\documentclass[oneside]{amsart}

\usepackage[letterpaper,body={15.0cm,22.5cm}, mag=1000]{geometry}
\usepackage{amssymb}
\usepackage{amsthm}
\usepackage{amscd}
\usepackage{enumitem}
\usepackage{float}
\usepackage{placeins}
\usepackage{caption}

\usepackage{times}
\usepackage{graphicx}
\usepackage{tikz}
\usepackage{tikz-cd}
\usepackage[all,cmtip]{xy}
%\documentclass[oneside]{amsart}
%\documentclass[a4paper]{article}
%\usepackage{hyperref}
%\usepackage[letterpaper,body={13.6cm,22.5cm}, mag=1000]{geometry}
%\usepackage{amssymb}
%\usepackage{amsthm}
%\usepackage{amscd}

%\setlength{\hoffset}{-.5in}
%\setlength{\voffset}{-.25in}
%\usepackage{amssymb,latexsym}
%\usepackage{graphicx}
%\usepackage{fancyhdr}
%\usepackage{tikz-cd}
%\usepackage{enumerate}
%\textwidth=6.175in
%\textheight=8.5in

%*********************************
\numberwithin{equation}{section}
\theoremstyle{plain}

\newtheorem{cor}[equation]{Corollary}
\newtheorem{lemma}[equation]{Lemma}

\newtheorem{prop}[equation]{Proposition}

\newtheorem{thm}[equation]{Theorem}

\newtheorem*{thma}{Theorem A}
\newtheorem*{thmb}{Theorem B}
\newtheorem*{thmc}{Theorem C}
\newtheorem*{thmd}{Theorem D}
\newtheorem*{thme}{Theorem E}

\newtheorem{rmk}[equation]{Remark}

\theoremstyle{definition}

\newtheorem{remark}[equation]{Remark}

\newcommand{\dlabel}[1]{\ifmmode \text{\ttfamily \upshape [#1] } \else
{\ttfamily \upshape [#1] }\fi \label{#1}}
\newcommand{\A}{\operatorname{A} }
\newcommand{\B}{\operatorname{B} }

\newcommand{\N}{\operatorname{N} }
\newcommand{\Ha}{\operatorname{H} }

\newcommand{\Z}{\operatorname{Z} }

\newcommand{\gen}[1]{\left < #1 \right >}
\newcommand{\Aut}{\operatorname{Aut} }

\newcommand{\Hom}{\operatorname{Hom} }

\newcommand{\Ker}{\operatorname{Ker} }

\newcommand{\Ann}{\operatorname{Ann} }

\newcommand{\CExt}{\operatorname{CExt} }
\newcommand{\Fix}{\operatorname{Fix} }

%*******************************

\sloppy

\begin{document}
\setcounter{page}{1}
\title[Isoclinism of skew left braces]
{Central series' and ($n$)-isoclinism of skew left braces}

\author{Arpan Kanrar}
\address{Harish-Chandra Research Institute, A CI of Homi Bhabha National Institute, Chhatnag Road, Jhunsi, Prayagraj-211 019, India}
\email{arpankanrar@hri.res.in}

\author{Charlotte Roelants}
\address{Department of Mathematics $\&$ Data Science, Vrije Universiteit Brussel, Pleinlaan 2, 1050 Brussel, Belgium.}
\email{Charlotte.Roelants@vub.be}

\author{Manoj K. Yadav}
\address{Harish-Chandra Research Institute, A CI of Homi Bhabha National Institute, Chhatnag Road, Jhunsi, Prayagraj-211 019, India}
\email{myadav@hri.res.in}

\subjclass[2010]{16T25, 81R50}
\keywords{Skew left brace,  ideal, annihilator series, central series, isoclinism, $n$-isoclinism, symmetric skew brace}

\begin{abstract}
The aim of this article is to advance the knowledge on the theory of skew left braces.  We introduce a subclass of skew left braces, which we denote by $\mathcal{I}_n$, $n \ge 1$, such that elements of the  annihilator and lower central series' interact `nicely' with respect to commutation.  That allows us to define a concept of   $n$-isoclinism of skew left braces  in $\mathcal{I}_n$, by using a concept of brace commutator words, which we have introduced.  We prove  results on $1$-isoclinism (isoclinism) of skew left braces analogous to important results in group theory.  For any two symmetric $n$-isoclinic skew left braces $A$ and $B$, we prove that, there exist skew left braces $C$ and $R$ such that both $A$ and $B$ are $n$-isoclinic to both $C$ and $R$ and (i) $A$ and $B$ are quotient skew left braces of $C$; (ii) $A$ and $B$ are sub-skew left  braces of $R$. Connections between a skew left brace and the group which occurs as a natural semi-direct  product of additive and multiplicative groups of the skew  left brace are investigated, and it is proved that $n$-isoclinism is preserved from braces to groups. We also show that various nilpotency concepts on skew left braces  are invariant under $n$-isoclinism. 
 \end{abstract}
\maketitle

\section{Introduction}

The concept of isoclinism ($1$-isoclinism) of groups was introduced by P. Hall in \cite{Hall40} in quest of classification of finite $p$-groups. In the same paper, Hall implicitly introduced the concept of $n$-isoclinism. Recently the concept of isoclinism ($1$-isoclinism) of  skew left braces was introduced by  Letourmy and Vendramin \cite{LV23} as a part of the important project of classification of skew left braces. This article is aimed  to make further prominent advancement on $n$-isoclinism ($n \ge 1$) of skew left braces to strengthen the classification project. We introduce the concept of a commutator word map in skew left braces and a skeleton of  lower central series of a skew left brace, which, we expect, will open new avenues of research in this area.

A non-empty set $A$ with two group structures, $(A, \cdot)$ and $(A, \circ)$,  is said to be a \emph{skew left brace} if
 \begin{equation}
 a \circ (b \cdot c) =  (a \circ b) \cdot a^{-1} \cdot  (a \circ c)
 \end{equation}
holds true for all $a, b, c \in A$, where $ a^{-1}$ denotes the  inverse of $a$ in $(A, \cdot)$. Whenever we want to emphasize on the binary operations of the skew left brace $A$, we shall  denote it by  a triple $(A, \cdot, \circ)$.  Otherwise we shall simply denote it by $A$. We call  $(A, \cdot)$ the \emph{additive group} and $(A, \circ)$ the \emph{multiplicative  group} of the skew left brace $(A, \cdot, \circ)$. A skew left brace $(A, \cdot, \circ)$ is said to be a \emph{left brace} if $(A, \cdot)$ is an abelian group.  A  skew right brace is defined analogously. In this article we mainly consider  skew left braces. So, we will mostly suppress the word `left' and only say skew brace(s).

 The concept of a left brace was introduced by Rump \cite{Rump07} in 2007 in connection with non-degenerate involutive set-theoretic solutions of the  Yang-Baxter equation.   Guarnieri and Vendramin \cite{GV17} generalized this in 2017 to skew left braces to encompass non-involutive set-theoretic solutions too. The subject received tremendous attention of the mathematical community after the work of Rump, as evident from the list of papers in the bibliography and the references therein. Interest in the study of set-theoretic solutions of the  Yang-Baxter equation was intrigued by the paper \cite{Drinfeld92}  of Drinfeld, published in 1992, and its connection with algebraic structures in the paper \cite{ESS99} published in 1999. It is already very well established by now that any advancement on the knowledge on skew braces will prominently contribute to the theory of the  Yang-Baxter equation. With this motivation in the background, we now introduce the work done in the paper.
  
 Schur covers of finite skew braces were introduced and studied in \cite{LV24}. The authors proved that any two Schur covers of a finite skew brace are isoclinic.  We generalize this result as follows. Let $K$ be a skew brace acting trivially on an abelian group  $A$ (viewed as a trivial brace) and $\Ha^2_b(K, A)$ be the second brace cohomology group of $K$ with coefficients in $A$ (see Section 3 for details). Let $[(\alpha, \mu)] \in \Ha^2_b(K, A)$ be a fixed cohomology class and $G = A \times_{(\alpha, \mu)} K$ be the corresponding annihilator extension skew brace of $K$ by $A$. Set $\hat{A} := \Hom(A, \mathbb{C}^{\times})$. Then there exists a group homomorphism $\varepsilon_G : \hat{A} \to \Ha^2_b(K, \mathbb{C}^{\times})$ given by $\varepsilon_G(\lambda) = [(\lambda \alpha, \lambda \mu)]$ for all $\lambda \in \hat{A}$. Our first  main result, which follows from  Theorem \ref{thm:gener:Schur:cov} (which is more general, but difficult to state here)  and Theorem \ref{thm:Tappe},  is 
 
 \begin{thma}
 Let a finite skew brace $K$ act trivially on abelian groups $A$ and $B$.  Let $[(\alpha, \mu)] \in \Ha^2_b(K, A)$ and $[(\beta, \nu)] \in  \Ha^2_b(K, B)$ with corresponding annihilator extension skew braces $G = A \times_{(\alpha, \mu)} K$ and $H = B \times_{(\beta, \nu)} K$ respectively.  Then $G$ and $H$ are isoclinic  if $\varepsilon_G(\hat{A}) = \varepsilon_H(\hat{B})$.
 \end{thma}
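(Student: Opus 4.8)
The statement is the skew brace analogue of Tappe's theorem on isoclinism of central extensions of a fixed group, and my plan is to adapt the classical fibre-product argument. The first step translates the hypothesis into a statement about the brace Schur multiplier $\M_b(K) = \Ha_2^b(K)$. Using the brace cohomology of Section 3, there is a natural ``universal coefficient'' projection $\Ha^2_b(K, A) \to \Hom(\M_b(K), A)$; write $\phi_G \colon \M_b(K) \to A$ for the image of $[(\alpha, \mu)]$ and $\phi_H \colon \M_b(K) \to B$ for the image of $[(\beta, \nu)]$. Since $\mathbb{C}^{\times}$ is divisible, the analogous projection $\Ha^2_b(K, \mathbb{C}^{\times}) \to \Hom(\M_b(K), \mathbb{C}^{\times})$ is an isomorphism, and under it $\varepsilon_G(\lambda) = [(\lambda\alpha, \lambda\mu)]$ corresponds to $\lambda \circ \phi_G$. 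Hence $\varepsilon_G(\hat A)$ is the subgroup of $\Hom(\M_b(K), \mathbb{C}^{\times})$ of characters killing $\ker \phi_G$; as $A$, $B$ and $\M_b(K)$ are finite, Pontryagin duality turns the hypothesis $\varepsilon_G(\hat A) = \varepsilon_H(\hat B)$ into the equality $\ker \phi_G = \ker \phi_H =: N$ of subgroups of $\M_b(K)$.

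Next I would form the fibre product $P := G \times_K H = \{(g,h) : g \text{ and } h \text{ have equal image in } K\}$, an annihilator extension of $K$ by $A \times B$, with ideals $\bar A := \ker(P \onto H) = A \times \{0\}$ and $\bar B := \ker(P \onto G) = \{0\} \times B$, both lying in $\Ann(P)$ and satisfying $P/\bar B \cong G$, $P/\bar A \cong H$. The key point is that $\bar A \cap [P,P] = 0 = \bar B \cap [P,P]$. Indeed, the annihilator extension $\bar B \to P \to G$ is the pullback along $G \to K$ of $B \to H \to K$, so by naturality the five-term exact sequence in brace (co)homology identifies $\bar B \cap [P,P]$ with the image of the connecting map $\M_b(G) \to \bar B$, which is $\phi_H$ precomposed with $(G \to K)_* \colon \M_b(G) \to \M_b(K)$; and the five-term sequence of $A \to G \to K$ identifies the image of $(G \to K)_*$ with $\ker \phi_G = N$. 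Thus $\bar B \cap [P,P] = \phi_H(N) = \phi_H(\ker \phi_H) = 0$, and symmetrically $\bar A \cap [P,P] = \phi_G(\ker \phi_G) = 0$.

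Finally I would invoke the elementary fact that if $D$ is an ideal of a skew brace $P$ with $D \subseteq \Ann(P)$ and $D \cap [P,P] = 0$, then $P$ is isoclinic to $P/D$: one has $[P/D,P/D] \cong [P,P]$, the preimage of $\Ann(P/D)$ in $P$ is exactly $\Ann(P)$, and the induced brace commutator pairings on $P/\Ann(P) = (P/D)/\Ann(P/D)$ agree. Applying this to $D = \bar B$ and $D = \bar A$ gives $G \cong P/\bar B \sim P \sim P/\bar A \cong H$, as required. (Steps two and three are precisely the ``if'' half of the brace version of Tappe's theorem, Theorem \ref{thm:Tappe}, while Theorem \ref{thm:gener:Schur:cov} carries out the cohomological bookkeeping in the generality needed.)

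The main obstacle is assembling the supporting homological algebra for skew braces so that all of this is legitimate: a natural universal-coefficient splitting with $\Hom(\M_b(K), -)$ in degree $2$, the five-term exact sequence together with its naturality in the extension, the identification of its connecting map with the $\Hom$-component of the defining class, and the collapse of the projection in $\mathbb{C}^{\times}$-coefficients. None of these is deep, but each requires a careful formulation in the brace category. The only other delicate point is the last lemma, where one must check that the brace commutator pairing descends faithfully modulo $D$ — this is where the hypothesis that the braces in question lie in $\mathcal{I}_1$, so that $1$-isoclinism is defined and behaves as expected, is used. Once these ingredients are in place, the proof is purely formal.
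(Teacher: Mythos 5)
Your overall strategy is the right shape and is genuinely different from the paper's: you route everything through the fibre product $P = G\times_K H$ and a chain $G\cong P/\bar B\sim P\sim P/\bar A\cong H$, whereas the paper works with the auxiliary extension $L=(A\times B)\times_{(\chi,\zeta)}K$ built from the \emph{difference} cocycle $\chi=(\alpha,\beta^{-1})$, $\zeta=(\mu,\nu^{-1})$, computes $\Ker(\varepsilon_L)=\big((A\times B)\cap\Gamma_2(L)\big)^{\perp}$ directly (Lemma \ref{lem:BraceEpsilonKernel}), reads off $(A\times B)\cap\Gamma_2(L)$ as the graph of an explicit isomorphism $\eta\colon A\cap\Gamma_2(G)\to B\cap\Gamma_2(H)$, and from that builds $\theta$ by hand; the final reduction to full annihilators is Theorem \ref{thm:Tappe}. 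Your closing lemma (if $D\subseteq\Ann(P)$ and $D\cap\Gamma_2(P)=1$ then $P$ and $P/D$ are isoclinic) is fine and standard, and your remark that this needs $\mathcal{I}_1$ is vacuous since $\mathcal{I}_1=\mathcal{SB}$.

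The genuine gap is that the two steps carrying all the weight rest on homological machinery for skew braces that you do not supply and that is not available off the shelf. Concretely: (i) the universal-coefficient projection $\Ha^2_b(K,A)\to\Hom(\M_b(K),A)$ producing $\phi_G$, together with the claim that it becomes an isomorphism over $\mathbb{C}^{\times}$ and intertwines $\varepsilon_G$ with $\lambda\mapsto\lambda\circ\phi_G$; and (ii) the five-term exact sequence in brace homology, its naturality under pullback of annihilator extensions, and the identification of its connecting map with $\phi_G$ (with image $A\cap\Gamma_2(G)$ and with $\operatorname{Im}(\M_b(G)\to\M_b(K))=\Ker\phi_G$). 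The brace cohomology used here is defined by explicit pairs of cocycles $(\alpha,\mu)$ in low degree, not as a derived functor, so none of these statements can simply be quoted; each would have to be formulated and proved, and together they constitute most of the content of the theorem. Your equality $\bar B\cap\Gamma_2(P)=\phi_H(\Ker\phi_G)=0$ is exactly the point where the paper instead does a concrete computation: it shows every generator of $\Gamma_2(X)$ for $X=G,H,L$ is determined by the elements $(1,k)_X$, so that $(A\times B)\cap\Gamma_2(L)=\{((a,\eta(a)^{-1}),1)\}$ falls out of Lemma \ref{lem:BraceEpsilonKernel} applied to $L$ without any appeal to $\M_b(K)$ or exact sequences. (A minor further slip: you invoke finiteness of $A$ and $B$ for the duality step, but the theorem assumes only that $K$ is finite; one should instead use that $\operatorname{Im}\phi_G\subseteq A\cap\Gamma_2(G)$ is finite because $\Gamma_2(G)$ is, by \cite[Theorem 5.4]{JKAV21}.) Until (i) and (ii) are actually established in the brace category, the argument is a correct blueprint of the classical group-theoretic proof rather than a proof.
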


  In Section 4, we define the concept of brace commutator words for skew braces analogous to the commutator words in group theory. We construct a filtration of strong left ideals: $A = A_{(0)}  \ge A_{(1)} \ge A_{(2)} \ge \cdots$ of a  given skew brace $A$, which lies  below the lower central series of ideals: $A = \Gamma_1(A) \ge \Gamma_2(A) \ge \Gamma_3(A) \ge  \cdots$ of $A$, that is, $A_{(i)} \le \Gamma_{i+1}(A)$ for all $i \ge 0$,  and prove that the length of the lower central series is determined by the length of this  filtration.  For a positive integer $n$, a concept of $n$-isoclinism is defined in the subclass $\mathcal{I}_n$ of the class of all skew braces $\mathcal{SB}$ using the brace commutator words, and then obtain equivalent conditions that allow a skew brace to fall in  $\mathcal{I}_n$. More precisely,  $\mathcal{I}_n$ is the largest subclass of skew braces in which we can talk of $n$-isoclinism. We remark that $\mathcal{I}_1 = \mathcal{SB}$  (proved in  \cite{LV23}).
  
  A skew brace $(A, \cdot, \circ)$ is said  to be \emph{symmetric} if $(A, \circ, \cdot)$ is also a skew brace.  In Section 5, we prove that  all symmetric skew braces fall in  $\mathcal{I}_n$ for all $n \ge 1$. The following result, analogous to \cite[Theorem 1.4]{Bioch78}, is also proved in this section.
  
 \begin{thmb}
  Let $A, B$ be symmetric  skew braces. Then $A$ and $B$ are $n$-isoclinic if and only if there exist  skew braces $C$, $N_1$, $N_2$, $C_A$ and $C_B$ such that $A \cong C/N_2$ and $B \cong C/N_1$ and the following two equivalent conditions hold:
\begin{enumerate}
\item  $C$ is $n$-isoclinic to both $A$ and $B$.
\item $C_B \cong C \cong C_A$, and $C$ is $n$-isoclinic to both $C/N_2 \times C/\Gamma_{n+1}(C)$ and  $C/N_1 \times C/\Gamma_{n+1}(C)$, where $C_B$ and $C_A$ are sub-skew  braces of $C/N_2 \times C/\Gamma_{n+1}(C)$ and  $C/N_1 \times C/\Gamma_{n+1}(C)$ respectively.
\end{enumerate}
 \end{thmb}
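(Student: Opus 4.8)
The plan is to prove the main equivalence by constructing the universal object $C$ as a pullback, and then to read off the equivalence of (1) and (2) by a routine argument with direct products. Throughout I would use three facts available from the earlier sections: that every symmetric skew brace (and hence every sub‑skew brace and quotient of one, and any finite direct product of them) lies in $\mathcal{I}_m$ for all $m$, so that the annihilator series $\A_\bullet$, the lower central series $\Gamma_\bullet$, the brace commutator words and the notion of $n$‑isoclinism are all available and well behaved for every brace produced below; that $n$‑isoclinism is an equivalence relation; and the brace analogue of the standard group lemma, namely that for a symmetric skew brace $C$ with an ideal $N$ one has $C\sim_n C/N$ \emph{if and only if} $N\le\A_n(C)$ and $N\cap\Gamma_{n+1}(C)=0$ (the ``only if'' part being the input I really need).

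The implication ``$\Leftarrow$'' of the main statement is immediate: if $C,N_1,N_2$ exist with $A\cong C/N_2$, $B\cong C/N_1$ and (1) holds, then $A\sim_n C\sim_n B$, so $A\sim_n B$ by transitivity; in this direction $C_A,C_B$ are unconstrained. For ``$\Rightarrow$'', assume $A\sim_n B$ and fix an $n$‑isoclinism, i.e. isomorphisms $\xi\colon A/\A_n(A)\to B/\A_n(B)$ and $\theta\colon\Gamma_{n+1}(A)\to\Gamma_{n+1}(B)$ compatible with the brace commutator words. I would set
\[
C=\{(a,b)\in A\times B:\ \xi(a+\A_n(A))=b+\A_n(B)\},
\]
the pullback of $A$ and $B$ over their common quotient. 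Since $\xi$ is a brace isomorphism, $C$ is a sub‑skew brace of $A\times B$, hence symmetric; the projections $\pi_A,\pi_B$ are surjective, and $N_2:=\ker\pi_A=\{0\}\times\A_n(B)$, $N_1:=\ker\pi_B=\A_n(A)\times\{0\}$ are ideals of $C$ (being ideals of $A\times B$ contained in $C$), so that $C/N_2\cong A$ and $C/N_1\cong B$.

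The heart of the argument is to locate $\A_n(C)$ and $\Gamma_{n+1}(C)$. First, membership in the annihilator $\Ann$ is cut out by identities each involving only one coordinate (additive and multiplicative centrality, $c\ast x=0$ and $x\ast c=0$ for all $x$); since $\pi_A(C)=A$ and $\pi_B(C)=B$, quantifying such identities over $C$ is the same as quantifying them over $A$ in the first coordinate and over $B$ in the second, so $\Ann(C)=\Ann(A)\times\Ann(B)$. Iterating this over the quotients $C/\A_k(C)$, which are again sub‑skew braces of $A/\A_k(A)\times B/\A_k(B)$ surjecting onto both factors, yields $\A_n(C)=\A_n(A)\times\A_n(B)$. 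Secondly, a brace commutator word of weight $n+1$ evaluated on a tuple from $C$ is computed coordinatewise, and, because $b_i$ is by construction a lift of $\xi(a_i+\A_n(A))$, the compatibility of $(\xi,\theta)$ with brace commutator words forces the $B$‑coordinate of such a value to equal $\theta$ of its $A$‑coordinate; hence $\Gamma_{n+1}(C)\subseteq\{(x,\theta(x)):x\in\Gamma_{n+1}(A)\}$, and since $\pi_A$ maps $\Gamma_{n+1}(C)$ onto $\Gamma_{n+1}(A)$ and is injective on the graph of $\theta$, in fact $\Gamma_{n+1}(C)=\{(x,\theta(x)):x\in\Gamma_{n+1}(A)\}$. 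From these two identifications, $N_2\le\A_n(C)$ and $N_2\cap\Gamma_{n+1}(C)=0$ (a pair $(0,y)$ in the graph has $y=\theta(0)=0$), so $\pi_A$ realises $C\sim_n A$ by the lemma quoted above; symmetrically $C\sim_n B$. This is (1), and it completes the forward implication.

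Finally I would prove (1)$\Leftrightarrow$(2). Assuming (1), the lemma forces $N_i\cap\Gamma_{n+1}(C)=0$, so the natural map $C\to(C/N_i)\times(C/\Gamma_{n+1}(C))$ is injective; take $C_B$, $C_A$ to be its images for $i=2,1$ respectively, so $C_B\cong C\cong C_A$. Now $C/\Gamma_{n+1}(C)$ is nilpotent of class $\le n$, hence $\A_n(C/\Gamma_{n+1}(C))$ is all of it, so the ideal $\{0\}\times(C/\Gamma_{n+1}(C))$ lies in $\A_n$ of the product $(C/N_2)\times(C/\Gamma_{n+1}(C))$ and meets its $\Gamma_{n+1}$ (which is $\Gamma_{n+1}(C/N_2)\times\{0\}$) trivially; the lemma then gives that this product is $n$‑isoclinic to its quotient $C/N_2$, which is $n$‑isoclinic to $C$ by (1) — so $C$ is $n$‑isoclinic to $(C/N_2)\times(C/\Gamma_{n+1}(C))$, and likewise for $N_1$, giving (2). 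Conversely, reading this computation backwards, $C/N_2$ is the quotient of $(C/N_2)\times(C/\Gamma_{n+1}(C))$ by an ideal inside its $\A_n$ and disjoint from its $\Gamma_{n+1}$, hence $n$‑isoclinic to that product, which by (2) is $n$‑isoclinic to $C$; transitivity gives $C\sim_n C/N_2\cong A$, and similarly $C\sim_n C/N_1\cong B$, i.e. (1). \emph{The main obstacle} is the pair of identifications $\A_n(C)=\A_n(A)\times\A_n(B)$ and $\Gamma_{n+1}(C)=\operatorname{graph}(\theta)$: the first needs the annihilator to be describable by coordinate‑separated identities, and the second needs the precise compatibility of brace commutator words with $n$‑isoclinism from Section 4 (and is the place where staying inside $\mathcal{I}_n$ is essential); once these are in place, the remainder is bookkeeping with the quotient‑isoclinism lemma and with lower central and annihilator series of direct products.
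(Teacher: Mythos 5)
Your proposal is correct and follows essentially the same route as the paper: the fiber product $C$ over $A/\Ann_n(A)\cong B/\Ann_n(B)$, the kernels of the two projections as $N_1,N_2$, the identification of $\Gamma_{n+1}(C)$ with the graph of $\theta$ (via the compatibility of $\xi,\theta$ with degree-$n$ brace commutator words) to get $N_i\cap\Gamma_{n+1}(C)=1$, and then the quotient/sub-brace criteria of Proposition \ref{prop7} to obtain (1) and (2). The only deviations are cosmetic: the paper does not need your computation $\Ann_n(C)=\Ann_n(A)\times\Ann_n(B)$ because its quotient criterion requires only $N_i\cap\Gamma_{n+1}(C)=1$ (not $N_i\le\Ann_n(C)$), and in step (2) it invokes the sub-brace criterion $X=Y\Ann_n(X)\Rightarrow X\sim_n Y$ where you instead project the direct product onto its first factor.
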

 
 We continue exploring symmetric  skew braces in Section 6. We first prove  the following embedding theorem for $1$-isoclinic skew braces in the full class $\mathcal{SB}$ of skew braces.
 
 \begin{thmc}
 Let $A$ and $B$ be isoclinic skew braces. Then there exists a skew  brace $W$ with sub-skew braces $\bar{A}$ and $\bar{B}$ such that $A\cong \bar{A},~B\cong \bar{B},$ and $\bar{A}  \Ann(W)=\bar{A}\circ \Ann(W) = W = \bar{B} \circ \Ann(W) = \bar{B}  \Ann(W)$.
 \end{thmc}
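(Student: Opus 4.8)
The plan is to realize $W$ simultaneously as a ``central product'' of $A$ with a large trivial brace and as the corresponding central product of $B$, using brace cohomology to match the two descriptions. Write $Z_A := \Ann(A)$ and $Z_B := \Ann(B)$, let $\xi \colon A/Z_A \to B/Z_B$ and $\theta \colon \Gamma_2(A) \to \Gamma_2(B)$ be the isomorphisms witnessing the isoclinism, and put $Q := A/Z_A$, which we identify with $B/Z_B$ along $\xi$. Then $A$ is an annihilator extension of $Q$ by the trivial brace $Z_A$, determined by a class $c_A \in \Ha^2_b(Q, Z_A)$, and likewise $B$ determines a class $c_B \in \Ha^2_b(Q, Z_B)$ after transport along $\xi$. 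Note also that $\theta$ restricts to an isomorphism $\Gamma_2(A) \cap Z_A \to \Gamma_2(B) \cap Z_B$, which is part of the compatibility built into the isoclinism. (The converse assertion of the theorem is not needed, since only existence of $W$ is claimed.)

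First I fix the coefficients. Form the amalgam $P := (Z_A \oplus Z_B)/\{(c,-\theta(c)) : c \in \Gamma_2(A)\cap Z_A\}$; the canonical maps $Z_A \to P$ and $Z_B \to P$ are injective and agree on $\Gamma_2(A)\cap Z_A$. Let $T$ be an injective hull of $P$ as an abelian group, regarded as a trivial skew brace with trivial $Q$-action, and let $\iota_A \colon Z_A \hookrightarrow T$ and $\iota_B \colon Z_B \hookrightarrow T$ be the induced embeddings. Now set
\[
W := (A \times T)/\Delta_A, \qquad \Delta_A := \{\,(z, -\iota_A(z)) : z \in Z_A\,\}.
\]
Since $\Delta_A$ is an additive subgroup of $\Ann(A\times T) = Z_A \times T$, it is an ideal of $A \times T$, so $W$ is a skew brace. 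The map $a \mapsto \overline{(a,0)}$ is an injective skew brace homomorphism (its kernel is $\{a \in Z_A : \iota_A(a) = 0\} = 0$), giving a sub-skew brace $\bar A \le W$ with $\bar A \cong A$; moreover $\overline{\{0\}\times T} \le \Ann(W)$, and since $\overline{(a,0)} + \overline{(0,t)} = \overline{(a,0)}\circ \overline{(0,t)} = \overline{(a,t)}$ we get $\bar A\,\Ann(W) = \bar A \circ \Ann(W) = W$.

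The crux is to show that the analogously defined brace $W' := (B \times T)/\Delta_B$, with $\Delta_B := \{(z,-\iota_B(z)) : z \in Z_B\}$, is isomorphic to $W$ by an isomorphism inducing $\xi$ on $Q$ and the identity on $T$. Both $W$ and $W'$ are annihilator extensions of $Q$ by $T$; a short computation with a set-theoretic section shows their classes in $\Ha^2_b(Q,T)$ are the images $\iota_{A*}(c_A)$ and $\iota_{B*}(c_B)$, so it suffices to prove $\iota_{A*}(c_A) = \iota_{B*}(c_B)$. Here the choice of $T$ pays off: because $T$ is injective, the universal-coefficients sequence for brace cohomology collapses and the canonical map $\rho \colon \Ha^2_b(Q,T) \to \Hom(\Ho_2^b(Q),T)$ into the (brace) second homology is injective — the brace analogue of the isomorphism $\Ha^2(Q,T)\cong\Hom(\Ho_2(Q),T)$ for divisible $T$. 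The homomorphism $\rho(c_A)\colon \Ho_2^b(Q)\to Z_A$ factors through $\Gamma_2(A)\cap Z_A$ (it is assembled from brace commutators of lifts of elements of $Q$ inside $A$), and similarly $\rho(c_B)$ factors through $\Gamma_2(B)\cap Z_B$; the isoclinism data $(\xi,\theta)$ says exactly that $\theta\circ\rho(c_A) = \rho(c_B)$. Since $\iota_A$ and $\iota_B$ were chosen to satisfy $\iota_A = \iota_B\circ\theta$ on $\Gamma_2(A)\cap Z_A$, we obtain $\iota_A\circ\rho(c_A) = \iota_B\circ\rho(c_B)$, hence $\rho(\iota_{A*}c_A) = \rho(\iota_{B*}c_B)$, hence $\iota_{A*}(c_A) = \iota_{B*}(c_B)$ by injectivity of $\rho$, hence $W \cong W'$. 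Pulling the canonical copy of $B$ inside $W'$ back through this isomorphism gives a sub-skew brace $\bar B \le W$ with $\bar B \cong B$ and $\bar B\,\Ann(W) = \bar B \circ \Ann(W) = W$, which with the previous paragraph yields the stated chain of equalities.

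The main obstacle is the penultimate paragraph: making precise, for injective coefficient braces $T$, that $\Ha^2_b(Q,T)$ is governed entirely by the commutator (alternating) part of a brace $2$-cocycle, and that this part is exactly the invariant preserved by $1$-isoclinism. If the treatment of brace cohomology and Schur covers in Section~3 does not already provide a universal-coefficients statement of sufficient generality, the needed special case can be extracted directly from the explicit cocycle description $(\alpha,\mu)$ of $\Ha^2_b$: a brace $2$-cocycle valued in an injective abelian group can be corrected by a coboundary so that its ``symmetric'' component is trivialised, leaving only the commutator pairing $Q \times Q \to \Gamma_2(\,\cdot\,)\cap\Ann(\,\cdot\,)$, and this pairing is transported by $(\xi,\theta)$ essentially by the definition of isoclinism.
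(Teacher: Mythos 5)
Your construction of $W$ itself is sound: any subgroup of $\Ann(A\times T)$ is an ideal, the map $a\mapsto \overline{(a,1)}$ is injective because $\iota_A$ is, the image of $\{1\}\times T$ lands in $\Ann(W)$ since surjective brace homomorphisms carry annihilators into annihilators, and the factorisation $\overline{(a,t)}=\overline{(a,1)}\cdot\overline{(1,t)}=\overline{(a,1)}\circ\overline{(1,t)}$ gives $\bar A\,\Ann(W)=\bar A\circ\Ann(W)=W$. The appeal to Lemma \ref{lem:PropertyIsocli}(2) for $\theta(\Ann(A)\cap\Gamma_2(A))=\Ann(B)\cap\Gamma_2(B)$ is also legitimate. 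This is a genuinely different route from the paper, which avoids cohomology entirely: it forms the fiber product $C\le A\times B$ over $A/\Ann(A)$, passes to $Y=C/N_2\times C/\Gamma_2(C)$, quotients by an explicit ideal $N$, and exhibits the embeddings $\rho_A,\rho_B$ by direct computation. That argument is elementary, works for arbitrary (not necessarily finite) skew braces, and produces the extra structure recorded in Remark \ref{rmk:embed1}.

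The gap in your proposal is the step you yourself flag: the claim that $\iota_{A*}(c_A)=\iota_{B*}(c_B)$ in $\Ha^2_b(Q,T)$. This rests on three unproved assertions: (i) a universal-coefficients statement for brace cohomology, namely that for injective $T$ the map $\rho\colon \Ha^2_b(Q,T)\to\Hom(\Ho_2^b(Q),T)$ is injective; (ii) a Hopf-type description of $\Ho_2^b(Q)$ under which $\rho(c_A)$ has image in $\Ann(A)\cap\Gamma_2(A)$ and is computed by evaluating degree-one brace commutator words on lifts; and (iii) the compatibility $\theta\circ\rho(c_A)=\rho(c_B)$. None of this machinery exists in the paper (Section 3 only constructs $\Ha^2_b$ via pairs $(\alpha,\mu)$ and the transgression to $\mathbb{C}^{\times}$ for finite $K$), and it is not routine: the compatibility condition \eqref{eq:compati} couples $\alpha$ and $\mu$, so even formulating $\Ho_2^b$ and its UCT requires real work. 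Your fallback sketch is moreover misleading as stated: even for ordinary group cohomology with divisible coefficients, the class of a central extension is \emph{not} determined by the alternating pairing $Q\times Q\to T$ alone (that only sees $\Hom(\wedge^2 Q^{\mathrm{ab}},T)$, not all of $\Hom(\Ho_2(Q),T)$); what one actually needs is that the full Ganea/Hopf map $\Ho_2^b(Q)\to\Ann(A)\cap\Gamma_2(A)$, defined on \emph{products} of commutator-word values of lifts, is intertwined by $\theta$. Until that package is built for skew braces, the isomorphism $W\cong W'$ — and hence the embedding of $B$ — is not established.
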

 
 As a consequence we obtain  \cite[Proposition 3.10]{LV23} which states that isoclinism between two skew braces $(A, \cdot, \circ)$ and $(B, \cdot, \circ)$, ensures isoclinism  between $(A, \cdot)$ and $(B, \cdot)$, and  $(A, \circ)$ and $(B, \circ)$. We then obtain the following embedding theorem for $n$-isoclinism of symmetric skew braces.
 
\begin{thmd}
  Let $A, B$ be two $n$-isoclinic ($n \ge 1$) symmetric skew braces. Then there exists a skew brace $R := \tilde{A} \times N$ with the following properties:
\begin{enumerate}
\item $N$ is a nilpotent skew brace of nilpotency class $n$;
\item $\tilde{A}$ is brace isomorphic to $A$, and $R=\bar{A}\Ann_n(R)$;
\item $R$ admits a sub-skew brace $\bar{B}$  isomorphic to $B$ such that  $R=\bar{B}\Ann_n(R)$,
\end{enumerate}
where $\bar{A}:=\tilde{A}\times \{1\}$. In particular, both $A$ and $B$ are $n$-isoclinic to $R$.
\end{thmd}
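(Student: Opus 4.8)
The plan is to deduce Theorem D from Theorem B together with the annihilator–extension machinery behind Theorem A. First I would apply Theorem B to the $n$-isoclinic pair $A,B$: this produces a skew brace $C$ and ideals $N_1,N_2\le\Ann_n(C)$ with $N_1\cap\Gamma_{n+1}(C)=N_2\cap\Gamma_{n+1}(C)=\{1\}$, with $A\cong C/N_2$ and $B\cong C/N_1$, and with $C$ being $n$-isoclinic to both $A$ and $B$. Throughout I would use the facts (from Sections~4--5) that $\Ann_n(C)$ is a nilpotent skew brace of class at most $n$, that $\Ann_n(C/N_i)=\Ann_n(C)/N_i$, that the quotient map gives $\Gamma_{n+1}(C)\cong\Gamma_{n+1}(C/N_i)$, and that $S=T\,\Ann_n(S)$ forces $T$ and $S$ to be $n$-isoclinic. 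Then set
\[
\tilde A:=C/N_2,\qquad N:=\bigl(C/\Gamma_{n+1}(C)\bigr)\times N_0,\qquad R:=\tilde A\times N,\qquad \bar A:=\tilde A\times\{1\},
\]
where $N_0$ is a fixed nilpotent skew brace of class exactly $n$, included only to force the nilpotency class of $N$ to equal $n$ (it plays no other role).

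The routine part comes first. Since $\Gamma_{n+1}\bigl(C/\Gamma_{n+1}(C)\bigr)=\{1\}$, the brace $N$ is nilpotent of class $n$, which is (1). Because $\Ann_n(N)=N$ we get $\Ann_n(R)=\Ann_n(\tilde A)\times N$, hence $\bar A\,\Ann_n(R)=(\tilde A\times\{1\})\bigl(\Ann_n(\tilde A)\times N\bigr)=\tilde A\times N=R$; together with $\tilde A\cong A$ this is (2). By the quoted fact, $\bar A\cong A$ is then $n$-isoclinic to $R$, and the analogous statement for $B$ will drop out of (3).

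Now (3), the heart of the matter. I would consider the monomorphism $\delta\colon C\to R$, $c\mapsto\bigl((cN_2,\,c\Gamma_{n+1}(C)),\,1\bigr)$, whose kernel is $N_2\cap\Gamma_{n+1}(C)=\{1\}$, and write $\hat C:=\delta(C)\cong C$. A direct check gives $\hat C\cap\Ann_n(R)=\delta(\Ann_n(C))$, so $\hat C$ surjects onto $R/\Ann_n(R)\cong\tilde A/\Ann_n(\tilde A)\cong C/\Ann_n(C)$, whence $\hat C\,\Ann_n(R)=R$; moreover $\delta(N_1)\le\delta(\Ann_n(C))\le\Ann_n(R)$ is an ideal of $\hat C$ with $\hat C/\delta(N_1)\cong C/N_1\cong B$. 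This exhibits $B$ only as a \emph{quotient} of the sub skew brace $\hat C\le R$, whereas we need it as a sub skew brace; so the plan is to produce a complement $B'$ of $N_1$ in $C$, i.e.\ a sub skew brace with $C=B'\cdot N_1=B'\circ N_1$ and $B'\cap N_1=\{1\}$, and then take $\bar B:=\delta(B')$. Indeed $\bar B\cong B'\cong C/N_1\cong B$ is a sub skew brace of $R$, and $R=\hat C\,\Ann_n(R)=\delta(B')\,\delta(N_1)\,\Ann_n(R)=\bar B\,\Ann_n(R)$ because $\delta(N_1)\le\Ann_n(R)$; applying the quoted fact once more shows $B\cong\bar B$ is $n$-isoclinic to $R$, completing the proof. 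The one genuinely difficult step is producing the complement $B'$: the common cover furnished by Theorem~B need not split over $N_1$, so one must arrange—by choosing, or adjusting, $C$ (for instance by enlarging the nilpotent padding)—that $N_1$ is complemented. This is where the symmetry of $A$ and $B$, the fact that $N_1$ is nilpotent of class $\le n$, and a cohomological comparison of the annihilator–extension classes of $A$ and $B$ in the spirit of the proof of Theorem~A all enter: $n$-isoclinism forces these classes to differ only in a part that becomes trivial modulo $\Gamma_{n+1}$, and that part can be absorbed into the nilpotent factor $N$, which is exactly what makes the splitting possible; verifying this (and its $\circ$-counterpart) is the crux of the argument, everything else being bookkeeping with the annihilator and lower central series.
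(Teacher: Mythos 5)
Your plan founders on the step you yourself flag as ``the one genuinely difficult step'': producing a complement $B'$ of $N_1$ in $C$. Such a complement does not exist in general, and no amount of ``adjusting $C$'' or ``enlarging the nilpotent padding'' will create one inside $C$: the extension $1 \to N_1 \to C \to B \to 1$ is an annihilator-type extension whose class is typically nontrivial (already for $n=1$ and ordinary groups this fails --- think of isoclinic but non-isomorphic groups sharing a common fiber product), and Theorem A only compares the isoclinism type of two such extensions; it says nothing about splitting. The assertion that $n$-isoclinism forces the relevant cohomology class ``to differ only in a part that becomes trivial modulo $\Gamma_{n+1}$'' and that this makes $N_1$ complemented is not an argument, and the conclusion is false. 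Note also that the obvious repair --- defining $\bar{B}$ directly on cosets by $N_1c \mapsto \big(cN_2,\, c\Gamma_{n+1}(C),\,1\big)$ --- is not well defined in your $R$, precisely because $N_1 \cap N_2 = 1$ and $N_1 \cap \Gamma_{n+1}(C)=1$ give no control over the ambiguity in the choice of $c$.

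The paper's proof avoids any splitting. It quotients the product $C/N_2 \times C/\Gamma_{n+1}(C)$ by the \emph{anti-diagonal} ideal $V$ built from $N_1\cap \Gamma_n(C)$, so that the diagonal map $\rho_2(c) = V\big(N_2c, \Gamma_{n+1}(C)c\big)$ has kernel exactly $N_1\cap\Gamma_n(C)$; this realizes $C/(N_1\cap\Gamma_n(C))$ --- which is $(n-1)$-isoclinic to $B$ by Proposition \ref{prop7}(2) --- as a sub skew brace of $Y$, and then an induction on $n$ (base case Theorem \ref{thm:embed}, where the quotient by the anti-diagonal of all of $N_1$ makes $\rho_B$ well defined on $B$ itself) supplies the embedding of $B$ via the auxiliary brace $U = \tilde{D}_2\times K_{n-1}$ and the final assembly $R = \rho_1(C)\times K_n\times K_{n-1}$. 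Your first two items, (1) and (2), and the reduction via Theorem B are essentially fine as bookkeeping, but the heart of (3) is missing; to fix the proposal you would need to replace the splitting step by the anti-diagonal quotient plus the induction on $n$, which is exactly the paper's argument.
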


As a consequence we obtain the result:   $n$-isoclinism between two symmetric skew braces $(A, \cdot, \circ)$ and $(B, \cdot, \circ)$, ensures $n$-isoclinism  between $(A, \cdot)$ and $(B, \cdot)$, and  $(A, \circ)$ and $(B, \circ)$. 

\begin{remark}
The results of Section 5 and Section 6 also hold true for $\lambda$-homomorphic skew braces (see Section 2 for the definition). Since the proofs  mostly follow verbatim  the ones on symmetric skew braces, we decided to work out the details only for the latter class.
\end{remark}
  
In the final section, Section 7, we investigate properties of skew braces which are invariant under $n$-isoclinism.  Let $(G, \cdot, \circ)$ be a skew brace and $\lambda^{op} : (G, \circ) \to \Aut(G, \cdot)$ be the group action of $(G, \circ)$ on $(G, \cdot)$ given by the $\lambda$ map of $(G, \cdot, \circ)$. Set $\Lambda_G := (G, \cdot) \rtimes_{\lambda^{op}}  (G, \circ)$, the semidirect product of $(G. \cdot)$ by $(G, \circ)$ under the action $\lambda^{op}$. Our next result is
  
  \begin{thme}
 Let $A$ and $B$ be two  $n$-isoclinic  skew braces in $\mathcal{I}_n$ such that $A_{(n)} = \Gamma_{n+1}(A)$ and $B_{(n)} = \Gamma_{n+1}(B)$. Then the groups $\Lambda_A$ and $\Lambda_B$ are $n$-isoclinic. 
  \end{thme}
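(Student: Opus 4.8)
The plan is to exhibit an explicit isoclinism between the groups $\Lambda_A$ and $\Lambda_B$, built from the data of the given $n$-isoclinism between the skew braces $A$ and $B$. Recall that an $n$-isoclinism of skew braces in $\mathcal{I}_n$ is (by the definition set up in Section~4) a pair of isomorphisms $\xi\colon A/\Ann_n(A)\to B/\Ann_n(B)$ and $\eta\colon \Gamma_{n+1}(A)\to\Gamma_{n+1}(B)$ that are compatible with the appropriate brace commutator word maps $\gamma_{n+1}$. The first step is to translate the hypothesis $A_{(n)}=\Gamma_{n+1}(A)$ (and likewise for $B$): under this assumption the skeleton filtration and the lower central series agree from level $n+1$ on, so the word map $\gamma_{n+1}$ on $(A/\Ann_n(A))^{\,\times(n+1)}$ surjects onto $\Gamma_{n+1}(A)$, and the isoclinism data is genuinely ``word-compatible'' in the strongest sense. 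I would record, as a preliminary lemma, the precise formulas for $\Ann_n(\Lambda_G)$ and $\gamma_{n+1}^{\Lambda_G}$ in terms of the brace data of $G$: since $\Lambda_G=(G,\cdot)\rtimes_{\lambda^{op}}(G,\circ)$, the lower central series of $\Lambda_G$ can be computed from the $\lambda$-action, and the key point is that $\Gamma_{n+1}(\Lambda_G)$ and $\Z_n(\Lambda_G)$ are ``read off'' coordinate-wise from $\Gamma_{n+1}(G)$ and $\Ann_n(G)$ together with the fixed points / commutators of the $\lambda^{op}$-action.

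Next I would assemble the two homomorphisms required for group isoclinism. From $\xi\colon A/\Ann_n(A)\to B/\Ann_n(B)$ one gets, applying $\xi$ in each of the two semidirect-product coordinates, an induced isomorphism $\bar\xi\colon \Lambda_A/\Z_n(\Lambda_A)\to \Lambda_B/\Z_n(\Lambda_B)$ — here one must check that $\Z_n(\Lambda_A)$ projects in each coordinate onto (something controlled by) $\Ann_n(A)$, which is exactly the content of the preliminary lemma, and that $\xi$ being a brace isomorphism makes $\bar\xi$ respect the $\lambda^{op}$-twisted multiplication. Symmetrically, from $\eta\colon \Gamma_{n+1}(A)\to\Gamma_{n+1}(B)$ one builds $\bar\eta\colon \Gamma_{n+1}(\Lambda_A)\to\Gamma_{n+1}(\Lambda_B)$, again coordinate-wise. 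The compatibility condition for group $n$-isoclinism — that $\bar\eta$ carries the value of the group commutator word $\gamma_{n+1}^{\mathrm{grp}}$ on any tuple to the value on the $\bar\xi$-image tuple — then reduces, via the formulas of the preliminary lemma expressing $\gamma_{n+1}^{\Lambda_G}$ through $\gamma_{n+1}^G$ and the $\lambda$-commutators, to the brace-level compatibility of $(\xi,\eta)$, which is the hypothesis. This is where the assumption $A_{(n)}=\Gamma_{n+1}(A)$ is essential: without it one only controls the skeleton $A_{(n)}$, not all of $\Gamma_{n+1}(A)$, and the group word values would not be forced.

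The main obstacle I anticipate is the preliminary lemma, i.e.\ pinning down $\Gamma_{n+1}(\Lambda_G)$ and $\Z_n(\Lambda_G)$ exactly. For a semidirect product $N\rtimes H$ one has $\Gamma_{k}(N\rtimes H)=\Gamma_k(H)\cdot[\,\ldots\,]$ with the $N$-part an iterated commutator/action subgroup, and the bookkeeping of which brace-theoretic object (the additive lower central term, the multiplicative one, or the ideal $\Gamma_{n+1}(A)$ of the brace) shows up in each slot requires care — in particular one must verify that the $\lambda^{op}$-action does not introduce terms outside $\Gamma_{n+1}(A)$ in either coordinate, which is precisely where the brace axiom relating $\cdot$, $\circ$ and $\lambda$ is used, and where membership in $\mathcal{I}_n$ guarantees the commutator identities behave. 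Once that lemma is in hand, the construction of $\bar\xi$ and $\bar\eta$ and the verification of the isoclinism square are routine diagram chases. I would close by remarking that the stated consequence about additive and multiplicative groups is the special case obtained by restricting the isoclinism of $\Lambda_A\cong(A,\cdot)\rtimes(A,\circ)$ to the two obvious subgroups, recovering the last displayed consequence in the introduction.
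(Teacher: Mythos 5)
Your overall strategy---transport $\xi$ and $\theta$ through the functor $G\mapsto\Lambda_G$ and compare the relevant subgroups of the semidirect products---is the same as the paper's, but your plan hinges on a ``preliminary lemma'' that does not hold in the form you need, and the paper deliberately avoids it. You propose to pin down $\Z_n(\Lambda_G)$ and $\gamma_{n+1}(\Lambda_G)$ \emph{exactly}, coordinate-wise, in terms of $\Ann_n(G)$ and $\Gamma_{n+1}(G)$, and then to build $\bar\xi\colon \Lambda_A/\Z_n(\Lambda_A)\to\Lambda_B/\Z_n(\Lambda_B)$ directly from $\xi$. Neither identification is available. For the upper central series one only has the containment $\Lambda_{\Ann_n(A)}\subseteq \Z_n(\Lambda_A)$ (the paper states exactly this and nothing more); $\Z_n(\Lambda_A)$ can be strictly larger, and since $\xi$ is only defined modulo $\Ann_n(A)$ there is no reason it descends to the quotient by the full $n$-th center, nor that the two $n$-th centers correspond under it. For the lower central series, Lemma 7.8 of the paper (from Guaschi--Pereiro) gives $\gamma_{n+1}(\Lambda_A)=L_{n+1}(A)\rtimes_{\lambda^{op}}\gamma_{n+1}(A,\circ)$, where the additive coordinate is the left ideal $L_{n+1}(A)$, which is contained in but in general not equal to $\Gamma_{n+1}(A)$, and the multiplicative coordinate is $\gamma_{n+1}(A,\circ)$, again only contained in $\Gamma_{n+1}(A)$. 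So the ``coordinate-wise read-off from $\Gamma_{n+1}(G)$ and $\Ann_n(G)$'' that your construction of $\bar\xi$ and $\bar\eta$ rests on is false as stated.

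The missing idea is that one does not need these exact computations: it suffices to produce an isomorphism $\alpha\colon\Lambda_A/\Lambda_{\Ann_n(A)}\to\Lambda_B/\Lambda_{\Ann_n(B)}$ (obtained as $(\bar\psi^n_B)^{-1}\Lambda(\xi)\bar\psi^n_A$, where $\bar\psi^n_G$ identifies $\Lambda_G/\Lambda_{\Ann_n(G)}$ with $\Lambda_{G/\Ann_n(G)}$) together with $\beta:=\Lambda(\theta)|_{\gamma_{n+1}(\Lambda_A)}$, which lands in $\gamma_{n+1}(\Lambda_B)$ because $\gamma_{n+1}(\Lambda_A)\subseteq\Lambda_{\Gamma_{n+1}(A)}$ and $\theta$ is defined on all of $\Gamma_{n+1}(A)$ precisely thanks to the hypothesis $A_{(n)}=\Gamma_{n+1}(A)$. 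One then invokes Hekster's Lemma~3.11, which says that compatible isomorphisms modulo subgroups \emph{contained in} the $n$-th centers already yield an $n$-isoclinism of groups (this is the group-theoretic analogue of the paper's Theorems 6.3 and 6.5 for braces). Without this reduction---or a reproof of it---your construction of the required isomorphism between $\Lambda_A/\Z_n(\Lambda_A)$ and $\Lambda_B/\Z_n(\Lambda_B)$ does not close.
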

  
  We then show that various nilpotency  concepts on skew braces are invariant under $n$-isoclinism. 
  
  We conclude this section by setting some notations. Note that additive and multiplicative identities of any skew brace coincide, which we denote by $1$. A skew brace having cardinality $1$ is also denoted by $1$, instead of $\{1\}$.  Inverse of  an element $a \in A$, when considered in $(A, \cdot)$  (respectively in $(A, \circ)$)  is denoted by $a^{-1}$  (respectively by $\bar{a}$). The group commutator of two elements $a$ and $b$ of a skew brace $A$ can be defined in two ways, one in $(A, \cdot)$ and the other in $(A, \circ)$. In $(A, \cdot)$, we denote it by $[a, b]^{\cdot} := a \cdot b \cdot a^{-1} \cdot b^{-1}$, and in $(A, \circ)$, we denote it by $[a, b]^{\circ} := a \circ b \circ \bar{a} \circ \bar{b}$. The commutator  $[a, b]^{\cdot}$ is also denoted by $\gamma_{\cdot}(a, b)$. The commutators of larger weights are defined iteratively. By $a^b$, we mean the conjugate $b \cdot a \cdot b^{-1}$ of $a$ by $b$ in $(A, \cdot)$.  We write $B \le A$ to denote that $B$ is a sub-skew brace of the skew  brace $A$.  To avoid confusion with the notation $\bar{a}$ as an inverse in $(A, \circ)$, we will denote a coset representative  of the coset $Ia$ by  $\tilde{a}$ or $\hat{a}$, where  $I$ is an ideal of a skew brace $A$ and $a \in A$. However, the quotient skew brace  $A/I$ of $I$ in $A$ may be denoted by $\bar{A}$. The class of all skew left braces will be denoted by $\mathcal{SB}$.
      
%%%%%%%%%%%%%%%%%%%%%%%%%%%%%%%%%%%%%%%%%%%%%%%%%%%%%%%%%%%%%%%%

\section{Preliminaries}

A  triple $(A, \cdot, \circ)$, where $(A, \cdot)$ and $(A, \circ)$ are  groups,    is said to be a \emph{skew left brace} if
 \begin{equation}
 a \circ (b \cdot c) =  (a \circ b) \cdot a^{-1} \cdot  (a \circ c)
 \end{equation}
 for all $a, b, c \in A$, where $ a^{-1}$ denotes the  inverse of $a$ in $(A, \cdot)$.  For a skew left brace $(A, \cdot, \circ)$, it was  proved in \cite{GV17} that the map
$$
\lambda  :  (A, \circ) \to \Aut \, (A, \cdot),~~a \mapsto \lambda_a
$$
is a group homomorphism, where $\Aut \, (A, \cdot)$ denotes the automorphism  group of $(A, \cdot)$ and $\lambda_a$ is given by $\lambda_a(b) = a^{-1} \cdot (a \circ b)$ for all $a, b \in A$.  Define 
$$a*b = \lambda_a(b) \cdot b^{-1}$$ for all $a, b \in A$.

For two elements $a, b \in A$, we define $[a, b]^{\cdot} = a \cdot b \cdot a^{-1} \cdot b^{-1}$ and  $[a, b]^{\circ} = a \circ b \circ \bar{a} \circ \bar{b}$, where for any $x \in A$, $\bar{x}$ denotes the inverse of $x$ in $(A, \circ)$. From now onwards we mostly suppress the use of `$\cdot$' as an operation in the computations. It is straightforward to see that

$$[ab,c]^{.} = [a,[b,c]^{\cdot}]^{\cdot} [b,c]^{\cdot} [a,c]^{\cdot}$$
and 
$$[a, bc]^{\cdot} = [a,b]^{\cdot} [b,[a,c]^{\cdot}]^{\cdot} [a,c]^{\cdot}$$
for all $a, b, c \in A$.
The same happens under the operation `$\circ$'.

A subset $I$ of a skew left brace $(A, \cdot, \circ)$ is said to be a \emph{ left ideal } of $(A, \cdot, \circ)$ if $I$ is a  subgroup of $(A, \cdot)$ and $\lambda_a(c) \in I$ for all $a \in A$ and $c \in I$. It turns out that a left ideal $I$ of $(A, \cdot, \circ)$ is a subgroup of $(A, \circ)$. A left ideal $I$ of $A$ is said to be  a \emph{a strong left ideal} if $I$ is normal in $(A, \cdot)$. A left ideal $I$ is said to be an \emph{ideal} of  $(A, \cdot, \circ)$ if $I$ is normal in both $(A, \cdot)$ and  $(A, \circ)$. 

The following result is well known (see, e.g., \cite[Proposition 7]{BEJP24}).
\begin{lemma}\label{lemma1}
Let $I$ be an ideal and $B$ a sub-skew left brace of a skew  brace  $A := (A, \cdot, \circ)$. Then $BI = IB$ is a  sub-skew left brace of $A$.
\end{lemma}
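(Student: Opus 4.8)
The plan is to establish that the four natural products $B\cdot I$, $I\cdot B$, $B\circ I$ and $I\circ B$ all coincide as subsets of $A$, and that this common subset is simultaneously a subgroup of $(A,\cdot)$ and of $(A,\circ)$. Once this is done the statement follows at once: a subset of a skew left brace that happens to be a subgroup under both operations is automatically a sub-skew left brace, since the brace identity $a\circ(b\cdot c)=(a\circ b)\cdot a^{-1}\cdot(a\circ c)$ holds throughout $A$, hence in particular on that subset, all of whose relevant products lie again in the subset.

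First I would record the structural input. As $I$ is an ideal it is normal in both $(A,\cdot)$ and $(A,\circ)$; as $B$ is a sub-skew brace it is a subgroup of both $(A,\cdot)$ and $(A,\circ)$. Normality of $I$ in $(A,\cdot)$ gives $B\cdot I=I\cdot B$, and the standard group-theoretic fact that $HN$ is a subgroup whenever $N$ is normal shows this set is a subgroup of $(A,\cdot)$. Symmetrically, normality of $I$ in $(A,\circ)$ gives $B\circ I=I\circ B$, a subgroup of $(A,\circ)$.

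The key step is to identify $B\cdot I$ with $B\circ I$. For $b\in B$ and $x\in I$ one has $b\circ x=b\cdot\lambda_b(x)$ directly from the definition of $\lambda_b$, and $\lambda_b(x)\in I$ because $I$ is a left ideal; hence $b\circ x\in B\cdot I$, so $B\circ I\subseteq B\cdot I$. For the reverse inclusion I would use that $\lambda$ is a homomorphism $(A,\circ)\to\Aut(A,\cdot)$, so $\lambda_{\bar b}=\lambda_b^{-1}$, and then $b\cdot x=b\cdot\lambda_b(\lambda_{\bar b}(x))=b\circ\lambda_{\bar b}(x)\in B\circ I$ since $\lambda_{\bar b}(x)\in I$. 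Therefore $S:=B\cdot I=I\cdot B=B\circ I=I\circ B$ is a single subset, a subgroup of $(A,\cdot)$ and of $(A,\circ)$, hence a sub-skew brace of $A$; in particular $BI=IB$ as claimed.

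I do not expect a genuine obstacle here; the only point requiring care is the bookkeeping with the $\lambda$ map — that left ideals are $\lambda$-invariant and that $\lambda_{\bar b}=\lambda_b^{-1}$ — which is exactly what forces the two one-sided products to agree. Everything else is the routine subgroup-product argument applied once in $(A,\cdot)$ and once in $(A,\circ)$, together with the observation that being a subgroup under both operations already entails the full sub-skew brace structure.
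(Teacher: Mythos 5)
Your proof is correct. The paper itself gives no argument for this lemma --- it is quoted as well known with a citation to \cite{BEJP24} --- but your reasoning (reduce to the subgroup-product lemma in each of $(A,\cdot)$ and $(A,\circ)$ separately, and then use $b\circ x=b\cdot\lambda_b(x)$ together with $\lambda$-invariance of the left ideal $I$ to identify $B\cdot I$ with $B\circ I$) is exactly the standard proof, and the observation that a subset which is a subgroup under both operations is automatically a sub-skew brace is sound.
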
 
%\begin{proof}
%That $BI = IB$  and $BI$ is closed under `$\cdot$' is clear from the normality of $I$ in $(A, \cdot)$.  Let $b \in B$ and $c \in I$.  Notice that $c^{-1}b^{-1} = b^{-1}(bc^{-1}b^{-1}) \in the inverse of $bc$  under the operation `$\cdot$'. Thus $(BI, \cdot)$ is subgroup of $(A, \cdot)$. Now we prove that $BI$ is a subgroup of $(A, \circ)$. First we see that 
%\begin{eqnarray*}
%(bc)\circ(b_1c_1) &=& b \circ \lambda^{-1}_b(c) \circ b_1 \circ \lambda^{-1}_{b_1}(c_1)\\
%&=& b \circ b_1 \circ (\bar{b}_1 \circ \lambda^{-1}_b(c) \circ b_1)  \circ  \lambda^{-1}_{b_1}(c_1)\\
%&=& (b \circ b_1) \lambda_{b \circ b_1}\big(\bar{b}_1 \circ \lambda^{-1}_b(c) \circ b_1  \circ  \lambda^{-1}_{b_1}(c_1)\big)
%\end{eqnarray*}
%lies in $BI$. It is easy to see that $\bar{b}\lambda^{-1}_b(b \circ \overline{\lambda^{-1}_b(c)} \circ \bar{b})$ is the inverse of $bc$ under the operation `$\circ$'. This completes the proof.
%\end{proof}

\begin{cor}
Let $I$ be an ideal and $J$ a  left ideal  of a skew brace  $A$. Then $JI = IJ$ is a   left ideal  of $A$. Moreover, if $J$ is also an ideal of $A$, then $JI = IJ$ is an ideal  of $A$.
\end{cor}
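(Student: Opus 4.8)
The plan is to derive both assertions from Lemma~\ref{lemma1} together with the observation that left ideals and ideals are stable under the maps $\lambda_a$ and, in the second case, under conjugation in both group structures; the only point needing a small detour is normality of $JI$ in $(A,\circ)$, which I would handle by passing to the quotient skew brace $A/I$.

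First I would record that a left ideal $J$ of $A$ is, in particular, a sub-skew brace of $A$: by definition $J$ is a subgroup of $(A,\cdot)$, and, as recalled in Section~2, a left ideal is automatically a subgroup of $(A,\circ)$. Hence Lemma~\ref{lemma1} applies and $JI=IJ$ is a sub-skew brace of $A$; in particular $JI$ is simultaneously a subgroup of $(A,\cdot)$ and of $(A,\circ)$. To upgrade this to ``left ideal'' it remains to check stability under the $\lambda$-maps. Since each $\lambda_a\in\Aut(A,\cdot)$, for $j\in J$ and $i\in I$ we have $\lambda_a(j\cdot i)=\lambda_a(j)\cdot\lambda_a(i)$, with $\lambda_a(j)\in J$ because $J$ is a left ideal and $\lambda_a(i)\in I$ because the ideal $I$ is in particular a left ideal. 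Thus $\lambda_a(JI)\subseteq JI$ for all $a\in A$, so $JI$ is a left ideal of $A$.

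Now suppose in addition that $J$ is an ideal. Normality of $JI$ in $(A,\cdot)$ is immediate, since conjugation by any $a\in A$ is an automorphism of $(A,\cdot)$ and $J,I$ are both normal there: $(j\cdot i)^a=j^a\cdot i^a\in JI$. For normality in $(A,\circ)$ I would argue via the canonical projection $\pi\colon A\to A/I$, which is a surjective homomorphism of skew braces with kernel $I$ and which induces on the third operation the quotient group homomorphism $(A,\circ)\to(A,\circ)/I$. Because $J\normaleq(A,\circ)$ and $\pi$ is surjective, $\pi(J)\normaleq(A,\circ)/I$; since $\pi(JI)=\pi(J)$ and $JI=\pi^{-1}(\pi(J))\supseteq I$, the correspondence theorem for $(A,\circ)$ yields $JI\normaleq(A,\circ)$. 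Together with the previous paragraph this shows that $JI$ is an ideal of $A$.

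The main (though modest) obstacle is precisely this last point: a direct attempt to conjugate a product $j\cdot i$ by $a$ in the operation $\circ$ forces one through the brace relation and mixes the two operations in an unwieldy way, so reducing normality in $(A,\circ)$ to normality of $\pi(J)$ in the quotient $\circ$-group is the natural way around it; everything else is a routine subgroup/automorphism computation.
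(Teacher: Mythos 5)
Your proof is correct and follows exactly the route the paper intends: the corollary is stated without proof as an immediate consequence of Lemma~\ref{lemma1}, and your write-up supplies the routine missing checks ($\lambda$-stability for the left-ideal claim, and normality in both groups for the ideal claim). Your detour through $A/I$ for normality in $(A,\circ)$ works, though it can be shortened by noting that $J\circ I=J\cdot I$ as sets (since $j\circ i=j\cdot\lambda_j(i)$ and $\lambda_j(I)=I$), so $JI$ is a product of two normal subgroups of $(A,\circ)$ and hence normal.
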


Let $A := (A, \cdot, \circ)$ be a skew  brace. Set $\Ann_0(A) = 1$, 
\begin{eqnarray*}
\Ann_1(A) = \Ann(A) &:=& \Ker(\lambda) \cap \Z(A, \cdot) \cap \Z(A, \circ)\\
&=& \Fix(\lambda) \cap \Z(A, \cdot) \cap \Z(A, \circ)\\
&=&  \{a \in A \mid a*b = 1, [a, b]^{\cdot} = 1, [a, b]^{\circ} = 1, \mbox{ for all } b \in A\}\\
&=&  \{a \in A \mid a*b = 1, b*a = 1,  [a, b]^{\cdot} = 1, \mbox{ for all } b \in A\}
\end{eqnarray*}
and for $n \ge 2$, define $\Ann_n(A)$, the $n$th annihilator  of $A$, by
$$\Ann_n(A) / \Ann_{n-1}(A) = \Ann\big(A /  \Ann_{n-1}(A)\big).$$
This is possible because $\Ann_{n-1}(A)$ is an ideal of $A$ \cite[Definition 2.2]{JAV23}.  Notice that 
$$\Ann_n(A) = \{a \in A \mid a*b, b*a,  [a, b]^{\cdot}  \in \Ann_{n-1}(A) \mbox{ for all } b \in A\}.$$
This is an ascending series of ideals of $A$, which is called the  \textit{annihilator series}. We shall view this series  analogous to the upper central series of a group.

Let $A := (A, \cdot, \circ)$ be a skew  brace and $\gamma_n(A, \cdot)$ and  $\gamma_n(A, \circ)$, respectively, denote the $n$th terms of the lower central series of $(A, \cdot)$ and $(A, \circ)$.  For the left ideals $I$ and $J$ of a skew brace $A$, define 
 $$I*J := \langle x*y \mid x \in I, y \in J\rangle^{\cdot},$$ 
 the subgroup generated by the set $\{x*y \mid x \in I, y \in J\}$ in $(A, \cdot)$.
 Set $A^1 = A$ and, recursively, define 
$$A^n :=  A*A^{n-1} = \gen{a*b \mid a \in A, b\in A^{n-1}}^{\cdot}.$$
 It is well known that $A^2$ is an ideal of $A$. But $A^n$, $n \ge 3$, in general is not an ideal of $A$. However, $A^n$ is a left ideal of $A$.  Further, set $A^{(1)} = A$ and, recursively, define 
$$A^{(n)} :=  A^{(n-1)}*A = \gen{b*a \mid a \in A, b\in A^{(n-1)}}^{\cdot}.$$
 It is proved in \cite[Proposition 2.1]{CSV19} that $A^{(n)}$ is an ideal of $A$.
 
 We consider the lower central series of a skew brace defined in \cite{BJ23}. 
  Set $\Gamma_1(A)  := A$ and define
$$\Gamma_n(A) := \gen{A * \Gamma_{n-1}(A), \Gamma_{n-1}(A) * A, [A, \Gamma_{n-1}(A)]^{\cdot}}^{\cdot},$$
where $[A, \Gamma_{n-1}(A)]^{\cdot}$ denotes the subgroup of $(A, \cdot)$ generated by the set $\{[a, u]^{\cdot} \mid a \in A, u \in \Gamma_{n-1}(A)\}$.
The authors of  \cite{BJ23} start with $0$ index, but we start with $1$. $\Gamma_n(A)$ is an ideal of $A$ and  $\Gamma_{n+1}(A) \le \Gamma_{n}(A)$ for all integers  $n \ge 1$. We shall view this series of ideals of a skew left  brace analogous to the lower central series of a group.

A skew brace $A$ is said to be  {\em nilpotent} if there exists an integer $n$ such that  $\Ann_n(A) = A$. The least such integer is called the  {\em nilpotency class} of $A$.  The authors of \cite{BJ23} call such skew braces centrally nilpotent and prove that   $\Ann_n(A) = A$ if and only if $\Gamma_{n+1} = 1$ (\cite[Theorem 2.8]{BJ23}). 

The following interesting observation in comes very handy at several places.
\begin{lemma}\label{lemma2}
For any skew brace $A$, $\gamma_n(A, \circ)$ is contained in  $\Gamma_n(A)$. 
\end{lemma}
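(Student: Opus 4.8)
The claim is that $\gamma_n(A,\circ) \le \Gamma_n(A)$ for all $n \ge 1$. I would prove this by induction on $n$. The base case $n = 1$ is trivial since $\gamma_1(A,\circ) = A = \Gamma_1(A)$. For the inductive step, assume $\gamma_{n-1}(A,\circ) \le \Gamma_{n-1}(A)$; I must show that $\gamma_n(A,\circ) = [A, \gamma_{n-1}(A,\circ)]^{\circ} \le \Gamma_n(A)$. Since $\Gamma_n(A)$ is generated (in $(A,\cdot)$) by the three sets $A * \Gamma_{n-1}(A)$, $\Gamma_{n-1}(A) * A$, and $[A, \Gamma_{n-1}(A)]^{\cdot}$, and since $\gamma_{n-1}(A,\circ) \le \Gamma_{n-1}(A)$ by induction, it suffices to express an arbitrary $\circ$-commutator $[a, u]^{\circ}$, with $a \in A$ and $u \in \Gamma_{n-1}(A)$, as a product of elements drawn from those three generating sets, i.e.\ to show $[a,u]^{\circ} \in \Gamma_n(A)$.

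The key computation is to relate the $\circ$-commutator to the $\ast$-operation and the $\cdot$-commutator. Using $a \circ b = a \cdot \lambda_a(b) = a \cdot (a*b) \cdot b$ and the definition $a*b = \lambda_a(b)\cdot b^{-1}$, one unwinds $[a,u]^{\circ} = a \circ u \circ \bar a \circ \bar u$ into an expression involving only the additive operation, the maps $\lambda$, and hence terms of the form $a * u$, $u * a$ (and their $\lambda$-translates) together with additive commutators $[a,u]^{\cdot}$. Concretely, I expect an identity roughly of the shape $[a,u]^{\circ} = (a * u) \cdot [a, u]^{\cdot} \cdot (\text{correction terms})$ where every correction term is either of the form $x*y$ with one of $x,y$ lying in $\Gamma_{n-1}(A)$, or an additive commutator $[x,y]^{\cdot}$ with one argument in $\Gamma_{n-1}(A)$, or a $\lambda_z$-image of such a term. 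Since $\Gamma_{n-1}(A)$ is an ideal, it is $\lambda$-invariant and normal in $(A,\cdot)$, so all such terms and their conjugates/$\lambda$-images remain inside $\Gamma_n(A)$; thus the whole product lies in $\Gamma_n(A)$.

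The main obstacle will be the bookkeeping in this unwinding: the $\circ$-commutator, once rewritten additively, produces several nested terms, and one must carefully check that each factor either directly matches one of the three generators of $\Gamma_n(A)$ or is a $\lambda$-translate or $(A,\cdot)$-conjugate of one — using that $\Gamma_n(A)$ is an ideal (hence $\lambda$-invariant and normal in $(A,\cdot)$) to absorb those translates. It is useful here to recall the elementary identities $\lambda_a(b) = a\cdot(a*b)\cdot b$ and $\lambda_a(bc) = \lambda_a(b)\lambda_a(c)$, and that for a left ideal $I$ one has $a * x, x*a \in$ the relevant ideal when $x \in I$. I would also lean on the standard fact (from \cite{BJ23}) that $\Gamma_n(A) \le \Gamma_{n-1}(A)$ and that $A * \Gamma_{n-1}(A)$, $\Gamma_{n-1}(A)*A \subseteq \Gamma_n(A)$ by definition, so that any term with a $\Gamma_{n-1}(A)$-entry in a $*$-slot is automatically in $\Gamma_n(A)$. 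Once the identity expressing $[a,u]^{\circ}$ is in hand, the conclusion is immediate and the induction closes.
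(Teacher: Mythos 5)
Your plan follows the paper's proof essentially verbatim: induct on $n$, unwind $[a,u]^{\circ}$ for $u\in\gamma_{n-1}(A,\circ)\le\Gamma_{n-1}(A)$ via the $\lambda$-maps into a product whose factors visibly lie in $[A,\Gamma_{n-1}(A)]^{\cdot}$, $A*\Gamma_{n-1}(A)$ and $\Gamma_{n-1}(A)*A$, and then absorb the remaining $\lambda$-images and $(A,\cdot)$-conjugates using that $\Gamma_n(A)$ is an ideal (the paper's explicit identity is $[v,a]^{\circ}=\lambda_v\bigl(\bar{v}^{-1}\bar{a}^{-1}\bar{v}\bar{a}\,(c^{-1}\lambda_a(c))\,\lambda_a(\bar{a}^{-1}\lambda_{\bar{v}}(\bar{a}))\bigr)$ with $c=\bar{a}^{-1}\bar{v}\bar{a}$, exactly the shape you anticipate). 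One small slip to fix: the identity you quote should be $a\circ b=a\cdot(a*b)\cdot b$, i.e.\ $\lambda_a(b)=(a*b)\cdot b$ without the leading factor $a$; this does not affect the argument.
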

\begin{proof}
We first prove the result for $n=2$ and then use induction. It suffices to work on the generators.  We have 
\begin{eqnarray*}
x \circ y \circ \bar{x} \circ \bar{y} & = & x \circ y \circ ( \bar{x}  \lambda_{\bar{x}}( \bar{y}))\\
&=& x \circ \big(y \lambda_y( \bar{x}  \lambda_{\bar{x}}( \bar{y}))\big)\\
&=& x \lambda_x (y \lambda_y( \bar{x}  \lambda_{\bar{x}}( \bar{y})))\\
&=& \lambda_x (\bar{x}^{-1}) \lambda_x \big(\lambda_y(\bar{y}^{-1}) \lambda_y( \bar{x}  \lambda_{\bar{x}}( \bar{y}))\big)\\
&=& \lambda_x\big(\bar{x}^{-1} \lambda_y(\bar{y}^{-1}  \bar{x}  \lambda_{\bar{x}}( \bar{y}))\big)\\
&=& \lambda_x\big(\bar{x}^{-1} \lambda_y([\bar{y}^{-1}, \bar{x}]^{\cdot}) \lambda_y( \bar{x}) \lambda_y (\bar{y}^{-1} \lambda_{\bar{x}}( \bar{y}))\big)\\
&=& \lambda_x\big(\bar{x}^{-1} (\lambda_y([\bar{y}^{-1}, \bar{x}]^{\cdot}) \lambda_y( \bar{x})\bar{x}^{-1}) \bar{x} \lambda_y (\bar{y}^{-1} \lambda_{\bar{x}}( \bar{y}))\big),
\end{eqnarray*}
which lies in $\Gamma_2(A)$.   By induction assume that  $\gamma_{n-1}(A, \circ) \subseteq \Gamma_{n-1}(A)$. Let $u = [v, a]^{\circ} \in \gamma_{n}(A, \circ)$, where $v \in \gamma_{n-1}(A, \circ) \le \Gamma_{n-1}(A)$ and $a \in A$. Now, the above computation, in a little altered form, gives
\begin{equation*}
u = [v, a]^{\circ} = \lambda_v\big(\bar{v}^{-1}\bar{a}^{-1}  \bar{v}  \bar{a} (c^{-1} \lambda_a(c)) \lambda_a (\bar{a}^{-1} \lambda_{\bar{v}}( \bar{a}))\big),
\end{equation*}
where $c:=\bar{a}^{-1}  \bar{v}  \bar{a}$.  Notice that  $\bar{v}^{-1}\bar{a}^{-1}  \bar{v}  \bar{a} = ([\bar{a}^{-1}, \bar{v}^{-1}]^{\cdot})^{-1}  \in [A, \Gamma_{n-1}(A)]^{\cdot}$,  $c^{-1} \lambda_a(c) = (\lambda_a(c^{-1})c) ^{-1} \in A*\Gamma_{n-1}(A)$ and  $\bar{a}^{-1} \lambda_{\bar{v}}( \bar{a}) = ( \lambda_{\bar{v}}( \bar{a}^{-1}) \bar{a})^{-1} \in \Gamma_{n-1}(A)*A$ as $c, v \in \Gamma_{n-1}(A)$.  Hence $\gamma_{n}(A, \circ) \subseteq \Gamma_{n}(A)$, and the proof is complete.
\end{proof}

\begin{rmk}\label{rmk1}
In the second part of the preceding proof we indeed proved that $\gamma_n(A, \circ) \subseteq [\Gamma_{n-1}(A), A]^{\circ} \subseteq \Gamma_{n}(A)$.
\end{rmk}

We now introduce two subclasses of skew braces, called symmetric and $\lambda$-homomorphic skew braces. As mentioned in the introduction, a skew brace $(A, \cdot, \circ)$ is said to be symmetric if $(A, \circ, \cdot)$ is also a skew brace. It turns out that a skew brace $(A, \cdot, \circ)$ is symmetric if and only if  $\lambda_{ab} = \lambda_{b \circ a}$ for all $a, b \in A$ (\cite[Proposition 5.2]{BNY22}). Citing the same reference  \cite{BNY22}, we say that a skew braces $A$ is $\lambda$-homomorphic if $\lambda_{a \cdot b} = \lambda_{a \circ b}$ for all $a, b \in A$. It is not difficult to see that  $(A, \cdot, \circ)$ is symmetric if and only if the image of $(A, \circ)$ under the group homomorphism $\lambda$ (defined above)  is an abelian subgroup of $\Aut(A, \cdot)$. A GAP \cite{GAP} computation, using the package  Yang-Baxter \cite{VK22}, shows that majority of $\lambda$-homomorphic skew braces are symmetric. Out of total 400 skew braces of oder 36, 124 are $\lambda$-homomorphic and 156 are symmetric. But there are only 115 skew braces of order 36 which are both symmetric and $\lambda$-homomorphic. Similarly, there are 305 $\lambda$-homomorphic skew braces of order 72 which are not symmetric. Total number of  $\lambda$-homomorphic skew braces of order 72 is 4124. Therefore, although seems small in number, there are $\lambda$-homomorphic skew braces which are not symmetric. Further relation between these two nice classes of skew braces is a matter of further investigation.

We conclude this section by defining the concept of isoclinism of skew  braces introduced in \cite{LV23}. For a skew left brace $B$, it is easy to check that the following maps are well defined:
\begin{align*}
\phi^B_{\cdot}&:(B/{\Ann B})^2\to \Gamma_2(B), \hspace{1.5cm}(\tilde{a},\tilde{b})\mapsto [a,b]^{\cdot},\\
\phi^B_\ast&:(B/{\Ann B})^2\to \Gamma_2(B),\hspace{1.5cm}(\tilde{a},\tilde{b})\mapsto a\ast b,
\end{align*}
where $\tilde{a}, \tilde{b} \in B/\Ann(B)$. 
We say that two skew  braces $A$ and $B$ are {\em isoclinic} if there exist  brace isomorphisms $\xi : A/{\Ann A} \to B/{\Ann B}$ and $\theta : \Gamma_2(A) \to \Gamma_2(B)$ such that the diagram
\begin{equation}\label{dia:isoclinism}
\begin{tikzcd}
\Gamma_2(A) \arrow[d,"\theta"] &(A/{\Ann A})^2\arrow[l,"\phi_{\cdot}^A"']\arrow[r,"\phi_\ast^A"]\arrow[d,"\xi \times \xi"] &\Gamma_2(A) \arrow[d,"\theta"]\\
\Gamma_2(B) &(B/{\Ann B})^2\arrow[l,"\phi_{\cdot}^B"]\arrow[r,"\phi_\ast^B"'] & \Gamma_2(B)
\end{tikzcd}
\end{equation}
commutes. The pair $(\xi,\theta)$ is called a  {\em skew  brace  isoclinism}. This concept will be generalized to the concept of $n$-isoclinism below, $n \ge 1$.

%%%%%%%%%%%%%%%%%%%%%%%%%%%%%%%%%%%%%%%%%%%%%%%%%%%%%%%

\section{Isoclinism of annihilator extensions of skew braces}

Let $K:= (K, \cdot, \circ)$ be a skew  brace and $A$ an abelian group.  The concept of action of $K$ on $A$ (viewed as a trivial brace) and second brace cohomology group of $K$ with coefficients in $A$ was studied in \cite{NY24}. We here consider the trivial action of $K$ on $A$.
A {\em factor set} or {\em$2$-cocycle} of  $K$ with coefficients in $A$  is a pair  of maps $(\alpha,\mu)$,  $\alpha,\mu : K \times K \to A$, satisfying the following conditions for all $x, y, z \in K$:
\begin{align}
&\alpha(1,y)=\alpha(x,1)=1,\label{eq:bCocycle1}\\
&\mu(1,y) =\mu(x,1)=1,\label{eq:bCocycle2}\\
&\alpha(y,z) \alpha(x \cdot y, z)^{-1}\alpha(x,y \cdot z)\alpha(x,y)^{-1}=1,\label{eq:bCocycle3}\\
&\mu(y,z)\mu(x \circ y,z)^{-1}\mu(x,y \circ z)\mu(x,y)^{-1}=1, \label{eq:bCocycle4}\\
&\alpha(y,z) \alpha(x, x^{-1})  \alpha(x \circ y, x^{-1})^{-1} \alpha((x \circ y) \cdot x^{-1}, x \circ z)^{-1} = \mu(x,z)\mu(x,y \cdot z)^{-1} \mu(x,y).\label{eq:compati}
\end{align}

As observed in \cite{LV24}, using \eqref{eq:bCocycle3} for  the elements $x \circ y, x^{-1}, x \circ z$  and then for the elements $x , x^{-1}, x \circ z$, \eqref{eq:compati} can equivalently be written more neatly as 
\begin{align}
\alpha(y,z)\alpha(x\circ y,\lambda_x(z))^{-1} \alpha(x,\lambda_x(z)) = \mu(x,y)\mu(x,y \cdot z)^{-1}\mu(x,z).\label{eq:equivCompati}
\end{align}

A $2$-cocycle $(\alpha, \mu)$ of  $K$ with coefficients in $A$ is said to be a {\em $2$-coboundary} if there exists  a map $h:K \to A,$ such that $h(1) = 1$ and the following hold for all $x, y \in K$:
\begin{align*}
\alpha(x, y) &=  h(x)h(y)h(x \cdot y)^{-1},\\
\mu(x, y) &=h(x)h(y)h(x\circ y)^{-1}.
\end{align*}
Let us denote the set of all $2$-cocycles by $\Z^2_b(K, A)$ and the set of all $2$-coboundaries by  $\B^2_b(K, A)$. Notice that $\Z^2_b(K, A)$ is an abelian group, under component-wise operation, and $\B^2_b(K, A)$ is its subgroup. The quotient group 
$\Ha^2_b(K, A) := \Z_b^2(K, A)/ \B^2_b(K, A)$ is called the {\em second cohomology group} of the skew left brace $K$ with coefficients in  the abelian group  $A$ having trivial action.

Let $K$ and $A$ be  skew  braces. A skew  brace $G$ is said to be a {\em brace extension} of $K$ by $A$ if the following sequence of skew  braces is exact
\begin{equation}\label{eq:annExt}
\begin{tikzcd}
\mathcal{E}_G : 1 \arrow[r] &A \arrow[r,"i"] &G \arrow[r,"\pi"] &K \arrow[r] & 1.
\end{tikzcd}
\end{equation}
For simplicity, we shall denote such an extension by $\mathcal{E}_G$ itself. An extension $\mathcal{E}_G$ of $K$ by $A$ is said to be an \textit{annihilator extension} if $i(A) \le \Ann(G)$. Let $\CExt(K, A)$ denote the set of all annihilator extensions of $K$ by $A$. Then it follows form  \cite[Corollary 3.7]{NY24} that $\Ha^2_b(K, A)  \leftrightarrow \CExt(K, A)$. We elaborate on this a little more here. For a given cohomology class $[(\alpha, \mu)] \in \Ha^2_b(K, A)$, we can define a skew  brace $G := (A \times_{(\alpha, \mu)} K, \cdot, \circ)$, where
\begin{eqnarray*}
(a_1, k_1) \cdot (a_2, k_2) &=& (a_1a_2 \alpha(k_1, k_2), k_1 \cdot k_2);\\
(a_1, k_1) \circ (a_2, k_2) &=& (a_1a_2 \mu(k_1, k_2), k_1 \circ k_2)
\end{eqnarray*}
for all $(a_i, k_i) \in A \times K$, $i = 1, 2$.  It turns out that $G$ is an annihilator extension of $K$ by $A$ with $i(a) = (a, 1)$ and $\pi(a, k) = k$. On the other hand, if we start with an annihilator extension $\mathcal{E}_G$, we can define a $2$-cocycle as follows. Let $t : K \to A$ be transversal of $\pi$ such that $t(1) = 1$. Then define  $\alpha, \mu : K \times K \to A$ by setting
\begin{align*}
\alpha(k_1, k_2) &= t(k_1)\cdot  t(k_2)\cdot t(k_1 \cdot k_2)^{-1};\\
\mu(k_1, k_2) &= t(k_1) \circ t(k_2) \circ \overline{t(k_1 \cdot k_2)}
\end{align*}
for all $k_1, k_2 \in K$. For proofs and other details, please refer to \cite[Section 3]{NY24}.

Let $K$ be a skew brace and $A$  an abelian group viewed as a trivial brace. Let $\Ha_b^2(K,A)$ be the second cohomology group of $K$ with coefficients in $A$ under trivial action. Let $G := A \times_{(\alpha, \mu)} K$ be the brace extension of $K$ by $A$ determined by the cohomology class  $[(\alpha, \mu)] \in \Ha_b^2(K,A)$. So we get the extension 
\[1 \to A \to G \to K \to 1\]
such that  $A$ embeds in $\Ann(G)$. For simplicity of notation, we view $A$ as an ideal of $G$.  Let $C$ be an abelian group, again viewed as a trivial brace.  For any homomorphism $\lambda : A \to C$, we can define maps $\alpha', \mu'  : K \times K \to C$ given by
\begin{eqnarray*}
\alpha'(k_1, k_2) &:=&  \lambda(\alpha(k_1, k_2));\\
\mu'(k_1, k_2) &:=& \lambda(\mu(k_1, k_2))
\end{eqnarray*}
for all $k_1, k_2 \in K$. A routine check reveals that $(\alpha', \mu')$ is a $2$-cocycle of $K$ with coefficients in $C$, viewing $C$ as a trivial $K$-module, which, in turn, defines a group homomorphism $\varepsilon_{G} : \Hom(A, C) \to \Ha_b^2(K,C)$ given by
\[\varepsilon_G(\lambda) := [(\alpha', \mu')]\]
for all $\lambda \in \Hom(A, C)$. For a given annihilator extension 
\[1 \to A \to G \to K \to 1\]
of skew left braces, the homomorphism $\varepsilon_G$, defined using the corresponding cohomology class,  is termed as the {\em transgression} homomorphism in the literature (see \cite[pages 621-622]{LV24}).  We consider a specific situation when $K$ is finite and $C = \mathbb{C}^{\times}$, the multiplicative group of all non-zero complex numbers. More precisely, we deal with the homomorphism  $\varepsilon_{G} : \hat{A} \to \Ha_b^2(K,\mathbb{C}^{\times})$, where for any abelian group $D$, $\hat{D} := \Hom(D, \mathbb{C}^{\times})$.

\begin{lemma}\label{lem:BraceEpsilonKernel}
Let $A$ be an ideal of a skew  brace $G$ such that $A \le \Ann(G)$. Then $\Ker(\varepsilon_G) \cong (A \cap \Gamma_2(G))^{\perp}$ and $\varepsilon_G(\hat{A})  \cong \widehat{A \cap \Gamma_2(G)}$.
\end{lemma}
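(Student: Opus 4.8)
The plan is to identify $\Ker(\varepsilon_G)$ explicitly with the annihilator $(A\cap\Gamma_2(G))^{\perp}$ of $A\cap\Gamma_2(G)$ inside $\hat{A}$, and then to deduce the second isomorphism by a Pontryagin-style duality argument. Write $K:=G/A$; choosing a transversal $t:K\to G$ with $t(1)=1$ we may, by the discussion preceding the lemma, identify $G$ with $A\times_{(\alpha,\mu)}K$ for the resulting $2$-cocycle $(\alpha,\mu)$, with the ideal $A$ corresponding to $\{(a,1):a\in A\}$; here $\varepsilon_G$ is computed from this $(\alpha,\mu)$, and it is independent of the choice of transversal since cohomologous $(\alpha,\mu)$ are sent by $\lambda$ to cohomologous $(\lambda\alpha,\lambda\mu)$. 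By definition $\varepsilon_G(\lambda)=[(\lambda\alpha,\lambda\mu)]$, so $\lambda\in\Ker(\varepsilon_G)$ if and only if $(\lambda\alpha,\lambda\mu)\in\B^2_b(K,\mathbb{C}^{\times})$, i.e. there is $h:K\to\mathbb{C}^{\times}$ with $h(1)=1$, $\lambda\alpha(x,y)=h(x)h(y)h(x\cdot y)^{-1}$ and $\lambda\mu(x,y)=h(x)h(y)h(x\circ y)^{-1}$ for all $x,y\in K$. The first step is to check, using the explicit product formulas on $A\times_{(\alpha,\mu)}K$, that $\psi:G\to\mathbb{C}^{\times}$, $\psi(a,k):=\lambda(a)h(k)$, is a skew brace homomorphism into the trivial brace $\mathbb{C}^{\times}$ precisely when $h$ satisfies these two identities (the additive identity makes $\psi$ respect $\cdot$, the multiplicative one makes $\psi$ respect $\circ$), and that conversely every brace homomorphism $G\to\mathbb{C}^{\times}$ restricting to $\lambda$ on $A$ has this shape. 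Thus $\lambda\in\Ker(\varepsilon_G)$ if and only if $\lambda$ extends to a brace homomorphism $G\to\mathbb{C}^{\times}$.

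Next I would identify the extendable characters. Since $\mathbb{C}^{\times}$ is a trivial brace, $\Gamma_2(\mathbb{C}^{\times})=1$, so every brace homomorphism $G\to\mathbb{C}^{\times}$ kills $\Gamma_2(G)$ and factors through $G/\Gamma_2(G)$. Moreover $G/\Gamma_2(G)$ is an abelian trivial brace: its additive group is abelian because $[G,G]^{\cdot}\le\Gamma_2(G)$, and its $\lambda$-map is trivial because $G*G\le\Gamma_2(G)$, so the two operations coincide on the quotient. Hence brace homomorphisms $G/\Gamma_2(G)\to\mathbb{C}^{\times}$ are exactly group homomorphisms of the underlying abelian groups, and $\lambda\in\hat{A}$ extends to $G\to\mathbb{C}^{\times}$ if and only if the character it induces on $A\Gamma_2(G)/\Gamma_2(G)\cong A/(A\cap\Gamma_2(G))$ extends to $G/\Gamma_2(G)$. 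As $\mathbb{C}^{\times}$ is divisible, hence an injective abelian group, such an extension always exists once the character is well defined on $A/(A\cap\Gamma_2(G))$, that is, once $\lambda$ vanishes on $A\cap\Gamma_2(G)$. Combined with the first step this shows $\Ker(\varepsilon_G)=(A\cap\Gamma_2(G))^{\perp}$; in particular equality, not merely isomorphism, holds.

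Finally, the restriction homomorphism $\hat{A}\to\widehat{A\cap\Gamma_2(G)}$ is surjective (again by injectivity of $\mathbb{C}^{\times}$) with kernel $(A\cap\Gamma_2(G))^{\perp}$ by definition of $(\cdot)^{\perp}$; combining this with the surjection $\hat{A}\onto\varepsilon_G(\hat{A})$, whose kernel is $\Ker(\varepsilon_G)=(A\cap\Gamma_2(G))^{\perp}$, the first isomorphism theorem gives
\[
\varepsilon_G(\hat{A})\;\cong\;\hat{A}/\Ker(\varepsilon_G)\;=\;\hat{A}/(A\cap\Gamma_2(G))^{\perp}\;\cong\;\widehat{A\cap\Gamma_2(G)}.
\]
The only genuine computation is the bookkeeping in the first step, where one must see that a single $h$ simultaneously trivializing the additive cocycle $\alpha$ and the multiplicative cocycle $\mu$ is exactly the datum of a character of $A$ extending to $G$ compatibly with both operations; the structure of $G/\Gamma_2(G)$ and the two appeals to injectivity of $\mathbb{C}^{\times}$ are routine. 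I foresee no serious obstacle, but one should be careful to arrange the identification $G\cong A\times_{(\alpha,\mu)}K$ so that the ideal $A$ becomes the first coordinate, so that ``character of $A$'' matches the first factor of $\psi$.
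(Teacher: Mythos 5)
Your proposal is correct and follows essentially the same route as the paper's proof: both directions hinge on the observation that $\lambda\in\Ker(\varepsilon_G)$ exactly when $\lambda$ extends to a brace homomorphism $G\to\mathbb{C}^{\times}$ of the form $(a,k)\mapsto\lambda(a)h(k)$, that any such homomorphism must kill $\Gamma_2(G)$, and that injectivity of $\mathbb{C}^{\times}$ supplies the extension through $G/\Gamma_2(G)$ in the converse direction and the surjectivity of the restriction $\hat{A}\to\widehat{A\cap\Gamma_2(G)}$ for the second isomorphism. The only cosmetic difference is that you package the two implications into a single extendability criterion, whereas the paper writes out the homomorphisms $\tau$ and $\delta$ separately.
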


\begin{proof} 
Let $[(\alpha, \mu)] \in \Ha_b^2(K,A)$ be the cohomology class corresponding to the extension
\[1 \to A \to G \to K \to 1.\]
So $G \cong A \times_{(\alpha, \mu)} K$.
Let $\lambda \in \hat{A}$ and $[(\alpha', \mu')]  \in \Ha^2_b(K, \mathbb{C}^{\times})$ be the cohomology class induced by $\lambda$  as explained above.

Let $\lambda\in \Ker(\varepsilon_G)$, that is,  $(\alpha', \mu') \in \B_b^2(K, \mathbb{C}^{\times})$. Then there is a map $h: K \to \mathbb{C}^\times$ with $h(1)=1$ and 
\begin{align*}
\alpha'(k_1, k_2) = h(k_1) h(k_2)  h(k_1 \cdot k_2)^{-1}\\
\mu'(k_1, k_2) = h(k_1) h(k_2)  h(k_1 \circ k_2)^{-1}
\end{align*}
for all $k_1, k_2 \in K$. We are now going to extend $\lambda$ to a brace homomorphism $\tau : G \to \mathbb{C}^\times$ as follows. For any $(a, k) \in G$, define $\tau$ by
\begin{align*}
\tau(a, k)=\lambda(a)h(k).
\end{align*}
Clearly $\tau$ is well-defined.  Now the equations
\begin{align*}
\tau((a_1, k_1) \cdot (a_2, k_2))&=\tau(a_1 a_2 \alpha(k_1, k_2), k_1 \cdot k_2)\\
&= \lambda(a_1 a_2 \alpha(k_1, k_2)) h(k_1 \cdot k_2)\\
\tau(a_1, k_1)\tau(a_2, k_2)&=\lambda(a_1)h(k_1)\lambda(a_2)h(k_2)\\
&=\lambda(a_1 a_2)\alpha'(k_1, k_2) h(k_1 \cdot k_2)\\
&=\lambda(a_1 a_2 \alpha(k_1, k_2)) h(k_1 \cdot k_2),
\end{align*}
imply that $\tau((a_1, k_1) \cdot (a_2, k_2)) = \tau(a_1, k_1)\tau(a_2, k_2)$.
Since $A \le \Ann(G)$, the same conclusion holds under the operation `$\circ$'.
Hence  $\tau$ is a brace homomorphism.  Using the fact that $\mathbb{C}^\times$ is a trivial brace, it follows that $G/\Ker(\tau)$ is a trivial brace, which gives  $\Gamma_2(G) \le \Ker( \tau)$ as $\Gamma_2(G)$ is the smallest ideal such that $G/\Gamma_2(G)$ is trivial. Since $\tau|_{A}  = \lambda$, it follows that $\lambda(A \cap \Gamma_2(G)) = 1$.
Thus $\lambda \in (A\cap \Gamma_2(G))^\perp$.

Conversely let $\lambda \in (A\cap \Gamma_2(G)^\perp$.  Notice that $A\Gamma_2(G)$, as a subgroup of  $(G, \cdot)$, is an ideal of $G$ and $A/(A \cap \Gamma_2(G)) \cong A\Gamma_2(G)/\Gamma_2(G)$. Since  $A \cap \Gamma_2(G) \le \Ker(\lambda)$, $\lambda$ factors through a homomorphism $f : A/(A \cap \Gamma_2(G)) \to \mathbb{C}^\times$, which, by the injectivity of $\mathbb{C}^\times$,   extends to a homomorphism $\bar{f} : G/\Gamma_2(G) \to \mathbb{C}^\times$. Let us define
$$\delta : G \to G/\Gamma_2(G) \xrightarrow {\bar{f}} \mathbb{C}^\times,$$
the composition of the natural projection $G \to G/\Gamma_2(G)$ and $\bar{f}$. It is easy to see that $\delta$ is a brace homomorphism. 

Let us now define $h : K \to \mathbb{C}^\times$ by setting, for all $k \in K$,
$$h(k) = \delta(1, k).$$
For $k_1, k_2 \in K$, we have
\begin{eqnarray*}
h(k_1)  h( k_2) &=& \delta(1, k_1) \delta(1, k_2)\\
&=& \delta((1, k_1) \cdot (1, k_2))\\
&=& \delta(\alpha(k_1, k_2), k_1 \cdot k_2)\\
&=& \delta(\alpha(k_1, k_2), 1) \delta(1, k_1 \cdot k_2)\\
&=& \lambda(\alpha(k_1, k_2)) h(k_1 \cdot k_2),
\end{eqnarray*}
which implies that 
$$\alpha'(k_1, k_2) = \lambda(\alpha(k_1, k_2)) =  h(k_1)  h( k_2) h(k_1 \cdot k_2)^{-1}.$$
Similarly one obtains
$$\mu'(k_1, k_2) =  \lambda(\mu(k_1, k_2)) = h(k_1)  h( k_2) h(k_1 \circ k_2)^{-1}$$
for all $k_1, k_2 \in K$. Hence, $(\alpha', \mu') \in \B_b^2(K, \mathbb{C}^{\times})$, and therefore it follows that $\Ker(\varepsilon_G) = (A \cap \Gamma_2(G))^{\perp}$.

Again using the fact  $\mathbb{C}^\times$ is injective, we deduce that each homomorphism $A \cap \Gamma_2(G) \to \mathbb{C}^{\times}$ extends to a homomorphism $A  \to \mathbb{C}^{\times}$, proving that the map $\lambda \to \lambda|_{A \cap \Gamma_2(G)}$ from 
$\hat{A}$ to $\widehat{A \cap \Gamma_2(G)}$ is a surjective group homomorphism. Hence we get
$$\varepsilon_G(\hat{A}) \cong \hat{A}/ (A\cap \Gamma_2(G))^\perp \cong \widehat{A\cap \Gamma_2(G)},$$ 
which completes the proof.
\end{proof}

We now prove Theorem A, which generalizes the result of \cite{LV24} on Schur covers: two Schur covers  of a finite skew brace are isoclinic. 

\begin{thm}\label{thm:gener:Schur:cov}
Let $K$ be a finite skew brace and $G = A \times _{(\alpha, \mu)} K$ and $H = B \times _{(\beta, \nu)} K$  two brace extensions of $K$ by $A$ and $K$ by $B$ corresponding to the cohomology classes $[(\alpha, \mu)] \in \Ha^2_b(K, A)$ and  $[(\beta, \nu)] \in \Ha^2_b(K, B)$, respectively.  Let $\xi :  G/A  \to H/B$ be a brace isomorphism with $\xi \big(A (1,k)\big)=\big(B (1, k)\big)$ for all $k \in K$. Then the following are equivalent:
\begin{enumerate}
\item There exists an isomorphism $\theta: \Gamma_2(G) \to \Gamma_2(H)$ such that the diagram
\begin{equation}\label{dia:AB}
\begin{tikzcd}
\Gamma_2(G) \arrow[d,"\theta"] &(G/{A})^2\arrow[l,"\phi_{\cdot}^G"']\arrow[r,"\phi_\ast^G"]\arrow[d,"\xi \times \xi"] &\Gamma_2(G)\arrow[d,"\theta"]\\
\Gamma_2(H) &(H/{B})^2\arrow[l,"\phi_{\cdot}^H"]\arrow[r,"\phi_\ast^H"'] & \Gamma_2(G)
\end{tikzcd}
\end{equation}  commutes.
\item $\varepsilon_G(\hat{A})=\varepsilon_G(\hat{B})$.
\end{enumerate}
\end{thm}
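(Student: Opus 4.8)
The plan is to establish the two implications separately, leveraging Lemma \ref{lem:BraceEpsilonKernel} as the bridge between the cohomological condition $\varepsilon_G(\hat A)=\varepsilon_G(\hat B)$ and the internal data $A\cap\Gamma_2(G)$, $B\cap\Gamma_2(H)$. The underlying heuristic is that isoclinism should ``see'' only the part of the kernel $A$ that lies inside $\Gamma_2(G)$, so both conditions (1) and (2) ought to be equivalent to the single statement that $A\cap\Gamma_2(G)$ and $B\cap\Gamma_2(H)$ have isomorphic character groups, equivalently (since everything is finite abelian) $A\cap\Gamma_2(G)\cong B\cap\Gamma_2(H)$, in a way compatible with $\xi$. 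Throughout I will identify $A$ with the ideal $i(A)\le\Ann(G)$ and similarly for $B$, and I will use freely that $\Ann(G)\supseteq A$ forces $A$ to be central under both operations, so that commutators and $\ast$-products of elements of $G$ are insensitive to the $A$-coordinate.

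For (1)$\Rightarrow$(2): suppose $\theta:\Gamma_2(G)\to\Gamma_2(H)$ is an isomorphism making \eqref{dia:AB} commute. The key observation is that $\Gamma_2(G)$ is generated by the images of $\phi^G_{\cdot}$ and $\phi^G_{\ast}$ (this is essentially the definition of $\Gamma_2$ together with the fact that modulo $A\le\Ann(G)$ the $\lambda$-action and conjugation data are pinned down by $\xi$), and similarly for $H$; commutativity of the diagram then shows $\theta$ carries the ``$G/A$-generated'' part of $\Gamma_2(G)$ onto that of $\Gamma_2(H)$ in a way governed entirely by $\xi$. Since $\xi$ identifies $A(1,k)$ with $B(1,k)$, it intertwines the two brace structures on $G/A\cong K\cong H/B$, so the commutator and $\ast$-values of lifts agree up to the central kernels; chasing this through shows $\theta(A\cap\Gamma_2(G))=B\cap\Gamma_2(H)$ as subgroups, hence these are isomorphic finite abelian groups and so have isomorphic duals. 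By Lemma \ref{lem:BraceEpsilonKernel}, $\varepsilon_G(\hat A)\cong\widehat{A\cap\Gamma_2(G)}\cong\widehat{B\cap\Gamma_2(H)}\cong\varepsilon_H(\hat B)$ as subgroups of $\Ha^2_b(K,\mathbb{C}^\times)$; one still needs that the two subgroups actually coincide inside $\Ha^2_b(K,\mathbb C^\times)$, not merely are abstractly isomorphic, and this is where the compatibility of $\xi$ with the standard identifications $G/A\cong K\cong H/B$ does the real work, since it matches the transgression maps on the nose.

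For (2)$\Rightarrow$(1): assume $\varepsilon_G(\hat A)=\varepsilon_H(\hat B)$. By Lemma \ref{lem:BraceEpsilonKernel} this gives $\widehat{A\cap\Gamma_2(G)}\cong\widehat{B\cap\Gamma_2(H)}$, hence $A\cap\Gamma_2(G)\cong B\cap\Gamma_2(H)$. Now I build $\theta$ directly: define it on generators by $\theta\big(\phi^G_{\cdot}(\bar a,\bar b)\big)=\phi^H_{\cdot}(\xi\bar a,\xi\bar b)$ and $\theta\big(\phi^G_{\ast}(\bar a,\bar b)\big)=\phi^H_{\ast}(\xi\bar a,\xi\bar b)$, and extend multiplicatively; the content is that this is well-defined. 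Well-definedness amounts to: any relation among commutators/$\ast$-products in $\Gamma_2(G)$ that holds in $G$ must hold, after transport by $\xi$, in $H$. Modulo the central subgroups $A\cap\Gamma_2(G)$ and $B\cap\Gamma_2(H)$, the two braces $\Gamma_2(G)/(A\cap\Gamma_2(G))$ and $\Gamma_2(H)/(B\cap\Gamma_2(H))$ are both canonically determined by $K$ via $\xi$ — they are $\Gamma_2(K)$ in disguise — so the map is well-defined modulo these central kernels; then one uses the isomorphism $A\cap\Gamma_2(G)\cong B\cap\Gamma_2(H)$ (chosen compatibly, which is possible since $\varepsilon_G(\hat A)=\varepsilon_H(\hat B)$ as actual subsets, not just isomorphic groups, and by the explicit cocycle formulas $\alpha,\mu$ versus $\beta,\nu$) to upgrade to honest well-definedness on the nose. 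Finally verify $\theta$ is a brace isomorphism $\Gamma_2(G)\to\Gamma_2(H)$ — injectivity and surjectivity follow from symmetry of the construction — and that \eqref{dia:AB} commutes, which is immediate from the defining formulas for $\theta$.

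The main obstacle I anticipate is the bookkeeping in the ``on the nose'' matching: Lemma \ref{lem:BraceEpsilonKernel} gives abstract isomorphisms of dual groups, but to produce (resp. to exploit) the isomorphism $\theta$ one needs the concrete identification $A\cap\Gamma_2(G)\cong B\cap\Gamma_2(H)$ to be compatible both with $\xi$ and with the explicit cocycle data. Concretely, one must check that the value of a commutator $[(a_1,k_1),(a_2,k_2)]^{\cdot}$ or of $(a_1,k_1)\ast(a_2,k_2)$ in $G=A\times_{(\alpha,\mu)}K$ lies in the ``$\Gamma_2$ part'' and compute its $A$-component in terms of $\alpha,\mu$; then matching with the corresponding computation in $H$ in terms of $\beta,\nu$ is exactly the statement $\varepsilon_G(\hat A)=\varepsilon_H(\hat B)$ unwound through the duality. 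This is a routine but delicate cocycle computation, and getting the direction of the central kernels and the role of injectivity of $\mathbb C^\times$ right is the crux; once that is pinned down, both implications close quickly.
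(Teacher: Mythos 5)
Your outline correctly identifies Lemma \ref{lem:BraceEpsilonKernel} as the bridge and, to your credit, you also correctly locate the two danger points: in (1)$\Rightarrow$(2) an abstract isomorphism $\varepsilon_G(\hat A)\cong\varepsilon_H(\hat B)$ is not the assertion (2), which is an \emph{equality of subgroups} of $\Ha^2_b(K,\mathbb{C}^\times)$; and in (2)$\Rightarrow$(1) the whole content is the well-definedness of $\theta$ on the nose, not merely modulo the central kernels. But at both of these points your proposal says, in effect, ``this is where the real work happens'' and then does not do the work. That is a genuine gap: the one idea that actually closes both implications is missing. The paper's device is an auxiliary annihilator extension $L=(A\times B)\times_{(\chi,\zeta)}K$ with the difference cocycle $\chi(k_1,k_2)=(\alpha(k_1,k_2),\beta(k_1,k_2)^{-1})$, $\zeta(k_1,k_2)=(\mu(k_1,k_2),\nu(k_1,k_2)^{-1})$, together with the observation that commutator and $\ast$-words evaluated on elements $(1,k)$ produce matched triples $(a,k)\in\gamma_2(G)$, $(b,k)\in\gamma_2(H)$, $((a,b^{-1}),k)\in\gamma_2(L)$. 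Applying Lemma \ref{lem:BraceEpsilonKernel} \emph{to $L$} converts ``$(\alpha',\mu')(\beta',\nu')^{-1}$ is a coboundary'' into ``$\lambda_1\times\lambda_2$ annihilates $(A\times B)\cap\Gamma_2(L)$,'' and this is exactly what (i) upgrades the abstract dual-group isomorphism to the set equality $\varepsilon_G(\hat A)=\varepsilon_H(\hat B)$ in the forward direction, and (ii) in the converse direction identifies $(A\times B)\cap\Gamma_2(L)$ as the graph $\{((a,\eta(a)^{-1}),1)\}$, which is precisely the uniqueness statement that makes $\theta(a,k)=(b,k)$ well defined.

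Without $L$ (or an equivalent device), your (1)$\Rightarrow$(2) only yields $\varepsilon_G(\hat A)\cong\varepsilon_H(\hat B)$ as abstract groups, and your appeal to ``$\xi$ matching the transgression maps on the nose'' is not an argument --- $\xi$ fixes the identification $G/A\cong K\cong H/B$, but the images of the two transgressions in $\Ha^2_b(K,\mathbb{C}^\times)$ depend on the cocycles $(\alpha,\mu)$ and $(\beta,\nu)$, and relating them requires exhibiting, for each $\lambda_1\in\hat A$, a specific $\lambda_2\in\hat B$ (obtained by transporting $\lambda_1$ through $\theta|_{A\cap\Gamma_2(G)}$ and extending via injectivity of $\mathbb{C}^\times$) with $\varepsilon_G(\lambda_1)=\varepsilon_H(\lambda_2)$; verifying that last equality is where $L$ and Lemma \ref{lem:BraceEpsilonKernel} enter. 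Similarly, in (2)$\Rightarrow$(1) your claim that the isomorphism $A\cap\Gamma_2(G)\cong B\cap\Gamma_2(H)$ can be ``chosen compatibly'' so as to make $\theta$ well defined is exactly the statement that needs proof; it is obtained in the paper by computing $\Ker(\varepsilon_L)$ in two ways and dualizing. So the skeleton of your plan is sound and parallel to the paper's, but as written it defers the decisive step in both directions to an unproved compatibility assertion.
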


\begin{proof}
Let $L = \big((A \times B) \times_{(\chi, \zeta)} K\big)$ be the extension of $K$ by $A \times B$ corresponding to the cohomology class $[(\chi, \zeta)] \in \Ha^2_b(K, A \times B)$ given by
\begin{align*}
\chi(k_1,k_2) &:=\big(\alpha(k_1,k_2),\beta(k_1,k_2)^{-1}\big)\\
\zeta(k_1,k_2) &:=\big(\mu(k_1,k_2),\nu(k_1,k_2)^{-1}\big).
\end{align*}
To make a differentiation between elements of $G$, $H$ and $L$, we shall write $(a, k)_G$, $(b, k)_H$ and $((a, b), k)_L$ respectively, where $a, b \in A$ and $k \in K$. For simplicity of notation, the element $((1,1), k)_L$ will  be denoted by $(1, k)_L$. As is evident from the definition of $\chi$ and $\zeta$, computations in $L$ are determined by the corresponding computations in $G$ and $H$. For example, the product 
\begin{eqnarray*}
\big((a_1, b_1), k_1)_L \cdot (a_2, b_2), k_2)_L \big) &=& \big((a_1, b_1)(a_2, b_2)\chi(k_1, k_2), k_1 \cdot k_2\big)_L\\
&=&  \big((a_1a_2 \alpha(k_1, k_2), b_1b_2\beta(k_1, k_2)^{-1}), k_1 \cdot k_2\big)_L
\end{eqnarray*}
in $L$ is determined by the products $(a_1, k_1)_G \cdot (a_2, k_2)_G$ and $(b_1, k_1)_H \cdot (b_2, k_2)_H$ in $G$ and $H$ respectively. The same happens with respect to the operation `$\circ$' using $\zeta$.  

We start by noticing that the generating elements of $\Gamma_2(Y)$, $Y = G, H, L$, are determined by the elements $(1, k)_Y$, $k \in K$. We explain it for the elements of $\gamma_2(Y)$ and leave the verifications on the elements of $Y \ast Y$ for the reader. Let $(a, k) \in \gamma_2(G)$. Then there exist elements $(a_i, k_i)$, $1 \le i \le m$, of $G$ such that 
$$(a, k) = \Pi ~[(a_i, k_i), (a_j, k_j)]^{t_{ij}} ~\mbox{ (product in $G$)}.$$
Since $[(a_i, k_i), (a_j, k_j)] = [(1, k_i), (1, k_j)]$, the assertion holds. So it is clear that the element $a$ in  $(a, k) \in \Gamma_2(G)$ is 
a product of elements of the form $\alpha(\Pi k_i, \Pi k_j)$ in $A$. Say $a = \Pi \; \alpha (\Pi k_i, \Pi k_j)$, where $\Pi k_i$ denotes the product in $K$ as a skew brace, that is, involving operations `$\cdot$' and `$\circ$'.  Then, for 
$b:= \Pi \; \beta (\Pi k_i, \Pi k_j)$, it follows that $(b, k) \in \Gamma_2(H)$ and $\big((a, b^{-1}), k\big) \in \Gamma_2(L)$. We can do the same by starting with $(b, k) \in \Gamma_2(H)$, that is, we get $a \in A$ such that  $(a, k) \in \Gamma_2(G)$ and $\big((a, b^{-1}), k\big) \in \Gamma_2(L)$.  Also $\big((a, b^{-1}), k\big) \in \Gamma_2(L)$ implies that $(a, k) \in \Gamma_2(G)$ and $(b, k) \in \Gamma_2(H)$. Just to remark that arbitrary elements $(a, k) \in \Gamma_2(G)$ and $(b, k) \in \Gamma_2(H)$ would not, in general, give $\big((a, b^{-1}), k\big) \in \Gamma_2(L)$.

Now we assume that assertion (1) of the statement holds true.  By the very definition of $\xi$ and the commutativity of the diagram \eqref{dia:AB}, we have
\begin{align}
\theta([(1,k_1)_G,(1,k_2)_G]) &=[(1,k_1)_H,(1,k_2)_H]\\
\theta((1,k_1)_G\ast (1,k_2)_G) &=(1,k_1)_H \ast (1,k_2)_H.
\end{align}
This, using the constructions in the preceding paragraph, says that for any $(a, k) \in \Gamma_2(G)$, there exists a unique $b \in B$ such that  $(b,k) \in \Gamma_2(H)$ and
\begin{equation}
\theta(a,k) = (b,k). \label{eq:thetaValue}
\end{equation}
 So for $(a, 1) \in A \cap \Gamma_2(G)$, we get $\theta(a, 1) = (b, 1) \in B \cap \Gamma_2(H)$.
 
Let $\lambda_1 \in \hat{A}$.   Then we can define a group homomorphism ${\bar \lambda} _2 : \theta(A \cap \Gamma_2(G)) \to \mathbb{C}^{\times}$ by setting 
 $${\bar \lambda}_2(\theta(a)) = \lambda_1(a)$$
 for all $a \in A \cap \Gamma_2(G)$. Since  $\mathbb{C}^{\times}$ is injective,  ${\bar \lambda}_2$ extends to $\lambda_2 \in {\hat B}$.
  Set $\lambda := \lambda_1 \times \lambda_2 : A \times B \to \mathbb{C}^{\times}$ with  $\lambda(a,b)=\lambda_1(a)\lambda_2(b)$.  Then $\lambda \in \widehat{A \times B}$. Let $((a, b^{-1}), 1) \in (A \times B) \cap \Gamma_2(L)$. Then as observed above  $(a, 1) \in A \cap \Gamma_2(G)$ and $(b, 1) \in B \cap \Gamma_2(H)$ such that $\theta(a, 1) = (b, 1)$. Thus $\lambda(a, b^{-1})  = 1$ for all $((a, b^{-1}), 1) \in (A \times B) \cap \Gamma_2(L)$. Now using the setting of Lemma \ref{lem:BraceEpsilonKernel}, it follows that $\lambda \in \Ker(\varepsilon_L)$. Thus $\varepsilon_L(\lambda)  \in \B_b^2(K, \mathbb{C}^{\times})$. For $k_1, k_2 \in K$,  we have 
\begin{align*}
\lambda (\chi(k_1,k_2)) &=\lambda\big((\alpha(k_1,k_2)),(\beta(k_1,k_2)^{-1})\big)\\
&=\lambda_1(\alpha(k_1,k_2)) \lambda_2(\beta(k_1,k_2))^{-1}\\
&=  \alpha' \beta'^{-1} (k_1, k_2)
\end{align*}
and
\begin{align*}
\lambda(\zeta(k_1,k_2)) &= \lambda \big((\mu(k_1,k_2)),(\nu(k_1,k_2)^{-1})\big)\\
&=\lambda_1(\mu(k_1,k_2)) \lambda_2(\nu(k_1,k_2))^{-1}\\
&= \mu' \nu'^{-1}(k_1, k_2).
\end{align*}
This yields $(\alpha' \beta'^{-1}, \mu' \nu'^{-1}) =(\alpha', \mu') (\beta', \nu')^{-1} \in \B_b^2(K, \mathbb{C}^{\times})$, that is, $ \B_b^2(K, \mathbb{C}^{\times}) (\alpha', \mu') = \B_b^2(K, \mathbb{C}^{\times}) (\beta', \nu') $. We have obtained $\varepsilon_G(\hat{A})\subseteq\varepsilon_H(\hat{B})$. Similarly, considering $\xi^{-1}$, $\theta^{-1}$ and the commutativity of the diagram \eqref{dia:AB}, we get $\varepsilon_H(\hat{B}) \subseteq   \varepsilon_G(\hat{A})$, which proves  assertion (2).

Next we assume that assertion (2) holds, that is, $\varepsilon_G(\hat{A})=\varepsilon_G(\hat{B})$.   Since $K$ is finite, by Proposition \ref{gamma2:finite} it follows that  $\Gamma_2(Y)$ is finite for $Y = G, H, L$ (this assertion also follows from \cite[Theorem 5.4]{JKAV21}).  Now, using Lemma \ref{lem:BraceEpsilonKernel}, we get
$$A\cap \Gamma_2(G) \cong \widehat{A\cap \Gamma_2(G)} \cong \varepsilon_G(\hat{A})=\varepsilon_H(\hat{B})\cong\widehat{B\cap \Gamma_2(H)}\cong B\cap \Gamma_2(H).$$

Let $\bar{\lambda}_1 \in \widehat{A\cap \Gamma_2(G)}$. Then  $\bar{\lambda}_1$ extends to $\lambda_1 \in \hat{A}$. By Lemma \ref{lem:BraceEpsilonKernel} again, the mapping 
\begin{equation*}
\tau_1 : \widehat{A\cap \Gamma_2(G)} \to \varepsilon_G(\hat{A}),\hspace{1cm} \tau_1(\bar{\lambda}_1) = \varepsilon_G(\lambda_1) = \B_b^2(K, \mathbb{C}^{\times})  (\alpha', \mu')
\end{equation*}
is an isomorphism. Similarly we have an isomorphism 
\begin{equation*}
\tau_2 : \widehat{B \cap \Gamma_2(H)} \to \varepsilon_H(\hat{B}),\hspace{1cm} \tau_2(\bar{\lambda}_2)=\varepsilon_H(\lambda_2) =\B_b^2(K, \mathbb{C}^{\times})  (\beta', \nu'),
\end{equation*}
where $\lambda_2$ extends $\bar{\lambda}_2$ from $\widehat{B \cap \Gamma_2(H)}$ to $\hat{B}$.
So we  have an isomorphism 
\begin{equation*}
\psi:=\tau_2^{-1}\tau_1:\widehat{A\cap \Gamma_2(G)}\to \widehat{B\cap \Gamma_2(H)},\hspace{1cm} \psi(\bar{\lambda}_1)= \bar{\lambda}_2.
\end{equation*}
Taking into account the natural isomorphism $A\cap \Gamma_2(Y) \cong \widehat{A\cap \Gamma_2(Y)}$, $Y = G, H$,  we get an isomorphism $\eta:A\cap \Gamma_2(G) \to B \cap \Gamma_2(H)$ satisfying 
\begin{equation}
\bar{\lambda}_1(a)=\psi(\bar{\lambda}_1)(\eta(a))  \label{eq:eta}
\end{equation}
for all $\lambda \in \widehat{A\cap \Gamma_2(G)}$  and $a \in A \cap \Gamma_2(G)$. We now aim to extend $\eta$ to a brace isomorphism between $\Gamma_2(G)$ and $\Gamma_2(H)$.

Let $\lambda:=\lambda_1\times \lambda_2\in \widehat{A\times B}$. It follows from the definitions of  $\varepsilon_Y$, $Y = G, H, L$, that $\varepsilon_L(\lambda)$ is a coboundary if and only if  $\varepsilon_G(\lambda_1) =\varepsilon_H(\lambda_2)$, which is equivalent to $\psi(\bar{\lambda}_1) = \bar{\lambda}_2$, where $\bar{\lambda}_1 = \lambda_1 |_{A \cap \Gamma_2(G)}$ and  $\bar{\lambda}_2 = \lambda_2 |_{B \cap \Gamma_2(H)}$. Using \eqref{eq:eta}, the preceding condition is further equivalent to $\bar{\lambda}_1(a) = \bar{\lambda}_2(\eta(a))$ for all $a \in A \cap \Gamma_2(G)$.  Hence, we obtain 
\begin{equation*}
\Ker(\varepsilon_L) = \{\lambda\in\widehat{A \times B} \mid \lambda(a,\eta(a)^{-1})=1~\mbox{ for all } a\in A\cap \Gamma_2(G)\}.
\end{equation*}
By Lemma \ref{lem:BraceEpsilonKernel} we know that
$$\Ker(\varepsilon_L) =  \big((A \times B) \cap \Gamma_2(L)\big)^{\perp}.$$
Reading it in the present set-up, we get
\begin{equation}
(A\times B)\cap \Gamma_2(L)= \{\big((a,\eta(a)^{-1}),1\big) \mid a\in A\cap \Gamma_2(G)\}.\label{eq:ABL'}
\end{equation}

As we already know, for any given $(a, k) \in \Gamma_2(G)$, there exists $(b, k) \in \Gamma_2(H)$ such that $((a,b^{-1}),k) \in \Gamma_2(L)$. Using \eqref{eq:ABL'} we see that the element $(b, k) \in \Gamma_2(H)$ is unique. For, if $(b_1, k) \in \Gamma_2(H)$ is another such  element, that is,  $((a,b_1^{-1}), k) \in \Gamma_2(L)$, then $((a,b^{-1}), k) \cdot  ((a,b_1^{-1}), k)^{-1} \in \Gamma_2(L)$. So $((1, b^{-1}b_1), 1) \in  (A \times B) \cap \Gamma_2(L)$, which, by \eqref{eq:ABL'}, gives $b = b_1$. We can, similarly, say that  for any given $(b, k) \in \Gamma_2(H)$, there exists a unique $(a, k) \in \Gamma_2(G)$ such that $((a,b^{-1}), k) \in \Gamma_2(L)$. This allow us to define a bijective map $\theta : \Gamma_2(G) \to \Gamma_2(H)$, given by 
$$\theta(a, k) = (b, k).$$
Recall that the elements $a \in A$ and $b \in B$ under discussion are given by the same product of  elements of the form $(1, k)$ with respect to the cohomology classes $[(\alpha, \mu)] \in \Ha^2_b(K, A)$ and $[(\beta, \nu)] \in \Ha^2_b(K, B)$ respectively. An easy computation, taking into consideration the definition of $L$, shows that $\theta$ is a brace isomorphism, and the diagram \eqref{dia:AB} commutes. This completes the proof of the theorem. 
\end{proof}

If $K$ is  a trivial skew brace, then the preceding theorem provides a modified proof of \cite[Theorem 1.7]{Tappe76}. This statement requires a justification. Let $G = A \times_r K$ be the group  extension of a group $K$ by an abelian group $A$ (acted upon trivially by $K$) with respect to the $2$-cocycle $r$ of $K$ with coefficients in $A$.  We remark that the existing proof of  \cite[Theorem 1.7]{Tappe76} uses the fact that for any $k \in \gamma_2(K)$, the  element  $(1, k)$ can be  viewed as an element of  $\gamma_2(G)$ by suitably modifying the $2$-cocycle $r$. Unfortunately, we could not see how to get this. So we found a slightly different proof without using this argument.

%%%%%%%%%%%%%%%%%%%%%%%%%%%%%%%%%%%%%%%%%%%%%%%%%%%%%%%%%%%%%

 \section{Skeleton of the Lower Central Series}
 
Let $A$ be a skew  brace. We now carry out a construction motivated by the similar concept in \cite[page 16]{CSV19}. Let $M$ denote the free monoid generated by the set of symbols $X := \{\gamma_{.}, \bar{\gamma}_{.}, *, \bar{*}\}$, where, for $a, b \in A$,
\begin{center}
\[\gamma_{.}(a, b) = [a, b]^{.}, \;\; \bar{\gamma}_{.}(a, b) = [b, a]^{.}, \;\; *(a, b) = a*b,\;\;  \bar{*}(a, b) = b*a.\]
\end{center}
Let $m \in M$. Then 
$$m =   \epsilon_1 \epsilon_2 \cdots \epsilon_s,$$
where $\epsilon_i \in X$, $1 \le i \le s$. The positive integer $s$, in the above expression,  is said to be the degree of $m$, denoted as $\deg(m) = s$. Let $A_i$, 1$ \le i \le s+1$, be  subgroups of $(A, \cdot)$. We now define the (brace commutator) word map $m : A_1 \times A_2 \times \cdots  \times A_{s+1} \to A$ by setting
$$m(a_1 a_2 \cdots a_{s+1}) :=  \epsilon_1(a_1, \epsilon_2(a_2, \ldots, \epsilon_s(a_s, a_{s+1}) \cdots )),$$
where $a_i \in A_i$, $1 \le i \le s+1$. The set of all word values of $m$ is denoted by $m(A_1 A_2 \cdots A_{s+1})$. By $m(A_1 A_2 \cdots A_{s+1})^{-1}$ we denote the set  $\{a \in A \mid a^{-1} \in m(A_1 A_2 \cdots A_{s+1})\}$.

Whenever $A_i = A$, $1 \le i \le s$, we denote by $A(s)$ the cartesian product $A_1 \times A_2 \times \cdots  \times A_{s}$. We define degree-lexicographic order on $M$ by setting $* < \bar{*} < \gamma_. < \bar{\gamma}_.$. For  the elements $m_2 < m_1$ of $M$, we have 
\begin{eqnarray*}
m_2(a_1 a_2 \cdots a_{\deg(m_2)+1})   m_1(b_1 b_2 \cdots b_{\deg(m_1)+1}) &=& m'\big(m_2(a_1 a_2 \cdots a_{\deg(m_2)+1}) b_1 b_2 \cdots b_{\deg(m_1)+1} \big)\\
 & & m_1(b_1 b_2 \cdots b_{\deg(m_1)+1})  m_2(a_1 a_2 \cdots a_{\deg(m_2)+1}),
\end{eqnarray*}
where $m' = \gamma_.m_1$ with $\deg(m') = \deg(m_1) +1$. As a result,  for a non-negative integer $t$,  it follows that the elements of the additive subgroup (say) $A_{(t)}$ of $A$ generated by 
$$\{m(A(\deg(m)+1)) \mid m \in M \mbox{ and } \deg(m) \ge t\}$$
can be written as $a_1a_2 \cdots a_s$ for some positive integer $s$, where $a_i \in  m_i(A(\deg(m_i)+1)) \cup m_i(A(\deg(m_i)+1))^{-1}$, $m_1 \ge m_2 \ge \cdots \ge m_s$ and $\deg(m_s) \ge t$.  It turns out that $A_{(t)}$ is a strong left  ideal of $A$ contained in $\Gamma_{t+1}(A)$. We now prove

\begin{lemma}\label{lemma-key}
Let  $u$ be an element of a skew brace $A$.  Then  $u \in \Ann_t(A)$ if and only if $m(a_1 a_2 \cdots a_t u) = 1$ for all $m \in M$ with $\deg(m) = t$ and $a_i \in A$, $1 \le i \le t$.
\end{lemma}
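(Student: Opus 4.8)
The plan is to argue by induction on $t$, using two ingredients. The first is the description
$$\Ann_t(A) = \{a \in A \mid a*b,\ b*a,\ [a,b]^{\cdot} \in \Ann_{t-1}(A)\ \mbox{ for all } b \in A\}$$
recorded in Section~2. The second is an elementary bookkeeping fact about the monoid $M$: every word $m$ of degree $t \ge 1$ factors as $m = m'\epsilon$ with $\epsilon \in X$ and $\deg(m') = t-1$, and then $m(a_1 \cdots a_t u) = m'\big(a_1 \cdots a_{t-1}\, \epsilon(a_t,u)\big)$ for all $a_i, u \in A$; conversely, for any word $m'$ of degree $t-1$ and any $\epsilon \in X$, the word $m'\epsilon$ has degree $t$ and $(m'\epsilon)(a_1 \cdots a_{t-1}\, b\, u) = m'\big(a_1 \cdots a_{t-1}\, \epsilon(b,u)\big)$.

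For the base case $t = 1$, a word of degree $1$ is one of the four symbols $*,\bar{*},\gamma_{.},\bar{\gamma}_{.}$, and the corresponding values $a_1*u$, $u*a_1$, $[a_1,u]^{\cdot}$, $[u,a_1]^{\cdot}$ are all trivial for every $a_1 \in A$ exactly when $u \in \Ann_1(A)$, directly from the definition of $\Ann(A)$ (recall $[a_1,u]^{\cdot}=1$ iff $[u,a_1]^{\cdot}=1$).

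For the inductive step, assume the statement for $t-1$ and take $\deg(m) = t$, say $m = m'\epsilon$ as above. If $u \in \Ann_t(A)$, then $\epsilon(a_t,u)$ is one of $a_t*u$, $u*a_t$, $[a_t,u]^{\cdot} = ([u,a_t]^{\cdot})^{-1}$, $[u,a_t]^{\cdot}$; since $u \in \Ann_t(A)$ forces $a_t*u,\ u*a_t,\ [u,a_t]^{\cdot} \in \Ann_{t-1}(A)$ and $\Ann_{t-1}(A)$ is a subgroup of $(A,\cdot)$, hence closed under inverses, in all four cases $\epsilon(a_t,u) \in \Ann_{t-1}(A)$. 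Applying the inductive hypothesis to $\epsilon(a_t,u)$ and the degree-$(t-1)$ word $m'$ gives $m(a_1 \cdots a_t u) = m'(a_1 \cdots a_{t-1}\, \epsilon(a_t,u)) = 1$. Conversely, assume $m(a_1 \cdots a_t u) = 1$ for all degree-$t$ words $m$ and all $a_i \in A$, and fix $b \in A$. To show $[u,b]^{\cdot} \in \Ann_{t-1}(A)$, by the inductive hypothesis it suffices to verify $m'(a_1 \cdots a_{t-1}\, [u,b]^{\cdot}) = 1$ for every degree-$(t-1)$ word $m'$ and all $a_i$; since $[u,b]^{\cdot} = \bar{\gamma}_{.}(b,u)$, this value equals $(m'\bar{\gamma}_{.})(a_1 \cdots a_{t-1}\, b\, u)$, which vanishes by hypothesis because $m'\bar{\gamma}_{.}$ has degree $t$. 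Using instead $b*u = *(b,u)$ and $u*b = \bar{*}(b,u)$, the same argument gives $b*u,\ u*b \in \Ann_{t-1}(A)$. As $b$ was arbitrary, the displayed description of $\Ann_t(A)$ yields $u \in \Ann_t(A)$, completing the induction.

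I do not expect a genuine obstacle here: the argument is a clean induction once one pins down the correspondence between the three defining conditions $u*b,\ b*u,\ [u,b]^{\cdot}$ of membership in $\Ann_t(A)$ and the symbols $*,\bar{*},\bar{\gamma}_{.}$ of $M$ (together with the observation that $\Ann_{t-1}(A)$ is closed under inversion, which is what handles the symbol $\gamma_{.}$ in the forward implication). The only thing requiring care is precisely this matching of symbols to conditions, plus keeping track that the final argument of every word occurring in the statement is the fixed element $u$.
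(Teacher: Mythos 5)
Your proposal is correct and follows essentially the same route as the paper's proof: induction on $t$, peeling off the last symbol $\epsilon_t$ of the word, using the recursive description $\Ann_t(A)=\{a \mid a*b,\ b*a,\ [a,b]^{\cdot}\in\Ann_{t-1}(A)\}$ in the forward direction and the arbitrariness of $\epsilon_t$ and $a_t$ in the converse. The only difference is cosmetic: you spell out the matching of the four symbols of $X$ to the three defining conditions (handling $\gamma_{.}$ via closure of $\Ann_{t-1}(A)$ under inversion), a point the paper passes over with "in any case, by the definition."
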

\begin{proof}
We proceed by induction on $t$. Let $t=1$. Then $m \in X$. The result now follows from the definition of $\Ann_1(A)$. Assume by induction that the result is true for all $1 \le k < t$. Let $u \in \Ann_{t}(A)$ and $m = \epsilon_1  \epsilon_2 \cdots \epsilon_t \in M$. Note that $\epsilon_t \in X$. In any case, by the definition, it follows that  $\epsilon_t(a_t, u) \in \Ann_{t-1}(A)$. Now, for any $a_1, a_2, \ldots, a_t \in A$, we have
\begin{eqnarray*}
m(a_1a_2 \cdots a_t u)  &=&  \epsilon_1(a_1, \epsilon_2(a_2, \ldots, \epsilon_t(a_t, u) \cdots ))\\
&=& \epsilon_1(a_1, \epsilon_2(a_2, \ldots, \epsilon_{t-1}(a_{t-1}, v) \cdots )), \mbox{ where } v= \epsilon_t(a_t, u),\\
&=& 1 \mbox{ (using inductive argument), since } v \in \Ann_{t-1}(A).
\end{eqnarray*}
Now assume the other side, that is,  $m(a_1 a_2 \cdots a_t u) = 1$ for all $m \in M$ with $\deg(m) = t$ and $a_i \in A$, $1 \le i \le t$.  Let $m' \in M$ with $\deg(m') = t-1$ and $\epsilon_t \in X$. Then $\deg(m' \epsilon_t) = t$, and therefore, by the given hypothesis, $(m' \epsilon_t )(a_1 a_2 \cdots a_t u) = 1$, which gives  $m' (a_1 a_2 \cdots a_{t-1}v_t) = 1$ for all $a_i \in A$, $1 \le i \le t-1$, where $v_t :=\epsilon_t(a_t, u)$. Since $m' \in M$ is an arbitrary element with $\deg(m') = t-1$, by inductive argument, we have $v_t \in \Ann_{t-1}(A)$. Further, since   $\epsilon_t \in X$ and  $a_t \in A$ are arbitrary, it follows that $u \in \Ann_t(A)$.  The proof is complete. 
\end{proof}

By \cite[Theorem 2.8]{BJ23} and the preceding result, we get
\begin{thm}
For any skew brace $A$ and an integer $t \ge 1$, $A_{(t)} = 1$ if and only if $\Gamma_{t+1}(A) = 1$.
\end{thm}

So we get a decreasing series $A \ge A_{(1)} \ge A_{(2)} \ge \cdots$   of strong  left ideals of $A$ which determines the length of the lower central series $A = \Gamma_1(A) \ge \Gamma_2(A) \ge \cdots$. For this reason, we call the series $A \ge A_{(1)} \ge A_{(2)} \ge \cdots$ the {\em skeleton} of the lower central series $A = \Gamma_1(A) \ge \Gamma_2(A) \ge \cdots$.

We now consider another series of strong left ideals of a skew brace $A$. Set $\bar{\Gamma}_1(A) = A$, and inductively define 
$$\bar{\Gamma}_{n}(A) = \langle A * \bar{\Gamma}_{n-1}(A), [A, \bar{\Gamma}_{n-1}(A)]^{\cdot}\rangle^{\cdot}$$
for all $n \ge 2$. Note that $\bar{\Gamma}_2(A) = \Gamma_2(A)$. It turns out that $\bar{\Gamma}_{n}(A)$ is a strong left ideals of $A$ for all $n \ge 1$, and we get a decreasing series $A = \bar{\Gamma}_1(A)  \ge \bar{\Gamma}_2(A)   \ge \cdots$ of  strong left ideal  of $A$. We now prove

\begin{lemma}\label{lem:inclusion:1}
For any skew brace $A$ and $n \ge 1$, $\bar{\Gamma}_{n}(A) \subseteq A_{(n-1)} \subseteq \Gamma_n(A)$.
\end{lemma}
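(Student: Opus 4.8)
Note first that the second inclusion $A_{(n-1)}\subseteq\Gamma_n(A)$ has already been recorded in the paragraph preceding Lemma~\ref{lemma-key} (take $t=n-1$ there), so the only thing that needs to be proved is $\bar{\Gamma}_n(A)\subseteq A_{(n-1)}$. The plan is to argue by induction on $n$. The case $n=1$ is immediate, since $\bar{\Gamma}_1(A)=A=A_{(0)}$. So assume $n\ge 2$ and that $\bar{\Gamma}_{n-1}(A)\subseteq A_{(n-2)}$.

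For the inductive step I would work with the auxiliary set
\[
S:=\{\,u\in A \mid a*u\in A_{(n-1)}\ \text{and}\ [a,u]^{\cdot}\in A_{(n-1)}\ \text{for all }a\in A\,\},
\]
and prove two things: (i) $S$ is a subgroup of $(A,\cdot)$, and (ii) every brace commutator word value $m(a_1\cdots a_{\deg(m)+1})$ with $\deg(m)\ge n-2$ lies in $S$. Granting (i) and (ii): since $A_{(n-2)}$ is by definition the subgroup of $(A,\cdot)$ generated by the word values $m(A(\deg(m)+1))$ with $\deg(m)\ge n-2$, we get $A_{(n-2)}\subseteq S$, whence $\bar{\Gamma}_{n-1}(A)\subseteq S$ by the induction hypothesis. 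This says precisely that all the generators $a*u$ and $[a,u]^{\cdot}$ of $\bar{\Gamma}_n(A)$ (with $a\in A$ and $u\in\bar{\Gamma}_{n-1}(A)$) lie in $A_{(n-1)}$, and since $A_{(n-1)}$ is a subgroup of $(A,\cdot)$ we conclude $\bar{\Gamma}_n(A)\subseteq A_{(n-1)}$.

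The proof of (i) rests on the fact that $A_{(n-1)}$ is a \emph{strong} left ideal, hence normal in $(A,\cdot)$, together with the expansion identities
\[
a*(xy)=(a*x)\cdot x\cdot(a*y)\cdot x^{-1},\qquad
[a,xy]^{\cdot}=[a,x]^{\cdot}\,[x,[a,y]^{\cdot}]^{\cdot}\,[a,y]^{\cdot}
\]
(the first of which follows from $a*b=\lambda_a(b)\,b^{-1}$ and $\lambda_a\in\Aut(A,\cdot)$, the second being the identity recorded in Section~2), and the inversion identities $a*x^{-1}=x^{-1}(a*x)^{-1}x$ and $[a,x^{-1}]^{\cdot}=x^{-1}([a,x]^{\cdot})^{-1}x$. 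For $u,v\in S$ these identities exhibit $a*(uv)$, $[a,uv]^{\cdot}$, $a*u^{-1}$ and $[a,u^{-1}]^{\cdot}$ as $\cdot$-products whose factors are either elements of $A_{(n-1)}$ or $(A,\cdot)$-conjugates of such elements; normality of $A_{(n-1)}$ in $(A,\cdot)$ then places each of these expressions in $A_{(n-1)}$, and $1\in S$ is trivial. For (ii), if $w=m(a_1\cdots a_{d+1})$ with $d=\deg(m)\ge n-2$, then $a*w$ and $[a,w]^{\cdot}$ are the word values at $(a,a_1,\ldots,a_{d+1})$ of the monoid elements obtained from $m$ by prepending the symbols $*$ and $\gamma_{.}$ respectively, both of degree $d+1\ge n-1$; hence they lie in $A_{(d+1)}\subseteq A_{(n-1)}$, the skeleton being a decreasing series, so $w\in S$.

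The only genuinely delicate point is (i): when iterating the two expansion identities one must keep careful track that every conjugation which appears is a conjugation by an element of $(A,\cdot)$ — and not, say, an application of some $\lambda$-map — so that normality of the strong left ideal $A_{(n-1)}$ in $(A,\cdot)$ is exactly what is needed to absorb it. Everything else is a direct unwinding of the definitions of $\bar{\Gamma}_n$, of the skeleton terms $A_{(t)}$, and of the word maps attached to $M$.
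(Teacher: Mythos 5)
Your proof is correct and follows essentially the same route as the paper's: induction on $n$, writing elements of $\bar{\Gamma}_{n-1}(A)$ as products of word values and their inverses, and expanding $a*(\,\cdot\,)$ and $[a,\cdot\,]^{\cdot}$ on such products using the same two identities together with normality of the strong left ideal $A_{(n-1)}$ in $(A,\cdot)$ (including the same conjugation identity for inverses). The only difference is presentational — you package the ``easy computation'' the paper alludes to as the closure of an auxiliary set $S$ under products and inverses, which makes the argument more explicit but is mathematically identical.
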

\begin{proof}
We only need to prove the first containment. We proceed by induction. The result follows for $n = 1, 2$. So assume that it follows up to $n-1$, $n \ge 3$, that is, $\bar{\Gamma}_{n-1}(A) \subseteq A_{(n-2)}$. So any $v \in  \bar{\Gamma}_{n-1}(A)$ can be written in the form $w_1w_2 \cdots w_s$ for some positive integer $s$, where $w_i \in  m_i(A(\deg(m_i)+1)) \cup m_i(A(\deg(m_i)+1))^{-1}$ and $\deg(m_s) \ge n-2$. Then, an easy computation shows that  $\epsilon(a, w_1w_2 \cdots w_s) \in A_{(n-1)}$ for $\epsilon = *, \gamma_{\cdot}$, because $A_{(n-1)}$ is normal in $(A, \cdot)$.  Indeed, one may like to use the identity $[a, w_i^{-1}]^{\cdot} = \big(([a, w_i]^{\cdot})^{-1}\big)^{w_i^{-1}}$. The proof is complete.
\end{proof}

\begin{lemma}\label{lemma4}
For any skew brace $A$ and for all $s \ge r \ge 1$,   $u \in \Ann_s(A)$ and $v \in \bar{\Gamma}_r(A)$,  we have $\epsilon(u, v) \in  \Ann_{s-r}(A)$, where $\epsilon = \gamma_{\cdot}, *$.
\end{lemma}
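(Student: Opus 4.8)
I would prove Lemma \ref{lemma4} by a double induction, primarily on $r$ (the "depth" of $v$ in the $\bar{\Gamma}$-series) and within each $r$ by appealing to Lemma \ref{lemma-key}, which characterizes membership in $\Ann_t(A)$ via vanishing of all degree-$t$ brace commutator word maps. The base case $r=1$ asserts: if $u\in\Ann_s(A)$ and $v\in A$, then $\gamma_{\cdot}(u,v)=[u,v]^{\cdot}\in\Ann_{s-1}(A)$ and $*(u,v)=u*v\in\Ann_{s-1}(A)$. This is immediate from the definition of the annihilator series recalled in Section 2: $\Ann_s(A)=\{a\in A\mid a*b,\ b*a,\ [a,b]^{\cdot}\in\Ann_{s-1}(A)\ \text{for all }b\in A\}$. (For $s=r$ the conclusion lands in $\Ann_0(A)=1$, which is consistent.)

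For the inductive step, suppose the statement holds for all smaller values of $r$, and let $v\in\bar{\Gamma}_r(A)$ with $r\ge 2$. By the generation description of $\bar{\Gamma}_r(A)$ and the normality/left-ideal structure (so that we may, as in the proof of Lemma \ref{lem:inclusion:1}, reduce to generators using identities such as $[a,w^{-1}]^{\cdot}=\big(([a,w]^{\cdot})^{-1}\big)^{w^{-1}}$ and the fact that $\Ann_{s-r}(A)$ is an ideal), it suffices to treat $v$ of the form $v = a * v'$ or $v = [a,v']^{\cdot}$ with $a\in A$ and $v'\in\bar{\Gamma}_{r-1}(A)$. The strategy is then to show $\epsilon(u, v)\in\Ann_{s-r}(A)$ by verifying, via Lemma \ref{lemma-key}, that every brace commutator word $m$ of degree $s-r$ kills $(a_1,\dots,a_{s-r},\epsilon(u,v))$; equivalently, to produce $\epsilon(u,v)$ (or rather $m(\cdots \epsilon(u,v))$) as a product of brace-commutator-word values each of which has an argument lying in $\Ann_{s-r-1}(A)$ or deeper, using the inductive hypothesis applied to $u\in\Ann_s(A)$ and $v'\in\bar{\Gamma}_{r-1}(A)$, which gives $\epsilon'(u,v')\in\Ann_{s-r+1}(A)$ for $\epsilon'=\gamma_{\cdot},*$.

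The technical heart is an "exchange/expansion" computation: one must expand $*(u, a*v')$, $*(u,[a,v']^{\cdot})$, $\gamma_{\cdot}(u, a*v')$, $\gamma_{\cdot}(u,[a,v']^{\cdot})$ using the brace identity, the two commutator expansion identities recorded in Section 2 ($[ab,c]^{\cdot}$ and $[a,bc]^{\cdot}$ formulas and their $\circ$-analogues), and the relation $a*b=\lambda_a(b)b^{-1}$, so as to rewrite $\epsilon(u,v)$ as a product of terms, each of which is a brace commutator word value in which either $u$ appears paired (through one or more nested $*$ or $[\,,\,]^{\cdot}$) with an element of $\bar{\Gamma}_{r-1}(A)$ — handled by induction on $r$ — or $v'\in\bar{\Gamma}_{r-1}(A)$ appears paired with something and $u$ still sits at depth $s$. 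This kind of identity juggling is exactly what is done in the proof of Lemma \ref{lemma2}, and I expect the main obstacle to be bookkeeping: ensuring that after expansion every resulting factor genuinely has a deep-enough annihilator argument and that the $\lambda$-maps applied to these factors do not escape the relevant ideal (which is fine, since $\Ann_{s-r}(A)$ and $\bar{\Gamma}_{r-1}(A)$ are $\lambda$-stable left ideals). A clean way to organize this is to prove the slightly stronger statement that $m(a_1\cdots a_{s-r}\,\epsilon(u,v))\in\Ann_0(A)=1$ directly by feeding the whole word into the induction, i.e. to prove by induction on $r$ the assertion: for every word $m$ of degree $s-r$ and all $a_i\in A$, the value $m(a_1\cdots a_{s-r}\,\epsilon(u,v))$ equals $1$ — this folds the Lemma \ref{lemma-key} verification and the expansion into a single inductive loop and avoids repeatedly re-invoking the characterization.
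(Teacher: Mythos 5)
Your inductive skeleton matches the paper's: induct on $r$ (with $s$ ranging over all $s\ge r$), get the base case $r=1$ from the definition of the annihilator series, reduce to generators $v=\epsilon_1(a,v')$ with $v'\in\bar{\Gamma}_{r-1}(A)$, and expand $\epsilon(u,\epsilon_1(a,v'))$ case by case. However, there is a genuine gap in the hardest case $\epsilon=\epsilon_1=\gamma_{\cdot}$. The toolkit you list --- the expansions of $[ab,c]^{\cdot}$ and $[a,bc]^{\cdot}$, the brace identity, and $a*b=\lambda_a(b)b^{-1}$ --- cannot handle $[u,[a,v']^{\cdot}]^{\cdot}$: writing $[a,v']^{\cdot}=a v' a^{-1} v'^{-1}$ and expanding bilinearly produces a bare factor $[u,a]^{\cdot}$, which you only know to lie in $\Ann_{s-1}(A)$. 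Since the annihilator series is \emph{ascending}, $\Ann_{s-r}(A)\subseteq\Ann_{s-1}(A)$ for $r\ge 2$, so that factor is not deep enough, and the term-by-term argument collapses. This is exactly the three-subgroups-lemma phenomenon: one cannot pass from ``$[u,a]^{\cdot}$ and $[u,v']^{\cdot}$ are deep'' to ``$[u,[a,v']^{\cdot}]^{\cdot}$ is deeper'' by naive expansion. The paper resolves it with the Hall--Witt identity in $(A,\cdot)$, which replaces $[u,[a,v']^{\cdot}]^{\cdot}$ (up to conjugation, harmless since $\Ann_{s-r}(A)$ is normal in $(A,\cdot)$) by $[v',[u,a]^{\cdot}]^{\cdot}$ and $[a,[v',u]^{\cdot}]^{\cdot}$; the first lands in $\Ann_{(s-1)-(r-1)}(A)=\Ann_{s-r}(A)$ by the inductive hypothesis applied with annihilator index $s-1$, and the second lands there by the base case. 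Your proposal never mentions this identity, and without it (or an equivalent re-pairing device) the case fails.

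Two further remarks. First, your detour through Lemma \ref{lemma-key} is unnecessary and does not rescue the step above: the paper never invokes that characterization here, but instead exhibits $\epsilon(u,v)$ directly as a product of elements of $\Ann_{s-r}(A)$ (for $\epsilon=*$ this takes the form $\lambda_u(v)=d\cdot v$ with $d\in\Ann_{s-r}(A)$, whence $u*v=d$); feeding everything into degree-$(s-r)$ word maps merely pushes the same re-pairing problem one level deeper. Second, be aware that the remaining three cases, while more routine, still rely on applying the inductive hypothesis with \emph{shifted} annihilator index (e.g.\ pairing an auxiliary element $w\in\Ann_{s-1}(A)$ with $v'\in\bar{\Gamma}_{r-1}(A)$), so the induction must be carried for all $s\ge r$ simultaneously, as you implicitly --- but should explicitly --- assume.
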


\begin{proof}
By the definition of the $s$th annihilator, the result holds for $r=1$. We now proceed by induction. Assume that for all $s \ge r > 1$,  $v \in \bar{\Gamma}_{r-1}(A)$ and $u \in \Ann_s(A)$,  we have $\epsilon(u, v) \in  \Ann_{s-r+1}(A)$, where $\epsilon \in \{\gamma_{\cdot}, * \}$. It is sufficient to prove the assertion for the generators of $\bar{\Gamma}_r(A)$. Let $v = \epsilon_1(a, c)$, where $a \in A$ and $c \in \bar{\Gamma}_{r-1}(A)$ and $\epsilon_1 \in  \{\gamma_{\cdot}, * \}$.  By our inductive assumption we know that $\epsilon_1(u, c)  \in \Ann_{s-r+1}(A)$. We first assume that $\epsilon = \epsilon_1 = \gamma_{\cdot}$.   Then using Hall-Witt identity in $(A, \cdot)$, we get
$$([u, [a, c]^{\cdot}]^{\cdot})^{a^{-1}} ([c, [u, a]^{\cdot}]^{\cdot})^{u^{-1}} ([a, [c, u]^{\cdot}]^{\cdot})^{c^{-1}} =  1.$$
Now using the fact that $[u, a]^{\cdot} \in \Ann_{s-1}(A)$, by the inductive argument it follows that 
$$([c, [u, a]^{\cdot}]^{\cdot})^{u^{-1}} ([a, [c, u]^{\cdot}]^{\cdot})^{c^{-1}} \in \Ann_{s-r}(A).$$
 Hence $[u,v]^{\cdot} \in \Ann_{s-r}(A)$, in this case.

 Next, we take $\epsilon = \gamma_{\cdot}$ and $\epsilon_1 = *$. Set $x :=  [\lambda_{a}(c), [u, c^{-1}]^{\cdot}]^{\cdot}$ and $y :=  [c, [u, c^{-1}]^{\cdot}]^{\cdot}$ which lie in $\Ann_{s-r}(A)$ by the assumption. Then 
\begin{eqnarray*}
[u, a * c]^{\cdot} &=& [u, \lambda_{a}(c)c^{-1}]^{\cdot} \\
&=& [u, \lambda_{a}(c)]^{\cdot} [\lambda_{a}(c), [u, c^{-1}]^{\cdot}]^{\cdot} [u, c^{-1}]^{\cdot}\\
&=& \lambda_{a}([\lambda^{-1}_{a}(u), c]^{\cdot}) x ([c, [u, c^{-1}]^{\cdot}]^{\cdot})^{-1} ([u, c]^{\cdot})^{-1}\\
&=& \lambda_a([wu, c]^{\cdot}) xy ([u, c]^{\cdot})^{-1}, \mbox{ where $w \in \Ann_{s-1}(A)$ } \\
&=& \lambda_a\big([w, [u, c]^{\cdot}]^{\cdot} [u, c]^{\cdot} [w, c]^{\cdot}\big) xy ([u, c]^{\cdot})^{-1}\\
&=&  \lambda_a([w, [u, c]^{\cdot}]^{\cdot}) \lambda_a([u, c]^{\cdot}) \lambda_a ([w, c]^{\cdot}) xy ([u, c]^{\cdot})^{-1}\\
&=&  \lambda_a([w, [u, c]^{\cdot}]^{\cdot})  (\lambda_a ([w, c]^{\cdot}) xy )^{\lambda_a([u, c]^{\cdot})} \lambda_a([u, c]^{\cdot})([u, c]^{\cdot})^{-1}\\
&=&  \lambda_a([w, [u, c]^{\cdot}]^{\cdot})  (\lambda_a ([w, d]^{\cdot}) xy )^{\lambda_a([u, c]^{\cdot})} (a * [u, c]^{\cdot}),
\end{eqnarray*}
which lies in $\Ann_{s-r}(A)$, since $[w, [u, c]^{\cdot}]^{\cdot}, [w, c]^{\cdot}$  both lie in  $\Ann_{s-r}(A)$ by inductive argument, and $a * [u, c]^{\cdot} \in \Ann_{s-r}(A)$ by the definition of $(s-r-1)$th annihilator. This proves that $[u,v]^{\cdot} \in \Ann_{s-r}(A)$.
 
 Now we consider $\epsilon = \epsilon_1 =*$. Then
\begin{eqnarray*}
\lambda_u(a * c) &=& \lambda_u\big(\lambda_{a}(c) c^{-1}\big)\\
&=&  \lambda_{u \circ a} (c) \lambda_u( c^{-1})\\
&=&  \lambda_{a \circ u \circ [\bar{u}, \; \bar{a}]^{\circ}} (c) \lambda_u( c)^{-1}\\
&=&  \lambda_{a \circ u}  (wc) \lambda_u( c)^{-1}, \mbox{ where $w \in \Ann_{s-r}(A)$ } \\
&=&  \lambda_{a \circ u}(w)  \lambda_a (\lambda_u( c) c^{-1} c ) c^{-1} (\lambda_u( c)c^{-1})^{-1} \\
&=&  \lambda_{a \circ u}(w)  \lambda_a (x) (a * c)  x^{-1}, \mbox{ where $x \in \Ann_{s-r+1}(A)$ }\\
&=& \lambda_{a \circ u}(w)  \lambda_a (x) x^{-1} [x, (a * c)]^{\cdot} (a*c)\\
&=& d (a * c), 
\end{eqnarray*}
where $d =  \lambda_{a \circ u}(w)  \lambda_a (x) x^{-1} [x, (a * c)]^{\cdot} \in \Ann_{s-r}(A)$. 

Finally we consider  $\epsilon = *$ and $\epsilon_1  = \gamma_.$. Then
\begin{eqnarray*}
\lambda_u([a, c]^{\cdot}) &=& [\lambda_u(a), \lambda_u(c) c^{-1} c]^{\cdot}\\
&=& [\lambda_u(a), \lambda_u(c) c^{-1}]^{\cdot}  [\lambda_u(c) c^{-1}, [\lambda_u(a),  c]^{\cdot}]^{\cdot} [\lambda_u(a), c]^{\cdot}\\
&=& x_1y_1 [\lambda_u(a)a^{-1} a, c]^{\cdot}, \mbox{ where $x_1, y_1 \in \Ann_{s-r}(A)$ } \\
&=& x_1y_1 [\lambda_u(a)a^{-1}, [a, c]^{\cdot}]^{\cdot} [a, c]^{\cdot} [\lambda_u(a)a^{-1}, c]^{\cdot}\\
&=& x_1y_1 [\lambda_u(a)a^{-1}, [a, c]^{\cdot}]^{\cdot} ([\lambda_u(a)a^{-1}, c]^{\cdot})^{[a, c]^{\cdot} } [a, c]^{\cdot} \\
&=& d [a, c]^{\cdot},
\end{eqnarray*}
where $d =  x_1y_1 [\lambda_u(a)a^{-1}, [a, c]^{\cdot}]^{\cdot} ([\lambda_u(a)a^{-1}, c]^{\cdot})^{[a, c]^{\cdot} } \in \Ann_{s-r}(A)$. This completes the proof.  
\end{proof}

For any $m \in M$ with $\deg(m)=1$, it is proved by Letourmy and Vendramin \cite{LV23} that maps $\phi^A_m :  (A/\Ann_n(A))^{2} \to \Gamma_{n}(A)$, given by 
$$\phi_m^A(\tilde{a}_1,  \tilde{a}_2)  = m(a_1 a_2),$$
are well defined. This paved a way for them to define isoclinism of skew left  braces, defined in Section 2.  In group theory we have a concept  of $n$-isoclinism  of groups, where  isoclinism is $1$-isoclinism. We will prove below that it is not possible to extend the concept of isoclinism to $n$-isoclinism of skew left braces with the current setting of lower central series and annihilator series. However, it can be established in some specific, but still large, class of skew braces. We now make preparations to do so.

\begin{lemma}\label{lemma4b}
Let $A$ be a skew brace and $\epsilon \in X$ be such that  $\epsilon(c, d) \in \Ann_{s-r}(A)$  for all  $c \in \Ann_s(A)$ and $d \in \Gamma_r(A)$, where $s \ge r \ge 1$. Then, for $a \in A$,  $u \in \Ann_s(A)$,  $v \in \Gamma_r(A)$ and $w \in \Ann_{s-r+1}(A)$,  we get $\epsilon(au, vw) = \epsilon(a, v)$ modulo $\Ann_{s-r}(A)$.
\end{lemma}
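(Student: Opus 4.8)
The plan is to prove Lemma \ref{lemma4b} by expanding the ``perturbed'' commutator-type value $\epsilon(au, vw)$ using the standard commutator identities already recorded in Section 2 (for $\epsilon = \gamma_\cdot$) and the analogous $*$-expansion formulas that appear in the proof of Lemma \ref{lemma4} (for $\epsilon = *$), and then showing that every term other than $\epsilon(a,v)$ lands in $\Ann_{s-r}(A)$. The key structural input is the hypothesis $\epsilon(c,d) \in \Ann_{s-r}(A)$ for $c \in \Ann_s(A)$, $d \in \Gamma_r(A)$, together with Lemma \ref{lemma4} applied to the strong left ideal $\bar\Gamma_r(A) \subseteq A_{r-1} \subseteq \Gamma_r(A)$; note however that the elements appearing here are in $\Gamma_r(A)$, so one must work with the stated hypothesis directly rather than with Lemma \ref{lemma4} alone, and observe that $w \in \Ann_{s-r+1}(A)$ is exactly the hypothesis needed so that $\epsilon(a,w)$, $\epsilon(u,w)$, etc., fall into $\Ann_{s-r}(A)$ via the definition of the annihilator series and Lemma \ref{lemma-key}.

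First I would treat $\epsilon = \gamma_\cdot$. Write $\gamma_\cdot(au, vw) = [au, vw]^\cdot$ and expand in two stages using $[xy,z]^\cdot = [x,[y,z]^\cdot]^\cdot [y,z]^\cdot [x,z]^\cdot$ and $[x,yz]^\cdot = [x,y]^\cdot [y,[x,z]^\cdot]^\cdot [x,z]^\cdot$. This produces a product of $[a,v]^\cdot$ with several other commutators, each of which has at least one entry that is a commutator $[u,*]^\cdot$ or $[*,w]^\cdot$ with $u \in \Ann_s(A)$, $w \in \Ann_{s-r+1}(A)$, $v \in \Gamma_r(A)$, or directly an $[u,v]^\cdot$-type term. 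Using the hypothesis $\epsilon(c,d)\in\Ann_{s-r}(A)$ we get $[u,v]^\cdot \in \Ann_{s-r}(A)$; using $w\in\Ann_{s-r+1}(A)$ and the definition of the annihilator series we get $[a,w]^\cdot, [u,w]^\cdot \in \Ann_{s-r}(A)$; and since $\Ann_{s-r}(A)$ is an ideal, hence normal in $(A,\cdot)$ and closed under $\lambda$, all the conjugated/iterated terms built from these stay inside $\Ann_{s-r}(A)$. Collecting, $[au,vw]^\cdot = [a,v]^\cdot$ modulo $\Ann_{s-r}(A)$, using that $\Ann_{s-r}(A)$ is normal so we may move the `garbage' factors to one side.

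Next I would treat $\epsilon = *$, i.e. $\lambda_{au}(vw)\cdot(vw)^{-1}$, mimicking the $*$-computations in the proof of Lemma \ref{lemma4}: expand $\lambda_{au} = \lambda_a \lambda_u$ (using that $\lambda$ is a homomorphism $(A,\circ)\to\Aut(A,\cdot)$, with $au$ being the $\circ$-product up to a correction by a $*$-value which itself lies in a high annihilator), expand $\lambda_a(vw\cdot(vw)^{-1})$, and peel off the factors $a*w$, $u*v$, $u*w$, $[a,v]^\cdot$-corrections, etc. By the hypothesis $*(c,d)\in\Ann_{s-r}(A)$ we get $u*v\in\Ann_{s-r}(A)$; by $w\in\Ann_{s-r+1}(A)$ and the definition of $\Ann_{s-r+1}$ we get $a*w,u*w\in\Ann_{s-r}(A)$; and again normality of $\Ann_{s-r}(A)$ together with its $\lambda$-invariance absorbs all conjugations and $\lambda_a(-)$-images. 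One must also handle the mixed substitution terms of the form $[\lambda_a(\cdot)\cdot(\cdot)^{-1}, \cdot]^\cdot$ that arose in Lemma \ref{lemma4}; these are commutators of a $*$-value (or $\lambda$-shifted element) with another high-annihilator element, so they too lie in $\Ann_{s-r}(A)$. The conclusion is $\lambda_{au}(vw)(vw)^{-1} = \lambda_a(v)v^{-1} = a*v$ modulo $\Ann_{s-r}(A)$.

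The main obstacle I anticipate is bookkeeping: making sure that in the $*$-case the rewriting $au = a\circ u \circ z$ (or a similar relation relating the $\cdot$-product $au$ to the $\circ$-product needed to split $\lambda_{au}$) introduces only a correction $z$ lying in a sufficiently high annihilator so that all terms it spawns are killed, and checking that every ``error term'' genuinely has an entry forcing it into $\Ann_{s-r}(A)$ under the \emph{given} hypothesis on $\epsilon$ (which is weaker than assuming it for all four symbols simultaneously). The remaining work is the routine identity-chasing already exemplified in the proof of Lemma \ref{lemma4}, using freely that $\Ann_{s-r}(A)$ is an ideal (normal in $(A,\cdot)$ and $(A,\circ)$ and $\lambda$-stable), so I would present those expansions compactly and point to Lemma \ref{lemma4} for the template.
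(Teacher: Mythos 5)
Your approach is essentially the paper's. The cases $\epsilon = \gamma_{\cdot}, \bar{\gamma}_{\cdot}$ are dispatched exactly as you describe, by the group-theoretic commutator expansions; and for $\epsilon = *$ the paper's computation is the one you outline: it writes $\lambda_{au} = \lambda_a\lambda_x$ with $x := \lambda_a^{-1}(u) \in \Ann_s(A)$ (note this is an exact identity, since $au = a \circ \lambda_a^{-1}(u)$, so no correction term is actually needed), expands
$(au)*(vw) = \lambda_a(x*v)\,(a*v)\, v\big((au)*w\big)v^{-1}$,
and kills the extra factors using the hypothesis on $*$ (for $x*v$), the definition of $\Ann_{s-r+1}(A)$ (for $(au)*w$), and normality and $\lambda$-invariance of $\Ann_{s-r}(A)$. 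The one case your plan does not address is $\epsilon = \bar{*}$: here $\bar{*}(au,vw) = (vw)*(au)$, and one must instead expand $\lambda_{vw} = \lambda_v\lambda_z$ with $z := \lambda_v^{-1}(w) \in \Ann_{s-r+1}(A)$, obtaining
$(vw)*(au) = \lambda_v(z*a)\,(v*a)\, a\,\lambda_v(z*u)\,(v*u)\,a^{-1}$,
where the unwanted factors lie in $\Ann_{s-r}(A)$ by the $\bar{*}$-hypothesis (for $v*u$) and by the definition of the annihilator series (for $z*a$ and $z*u$). This case is analogous to, but not a formal consequence of, the $*$ case (unlike $\bar{\gamma}_{\cdot}$, which does follow from $\gamma_{\cdot}$ by inversion), so it needs to be written out; with that addition your argument coincides with the paper's proof.
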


\begin{proof}
The assertion holds for $\epsilon = \gamma_., \bar{\gamma}_.$ by group theoretic arguments. So let $\epsilon = *$. Then 
\begin{eqnarray*}
(au)*(vw) &=& \lambda_{au}(vw)(vw)^{-1} = \lambda_{au}(v) \lambda_{au}(w) w^{-1}v^{-1}\\
&=&  \lambda_a(x*v) \lambda_a(v) v^{-1} v \big((au)*w\big) v^{-1}, \mbox{   where $x := \lambda^{-1}_a(u)$}\\
&=&  \lambda_a(x*v) (a*v) v \big((au)*w\big)v^{-1},
\end{eqnarray*}
where both $\lambda_a(x*v)$ and  $v \big((au)*w\big)v^{-1}$ lie in $\Ann_{s-r}(A)$. Finally let  $\epsilon = \bar{*}$. Then 
\begin{eqnarray*}
(vw)*(au) &=& \lambda_{vw}(a)  \lambda_{vw}(u) u^{-1} a^{-1} \\
&=& \lambda_v(\lambda^{-1}_v(w)*a) (v*a) a  \lambda_v(\lambda^{-1}_v(w)*u) (v*u) a^{-1},
\end{eqnarray*}
where  both $\lambda_v(\lambda^{-1}_v(w)*a)$ and $a  \lambda_v(\lambda^{-1}_v(w)*u) (v*u) a^{-1}$ belong to $\Ann_{s-r}(A)$. The proof is complete.
\end{proof}

Set $S_0(A)  := A$ and, for any integer $r \ge 1$, 
$$S_r(A) := \{m(a_1a_2 \cdots a_{r+1}) \mid m \in M,  \deg(m) = r  \mbox{ and } a_1,  \ldots, a_{r+1} \in A\}.$$
 Now specialising to $S_r(A)$  in  the preceding result, we get

\begin{cor}\label{cor-phi}
Let $A$ be a skew brace and $\epsilon \in X$ be such that  $\epsilon(c, d) \in \Ann_{s-r}(A)$  for all  $c \in \Ann_s(A)$ and $d \in S_{r-1}(A)$, where $s \ge r \ge 1$. Then, for $a \in A$,  $u \in \Ann_s(A)$,  $v \in S_{r-1}(A)$ and $w \in \Ann_{s-r+1}(A)$,  we get $\epsilon(au, vw) = \epsilon(a, v)$ modulo $\Ann_{s-r}(A)$.
\end{cor}

Let $\mathcal{I}_n$ denote the class of all skew  braces $A$ for which the following maps are well defined for all $m \in M$ with $\deg(m)= n$:
\begin{equation}\label{map:phi}
\phi_m^A: (A/{\Ann_n(A)})^{n+1}\to A_{(n)} \mbox{ given by } \phi_m^A(\tilde{a}_1 \tilde{a}_2 \cdots \tilde{a}_{n+1}) = m(a_1a_2\cdots a_{n+1}),
\end{equation}
where $\tilde{a}_i =  \Ann_n(A) a_i$ for $i = 1, 2, \ldots, n+1$.
Denote by $\bar{\mathcal{I}}_n$ the subclass of $\mathcal{I}_n$ consisting of skew braces $A$ such that $A$ belongs to $\mathcal{I}_r$ for all $1 \le r \le  n$. The maximal subclass of $\bar{\mathcal{I}}_n$ which is closed under the formation of sub-skew braces, brace homomorphic images and finite direct products will be denoted by $\tilde{\mathcal{I}}_n$. Obviously  $\tilde{\mathcal{I}}_n \subseteq  \bar{\mathcal{I}}_n \subseteq \mathcal{I}_n$ for all integers $n \ge 1$. Note that $ \bar{\mathcal{I}}_2 = \mathcal{I}_2$. With this setup, we now derive an equivalent criterion on a skew brace $A$ so that $\phi_m^A$ is well defined. First we derive such an criterion for the class $\bar{\mathcal{I}}_n$.

\begin{thm}\label{phi-well-defined}
A skew brace $A$ belongs to $\bar{\mathcal{I}}_n$ if and only if  $\epsilon(u, v) \in \Ann_{s-r}(A)$ for all $\epsilon \in X$, $u \in \Ann_s(A)$ and $v \in S_{r-1}(A)$, $1 \le r  \le s \le n$. 
\end{thm}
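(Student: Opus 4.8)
\emph{Reduction.} Since $\deg(m)=n$ forces $m(a_1a_2\cdots a_{n+1})\in A_{(n)}$ by the very construction of $A_{(n)}$, membership of $A$ in $\mathcal I_n$ is purely a well-definedness statement: $\phi^A_m$ is a genuine map on $(A/\Ann_n(A))^{n+1}$ exactly when, for each position $k$, replacing the $k$-th argument $a_k$ by $a_ku$ with $u\in\Ann_n(A)$ leaves $m(a_1\cdots a_{n+1})$ unchanged (iterating over $k$ then handles all arguments simultaneously). In particular, if $A\in\mathcal I_n$ then every degree-$n$ word value collapses to $1$ as soon as one of its arguments is taken in $\Ann_n(A)$, because $\epsilon(1,x)=\epsilon(x,1)=1$ for all $\epsilon\in X$ and $x\in A$.

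\emph{Sufficiency.} I would use only the instances $s=n$ of the hypothesis. Fix $m=\epsilon_1\epsilon_2\cdots\epsilon_n$, elements $a_1,\dots,a_{n+1}\in A$ and $u_1,\dots,u_{n+1}\in\Ann_n(A)$. Put $v_{n+1}=a_{n+1}$, $v_j=\epsilon_j(a_j,v_{j+1})$ for $j=n,\dots,1$ (so $v_1=m(a_1\cdots a_{n+1})$), and correspondingly $\widetilde v_{n+1}=a_{n+1}u_{n+1}$, $\widetilde v_j=\epsilon_j(a_ju_j,\widetilde v_{j+1})$. By downward induction on $j$ I claim $v_j\in S_{\,n+1-j}(A)$ and $\widetilde v_j\equiv v_j\pmod{\Ann_{j-1}(A)}$: the case $j=n+1$ is immediate, and the passage from $j+1$ to $j$ is precisely Corollary~\ref{cor-phi} applied with $r:=n-j+1$, whose hypothesis is the $(s,r)=(n,\,n-j+1)$ instance of the assumed condition, whose datum $v_{j+1}\in S_{r-1}(A)$ is the inductive claim for $j+1$, and whose input ``$w\in\Ann_{n-r+1}(A)=\Ann_j(A)$'' is the congruence $\widetilde v_{j+1}\equiv v_{j+1}$. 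For $j=1$ this gives $\widetilde v_1=v_1$ (as $\Ann_0(A)=1$), that is, $m(a_1u_1\cdots a_{n+1}u_{n+1})=m(a_1\cdots a_{n+1})$; since $m$ was an arbitrary word of degree $n$, $A\in\mathcal I_n$.

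\emph{Necessity, the case $s=n$.} Let $\epsilon\in X$, $u\in\Ann_n(A)$ and $v=m'(a_1\cdots a_r)$ with $\deg(m')=r-1$ and $1\le r\le n$. By Lemma~\ref{lemma-key}, $\epsilon(u,v)\in\Ann_{n-r}(A)$ is equivalent to the vanishing $m''(b_1\cdots b_{n-r}\,\epsilon(u,v))=1$ for every $m''$ with $\deg(m'')=n-r$ and all $b_i\in A$. But $m''(b_1\cdots b_{n-r}\,\epsilon(u,v))$ is exactly the value $\widehat m(b_1,\dots,b_{n-r},u,a_1,\dots,a_r)$ of the degree-$n$ word $\widehat m:=m''\,\epsilon\,m'$, in which $u$ occupies the $(n-r+1)$-st slot. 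Since $A\in\mathcal I_n$ and $u\in\Ann_n(A)$, replacing this $u$ by $1$ does not change the value, and the value with $1$ in that slot equals $1$ because $\epsilon(1,\cdot)=1$ and $\epsilon'(\cdot,1)=1$ for every $\epsilon'\in X$. Hence $\epsilon(u,v)\in\Ann_{n-r}(A)$, which is the required condition when $s=n$.

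\emph{Necessity, the general case, and the main difficulty.} For $s<n$ I would argue as in the previous paragraph, but now prepend $n-s$ arbitrary letters $\epsilon^{(1)},\dots,\epsilon^{(n-s)}\in X$ to $\widehat m:=m''\,\epsilon\,m'$ (of degree $s$) to obtain a degree-$n$ word; membership in $\mathcal I_n$ then gives $\epsilon^{(1)}(c_1,\dots,\epsilon^{(n-s)}(c_{n-s},Z)\cdots)=1$ for all $c_i\in A$, where $Z:=\widehat m(b_1,\dots,b_{s-r},u,a_1,\dots,a_r)$, and peeling the letters off one at a time — using that ``$\epsilon(c,W)\in\Ann_j(A)$ for all $c\in A$, $\epsilon\in X$'' forces $W\in\Ann_{j+1}(A)$, which is immediate from the characterisation of the annihilator series recalled in Section~2 — yields only $Z\in\Ann_{n-s}(A)$ (the same bound the case $s=n$ already gives, applied with $u$ regarded as lying in $\Ann_n(A)$). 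The heart of the matter is to improve ``$Z\in\Ann_{n-s}(A)$'' to ``$Z=1$'', equivalently to push the required annihilator level from $n-r$ down to $s-r$; I would do this by a descending induction, treating $\epsilon\in\{\gamma_{\cdot},*\}$ (and $\bar\gamma_{\cdot}$ via $\bar\gamma_{\cdot}(u,v)=\gamma_{\cdot}(u,v)^{-1}$) with Lemma~\ref{lemma4} and the commutator identities of Section~2, and handling the letter $\bar *$ together with the word values $v\in S_{r-1}(A)$ not lying in $\bar\Gamma_r(A)$ by the ad hoc $\lambda$-computations in the style of the proof of Lemma~\ref{lemma4}, feeding in the cases already secured. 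This last upgrade is exactly where the hypothesis $A\in\mathcal I_n$ is used essentially — a general skew brace violates the condition precisely on account of such terms — and I expect it to be the main obstacle and to carry the bulk of the computation.
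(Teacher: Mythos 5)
Your reduction, your sufficiency argument, and your necessity argument for $s=n$ are correct and essentially coincide with the paper's proof: the paper likewise proves the ``if'' part by feeding the $(s,r)=(n,r)$ instances of the hypothesis into Corollary~\ref{cor-phi} and peeling the word from the inside out, picking up an error term in $\Ann_{j-1}(A)$ at the $j$-th stage, and proves the ``only if'' part by placing $u$ in the $(n-r+1)$-st slot of a degree-$n$ word, replacing $u$ by $1$, and invoking Lemma~\ref{lemma-key}. So the portion of the statement you actually prove is exactly the portion the paper proves.

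The case you flag as the main obstacle --- necessity for $s<n$ --- is not treated in the paper at all: after the $s=n$ computation the proof simply declares itself complete. Your diagnosis of the difficulty is accurate: prepending $n-s$ letters and peeling them off only yields $\epsilon(u,v)\in\Ann_{n-r}(A)$ rather than the asserted $\Ann_{s-r}(A)$, and for word values $v$ involving $\bar{*}$ there is no evident mechanism for converting degree-$n$ information into control of degree-$s$ words; note also that if the $s<n$ clauses did follow from $A\in\mathcal{I}_n$, then combining them with the sufficiency direction (applied with $n$ replaced by $s$) would give $\mathcal{I}_n\subseteq\mathcal{I}_s$, which would make the paper's separate introduction of the class $\bar{\mathcal{I}}_n$ pointless --- a strong hint that the authors do not actually have these clauses. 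In mitigation, only the $s=n$ instances are consumed by the sufficiency direction and by every later use of the theorem (for symmetric braces the condition is reproved directly for all $s\ge r$), so the equivalence that is needed, and the one genuinely established, is ``$A\in\mathcal{I}_n$ if and only if the $s=n$ instances hold.'' You should either restrict the statement to that, or explicitly record that the $s<n$ clauses of the ``only if'' direction require an argument that neither your proposal nor the paper supplies; the upgrade you sketch via Lemma~\ref{lemma4} works unconditionally only for $v\in\bar{\Gamma}_r(A)$ and $\epsilon\in\{\gamma_{\cdot},*\}$, which is precisely not the problematic configuration.
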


\begin{proof}
For the proof of the `if' part, we only need to prove that 
 $$m(a_1u_1, a_2u_2, \ldots, a_{t+1}u_{t+1}) = m(a_1, a_2, \ldots, a_{t+1})$$
 for all $m \in M$ with $\deg(m) = t$,  $a_i \in A$ and $u_i \in \Ann_{t}(A)$, where $1 \le  i \le t+1$ and $1 \le t \le n$. We only prove the case $t = n$. All other cases, indeed, will follow from the steps of the proof of this case.   Let $m = \epsilon_1 \epsilon_2 \ldots \epsilon_n$, where $\epsilon_i \in X$, be an arbitrary element of $M$ of degree $n$. It follows from Corollary \ref{cor-phi}, for $r = 1$,   that 
$$\epsilon_n(a_nu_n, a_{n+1}u_{n+1}) = \epsilon_n(a_n, a_{n+1})z_{n-1}$$
 for some $z_{n-1} \in \Ann_{n-1}(A)$.  Since $\epsilon_n(a_n, a_{n+1}) \in S_1(A)$,  again by Corollary \ref{cor-phi} for $r =2$, using the given hypothesis, we get 
 $$\epsilon_{n-1}\big(a_{n-1}u_{n-1}, \epsilon_n(a_n, a_{n+1})z_{n-1}\big) = \epsilon_{n-1}\big(a_{n-1}, \epsilon_n(a_n, a_{n+1})\big)z_{n-2}$$
 for some $z_{n-2} \in \Ann_{n-2}(A)$. Set $y:=\epsilon_{n-1}(a_{n-1},\epsilon_n(a_n,a_{n+1}))$; then using the given hypothesis, we get  
 $$\epsilon_2(a_2u_2, \epsilon_3(a_3u_3,  \ldots, \epsilon_{n-2}(a_{n-2}u_{n-2}, yz_{n-2}) \ldots )) = \epsilon_2(a_2, \epsilon_3(a_3, \ldots \epsilon_n(a_n, a_{n+1}) \ldots ))z_1$$
  for some $z_1 \in \Ann_1(A)$. Since $\epsilon_2(a_2, \epsilon_3(a_3, \ldots \epsilon_n(a_n, a_{n+1}) \ldots )) \in S_{n-1}(A)$ and $u_1 \in \Ann_n(A)$, one final iteration gives the desired  identity.
  
 Now we prove the `only if' part.  Assume that $A \in \mathcal{I}_t$, $1 \le t \le n$. For $n \ge s \ge r \ge 1$, let $u \in \Ann_s(A)$ and $v = m'(b_1 b_2 \cdots b_r) \in S_{r-1}$, where  $m' \in M$ with $\deg(m') = r-1$. Choose any $m \in M$ with $\deg(m) = s-r$  and any $\epsilon \in X$. Then, since $A \in \mathcal{I}_s$ and $\deg(m \epsilon m') = s$, we get
\begin{eqnarray*}
m( a_1 \cdots a_{s-r}, \epsilon(u, v) \cdots )&=& m\epsilon m'(a_1 \cdots a_{s-r} u b_1   \cdots b_r)\\
&=&  m\epsilon m'(a_1 \cdots a_{r-1} 1 b_1   \cdots b_r)\\
&=&  1,
\end{eqnarray*}
for all $a_i \in A$, $1 \le i \le s-r$.  Hence, by Lemma \ref{lemma-key}, it follows that $\epsilon(u,v) \in \Ann_{s-r}(A)$.  The proof is now complete.
\end{proof}

Now specializing  to $s = n$, the preceding  theorem provides a criterion for a skew brace to fall in $\mathcal{I}_n$.

\begin{cor}
A skew brace $A$ belongs to $\mathcal{I}_n$ if and only if  $\epsilon(u, v) \in \Ann_{n-r}(A)$ for all $\epsilon \in X$, $u \in \Ann_n(A)$ and $v \in S_{r-1}(A)$, $1 \le r \le n$. 
\end{cor}

Two  skew  braces $A, B \in \mathcal{I}_n$ are said to be \emph{$n$-isoclinic}, $n \ge 1$, if  there exist brace isomorphisms 
\begin{eqnarray*}
\xi &:&  A/\Ann_n(A) \to B/\Ann_n(B)\\
\theta &:& A_{(n)} \to B_{(n)}
\end{eqnarray*}
 such that the following diagram commutes:
 \begin{center}
\begin{tikzcd}\nonumber
(A/\Ann_n(A))^{n+1} \arrow [d, "(\xi)^{n+1}"]   \arrow[r, "\phi_m^A"]  & A_{(n)} \arrow[d, "\theta"]\\
(A/\Ann_n(B))^{n+1}    \arrow[r, "\phi_m^B"]  & B_{(n)}
\end{tikzcd}
\end{center}
for all $m \in M$ with $\deg(m) = n$, where  $(\xi)^{n+1} := \xi \times \cdots \times  \xi \mbox{ (n+1 copies)}$. The pair of  isomorphisms $(\xi, \theta)$ is called an \emph{$n$-isoclinism} between $A$ and $B$. It is easy to see that  $n$-isoclinism is an equivalence relation (reflexive, symmetric and transitive)  in  $\mathcal{I}_n$.

We would like to mention here that the preceding definition generalizes the analogous concept of $n$-isoclinism of finite groups, introduced by  P. Hall \cite{Hall40}, to skew braces. Two groups $G$ and $H$ are said to be \emph{$n$-isoclinic}, $n \ge 1$,  if there exist group isomorphisms
    \begin{eqnarray*}
        \alpha &:& G/\Z_n(G) \longrightarrow H/\Z_n(H), \\
        \beta &:& \gamma_{n+1}(G) \longrightarrow \gamma_{n+1}(H),
    \end{eqnarray*}
    such that the following diagram commutes:
    \begin{center}
    \begin{tikzcd} \label{cd:add_isoclinism}
    (G/\Z_n(G))^{n+1} \arrow [d, "(\alpha)^{n+1}"]  \arrow[r,
"\gamma_{\cdot}^n"]  & \gamma_{n+1}(G) \arrow[d, "\beta"]\\
    (H/\Z_n(H))^{n+1}    \arrow[r, "\gamma_{\cdot}^n"]  &
\gamma_{n+1}(H),
    \end{tikzcd}
    \end{center}
    where $\gamma_{\cdot}^n = \gamma_{\cdot} \cdots \gamma_{\cdot}$ ($n$-copies) is a word of degree $n$ in $M$.
The pair $(\alpha, \beta)$ is called an \textit{$n$-isoclinism} between the groups  $G$ and $H$.

 \vspace{.1in}  
For $n = 2$, we have
\begin{lemma}\label{lemma:2:iso}
A skew  brace $A \in \mathcal{I}_2$ if and only if $u \bar{*} v = 1$ for all $u \in \Ann_2(A)$, $v \in \Gamma_2(A)$.
\end{lemma}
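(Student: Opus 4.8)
The plan is to deduce the statement from the criterion in Theorem~\ref{phi-well-defined} and the ``automatic vanishing'' supplied by Lemma~\ref{lemma4}. First I would invoke Theorem~\ref{phi-well-defined} with $n=2$: $A\in\mathcal{I}_2$ if and only if $\epsilon(u,v)\in\Ann_{s-r}(A)$ for all $\epsilon\in X$, $u\in\Ann_s(A)$, $v\in S_{r-1}(A)$ and $1\le r\le s\le 2$. Of the three admissible index pairs, $(r,s)=(1,1)$ asks that $\epsilon(u,v)=1$ for $u\in\Ann_1(A)=\Ann(A)$ and $v\in S_0(A)=A$, which is precisely the set-theoretic description of $\Ann(A)$ recorded in Section~2 (using $\bar\gamma_{\cdot}(u,v)=[v,u]^{\cdot}=([u,v]^{\cdot})^{-1}$ and that $\{a*b:a,b\in A\}=\{b*a:a,b\in A\}$); and $(r,s)=(1,2)$ asks that $\epsilon(u,v)\in\Ann_1(A)=\Ann(A)$ for $u\in\Ann_2(A)$ and $v\in A$, which is the defining property of $\Ann_2(A)$. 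Hence both of these hold for every skew brace, and $A\in\mathcal{I}_2$ is equivalent to the single remaining pair $(r,s)=(2,2)$, namely $\epsilon(u,v)=1$ for all $\epsilon\in X$, $u\in\Ann_2(A)$, $v\in S_1(A)$.

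Next I would pare the four conditions coming from $X$ down to one. Since $\bar{\Gamma}_2(A)=\Gamma_2(A)$, Lemma~\ref{lemma4} with $s=r=2$ gives $\gamma_{\cdot}(u,v)=[u,v]^{\cdot}=1$ and $*(u,v)=u*v=1$, hence also $\bar\gamma_{\cdot}(u,v)=[v,u]^{\cdot}=1$, for every $u\in\Ann_2(A)$ and every $v\in\Gamma_2(A)$, in particular for $v\in S_1(A)$. So in the pair $(r,s)=(2,2)$ the only condition that is not automatic is the one for $\epsilon=\bar{*}$, i.e.\ $v*u=1$. This already yields: $A\in\mathcal{I}_2$ if and only if $u\,\bar{*}\,v=v*u=1$ for all $u\in\Ann_2(A)$ and $v\in S_1(A)$. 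As $S_1(A)\subseteq\Gamma_2(A)$, the ``if'' direction of the lemma follows at once, and the whole matter reduces to promoting ``$v*u=1$'' from the set $S_1(A)$ to the subgroup $\Gamma_2(A)=\langle S_1(A)\rangle^{\cdot}$ it generates.

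For that promotion I would fix $u\in\Ann_2(A)$ and study $f\colon(A,\circ)\to\Ann(A)$, $f(v)=v*u$. It is valued in $\Ann(A)$ because $u\in\Ann_2(A)$, and using $\lambda_{v_1\circ v_2}=\lambda_{v_1}\lambda_{v_2}$ together with the fact that every $\lambda_v$ fixes $\Ann(A)$ pointwise (immediate from $\Ann(A)=\{a:a*b=b*a=[a,b]^{\cdot}=1\ \mbox{ for all } b\}$) one finds $f(v_1\circ v_2)=(v_2*u)(v_1*u)=f(v_1)f(v_2)$, so $f$ is a homomorphism into the abelian group $\Ann(A)$ and $\Ker f$ is normal in $(A,\circ)$. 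If $A\in\mathcal{I}_2$, the previous step gives $S_1(A)\subseteq\Ker f$ (and in fact $\Ker(\lambda),\gamma_2(A,\circ)\subseteq\Ker f$ too), so it remains to show $\Gamma_2(A)\subseteq\Ker f$; I would get this by proving $\langle S_1(A)\rangle^{\circ}=\Gamma_2(A)$, writing $\Gamma_2(A)=A^2\cdot\gamma_2(A,\cdot)$ with $A^2=A*A$ an ideal, rewriting each $\cdot$-product as a $\circ$-product via $x\cdot y=x\circ\lambda_{\bar{x}}(y)$, and inducting on $\cdot$-word length. I expect this last point to be the main obstacle: one must control the $\circ$-homomorphism $f$ along all $\cdot$-products of elements of $S_1(A)$, and although the commutator half is smooth since $\{[a,b]^{\cdot}:a,b\in A\}$ is stable under each $\lambda_c$ and under $\cdot$-inversion, the $*$-half is delicate, because applying $\lambda_c$ to a single $*$-value produces a $\cdot$-product of two $*$-values and $\{a*b:a,b\in A\}$ is not visibly closed under $\cdot$-inversion; one has to lean on the ideal structure of $A^2$ to keep everything inside $\langle S_1(A)\rangle^{\circ}$.
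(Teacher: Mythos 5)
Your ``if'' direction is correct and coincides with the paper's own proof: the index pairs $(r,s)=(1,1)$ and $(1,2)$ in Theorem~\ref{phi-well-defined} hold in every skew brace by the definition of the annihilator series, Lemma~\ref{lemma4} (with $\bar{\Gamma}_2(A)=\Gamma_2(A)$) disposes of $\epsilon=\gamma_{\cdot}$ and $\epsilon=*$ in the pair $(2,2)$ (hence also $\bar{\gamma}_{\cdot}$, by inversion), and the hypothesis supplies the one remaining case $\epsilon=\bar{*}$. The paper's proof consists of exactly this reduction and explicitly declines to prove the ``only if'' part.

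For the ``only if'' direction you have put your finger on the genuine issue, which the paper passes over in silence: Theorem~\ref{phi-well-defined} and the well-definedness of the maps $\phi_m^A$ only give $v*u=1$ for $v$ in the \emph{set} $S_1(A)$ of word values, while the lemma asserts it for the subgroup $\Gamma_2(A)=\langle S_1(A)\rangle^{\cdot}$. Your device --- for fixed $u\in\Ann_2(A)$ the map $f(v)=v*u=\lambda_v(u)u^{-1}$ takes values in $\Ann(A)$ and is a homomorphism on $(A,\circ)$, so $\Ker f$ is a $\circ$-subgroup containing $S_1(A)$, $\Ker(\lambda)$ and $\gamma_2(A,\circ)$ --- is sound, and it does settle the commutator half: the set of $\cdot$-commutators is stable under every $\lambda_c$ and under $\cdot$-inversion, so $f(c_1c_2\cdots c_k)=f\big(\lambda_{\bar{c}_1}(c_2)\cdots\lambda_{\bar{c}_1}(c_k)\big)$ and induction on $k$ gives $\gamma_2(A,\cdot)\subseteq\Ker f$. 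But the $*$-half is not closed by the induction you propose: the only rewriting available is $x\cdot y=x\circ\lambda_{\bar{x}}(y)$ with $\lambda_{\bar{x}}(y)=(\bar{x}*y)\cdot y$, which replaces one $\cdot$-product by another of the same shape (the new left factor $\bar{x}*y$ is again a single $*$-value), so the recursion never terminates; equivalently, there is no identity expressing $\lambda_{v_1\cdot v_2}$ in terms of $\lambda_{v_1}$ and $\lambda_{v_2}$ alone, which is precisely why Lemma~\ref{lemma4} covers $\gamma_{\cdot}$ and $*$ but not $\bar{*}$. Thus the step ``$\Gamma_2(A)=\langle S_1(A)\rangle^{\circ}$'' (equivalently $A*A\subseteq\Ker f$) remains unproved in your write-up, and you should either supply an actual argument for it or state the criterion with $v\in S_1(A)$ in place of $v\in\Gamma_2(A)$, which is what the equivalence with membership in $\mathcal{I}_2$ directly delivers. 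In fairness, the paper's proof is equally silent on this direction.
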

 
 \begin{proof}
 We only need to proof the `if' part of the statement.   So assume that  $u \bar{*} v = 1$ for all $u \in \Ann_2(A)$, $v \in \Gamma_2(A)$. Since $\Gamma_2(A)$ is generated by the elements of the form $a * b$ and $[a, b]^{\cdot}$, the proof follows from Lemma \ref{lemma4}.
\end{proof}

One may very well expect that $\mathcal{I}_2 = \mathcal{SB}$, which, in view of  Lemma \ref{lemma:2:iso}, actually requires to prove that   $u  \bar{*}  v = v * u =1$ for all $u \in \Ann_2(A)$ and $v \in \Gamma_2(A)$.
 But, unfortunately, it fails.  There are plenty of counterexamples to this statement.
A GAP \cite{GAP} search, using the package  Yang-Baxter \cite{VK22}  reveals that this is not true in general. Counterexamples exist even for left braces of order $16$. Here are some GAP ID's of such skew  braces: [ 16, 73 ], [ 16, 74 ], [ 16, 75 ], [ 16, 78 ],  [ 16, 82 ],  [ 16, 85 ], [ 16, 88 ], [ 16, 89 ], [16, 240 ] [ 48, 5011], [48, 5012], [48, 5015 - 5021],  [81, 5174], [81, 5181 ],  [81, 5292 - 5294],  [ 81, 5297 ],  [81, 5298 ], [108, 174], [108, 176], [108, 183], [108, 187], [108, 200], [108, 206], [112, 4969 - 4976]. But the good news is that  there are some considerably large subclasses of skew braces which lie in $\mathcal{I}_n $. One such subclass is the class of all symmetric  skew braces, which we will deal  in the next two sections. One other such subclass consists of $\lambda$-homomorphic skew braces.  Both of these are defined in Section 2.

Skew braces of orders 16, 32, 64, 80, 96, 128  are not implemented in GAP.  As it is evident from the following table that all skew braces having order $n \le 135$, excepting $n = 16, 32, 48, 64, 80, 81, 96, 108, 112, 128$, are in $\mathcal{I}_2$. It is also evident that most of the skew braces of order $n = 16, 48, 81, 108, 112$ are in $\mathcal{I}_2$. More precisely,  48, 800, 345, 32 and 336  skew braces of orders 16, 48, 81, 108 and 112 respectively are not in $\mathcal{I}_2$.

In the following table $m$, $s(m)$, $Sym(m)$ and  $\mathcal{I}_2(m)$, respectively denote  the order of a skew brace,  the number of the skew braces of order $m$, the number of the symmetric skew braces of order $m$ and the number of the skew braces of order $m$ in $\mathcal{I}_2$.
We exclude orders $m$ such that  $m$ and $\phi(m)$ are coprime,  where $\phi$ denotes the Euler’s totient function, because there is a unique skew brace, that is, the trivial skew brace, for such an order m.
 
\begin{table}[H]
\centering
\begin{small}
 \begin{tabular}{|c | c c c c c c c c c c  c c c c|} 
 \hline
 $m$ & $4$ & $6$ & $8$ & $9$ &  $10$ & $12$ & $14$ & $16$ & $18$ &$20$& $21$ & $22$ & $24$ & $25$ \\
 $s(m)$ & $4$ & $6$ & $47$ & $4$ & $6$ & $38$ & $6$ & $1605$ &$49$ &$43$ & $8$ & $6$ & $855$ & $4$ \\
 $Sym(m)$ &  $4$ & $5$ & $39$ & $4$ & $5$ & $26$ & $5$ & $1086$ &$24$ &$28$ & $6$ & $5$ & $501$ & $4$ \\
 $\mathcal{I}_2(m)$ & $ 4$ & $6 $ & $47$ & $4$ & $6$ & $38$ & $6$ & $1557$ &$49$ &$43$ & $8$ & $6$ & $855$ & $4$ \\
 \hline
  $m$ &$26$ & $27$ & $28$ & $30$ & $34$ &  $36$ & $38$ & $39$ & $40$ & $42$ &$44$ & $45$ & $46$ & $48$ \\
 $s(m)$ &$6$ & $101$ & $29$ & $36$ & $6$ & $400$ & $6$ & $8$ & $944$ &$78$ &$29$  & $4$ & $6$ & $66209$ \\
 $Sym(m)$ & $5$ &  $69$ & $21$ & $25$ & $5$ & $156$ & $5$ & $6$ & $514$ &$43$ &$21$ & $4$ & $5$ & $29347$ \\
 $\mathcal{I}_2(m)$ &$6$ & $ 101$ & $ 29$ & $36$ & $6$ & $400$ & $6$ & $8$ & $944$ &$78$ &$29$ & $4$ & $5$ & $65409$ \\
  \hline
 $m$ & $49$ &$50$ & $52$ & $54$ & $55$ & $56$ &  $57$ & $58$ & $60$ & $62$ & $63$ &$66$ & $68$ & $70$ \\ 
 $s(m)$ & $4$ &$51$ & $43$ & $1028$ & $12$ & $815$ & $8$ & $6$ & $418$ & $6$ &$47$ &$36$ & $43$ & $36$ \\
 $Sym(m)$ & $4$ &$24$ &  $28$ & $209$ & $8$ & $492$ & $6$ & $5$ & $190$ & $5$ &$30$ &$25$ & $28$ & $25$ \\
 $\mathcal{I}_2(m)$ & $4$ &$51$ & $43$ & $1028$ & $12$ & $815$ & $8$ & $6$ & $418$ & $6$ &$47$ &$36$ & $43$ & $36$ \\
   \hline
   $m$ & $72$ & $74$ &$75$ & $76$ & $78$ & $81$ & $82$ &  $84$ & $86$ & $88$ & $90$ & $92$ &$93$ & $94$ \\ 
 $s(m)$ & $17790$ & $6$ &$14$ & $29$ & $78$ & $8436$ & $6$ & $606$ & $6$ & $800$ & $294$ &$29$ &$8$ & $6$ \\
 $Sym(m)$ & $5403$ & $5$ &$9$ &  $21$ & $39$ & $4617$ & $5$ & $254$ & $5$ & $483$ & $120$ &$21$ &$6$ & $5$ \\
 $\mathcal{I}_2(m)$ & $17790$ & $6$ &$14$ & $29$ & $78$ & $8091$ & $6$ & $606$ & $6$ & $800$ & $294$ &$29$ &$8$ & $6$\\
   \hline
 $m$ & $98$ & $99$ & $100$ &$102$ & $104$ & $105$ & $106$ & $108$ &  $110$ & $111$ & $112$ & $114$ & $116$ &$117$ \\ 
 $s(m)$ & $53$ & $4$ & $711$ &$36$  & $944$ & $8$ & $6$ & $11223$ & $94$  & $8$ & $8016$ &$78$ &$43$ & $47$ \\
 $Sym(m)$ & $24$ & $4$ & $231$ &$25$ &  $514$ & $6$ & $5$ & $2627$ & $51$ & $6$ & $2533$ & $43$ &$28$ &$30$ \\
 $\mathcal{I}_2(m)$  & $53$ & $4$ & $711$ &$36$ & $944$ & $ 8$ & $6$ & $11191$ & $94$ & $8$ & $7680$ & $78$ &$43$ &$47$ \\
   \hline
   $m$ & $118$ & $120$ & $121$ & $122$ &$124$ & $125$ & $126$ & $129$ & $130$ & $132$ & $134$ & $135$ & $$ & $$\\
   $s(m)$  & $6$ & $22711$ & $4$ &$6$ & $29$ & $213$ & $990$ & $8$ & $36$ & $324$ & $6$ & $101$ & $$ & $$ \\
   $Sym(m)$ & $5$ & $8422$ & $4$ & $5$ &$21$ & $133$ & $304$ & $6$ & $25$ & $153$ & $5$ & $69$ & $$ & $$  \\
    $\mathcal{I}_2(m)$ & $6$ & $22711$ & $4$ & $6$ &$29$ & $213$ & $990$ & $8$ & $36$ & $324$ & $6$ & $101$ & $$ & $$ \\
       \hline
\end{tabular}
\hspace{1cm}
\caption{Computations with skew braces of order at most 135}
\end{small}
\end{table}

%%%%%%%%%%%%%%%%%%%%%%%%%%%%%%%%%%%%%%%%%%%%%%%

\section{Symmetric skew braces}

In this section we first prove that all symmetric skew braces belong to the class $\tilde{\mathcal{I}}_n$ for all $n \ge 1$. Then we prove several results analogous to the results of Bioch \cite{Bioch78}.   We start with the following nice observation on symmetric skew braces, which follows from \cite[Proposition 5.2]{BNY22}

\begin{lemma}\label{lem:symm}
A skew brace $A$ is symmetric if and only if for all $a, b \in A$, we have $\lambda_{ab} = \lambda_{b \circ a}$. Moreover, in a symmetric skew brace $A$, (i)  $\lambda_{a^{-1}} = \lambda_{\bar{a}} = \lambda^{-1}_a$ and (ii) $\lambda_{a * b} = \lambda_{[a^{-1}, b]^{\cdot}}$ for all $a, b \in A$.
\end{lemma}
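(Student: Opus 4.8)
The plan is to derive both statements from the cited criterion in \cite[Proposition 5.2]{BNY22}, which characterizes when $(A,\circ,\cdot)$ is again a skew left brace, i.e.\ when $A$ is symmetric. Recall the defining axiom for the pair $(A,\circ,\cdot)$: symmetry is equivalent to
$$
a \cdot (b \circ c) = (a \cdot b) \circ \bar{a} \circ (a \cdot c)
$$
for all $a,b,c \in A$. First I would translate this identity into a statement about $\lambda$-maps. Using $\lambda_a(b) = a^{-1}\cdot(a\circ b)$, or equivalently $a\circ b = a\cdot\lambda_a(b)$, one rewrites products under $\circ$ in terms of products under $\cdot$ and the $\lambda$-maps; the homomorphism property $\lambda_{a\circ b}=\lambda_a\lambda_b$ (valid in any skew brace, from \cite{GV17}) is the main tool. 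Carrying the symmetry identity through this dictionary, after cancellation one should land exactly on the relation $\lambda_{a\cdot b} = \lambda_{b\circ a}$ for all $a,b$. (One direction — symmetric $\Rightarrow$ this $\lambda$-identity — is essentially \cite[Proposition 5.2]{BNY22}; for the converse one reverses the computation, checking that the $\lambda$-identity forces the `opposite' brace axiom to hold.) This gives the first assertion.

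For the ``moreover'' part, I would specialize the relation $\lambda_{a\cdot b} = \lambda_{b\circ a}$ to suitable choices. Taking $b = a^{-1}$ gives $\lambda_1 = \mathrm{Id} = \lambda_{a^{-1}\circ a}$, and taking $a=a$, $b = \bar a$ (noting $\bar a$ is a specific element of $A$) one similarly extracts identities; more directly, applying the relation with the roles arranged so that one side collapses to $\lambda_1=\mathrm{Id}$. Concretely: from $\lambda_{a\cdot b}=\lambda_{b\circ a}$ put $b=a^{-1}$ to get $\mathrm{Id} = \lambda_{a^{-1}\circ a}$; but also, since $\lambda$ is a homomorphism on $(A,\circ)$, $\lambda_{a^{-1}\circ a} = \lambda_{a^{-1}}\lambda_a$, whence $\lambda_{a^{-1}} = \lambda_a^{-1}$. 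For $\lambda_{\bar a}=\lambda_a^{-1}$ one uses instead that $\lambda$ is always a homomorphism $(A,\circ)\to\Aut(A,\cdot)$, so $\lambda_{\bar a}=\lambda_{\overline{a}}=\lambda_a^{-1}$ holds in \emph{every} skew brace and needs no symmetry at all; hence $\lambda_{a^{-1}}=\lambda_{\bar a}=\lambda_a^{-1}$ in the symmetric case.

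The step I expect to require the most care is the faithful bookkeeping in translating the opposite-brace axiom into the $\lambda$-identity: there are several places where one must substitute $a\circ b = a\cdot\lambda_a(b)$, invoke $\lambda_{a\circ b}=\lambda_a\lambda_b$, and cancel group elements in $(A,\cdot)$, and it is easy to mismatch a $\lambda_a$ with a $\lambda_{\bar a}$ or drop an inverse. Since the equivalence is exactly the content of \cite[Proposition 5.2]{BNY22}, I would state that reference for the nontrivial direction and only sketch the short reverse computation, then finish with the one-line specializations above. No genuine obstacle is anticipated beyond this routine but error-prone manipulation.
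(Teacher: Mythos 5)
Your proposal is correct and takes essentially the same route as the paper: the paper gives no proof beyond asserting that the lemma ``follows from \cite[Proposition 5.2]{BNY22}'', so your reliance on that reference for the equivalence is exactly what the authors do. Your derivation of the ``moreover'' part is also right: $\lambda_{\bar a}=\lambda_a^{-1}$ holds in any skew brace since $\lambda$ is a homomorphism on $(A,\circ)$, and substituting $b=a^{-1}$ into $\lambda_{ab}=\lambda_{b\circ a}$ gives $\Id=\lambda_{a^{-1}\circ a}=\lambda_{a^{-1}}\lambda_a$, hence $\lambda_{a^{-1}}=\lambda_a^{-1}$.
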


 It is not difficult to see that the left ideal $A_{(t)}$ constructed in the preceding section coincides with $\Gamma_{t+1}(A)$, when $A$ is a symmetric skew  brace. Indeed
 
\begin{lemma}
For a symmetric skew brace $A$, $A_{(t)}=\Gamma_{t+1}(A)$ for all  integers $t\ge 1$.
\end{lemma}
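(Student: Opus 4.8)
The plan is to prove the equality $A_{(t)} = \Gamma_{t+1}(A)$ by establishing both inclusions; the inclusion $A_{(t)} \subseteq \Gamma_{t+1}(A)$ is already noted in the construction of the skeleton (it holds for \emph{every} skew brace, symmetric or not), so the real content is the reverse inclusion $\Gamma_{t+1}(A) \subseteq A_{(t)}$. I would proceed by induction on $t$. The case $t = 1$ is immediate, since $\Gamma_2(A)$ is generated by the elements $a*b$ and $[a,b]^{\cdot}$, which are exactly the degree-$1$ word values $*(a,b)$ and $\gamma_{\cdot}(a,b)$, all of which lie in $A_{(1)}$ by definition. (One should also check $\bar\gamma_{\cdot}$ and $\bar *$, but $\bar\gamma_{\cdot}(a,b)=[b,a]^{\cdot}=([a,b]^{\cdot})^{-1}$ and $\bar *(a,b)=b*a$ are manifestly covered too.)

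For the inductive step, assume $\Gamma_t(A) \subseteq A_{(t-1)}$ and aim to show $\Gamma_{t+1}(A) \subseteq A_{(t)}$. Recall that $\Gamma_{t+1}(A)$ is generated, as a subgroup of $(A,\cdot)$, by the three sets $A * \Gamma_t(A)$, $\Gamma_t(A) * A$, and $[A, \Gamma_t(A)]^{\cdot}$. By the inductive hypothesis, every element $v \in \Gamma_t(A)$ can be written in the canonical form $w_1 w_2 \cdots w_s$ with each $w_i \in m_i(A(\deg(m_i)+1)) \cup m_i(A(\deg(m_i)+1))^{-1}$ and $\deg(m_s) \ge t-1$. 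The task is then to show that $a * v$, $v * a$, and $[a, v]^{\cdot}$ all lie in $A_{(t)}$ for $a \in A$ and such $v$. The expansion identities for $*$, $\bar *$, $\gamma_{\cdot}$ over a product in the second (or first) variable — together with the fact that $A_{(t)}$ is a \emph{strong} left ideal, hence normal in $(A,\cdot)$ — reduce each of these to the generators: prepending $*$, $\bar *$, or $\gamma_{\cdot}$ to a word $m_i$ of degree $\ge t-1$ produces a word of degree $\ge t$, whose values live in $A_{(t)}$ by definition, and conjugates/inverses of such values stay in $A_{(t)}$ by normality. This is essentially the same bookkeeping already performed in the proof of Lemma~\ref{lem:inclusion:1}, now pushed one notch further: there it was shown $\bar\Gamma_n(A) \subseteq A_{n-1}$, and here we want the full $\Gamma_{t+1}(A)$.

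The place where symmetry is genuinely needed is in handling the set $\Gamma_t(A) * A$, i.e.\ the terms $v * a$ where $v$ is a product $w_1 \cdots w_s$ coming from $\Gamma_t(A)$ and $a \in A$. Unlike $a * v$ (expansion in the second variable is clean) or $[a,v]^{\cdot}$ (ordinary commutator calculus), the term $v * a = \lambda_v(a) a^{-1}$ with $v$ a long product requires understanding $\lambda_{w_1 \cdots w_s}$, and here Lemma~\ref{lem:symm} is the key tool: in a symmetric skew brace $\lambda_{ab} = \lambda_{b \circ a}$, and more usefully for our purposes $\lambda_{w_1\cdots w_s}$ can be rewritten as a composite of the $\lambda$-maps of the individual $w_i$ (in $\circ$-reversed order), each of which, being a word value of degree $\ge t-1$, gives $\lambda_{w_i}(a)a^{-1} = w_i \bar * a \in A_{(t)}$ after prepending $\bar *$ to $m_i$. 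Telescoping $\lambda_v(a) a^{-1} = \lambda_{w_1}(\cdots \lambda_{w_s}(a) \cdots) a^{-1}$ through successive insertions of $a^{-1}a$ and using normality of $A_{(t)}$ then places $v * a$ in $A_{(t)}$. I expect this telescoping argument for the $\bar *$-side — making precise how $\lambda$ of a product decomposes in the symmetric case and controlling the resulting cross-terms — to be the main obstacle; the $*$-side and the $\gamma_{\cdot}$-side are routine group-commutator and $\lambda$-expansion manipulations of the kind already appearing in Lemmas~\ref{lemma2}, \ref{lem:inclusion:1} and \ref{lemma4}. Once all three generating sets are shown to land in $A_{(t)}$, we conclude $\Gamma_{t+1}(A) \subseteq A_{(t)}$, and combined with the always-true reverse inclusion this gives $A_{(t)} = \Gamma_{t+1}(A)$, completing the induction.
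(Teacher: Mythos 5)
Your proposal is correct and follows essentially the same route as the paper: induction on $t$, the automatic inclusion $A_{(t)}\subseteq\Gamma_{t+1}(A)$, reduction of $A*\Gamma_t(A)$ and $[A,\Gamma_t(A)]^{\cdot}$ to the argument of Lemma \ref{lem:inclusion:1}, and the use of $\lambda_{ab}=\lambda_{b\circ a}$ (Lemma \ref{lem:symm}) to telescope $\lambda_{w_1\cdots w_s}(a)a^{-1}$ for the $\Gamma_t(A)*A$ generators. The only detail worth making explicit is the case $w_i=y^{-1}$ with $y$ a word value, which the paper settles by specializing the product identity to $y_1=y$, $y_2=y^{-1}$ and using $\lambda_{y^{-1}}=\lambda_{\bar y}=\lambda_y^{-1}$.
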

\begin{proof}
We prove it by induction. The result is obvious for $t =1$. So assume that $A_{(t-1)} = \Gamma_t(A)$. Also, it is sufficient to prove that the generators of $\Gamma_{t+1}(A)$ lie in $A_{(t)}$. If we take a generator $v$ that either belongs to $A * \Gamma_t(A)$ or $[A, \Gamma_t(A)]^{\cdot}$, then by the arguments used in the proof of Lemma \ref{lem:inclusion:1} we see that $v \in A_{(t)}$.  So assume that $v \in \Gamma_t(A) * A$.
So $v = (x_1 * a_1)^{\delta_1}  \cdots (x_r * a_r)^{\delta_r}$ for some $y_i \in \Gamma_t(A)$, $a_i \in A$ and positive integer $r$. So it is sufficient to prove it for any of the components of $v$. We prove it for the first component, that is, $(x_1 * a_1)^{\delta_1}$.

By the inductive argument, we know that $x _1 \in A_{(t-1)}$.  Let  $y,~y_1,~y_2 \in S_{t-1}(A)$ and $a \in A$. Then, using Lemma \ref{lem:symm}, we get
\begin{align*}
(y_1y_2)\ast a &=\lambda_{y_1y_2}(a)a^{-1} = \lambda_{y_2 \circ y_1} (a)a^{-1}\\
&=\lambda_{y_2}\big(\lambda_{y_1}(a)a^{-1}a\big)a^{-1}\\
&=(y_2\ast a)a\lambda_{y_2}(a(y_1\ast a)a^{-1})a^{-1},
\end{align*}
which belongs to $A_{(t)}$ as it is a strong left ideal of $A$. Now, using Lemma \ref{lem:symm} again, the preceding identity with $y_1 = y$ and $y_2 = y^{-1}$ gives
$$y^{-1}\ast a = a \lambda_{\bar{y}}\big(a(y * a) a^{-1})\big)^{-1} a^{-1},$$
which again lies in  $A_{(t)}$. Since $x_1$ can be written as a product of finitely many elements from $S_{t-1}(A) \cup (S_{t-1}(A))^{-1}$, this completes the proof.
\end{proof}

\begin{lemma}\label{lemma5}
Let $A$ be a symmetric skew brace. Then, for all $s \ge r \ge 1$,  $v \in \Gamma_r(A)$ and $u \in \Ann_s(A)$,  we have $v*u \in  \Ann_{s-r}(A)$.
\end{lemma}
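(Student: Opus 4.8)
The plan is to prove Lemma \ref{lemma5} by induction on $r$, mirroring the structure of Lemma \ref{lemma4} but exploiting the extra symmetry. For the base case $r=1$, the claim $v*u \in \Ann_{s-1}(A)$ for $v \in \Gamma_1(A) = A$ and $u \in \Ann_s(A)$ is immediate from the characterization $\Ann_s(A) = \{a \mid a*b, b*a, [a,b]^{\cdot} \in \Ann_{s-1}(A) \text{ for all } b\}$ recorded in Section 2. For the inductive step, assume the result holds for $r-1$, and recall that $\Gamma_r(A)$ is generated (as a subgroup of $(A,\cdot)$) by the sets $A*\Gamma_{r-1}(A)$, $\Gamma_{r-1}(A)*A$, and $[A,\Gamma_{r-1}(A)]^{\cdot}$; since $A_{(s-r)} = \Ann_{s-r}(A)$-membership is what we want and $\Ann_{s-r}(A)$ is a (normal) ideal, it suffices to check $v*u \in \Ann_{s-r}(A)$ for $v$ ranging over these generators.

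First I would handle the generator $v = [a,c]^{\cdot}$ with $a \in A$, $c \in \Gamma_{r-1}(A)$. Here one can try to transpose the roles played in Lemma \ref{lemma4}: in that lemma $\lambda_u([a,c]^{\cdot})$ was expanded with $u \in \Ann_s(A)$ acting; now we need $\lambda_v(u) u^{-1}$ with $v = [a,c]^{\cdot} \in \Gamma_r(A)$. The key tool is Lemma \ref{lem:symm}: in a symmetric skew brace $\lambda_{xy} = \lambda_{y \circ x}$ and $\lambda_{x^{-1}} = \lambda_{\bar x} = \lambda_x^{-1}$, which lets one rewrite $\lambda_{[a,c]^{\cdot}}$ as a product of $\lambda$'s indexed by $a, c$ and their inverses in a controlled way, peeling off one commutator-bracket layer at a time and using the inductive hypothesis (applied with $r-1$) together with Lemma \ref{lemma4} to absorb the error terms into $\Ann_{s-r}(A)$. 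The generator $v = a*c$ (with $c \in \Gamma_{r-1}(A)$) is treated similarly: write $v = \lambda_a(c)c^{-1}$, so $\lambda_v = \lambda_{\lambda_a(c)c^{-1}} = \lambda_{c^{-1} \circ \lambda_a(c)}$ by symmetry, and then expand $\lambda_v(u)u^{-1}$ using the cocycle-type identities for $*$ and the inductive hypothesis, noting that $c, \lambda_a(c) \in \Gamma_{r-1}(A)$. For the third generator $v = c*a$ with $c \in \Gamma_{r-1}(A)$, one uses the identity from the previous lemma proof: $(y_1 y_2)*a$ and $y^{-1}*a$ were already expressed (in the lemma just before this one) as strong-left-ideal elements built from $y*a$ with $y \in S_{r-2}$-type data, so combined with symmetry this reduces $c*a$ to the cases already handled.

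The main obstacle I expect is the bracket-by-bracket bookkeeping in the $v = [a,c]^{\cdot}$ case: one must expand $\lambda_{[a,c]^{\cdot}}(u)u^{-1}$ into a product where every factor is visibly either of the form $z*w$ or $[z,w]^{\cdot}$ with the "depth" parameters summing correctly, so that the inductive hypothesis at level $r-1$ and Lemma \ref{lemma4} (which controls $\epsilon(u,v) \in \Ann_{s-r'}(A)$ from the other side) together certify membership in $\Ann_{s-r}(A)$. Getting the conjugation factors $(\cdot)^{x}$ to land in $\Ann_{s-r}(A)$ requires repeatedly invoking that $\Ann_{s-r}(A)$ is normal in $(A,\cdot)$, together with the identity $[a, w^{-1}]^{\cdot} = \big(([a,w]^{\cdot})^{-1}\big)^{w^{-1}}$ used in Lemma \ref{lem:inclusion:1}. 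A useful simplification worth exploiting is that, by the lemma proved immediately before this one, $A_{(t)} = \Gamma_{t+1}(A)$ for symmetric braces, so the filtration $\bar{\Gamma}$, $\Gamma$, and the skeleton $A_{(\cdot)}$ all coincide; this means I can freely quote Lemma \ref{lemma4} with $\bar\Gamma_r(A)$ replaced by $\Gamma_r(A)$ and combine it with the present induction rather than re-deriving everything. Once all three generator types are dispatched, closure of $\Ann_{s-r}(A)$ under products and inverses, plus Lemma \ref{lemma4b} / Corollary \ref{cor-phi} to handle products of generators, finishes the induction.
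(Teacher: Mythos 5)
Your proposal follows essentially the same route as the paper's proof: induction on $r$, reduction to the three generator types of $\Gamma_r(A)$, and the decisive use of Lemma~\ref{lem:symm} ($\lambda_{xy}=\lambda_{y\circ x}$ and $\lambda_{x^{-1}}=\lambda_{\bar{x}}=\lambda_x^{-1}$) to rewrite $\lambda_v$ for a generator $v$ as a composition of $\lambda$'s indexed by the individual factors, which are then peeled off one at a time against the inductive hypothesis. One small caveat: the lemma immediately preceding this one gives $A_{(t)}=\Gamma_{t+1}(A)$ but not $\bar{\Gamma}_{t+1}(A)=\Gamma_{t+1}(A)$, so your appeal to Lemma~\ref{lemma4} with $\Gamma_r(A)$ in place of $\bar{\Gamma}_r(A)$ should rest (as the paper notes in the proof of the theorem that follows) on the observation that its computations carry over verbatim, not on an identification of the two filtrations.
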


\begin{proof}
We shall proceed by induction on $r$. For $r=1$, the result follows by the definition of the $s$th annihilator.  So assume that  $s \ge r \ge 2$ and the assertion holds for $r-1$.  We need to show that $\lambda_{v}(u)u^{-1} \in  \Ann_{s-r}(A)$ for all $v \in \Gamma_{r}(A)$ and $u \in \Ann_s(A)$. In view of Lemma \ref{lem:symm}, it is sufficient to prove the preceding assertion  for all generators of $\Gamma_{r}(A)$. Let  $v = \epsilon_1(a, d)$ for some $a \in A$ and $d \in  \Gamma_{r-1}(A)$, where $\epsilon_1 \in X$. If $\epsilon_1 = *$, then
\begin{eqnarray*}
\lambda_{a * d} (u) &=& \lambda_{a^{-1}(a \circ d) d^{-1}} (u) \\
&=&  \lambda_{d^{-1} \circ a \circ d \circ a^{-1}}(u)\\
&=&  \lambda_{d^{-1} \circ a \circ d}(wu), \mbox{ where $w \in \Ann_{s-1}(A)$ } \\
&=& \lambda_{d^{-1} \circ a} \big(yw \; x u \big), \mbox{ where $y \in \Ann_{s-r}(A), \;\;  x \in \Ann_{s-r+1}(A)$ } \\
&=& \lambda_{d^{-1}} \big( \lambda_{a} (ywxw^{-1})  \lambda_a(wu)\big) \\
&=& \lambda_{d^{-1}} \big(\lambda_a(y[w,x]^{\cdot} x) x^{-1} (xu)\big), \mbox{ since $\lambda_a(wu) = u$ } \\
&=& \lambda_{d^{-1}} \big(\lambda_a(y[w,x]^{\cdot}) \lambda_a(x) x^{-1} (xu)\big)\\
&=& \lambda_{d^{-1}} \big(\lambda_a(yz_1)z_2\big) \lambda_{d^{-1}}(xu), \mbox{ where $z_1, z_2 \in \Ann_{s-r}(A)$ }\\
&=& \lambda_{d^{-1}} \big(\lambda_a(yz_1)z_2\big) u, \mbox{ since $\lambda_{d^{-1}}(xu) = u$ } \\
&=& bu, \mbox{ where } b =  \lambda_{d^{-1}} \big(\lambda_a(yz_1)z_2\big) \in \Ann_{s-r}(A).
\end{eqnarray*}

Note that the preceding identity holds for any $a \in A$ and $d \in \Gamma_{r-1}(A)$. Thus there exists $b_1 \in \Ann_{s-r}(A)$ such that 
$$\lambda_{a^{-1} * d^{-1}} (u) = b_1 u.$$
Thus, by Lemma \ref{lem:symm},  we get
$$\lambda^{-1}_{[a, d^{-1}]^{\cdot}} (u) = \lambda^{-1}_{[a, d^{-1}]^{\cdot}}( b_1^{-1}) u.$$
Now, if we take  $\epsilon_1 = \bar{*}$, then, again using Lemma \ref{lem:symm}, we get
$$\lambda_{a \bar{*} d} (u) =  \lambda_{d * a} (u)    = \lambda_{[d^{-1}, a]^{\cdot}}(u) = \lambda^{-1}_{[a, d^{-1}]^{\cdot}} (u)   = \lambda^{-1}_{[a, d^{-1}]^{\cdot}}( b_1^{-1}) u,$$
where $\lambda^{-1}_{[a, d^{-1}]^{\cdot}}( b_1^{-1})\in \Ann_{s-r}(A)$. Finally, the cases  $\epsilon_1 = \gamma_., \bar{\gamma}_.$ follow from the  identity $\lambda_{a * d} = \lambda_{[a^{-1}, d]^{\cdot}}$. The proof is now complete. 
\end{proof}

\begin{thm}
Let  $A$ be a symmetric skew brace. Then $A \in \tilde{\mathcal{I}}_n$ for all $n \ge 1$. 
\end{thm}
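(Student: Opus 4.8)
The strategy is to reduce the statement by soft arguments to a single computational claim in the style of Lemma~\ref{lemma5}, and then to prove that claim by induction on the lower central index. First I would record that the class of symmetric skew braces is closed under the formation of sub-skew braces, brace homomorphic images and finite direct products. For sub-skew braces and direct factors this is clear, since the defining identity of $(A,\circ,\cdot)$ is inherited; for a quotient $A/I$ by an ideal $I$ one notes, using Lemma~\ref{lem:symm}, that the $\lambda$-map of the opposite brace $(A,\circ,\cdot)$ equals $\lambda_{\bar a}=\lambda_a^{-1}$, so $I$ (being a left ideal) is invariant under it, and hence $A/I$ is again symmetric. Since $\tilde{\mathcal{I}}_n$ is by definition the largest subclass of $\bar{\mathcal{I}}_n$ closed under these three operations, it suffices to show that every symmetric skew brace $A$ belongs to $\bar{\mathcal{I}}_n$, i.e. to $\mathcal{I}_r$ for every $r\le n$; as $n$ is arbitrary, this is the same as proving $A\in\mathcal{I}_r$ for all $r\ge 1$.

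Fix $r$ and apply Theorem~\ref{phi-well-defined}: one must check $\epsilon(u,v)\in\Ann_{s-q}(A)$ for all $\epsilon\in X$, $u\in\Ann_s(A)$ and $v\in S_{q-1}(A)$ with $1\le q\le s\le r$. For a symmetric skew brace $A_{(q-1)}=\Gamma_q(A)$, and every value of a word of degree $q-1$ lies in $A_{(q-1)}$, so $S_{q-1}(A)\subseteq\Gamma_q(A)$. Hence it is enough to prove the (a priori stronger) statement
\[
\epsilon(u,v)\in\Ann_{s-q}(A)\qquad\text{for all }\epsilon\in X,\ u\in\Ann_s(A),\ v\in\Gamma_q(A),\ 1\le q\le s.
\]
For $\epsilon=\bar{\ast}$ this is exactly Lemma~\ref{lemma5}, and for $\epsilon=\bar\gamma_{\cdot}$ it follows from the case $\epsilon=\gamma_{\cdot}$ because $[v,u]^{\cdot}=([u,v]^{\cdot})^{-1}$ and $\Ann_{s-q}(A)$ is a subgroup of $(A,\cdot)$. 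So only $\epsilon=\ast$ and $\epsilon=\gamma_{\cdot}$ remain.

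These two cases I would handle simultaneously by induction on $q$, the case $q=1$ being the very definition of the annihilator series. For the inductive step, since $\Ann_{s-q}(A)$ is normal in $(A,\cdot)$ and both $\ast$ and $\gamma_{\cdot}$ distribute over $\cdot$-products up to conjugation (immediate from the identities of Section~2 together with $\lambda$ being a homomorphism $(A,\circ)\to\Aut(A,\cdot)$), it suffices to treat a generator $v$ of $\Gamma_q(A)$, i.e. $v=a\ast d$, $v=d\ast a$ or $v=[a,d]^{\cdot}$ with $a\in A$ and $d\in\Gamma_{q-1}(A)$. For each such $v$ and each of $\epsilon=\ast,\gamma_{\cdot}$ one expands $\epsilon(u,v)$ by means of the brace identities, the Hall--Witt identity in $(A,\cdot)$, and Lemma~\ref{lem:symm} (to collapse products and compositions of $\lambda$-maps, using $\lambda_{ab}=\lambda_{b\circ a}$ and $\lambda_{a^{-1}}=\lambda_{\bar a}=\lambda_a^{-1}$), exactly as in the proofs of Lemmas~\ref{lemma4},~\ref{lemma4b} and~\ref{lemma5}; this rewrites $\epsilon(u,v)$ as a $\cdot$-product of terms $\epsilon'(w,z)$ with $w\in\Ann_m(A)$ and $z\in\Gamma_j(A)$ for suitable $m,j$. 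The point that prevents circularity is that $\Gamma_q(A)\subseteq\Gamma_{q-1}(A)\subseteq\cdots\subseteq\Gamma_1(A)=A$: every auxiliary term $\epsilon'(w,z)$ above can therefore be absorbed into $\Ann_{s-q}(A)$ by applying, at the largest admissible level $j'\le m$, either the inductive hypothesis (for $\ast$ and $\gamma_{\cdot}$ at level $q-1$), or Lemma~\ref{lemma5} (for $\bar{\ast}$), or merely the defining property of $\Ann_m(A)$ (level $1$), each of which places $\epsilon'(w,z)$ in $\Ann_{m-1}(A)$; the index arithmetic then shows $m-1\le s-q$ throughout, so $\Ann_{m-1}(A)\subseteq\Ann_{s-q}(A)$. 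Alternatively, one may first settle the special case $s=q$ and deduce the general one by passing to $A/\Ann_{s-q}(A)$, using that $\Gamma_q$ and $\Ann_s$ are functorial under quotients by ideals.

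Everything in the first two paragraphs is routine. The main obstacle is the last step: the finite but lengthy case analysis over the three generator types and the two operations, where the real work lies in the bookkeeping of annihilator indices and in invoking the symmetric hypothesis (Lemma~\ref{lem:symm}) at the right places. This is entirely parallel to, though somewhat heavier than, the computations already carried out in the proofs of Lemmas~\ref{lemma4} and~\ref{lemma5}.
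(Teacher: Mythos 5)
Your proposal is correct and follows essentially the same route as the paper's proof: reduce via Theorem \ref{phi-well-defined} to showing $\epsilon(u,v)\in\Ann_{s-r}(A)$ for $u\in\Ann_s(A)$ and $v\in\Gamma_r(A)$, induct on $r$ with the base case given by the definition of the annihilator series, dispatch $\epsilon=\bar{*}$ by Lemma \ref{lemma5}, $\epsilon=\bar{\gamma}_{\cdot}$ by inversion from $\gamma_{\cdot}$, and most $(\epsilon,\epsilon_1)$ pairs by the computations of Lemma \ref{lemma4}, then obtain membership in $\tilde{\mathcal{I}}_n$ from the closure of symmetric skew braces under the three operations via Lemma \ref{lem:symm}. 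The only substantive item you leave implicit is the one genuinely new computation the paper writes out in full, namely $\epsilon=\gamma_{\cdot}$ against a generator $v=a\,\bar{*}\,d$, but you correctly locate this as the residual work and its treatment is indeed parallel to Lemma \ref{lemma4}.
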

\begin{proof}
In view of Theorem \ref{phi-well-defined}, to prove that $A \in \bar{\mathcal{I}}_n$, it is sufficient to prove that $\epsilon(u,v) \in \Ann_{s-r}(A)$, for all $\epsilon \in X$,  integers $s \ge r \ge 1$,  $u \in \Ann_s(A)$ and $v \in \Gamma_r(A)$.  
We prove this result using induction on $r$. The case $r=1$ holds true by the definition of the annihilator series. Assume that the assertion holds for all values $\le r-1$ given any $s \ge r-1$.   Let $u \in  \Ann_s(A)$ and $v  \in \Gamma_{r}(A)$, $s \ge r \ge 1$. Write $v = \epsilon_1(a, d)$, where $d \in \Gamma_{r-1}(A)$.  By our assumption $\epsilon(u, d)  \in \Ann_{s-r+1}(A)$.  
The case $\epsilon = \bar{*}$ has already been handled in Lemma \ref{lemma5}. The cases $\epsilon = *$ and $\epsilon_1 = \gamma_{\cdot}, *$ have been taken care of in Lemma \ref{lemma4}. There is word of caution on the use of Lemma \ref{lemma4}. Although the domain of $v$ is bigger in the present situation, the computations are precisely the same. The cases $\epsilon = *$ and $\epsilon_1 = \bar{\gamma}_{\cdot}, \bar{*}$ will follow along similar lines, and are therefore left for the reader as an easy exercise.

The cases $\epsilon = \gamma_{\cdot}$ and $\epsilon_1 = \gamma_{\cdot}, *$ have also been handled in Lemma \ref{lemma4},  and the cases  $\epsilon = \gamma_{\cdot}$ and $\epsilon_1 = \bar{\gamma}_{\cdot}$ will follow similarly. As above, some caution is required on the use of Lemma \ref{lemma4}.

Finally we take $\epsilon = \gamma_{\cdot}$, $\epsilon_1 =  \bar{*}$. Notice that
\begin{eqnarray}\label{eqn1}
 [\lambda_{d}(a), [u, a^{-1}]^{\cdot}]^{\cdot} &=&  \lambda_{d}\big([a, \lambda^{-1}_d([u, a^{-1}]^{\cdot})]^{\cdot}\big) \nonumber\\
 &=&  \lambda_{d}\big(\big[a, \lambda^{-1}_d([u, a^{-1}]^{\cdot})([u, a^{-1}]^{\cdot})^{-1}[u, a^{-1}]^{\cdot}\big]^{\cdot}\big) \nonumber\\
 &=&  \lambda_{d}\big([a, z_1[u, a^{-1}]^{\cdot}]^{\cdot}\big),\mbox{ where $z_1 \in \Ann_{s-r}(A)$ } \nonumber\\
 &=& \lambda_{d}\big([a, z_1]^{\cdot} [z_1, [a, [u, a^{-1}]^{\cdot}]^{\cdot}]^{\cdot} [a, [u, a^{-1}]^{\cdot}]^{\cdot}\big) \nonumber\\
 &=& z_2 \big(\lambda_d([a, [u, a^{-1}]^{\cdot}]^{\cdot})([a, [u, a^{-1}]^{\cdot}]^{\cdot})^{-1}\big) [a, [u, a^{-1}]^{\cdot}]^{\cdot} \nonumber\\
 &=& z_2 z_3 [a, [u, a^{-1}]^{\cdot}]^{\cdot},
 \end{eqnarray} 
where $z_2 z_3 \in \Ann_{s-r}(A)$. Now

\begin{eqnarray*}
[u, a \bar{*} d]^{\cdot} &=& [u, \lambda_{d}(a)a^{-1}]^{\cdot} \\
&=& [u, \lambda_{d}(a)]^{\cdot} [\lambda_{d}(a), [u, a^{-1}]^{\cdot}]^{\cdot} [u, a^{-1}]^{\cdot}\\
&=& \lambda_{d}([\lambda^{-1}_{d}(u), a]^{\cdot}) [\lambda_{d}(a), [u, a^{-1}]^{\cdot}]^{\cdot}([a, [u,a^{-1}]^{\cdot}]^{\cdot})^{-1} ([u, a]^{\cdot})^{-1}\\
&=& \lambda_d([wu, a]^{\cdot}) z_2 z_3 ([u, a]^{\cdot})^{-1}, \mbox{ (by \eqref{eqn1}) where $w \in \Ann_{s-r+1}(A)$ } \\
&=& \lambda_d\big([w, [u, a]^{\cdot}]^{\cdot} [u, a]^{\cdot} [w, a]^{\cdot}\big) z_2z_3 ([u, a]^{\cdot})^{-1}\\
&=&  \lambda_d([w, [u, a]^{\cdot}]^{\cdot}) \lambda_d([u, a]^{\cdot}) \lambda_d ([w, a]^{\cdot}) z_2z_3 ([u, a]^{\cdot})^{-1}\\
&=&  \lambda_d([w, [u, a]^{\cdot}]^{\cdot})  (\lambda_d ([w, a]^{\cdot}) z_2z_3 )^{\lambda_d([u, a]^{\cdot})} \lambda_d([u, a]^{\cdot})([u, a]^{\cdot})^{-1}\\
&=&  \lambda_d([w, [u, a]^{\cdot}]^{\cdot})  \big(\lambda_d ([w, a]^{\cdot}) z_2z_3 \big)^{\lambda_d([u, a]^{\cdot})} (d * [u, a]^{\cdot}),
\end{eqnarray*}
which lies in $\Ann_{s-r}(A)$, since $[w, [u, a]^{\cdot}]^{\cdot}$, $[w, a]^{\cdot}$ and $d * [u, a]^{\cdot}$  all lie in  $\Ann_{s-r}(A)$ by inductive argument. The case $\epsilon = \bar{\gamma}_.$ follows from the case $\epsilon = \gamma_.$ This proves that  $A \in \bar{\mathcal{I}}_n$.  That the class of symmetric skew braces lies in $\tilde{\mathcal{I}}_n$ follows by using  Lemma \ref{lem:symm}, and the proof is complete.
 \end{proof}

 So, we can now talk about $n$-isoclinism of symmetric skew braces. Numerous results have been proved on $n$-isoclinism of groups in the literature. We prove here some results analogous to those results for skew braces. The following three results are analogous to \cite[Lemma 1.2, Lemma 1.3 and Theorem 1.4]{Bioch78}. In the case of symmetric skew braces $A$, we know that $A_{(n)} = \Gamma_{n+1}(A)$. We'll mostly use  $\Gamma_{n+1}(A)$  for  $A_{(n)}$ while dealing with $n$-isoclinism.

\begin{prop}\label{prop6}
Let $A$ and $B$ be $n$-isoclinic symmetric skew  braces with $n$-isoclinism  $(\xi, \theta)$. Then the following statements hold true:
\begin{enumerate}
\item If $A_1$ is sub-skew  brace of $A$ containing $\Ann_n(A)$ and $\xi(A_1/\Ann_n(A)) = B_1/\Ann_n(B)$, then  $B_1$ is a  sub-skew   brace of $B$ containing $\Ann_n(B)$ and $A_1$ and $B_1$ are $n$-isoclinic skew braces. 
\item  If $A_2$ is an ideal of $A$ such that $A_2 \le \Gamma_{n+1}(A)$, then $\theta(A_2) \le  \Gamma_{n+1}(B)$ is an ideal of $B$ and  the skew  braces $A/\A_2$ and $B/\theta(A_2)$ are $n$-isoclinic.
\end{enumerate}
\end{prop}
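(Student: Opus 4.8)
The plan is to prove the two statements by transporting the $n$-isoclinism data $(\xi,\theta)$ along the relevant quotient maps, in both cases checking first that the image objects are of the right algebraic type and then that the induced pair of isomorphisms makes the defining diagram commute for every brace commutator word $m$ of degree $n$.

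For part (1), the first step is to note that $\xi : A/\Ann_n(A) \to B/\Ann_n(B)$ is a brace isomorphism, so it carries the sub-skew brace $A_1/\Ann_n(A)$ onto a sub-skew brace of $B/\Ann_n(B)$; by the correspondence theorem for ideals (here $\Ann_n(B)$ is an ideal of $B$), this image is of the form $B_1/\Ann_n(B)$ for a unique sub-skew brace $B_1$ of $B$ with $\Ann_n(B) \le B_1$. Next I would observe that $\Ann_n(A_1) \supseteq \Ann_n(A)$ since $\Ann_n(A) \le A_1$ consists of elements annihilating all of $A$, hence in particular all of $A_1$; more importantly, since $A$ and $B$ are symmetric we have $A_{1,(n)} = \Gamma_{n+1}(A_1)$ and likewise for $B_1$. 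One then checks that $\Ann_n(A_1) = \Ann_n(A)$ — the inclusion $\Ann_n(A_1)\supseteq\Ann_n(A)$ is immediate, and for the reverse one uses that $\xi$ restricts to an isomorphism $A_1/\Ann_n(A)\to B_1/\Ann_n(B)$ so that an element of $\Ann_n(A_1)$ maps into $\Ann_n(B_1)$, but since $\Gamma_{n+1}(A)$ need not be contained in $A_1$ one must instead argue directly that $\Ann_n(A)$ is already the full $n$th annihilator of $A_1$ by the description $\Ann_n(A_1)=\{a\in A_1 \mid \epsilon(a,b)\in\Ann_{n-1}(A_1)\ \forall b\in A_1\}$ together with an induction on $n$. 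Granting this, $\xi$ restricts to $\xi_1: A_1/\Ann_n(A_1)\to B_1/\Ann_n(B_1)$, and $\theta$ restricts to a map $\Gamma_{n+1}(A_1)\to\Gamma_{n+1}(B_1)$: indeed $\Gamma_{n+1}(A_1)$ is generated by word values $m(a_1\cdots a_{n+1})$ with $a_i\in A_1$, and the commutativity of the original diagram forces $\theta(m(a_1\cdots a_{n+1})) = m(b_1\cdots b_{n+1})$ whenever $\xi$ sends $\tilde a_i$ to $\tilde b_i$; picking the $b_i\in B_1$ (possible since $\xi(A_1/\Ann_n A)=B_1/\Ann_n B$) shows $\theta$ maps generators of $\Gamma_{n+1}(A_1)$ into $\Gamma_{n+1}(B_1)$, and the same argument with $\theta^{-1}$ gives surjectivity. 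The diagram for $(\xi_1,\theta_1)$ then commutes by restriction, so $A_1$ and $B_1$ are $n$-isoclinic.

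For part (2), first note that $\Ann_n(A)$ always contains no nontrivial ideal forcing issues — rather, the key point is that $A_2\le\Gamma_{n+1}(A)$ together with nilpotency-type bookkeeping gives $A_2\cap\Ann_n(A)$-control. Set $\bar A := A/A_2$. Since $A_2$ is an ideal, $\bar A$ is a skew brace, and it is symmetric because $A$ is. The image $\theta(A_2)$ is a sub-skew brace of $\Gamma_{n+1}(B)$; I would show it is an ideal of $B$ as follows. Because $A_2\le\Gamma_{n+1}(A)$ and $\Ann_n(A)$ maps to $1$ under the quotient defining $\phi^A_m$, the preimage $\xi^{-1}$ and $\theta$ together identify $A_2/(A_2\cap\cdots)$ with a corresponding object in $B$; more concretely, since $\theta$ is a brace isomorphism $\Gamma_{n+1}(A)\to\Gamma_{n+1}(B)$ and $A_2$ is an ideal of $A$ contained in $\Gamma_{n+1}(A)$, one checks normality of $\theta(A_2)$ in $(B,\cdot)$ and $(B,\circ)$ and $\lambda$-invariance by pulling back the action: an element $\lambda_b(\theta(a))$ for $a\in A_2$ can be rewritten, using the $n$-isoclinism relations, as $\theta$ of a corresponding element $\lambda_{\hat a'}(a)\in A_2$, because the $\lambda$-action of $B$ on $\Gamma_{n+1}(B)$ factors through $B/\Ann_n(B)\cong A/\Ann_n(A)$ and $\theta$ intertwines these actions (this is exactly the content of the $\phi_\ast$ half of the diagram). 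Hence $\theta(A_2)\trianglelefteq B$. Then $\xi$ descends to $\bar\xi : \bar A/\Ann_n(\bar A)\to \bar B/\Ann_n(\bar B)$ after checking $\Ann_n(\bar A)=\Ann_n(A)/(A_2\cap\Ann_n(A))\cdot$-image — precisely, the natural map $A/\Ann_n(A)\to \bar A/\Ann_n(\bar A)$ is an isomorphism since $A_2\le\Gamma_{n+1}(A)$ and an element of $A$ annihilates $\bar A$ iff it annihilates $A$ modulo $A_2$, which for $A_2$ deep in the lower central series forces it to annihilate $A$ (this uses $A_2\le\Gamma_{n+1}(A)$ and Lemma~\ref{lemma-key}/Lemma~\ref{lemma4}). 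Similarly $\Gamma_{n+1}(\bar A) = \Gamma_{n+1}(A)/A_2$ and $\Gamma_{n+1}(\bar B)=\Gamma_{n+1}(B)/\theta(A_2)$, so $\theta$ descends to an isomorphism $\bar\theta$ between them. Finally, for each word $m$ of degree $n$, $\phi^{\bar A}_m$ is the composite of $\phi^A_m$ with the quotient $\Gamma_{n+1}(A)\to\Gamma_{n+1}(\bar A)$ precomposed with the (iso) $\bar A/\Ann_n(\bar A)\cong A/\Ann_n(A)$, and likewise for $B$; commutativity of the barred diagram follows by chasing the original one.

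The main obstacle I expect is the bookkeeping around annihilators: showing $\Ann_n(A_1)=\Ann_n(A)$ in part (1) and $\Ann_n(A/A_2)\cong A/\Ann_n(A)$-compatibly in part (2). Both are ``obvious from group theory'' for the upper/lower central series, but here one must verify them using the brace-commutator-word description of $\Ann_n$ (Lemma~\ref{lemma-key}) together with Lemma~\ref{lemma4} and Lemma~\ref{lemma5}, and the symmetry hypothesis is what makes $A_{(n)}=\Gamma_{n+1}(A)$ and keeps these identifications clean. Once those identifications are in hand, every diagram chase is routine: the isomorphisms are restrictions or quotients of $\xi$ and $\theta$, and the word maps $\phi_m$ are natural in the obvious sense, so commutativity is inherited.
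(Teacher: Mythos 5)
Your overall strategy (restricting and descending $\xi$ and $\theta$) is the same as the paper's, but both parts hinge on annihilator identifications that are false, and those false claims sit exactly where the real content of the proposition lies. In part (1) you assert $\Ann_n(A_1)=\Ann_n(A)$. This fails already for $n=1$ with $A=B$ a trivial brace on a nonabelian group and $A_1$ an abelian subgroup properly containing the centre: then $\Ann(A_1)=A_1\supsetneq \Ann(A)$. What is actually needed --- and what the paper proves --- is that $\xi$ carries $\Ann_n(A_1)/\Ann_n(A)$ onto $\Ann_n(B_1)/\Ann_n(B)$, so that it induces a well-defined bijection $A_1/\Ann_n(A_1)\to B_1/\Ann_n(B_1)$. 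The paper gets this by a contradiction argument with the word maps: if $a_1\notin\Ann_n(A_1)$ while its counterpart $a_2$ lies in $\Ann_n(B_1)$, then $m(y_1\cdots y_n a_2)=1$ for all degree-$n$ words $m$ and all $y_i\in B_1$, and commutativity of the isoclinism diagram forces $m(x_1\cdots x_n a_1)=1$ for all $x_i\in A_1$, whence $a_1\in\Ann_n(A_1)$ after all. Your parenthetical plan to ``argue directly that $\Ann_n(A)$ is already the full $n$th annihilator of $A_1$ \ldots{} by induction on $n$'' cannot be repaired, because the statement is false.

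Part (2) has the same defect. You claim the natural map $A/\Ann_n(A)\to (A/A_2)/\Ann_n(A/A_2)$ is an isomorphism because $A_2\le\Gamma_{n+1}(A)$. Again false: for $n=1$, $A$ a trivial brace on a nilpotent group of class $2$ and $A_2=\Gamma_2(A)$, the quotient $A/A_2$ is a trivial abelian brace, so $\Ann(A/A_2)=A/A_2$ while $A/\Ann(A)\neq 1$. The hypothesis $A_2\le\Gamma_{n+1}(A)$ controls $\Gamma_{n+1}(A/A_2)$ (which does equal $\Gamma_{n+1}(A)/A_2$, as you say), not $\Ann_n(A/A_2)$. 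The correct route, which the paper takes, is to characterise the preimage of $\Ann_n(A/A_2)$ in $A$ as the set of $u$ with $m(a_1\cdots a_n u)\in A_2$ for all $m$ and all $a_i$, and to use Lemma \ref{lemma6} (namely $\xi\big(\Ann_n(A)\, m(a_1\cdots a_{n+1})\big)=\Ann_n(B)\,\theta\big(m(a_1\cdots a_{n+1})\big)$) to show that $\xi$ maps this preimage, modulo $\Ann_n(A)$, onto the corresponding preimage for $B/\theta(A_2)$; only then does $\tilde\xi$ descend. Your treatment of the ideal property of $\theta(A_2)$ and of the induced map on $\Gamma_{n+1}$ is essentially in line with the paper, but the two annihilator claims are genuine gaps, and they are the heart of the proof.
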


We first prove a lemma.
\begin{lemma}\label{lemma6}
With the settings of Proposition \ref{prop6}, we have 
$$\xi\big(\Ann_n(A) m(a_1 \cdots a_{n+1})\big) =  \Ann_n(B) \theta \big(m(a_1 \cdots a_{n+1})\big)$$
 for all $a_1, \ldots a_{n+1} \in A$ and $m \in M$ with $\deg(m) = n$.
\end{lemma}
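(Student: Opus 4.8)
The plan is a short diagram chase resting on one elementary fact: every homomorphism of skew braces commutes with every brace commutator word map. First I would record this. If $f : C \to D$ is a brace homomorphism and $m \in M$ with $\deg(m) = s$, then
\[
f\big(m(c_1 c_2 \cdots c_{s+1})\big) = m\big(f(c_1) f(c_2) \cdots f(c_{s+1})\big)
\]
for all $c_i \in C$. This is seen by decomposing $m$ into its letters from $X = \{\gamma_{.}, \bar{\gamma}_{.}, *, \bar{*}\}$ and checking that $f$ respects each: $f([x,y]^{\cdot}) = [f(x),f(y)]^{\cdot}$ and $f([x,y]^{\circ}) = [f(x),f(y)]^{\circ}$ since $f$ is a homomorphism for both group operations, while $f(\lambda_x(y)) = f\big(x^{-1}(x \circ y)\big) = f(x)^{-1}\big(f(x) \circ f(y)\big) = \lambda_{f(x)}(f(y))$, whence $f(x * y) = f\big(\lambda_x(y) y^{-1}\big) = \lambda_{f(x)}(f(y)) f(y)^{-1} = f(x) * f(y)$.

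Next I would set up notation. Fix $m \in M$ with $\deg(m) = n$ and $a_1, \ldots, a_{n+1} \in A$, and let $q_A : A \to A/\Ann_n(A)$ and $q_B : B \to B/\Ann_n(B)$ be the canonical brace epimorphisms, so that $q_A(x) = \Ann_n(A) x$ and $q_B(y) = \Ann_n(B) y$ (these cosets are two-sided since $\Ann_n$ is an ideal). Choose, for each $i$, an element $b_i \in B$ with $\xi\big(q_A(a_i)\big) = q_B(b_i)$. Feeding the tuple $\big(q_A(a_1), \ldots, q_A(a_{n+1})\big)$ into the commuting diagram defining the $n$-isoclinism $(\xi, \theta)$, and using the defining formulas $\phi_m^A(q_A(a_1), \ldots, q_A(a_{n+1})) = m(a_1 \cdots a_{n+1})$ and $\phi_m^B(q_B(b_1), \ldots, q_B(b_{n+1})) = m(b_1 \cdots b_{n+1})$, I obtain
\[
\theta\big(m(a_1 a_2 \cdots a_{n+1})\big) = m(b_1 b_2 \cdots b_{n+1});
\]
in particular $m(a_1 \cdots a_{n+1})$ lies in $A_{(n)}$, so $\theta$ is indeed applicable to it.

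Finally I would apply the recorded fact three times — to $q_A$, then to $\xi$, then to $q_B$ — to compute
\[
\xi\big(q_A(m(a_1 \cdots a_{n+1}))\big) = m\big(q_B(b_1) \cdots q_B(b_{n+1})\big) = q_B\big(m(b_1 \cdots b_{n+1})\big) = q_B\big(\theta(m(a_1 \cdots a_{n+1}))\big),
\]
where the first equality also uses that $\xi$ is a homomorphism with $\xi(q_A(a_i)) = q_B(b_i)$, and the last uses the displayed identity of the previous paragraph. Rewriting the canonical projections in coset notation gives exactly the claimed equality. I do not expect a genuine obstacle here: the content is entirely the functoriality of word maps under brace homomorphisms together with the commuting square, and the only points deserving a sentence of care are that the word value $m(a_1 \cdots a_{n+1})$ truly belongs to $A_{(n)}$ (built into the definitions of $A_{(n)}$ and of $\phi_m^A$) and that the cosets involved are two-sided.
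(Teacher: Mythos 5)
Your proof is correct and follows essentially the same route as the paper's: both arguments reduce the claim to the functoriality of the brace commutator word maps under brace homomorphisms (applied to the quotient maps and to $\xi$) combined with the commutativity of the $n$-isoclinism square, which identifies $m(b_1\cdots b_{n+1})$ with $\theta\big(m(a_1\cdots a_{n+1})\big)$. Your write-up merely makes explicit the letter-by-letter verification that a brace homomorphism respects each symbol in $X$, which the paper leaves implicit.
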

\begin{proof}
With the given hypotheses, we get
\begin{eqnarray*}
\xi\big(\Ann_n(A) m(a_1 \cdots a_{n+1})\big) &=&  m\big(\xi(\tilde{a}_1) \cdots \xi(\tilde{a}_{n+1})\big)\\
&=& m(\tilde{b}_1 \cdots \tilde{b}_{n+1})\\
&=& \Ann_n(B) m(b_1 \cdots b_{n+1})\\
&=& \Ann_n(B) \phi_m^B(\tilde{b}_1,  \ldots, \tilde{b}_{n+1})\\
  &=&   \Ann_n(B) \theta \big(m(a_1 \cdots a_{n+1})\big),
  \end{eqnarray*}
 where $\tilde{a}_i = \Ann_n(A)a_i $ and $\tilde{b}_i = \xi(\tilde{a}_i)$, $1 \le i \le n+1$. 
\end{proof}

\noindent {\it Proof of Proposition \ref{prop6}.}
That $B_1$ is a symmetric sub-skew  brace, is easy to see. Notice that $\Ann_n(A) \le \Ann_n(A_1)$ and $\Ann_n(B) \le \Ann_n(B_1)$. Also $\Gamma_{n+1}(A_1) \le \Gamma_{n+1}(A)$ and $\Gamma_{n+1}(B_1) \le \Gamma_{n+1}(B)$. Define 
\begin{eqnarray*}
\bar{\xi} &:& A_1/\Ann_n(A_1) \to B_1/\Ann_n(B_1)\\
\bar{\theta} &:& \Gamma_{n+1}(A_1) \to \Gamma_{n+1}(B_1)
\end{eqnarray*}
 by setting, for any $a_1 \in A_1$ and $v_1 \in \Gamma_{n+1}(A_1)$,
\begin{eqnarray*}
\bar{\xi}(\Ann_n(A_1)a_1) &=&  \Ann_n(B_1)a_2, \mbox{ where $a_2$ is a coset representative of $\xi(\Ann_n(A)a_1)$ },\\
\bar{\theta}(v_1) &=& v_2, \mbox{ where $v_2 = \theta(v_1) $}.
\end{eqnarray*}

Let $a_1 \not\in \Ann_n(A_1)$ such that $a_2 \in \Ann_n(B_1) - \Ann_n(B)$.  Let  $x_1, x_2, \ldots x_n \in A_1$ be arbitrary elements. Then 
by the given hypothesis, there exist $y_1, y_2, \ldots y_n \in B_1$ such that $\xi(\Ann_n(A) x_i) = \Ann_n(B) y_i$. Since  $a_2 \in \Ann_n(B_1)$, we have  $m(y_1 y_2 \cdots y_n a_2) = 1$ for all $m \in M$ with $\deg(m) = n$.  Further, by the definition of $n$-isoclinism $m(x_1 x_2 \cdots x_n a_2) = 1$. Since $\xi$ is a brace isomorphism, it follows that $a_1 \in \Ann_n(A_1)$. This is a contradiction, which proves that $\bar{\xi}$ is well defined and one-one. That  $\bar{\xi}$ is onto can be easily established by the similar argument.  It is easy, using the definition of $n$-isoclinism of $A$ and $B$, that  $\bar{\theta}$ is an isomorphism, and $A_1$ and $B_1$ are $n$-isoclinic.
 
Since $\theta$ is a brace isomorphism, $\theta(A_2)$ is a subgroup of  both $(B, \cdot)$ and $(B, \circ)$. To show that $\theta(A_2)$ is an ideal of $B$, it is sufficient to check $v_2 * b$ and $b* v_2$ belong to $\theta(A_2)$ for all $b \in B$ and $v_2 \in \theta(A_2)$. Let $b \in B$ be an arbitrary element,  $a \in A$ such that $\xi(\Ann_n(A)a) = \Ann_n(B)b$ and $v_1 \in \Gamma_{n+1}(A)$ such that $\theta(v_1) = v_2$.  Then it follows that $\theta(a * v_1) = b * v_2$ and $\theta(v_1 * a) = v_2 * b$. We remark that this conclusion is not as easy as it seems, because $\theta$ is  not an isomorphism of  $A$ onto $B$. Tricky computations and the definition of $n$-isoclinism are involved. Since, $A_2$ being an ideal of $A$, both $a * v_1$ and $v_1 * a$ lie in $A_2$, we have that $v_2 * b$ and $b* v_2$ belong to $\theta(A_2)$.

Set $\tilde{A} := A/A_2$ and $\tilde{B} := B/\theta(A_2)$. We now define
\begin{eqnarray*}
\tilde{\xi} &:& \tilde{A}/ \Ann_n(\tilde{A}) \to  \tilde{B}/ \Ann_n(\tilde{B})\\
\tilde{\theta} &:& \Gamma_n(\tilde{A}) \to \Gamma_n(\tilde{B})
\end{eqnarray*}
by
\begin{eqnarray*}
\tilde{\xi}(\Ann_n(\tilde{A})\tilde{a}_1) &=&  \Ann_n(\tilde{B})\tilde{a}_2, \mbox{ where $a_2$ is a coset representative of $\xi(\Ann_n(A)a_1)$},\\
\tilde{\theta}(\tilde{v}_1) &=& \tilde{v}_2, \mbox{ where $v_2 = \theta(v_1)$}.
\end{eqnarray*}
That  $\tilde{\theta}$ is well-defined and an isomorphism is easy to see. We claim  that the restriction of  $\xi$ to $(A_2 \Ann_n(A))/$ $\Ann_n(A)$ is determined by the map induced by $\theta$ on $(A_2\Ann_n(A))/\Ann_n(A)$ (note that  $A_2 \le \Gamma_{n+1}(A)$). It is sufficient to check it on the values of the words $m \in M$ with $\deg(m) = n$, which actually follows from Lemma \ref{lemma6}. This settles our claim.
 Now considering the fact that $n$-isoclinism is a symmetric relation, we see that  $\bar{\xi}$ is well defined as well as an isomorphism. Commutativity of the diagrams now follows from the definition of $n$-isoclinism of $A$ and $B$.   \hfill $\Box$

\begin{prop}\label{prop7}
Let $A$ be a  symmetric skew brace with a sub-skew brace $B$ and let $I$ be an ideal of $A$.  Then 
\begin{enumerate}
\item  $B$ and $B\Ann_n(A)$ are $n$-isoclinic. Consequently, if $A =  B\Ann_n(A)$, then $A$ and $B$ are $n$-isoclinic. Conversely, if $A/\Ann_n(A)$ is finite and $A$ is isoclinic to $B$, then $A = B\Ann_n(A)$.
\item  $A/I$ is $n$-isoclinic to $A/(I \cap \Gamma_{n+1}(A))$.  Consequently, if $I \cap \Gamma_{n+1}(A) =1$, then $A$ and $A/I$ are $n$-isoclinic. Conversely, if $\Gamma_{n+1}(A)$ is finite and $A$ and $A/I$ are $n$-isoclinic, then $I \cap \Gamma_{n+1}(A) =1$.
\end{enumerate}
\end{prop}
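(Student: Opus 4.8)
My plan is to derive both parts from a single auxiliary statement about quotients, following the pattern of Bioch's group-theoretic arguments \cite{Bioch78}, with the main tool being Lemma~\ref{lemma-key} (the characterisation $u\in\Ann_t(A)\iff m(a_1\cdots a_t u)=1$ for all $m\in M$ with $\deg m=t$), together with the observation that every sub-skew brace and every quotient of a symmetric skew brace is again symmetric (immediate from Lemma~\ref{lem:symm}), hence lies in $\tilde{\mathcal I}_k$ for all $k$, so that all the maps $\phi_m$ are available. The auxiliary statement is: \emph{if $N$ is an ideal of a symmetric skew brace $A$ with $N\cap\Gamma_{n+1}(A)=1$, then $A$ and $A/N$ are $n$-isoclinic.}

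To prove this, I would first show $N\subseteq\Ann_n(A)$: for $u\in N$ and $m\in M$ with $\deg m=n$, the value $m(a_1\cdots a_n u)$ lies in $\Gamma_{n+1}(A)$ by the construction of Section~4, and it lies in $N$ because each of the four word generators $\gamma_\cdot,\bar\gamma_\cdot,*,\bar{*}$ maps $A\times N$ into $N$ — for $\bar{*}$ this is because $\overline{u*a}=\lambda_{\bar u}(\bar a)\,\bar a^{-1}=\lambda_{\bar1}(\bar a)\,\bar a^{-1}=\bar1$ in $A/N$, the others being clear since $N$ is a strong left ideal — so $m(a_1\cdots a_n u)\in N\cap\Gamma_{n+1}(A)=1$ and Lemma~\ref{lemma-key} applies. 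The same reasoning, comparing words in $A$ and in $A/N$, gives $\Ann_n(A/N)=\Ann_n(A)/N$; and $N\cap\Gamma_{n+1}(A)=1$ makes the canonical map $\Gamma_{n+1}(A)\to\Gamma_{n+1}(A/N)=\Gamma_{n+1}(A)N/N$ an isomorphism $\theta$. With $\xi:A/\Ann_n(A)\to(A/N)/\Ann_n(A/N)$ the obvious isomorphism, both $\xi$ and $\theta$ are induced by the projection $A\to A/N$, so the defining square commutes. Part~(2) then follows by applying the auxiliary statement inside $A/J$ for $J:=I\cap\Gamma_{n+1}(A)$: here $\Gamma_{n+1}(A/J)=\Gamma_{n+1}(A)/J$ and $(I/J)\cap\Gamma_{n+1}(A/J)=J/J=1$, so $A/J$ and $(A/J)/(I/J)\cong A/I$ are $n$-isoclinic; the case $J=1$ gives the ``consequently'', and the converse is immediate since $\Gamma_{n+1}(A)\cong\Gamma_{n+1}(A/I)\cong\Gamma_{n+1}(A)/(\Gamma_{n+1}(A)\cap I)$ forces $\Gamma_{n+1}(A)\cap I=1$ when $\Gamma_{n+1}(A)$ is finite.

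For part~(1), I would set $C:=B\Ann_n(A)$, a symmetric sub-skew brace by Lemma~\ref{lemma1}, so that $A$, $B$, $C$ all lie in $\tilde{\mathcal I}_k$ for every $k$. First, Lemma~\ref{lemma-key} gives $\Ann_n(A)\subseteq\Ann_n(C)$ (a degree-$n$ word vanishing on all of $A$ with last argument in $\Ann_n(A)$ vanishes in particular on $C$). The two facts that do the real work are (i) $\Gamma_{n+1}(C)=\Gamma_{n+1}(B)$ and (ii) $B\cap\Ann_n(C)=\Ann_n(B)$, and both come from one device: writing $c_i=b_iz_i$ with $b_i\in B$ and $z_i\in\Ann_n(A)\subseteq\Ann_k(C)$ for $k\ge n$, well-definedness of $\phi_m^C$ gives $m(c_1\cdots c_{k+1})=m(b_1\cdots b_{k+1})\in\Gamma_{k+1}(B)$ whenever $\deg m=k\ge n$, which over the generators of $\Gamma_{n+1}(C)=C_{(n)}$ yields (i); and feeding $u\in\Ann_n(B)\subseteq B$ into the last slot, where $m(b_1\cdots b_n u)=1$ by Lemma~\ref{lemma-key} in $B$, gives $m(c_1\cdots c_n u)=1$ for all $c_i\in C$, hence $u\in\Ann_n(C)$ by Lemma~\ref{lemma-key} in $C$ — the reverse inclusion $B\cap\Ann_n(C)\subseteq\Ann_n(B)$ being the trivial one. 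Consequently the inclusion $B\hookrightarrow C$ induces an isomorphism $\xi:B/\Ann_n(B)\to C/\Ann_n(C)$ (onto because $C=B\Ann_n(A)\subseteq B\Ann_n(C)$), and with $\theta:=\mathrm{id}_{\Gamma_{n+1}(B)}$ the pair $(\xi,\theta)$ is an $n$-isoclinism since $\phi_m^C(\tilde b_1,\dots,\tilde b_{n+1})=m(b_1\cdots b_{n+1})=\theta\big(\phi_m^B(\hat b_1,\dots,\hat b_{n+1})\big)$. The first ``consequently'' is the case $C=A$; for the converse, transitivity of $n$-isoclinism makes $A$ $n$-isoclinic to $C$, whence $|A/\Ann_n(A)|=|C/\Ann_n(C)|\le|C/\Ann_n(A)|\le|A/\Ann_n(A)|$ (using $\Ann_n(A)\le\Ann_n(C)$ and $C\le A$), so $C/\Ann_n(A)=A/\Ann_n(A)$, i.e.\ $A=B\Ann_n(A)$, when $A/\Ann_n(A)$ is finite.

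The crux, and the only place where anything substantial happens, is the pair of structural facts in part~(1): that replacing $B$ by $B\Ann_n(A)$ alters neither $\Gamma_{n+1}$ nor the quotient by the $n$th annihilator. I would resist proving these by the direct ``$\epsilon(u,v)\in\Ann_{s-r}$'' computations of Section~5 and instead use only Lemma~\ref{lemma-key} and the availability of the maps $\phi_m^C$ (which holds precisely because symmetric skew braces, hence their sub-skew braces, lie in $\tilde{\mathcal I}_k$ for all $k$); with those in hand the remainder is the same bookkeeping as in \cite{Bioch78}.
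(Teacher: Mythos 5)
Your proposal is correct and follows essentially the same route as the paper's proof: the same counting argument for the converse of (1), the same isomorphism chain $\Gamma_{n+1}(A)\cong\Gamma_{n+1}(A/I)\cong\Gamma_{n+1}(A)/(I\cap\Gamma_{n+1}(A))$ for the converse of (2), and the same underlying structural facts (namely $B\cap\Ann_n(B\Ann_n(A))=\Ann_n(B)$ and $\Gamma_{n+1}(B\Ann_n(A))=\Gamma_{n+1}(B)$ for part (1), and the observation that membership in $\Ann_n$ and equality of elements of $\Gamma_{n+1}(A)$ are detected identically in $A/I$ and in $A/(I\cap\Gamma_{n+1}(A))$ for part (2)). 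The only differences are organizational --- you route part (2) through an auxiliary statement applied inside $A/(I\cap\Gamma_{n+1}(A))$ rather than comparing the two quotients directly, and you supply Lemma~\ref{lemma-key}-based justifications for the facts the paper merely asserts (e.g.\ $\Ann_n(B\Ann_n(A))=\Ann_n(B)\Ann_n(A)$) --- which if anything makes the argument more self-contained.
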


\begin{proof}
By Lemma  \ref{lemma1} we know that $B\Ann_n(A)$ is a sub-skew  brace of $A$. We start by noting that $\Ann_n(B\Ann_n(A)) = \Ann_n(B)\Ann_n(A)$. Define, for all $b \in B$, 
$$\xi : B/\Ann_n(B) \to  B\Ann_n(A)/\Ann_n(B\Ann_n(A)) \;\; \big(= B\Ann_n(A)/(\Ann_n(B)\Ann_n(A))\big)$$
  by
$$\xi\big(\Ann_n(B)b\big) = \Ann_n(B\Ann_n(A))b = (\Ann_n(B)\Ann_n(A))b,$$
which is naturally an isomorphism under the operation `$\cdot$'. To show that $\xi$ is a brace homomorphism, we only need to check that $\xi$ is a homomorphism under `$\circ$'.
For $b_1, b_2 \in B$, we have
\begin{eqnarray*}
\xi\big(\Ann(B) (b_1 \circ b_2) &=&  (\Ann_n(B)\Ann_n(A)) (b_1 \lambda_{b_1}(b_2))\\
&=& \big(\Ann_n(B)\Ann_n(A) b_1\big) \big(\Ann_n(B)\Ann_n(A)) \lambda_{b_1}(b_2)\big)\\
&=&  \big(\Ann_n(B)\Ann_n(A) b_1\big) \circ \lambda^{-1}_{\tilde{b}_1}\big(\lambda_{\tilde{b}_1}(\Ann_n(B)\Ann_n(A)) b_2\big)\\
&=& \xi(b_1) \circ \xi(b_2),
\end{eqnarray*}
where $\tilde{b}_1 := \Ann_n(B)\Ann_n(A) b_1$. Thus $\xi$ is a brace isomorphism. Taking 
$$\theta : \Gamma_{n+1}(B) \to \Gamma_{n+1}(B\Ann_n(A)) (= \Gamma_{n+1}(B))$$
 to be the identity map, it follows that  $B$ and $B\Ann_n(A)$ are $n$-isoclinic.

Conversely, let $A$ and $B$ be $n$-isoclinic. If $\Ann_n(A) \not\subseteq B$, then we replace $B$ by $B\Ann_n(A)$ to assume that  $\Ann_n(A) \subseteq B$.  By the preceding paragraph,  we see that $A$ and $B$ are still $n$-isoclinic. So we only need to prove that $A = B$.
Contrarily assume that $B  \neq A$. Note that $\Ann_n(A) \le \Ann_n(B) \le B \le A$. Let $|A/\Ann_n(A)| = |B/\Ann_n(B)| = n$. Then
\begin{eqnarray*}
n &=& |B/\Ann_n(B)| = |B/\Ann_n(A)|/|\Ann_n(B)/\Ann_n(A)| \le |B/\Ann_n(A)| \\
&<& |A/\Ann_n(A)| = n,
\end{eqnarray*}
a contradiction. Hence $A = B$.

Let us set $\hat{A} := A/I$ and $\tilde{A} := A/(I \cap \Gamma_{n+1}(A))$. Note that, two elements $v_1, v_2 \in A_{(n)}$ are equal in $\hat{A}$  if and only if $v_1, v_2$ are equal in  
$\tilde{A}$. Also   any $\hat{u} \in \hat{A}$ belongs to $\Ann_n(\hat{A})$ if and only if $v:=m(a_1 \cdots a_n u) \in I$ for all choices of $a_1, \ldots, a_n \in A$ and  $m \in M$ with $\deg(m) = n$. Since $v \in \Gamma_{n+1}(A)$,  we obviously have that $\hat{v} =1$ if and only if $\tilde{v} = 1$.  We have shown that $\hat{u} \in \Ann_n(\hat{A})$ if and only if  $\tilde{u} \in \Ann_n(\tilde{A})$. Hence the map $\xi : \hat{A}/\Ann_n(\hat{A}) \to \tilde{A}/ \Ann_n(\tilde{A})$, defined by
$$\xi(\Ann_n(\hat{A})\hat{a}) = \Ann_n(\tilde{A})\tilde{a}$$
for all $a \in A$, is an isomorphism. Also the map $\theta : \Gamma_{n+1}(\hat{A}) \to \Gamma_{n+1}(\tilde{A})$, defined by
$$\theta(\hat{v}) = \tilde{v}$$
for all $\hat{v} \in \Gamma_{n+1}(\hat{A})$, is an isomorphism. It is now easy to see that the pair $(\xi, \theta)$ gives rise to an $n$-isoclinism of $\hat{A}$ and $\tilde{A}$. The converse part is an easy consequence of the expression
$$\Gamma_{n+1}(A) \cong \Gamma_{n+1}(A/I) \cong \Gamma_{n+1}(A)I/I \cong \Gamma_{n+1}(A)/(I \cap \Gamma_{n+1}(A)).$$
This completes the proof.   
\end{proof}

Please note that the symbol $\hat{A}$, in the preceding proof, for $A/I$ is used locally only for this proof. Anywhere else it means $\Hom(A, \mathbb{C}^*)$.

We are now ready to prove Theorem B.
\begin{thm}\label{thm:isoclin-surj}
Let $A, B$ be symmetric  skew braces. Then $A$ and $B$ are $n$-isoclinic if and only if there exist  skew braces $C$, $N_1$, $N_2$, $C_A$ and $C_B$ such that $A \cong C/N_2$ and $B \cong C/N_1$ and the following two equivalent conditions hold:
\begin{enumerate}
\item  $C$ is $n$-isoclinic to both $A$ and $B$.
\item $C_B \cong C \cong C_A$, and $C$ is $n$-isoclinic to both $C/N_2 \times C/\Gamma_{n+1}(C)$ and  $C/N_1 \times C/\Gamma_{n+1}(C)$, where $C_B$ and $C_A$ are sub-skew  braces of $C/N_2 \times C/\Gamma_{n+1}(C)$ and  $C/N_1 \times C/\Gamma_{n+1}(C)$ respectively.
\end{enumerate}
\end{thm}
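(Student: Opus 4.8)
The plan is to route both ``backward'' implications through transitivity of $n$-isoclinism in $\mathcal{I}_n$, and to put the real work into the forward direction, where I build the common cover $C$ as a fibre product. I would first record one preliminary lemma: if a symmetric skew brace $Y$ satisfies $\Gamma_{n+1}(Y)=1$ --- equivalently $\Ann_n(Y)=Y$, by \cite[Theorem 2.8]{BJ23} --- then $Y$ is $n$-isoclinic to the trivial brace, and hence $X\times Y$ is $n$-isoclinic to $X$ for every symmetric $X$, since $\Ann_n(X\times Y)=\Ann_n(X)\times Y$ and $\Gamma_{n+1}(X\times Y)=\Gamma_{n+1}(X)\times 1$. \textbf{The backward implications.} If such $C,N_1,N_2$ exist and (1) holds, then $A\cong C/N_2$ is $n$-isoclinic to $C$ and $C$ is $n$-isoclinic to $C/N_1\cong B$, so $A$ and $B$ are $n$-isoclinic. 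If instead (2) holds, apply the preliminary lemma with $Y=C/\Gamma_{n+1}(C)$: the two products appearing in (2) are then $n$-isoclinic to $C/N_2$ and to $C/N_1$ respectively, so $C$ is $n$-isoclinic to $C/N_2\cong A$ and to $C/N_1\cong B$, and transitivity again finishes.

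\textbf{The forward direction.} Assume $A$ and $B$ are $n$-isoclinic via $(\xi,\theta)$, with $\xi:A/\Ann_n(A)\to B/\Ann_n(B)$ and $\theta:\Gamma_{n+1}(A)\to\Gamma_{n+1}(B)$ (recall $A_{(n)}=\Gamma_{n+1}(A)$ since $A$ is symmetric). Set
\[C:=\{(a,b)\in A\times B \mid \xi(\Ann_n(A)a)=\Ann_n(B)b\},\]
the equaliser of two brace homomorphisms $A\times B\to B/\Ann_n(B)$; thus $C$ is a sub-skew brace of $A\times B$, hence symmetric and in $\tilde{\mathcal{I}}_n$. Put $N_2:=\{1\}\times\Ann_n(B)$ and $N_1:=\Ann_n(A)\times\{1\}$, the kernels of the projections $C\to A$ and $C\to B$; these are surjective because $\xi$ is, so $C/N_2\cong A$ and $C/N_1\cong B$. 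The two identifications that carry the proof are: (a) $\Ann_n(C)=\Ann_n(A)\times\Ann_n(B)$, where ``$\supseteq$'' is an induction on $n$ using that $\ast$ and $\gamma_{\cdot}$ act componentwise on $A\times B$ together with $\Ann_k(A)\times\Ann_k(B)\subseteq C$ for $k\le n$, and ``$\subseteq$'' follows by applying to the two projections the fact that a brace homomorphism carries $\Ann_n$ into $\Ann_n$; consequently $C/\Ann_n(C)\cong A/\Ann_n(A)$ via the first projection. (b) $\Gamma_{n+1}(C)=\{(v,\theta(v))\mid v\in\Gamma_{n+1}(A)\}$: because $\Gamma_{n+1}$ of a symmetric brace is generated additively by the values $m(c_1,\dots,c_{n+1})$ of degree-$n$ brace commutator words, and for $c_i=(a_i,b_i)\in C$ one has $m((a_1,b_1),\dots)=(m(a_1,\dots),m(b_1,\dots))$ with $m(b_1,\dots,b_{n+1})=\theta(m(a_1,\dots,a_{n+1}))$ by the commuting square defining $(\xi,\theta)$; conversely every such $(v,\theta(v))$ is hit by lifting generators of $\Gamma_{n+1}(A)$ along the surjection $\xi$. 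With (a) and (b) the first projection descends to an $n$-isoclinism of $C$ with $A$ and the second to one of $C$ with $B$ --- that is (1). Finally $N_2\cap\Gamma_{n+1}(C)=1$ (if $(1,\theta(v))=(v,\theta(v))$ then $v=1$), and likewise $N_1\cap\Gamma_{n+1}(C)=1$, so the diagonal maps $c\mapsto(cN_2,c\Gamma_{n+1}(C))$ and $c\mapsto(cN_1,c\Gamma_{n+1}(C))$ are injective; their images $C_A$ and $C_B$ are copies of $C$ sitting inside the two products of (2), and the required $n$-isoclinism of $C$ with those products follows from the preliminary lemma together with (1). This proves (2), and since both (1) and (2) are now equivalent to ``$A$ and $B$ are $n$-isoclinic'', the equivalence (1)$\Leftrightarrow$(2) drops out.

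\textbf{Main obstacle.} I expect the crux to be identifications (a) and (b), and above all (b): it is the only point where the full strength of the definition of $n$-isoclinism is used --- compatibility of $\theta$ with \emph{every} degree-$n$ brace commutator word, not merely the group commutator --- and it relies on the prior reduction of a generating set of $\Gamma_{n+1}$ to degree-$n$ word values. Once (a) and (b) are in place, verifying that the two projections descend to brace isomorphisms making the isoclinism squares commute is routine bookkeeping.
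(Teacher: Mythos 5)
Your proposal is correct and follows essentially the same route as the paper: the same fibre product $C$ over $A/\Ann_n(A)$, the same ideals $N_1,N_2$, the identification of $\Gamma_{n+1}(C)$ with the graph of $\theta$ via degree-$n$ word values (the paper's Lemma \ref{lemma6}), and the same diagonal copies $C_A,C_B$ inside the two products, with the paper merely delegating the routine isoclinism checks to Proposition \ref{prop7} where you verify them directly or via your preliminary lemma. One small caveat: your claim that ``a brace homomorphism carries $\Ann_n$ into $\Ann_n$'' needs surjectivity (which the projections do have), so no actual gap results.
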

\begin{proof}
We only need to prove the `only if' part of the statement. So let  us assume that $A$ and $B$ are $n$-isoclinic with  $n$-isoclinism  $(\xi, \theta)$.   Define 
$$C:= \{(a, b) \in A \times B \mid \xi(\Ann_n(A)a)= \Ann_n(B)b\},$$
 the fiber product of $A$ and $B$   over $A/\Ann_n(A)$ (which we assume equal to $B/\Ann_n(B)$ through $\xi$). It turns out that $C$ is a (symmetric) skew left brace.  Note that $\Ann_n(A) \times \Ann_n(B) \subseteq C$.  Set  $N_1 := \{(u, 1) \mid u \in \Ann_n(A)\} \cong \Ann_n(A)$ and $N_2 := \{(1, u) \mid u \in \Ann_n(B)\} \cong \Ann_n(B)$.  Obviously, both $N_1$ and $N_2$ are ideals of $C$. So if we factor $C$ by $N_2$, then for every $a \in A$ there will be a unique $b \in B$ such that $\tilde{(a, b)} \in C/N_2$, where $\tilde{(a, b)}$ is the right coset of $N_2$ by the element $(a, b)$ of $C$. Hence $C/N_2 \cong A$. Similarly, $C/N_1 \cong B$.

To prove assertion (1) we only need to prove that $C$ is $n$-isoclinic to both $C/N_1$ and $C/N_2$. By Proposition \ref{prop7}(2) it suffice to prove that 
$$N_1 \cap \; \Gamma_{n+1}(C) = 1 =  N_2 \cap \; \Gamma_{n+1}(C).$$
 Let $m(a_1 \cdots a_{n+1}) \in \Gamma_{n+1}(A)$. Then, we know by Lemma \ref{lemma6} that 
$$\xi\big(\Ann_n(A)m(a_1 \cdots a_{n+1})\big) = \Ann_n(B) \theta\big(m(a_1 \cdots a_{n+1})\big).$$
Thus it follows that $\Gamma_{n+1}(C)$ is generated by the elements of the form 
$$\big(m(a_1 \cdots a_{n+1}), \theta(m(a_1 \cdots a_{n+1}))\big).$$
Now suppose that $(u, 1) \in N_1  \cap \; \Gamma_{n+1}(C)$. Then there exists $v \in \Gamma_{n+1}(A)$ such that $(u, 1) = (v, \theta(v))$, which implies that $u = v = 1$. Hence $N_1 \cap \; \Gamma_{n+1}(C) = 1$. Similarly,  we have   $N_2 \cap \; \Gamma_{n+1}(C) = 1$.

Now consider the skew left brace $C/N_1 \times C/\Gamma_{n+1}(C)$. Let $C_A := \{N_1 c, \Gamma_{n+1}(C)c \mid c \in C\}$. Since $N_1 \cap \; \Gamma_{n+1}(C) = 1$, it follows that $C_A \cong C$ as a skew brace. Notice that $C_A \Ann_n\big(C/N_1 \times C/\Gamma_{n+1}(C)\big) = C/N_1 \times C/\Gamma_{n+1}(C)$, since $\Ann_n(C/\Gamma_{n+1}(C)) = C/\Gamma_{n+1}(C)$. By Proposition \ref{prop7}(1) we see that $C_A$ is $n$-isoclinic to $C/N_1 \times C/\Gamma_{n+1}(C)$. The other part of assertion (2) follows on the same lines. This proves the theorem. 
\end{proof}

The preceding result guarantees the existence of a skew brace $C$ for given symmetric skew braces $A$ and $B$ such that both $A$ and $B$ are brace homomorphic images of $C$. An inclusion result is also possible, that is, for two symmetric skew braces $A$ and $B$, there exists a skew brace $C$ such that both $A$ and $B$ embed in $C$. We take this up in the next section.

%%%%%%%%%%%%%%%%%%%%%%%%%%%%%%%%%%%%%%%%%%%%%%%%%%%%%%%%%%%%%

\section{Embedding theorem for isoclinic skew  braces}

Catching the vibes from the preceding section, in this section we investigate  embedding theorems for $n$-isoclinic skew braces. For $n=1$, we could prove the result analogous to group theory results from \cite{Tappe76} in full generality.  But for $n \ge 2$, we could only do so for symmetric skew braces, which are analogous to the results of \cite{Waall2004}.  First we take the case $n = 1$ and prove Theorem C.

\begin{thm}\label{thm:embed}
Let $A$ and $B$ be isoclinic skew  braces. Then there exists a skew  brace $W$ with sub-skew  braces $\bar{A}$ and $\bar{B}$ such that $A\cong \bar{A},~B\cong \bar{B},$ and $\bar{A}  \Ann(W)=\bar{A}\circ \Ann(W) = W = \bar{B} \circ \Ann(W) = \bar{B}  \Ann(W)$.
\end{thm}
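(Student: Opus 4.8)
\emph{Plan.} The plan is to realise $A$ and $B$ as annihilator extensions of one common quotient $Q$ and then to take $W$ to be a third such extension whose abelian kernel is large enough to house copies of both $\Ann(A)$ and $\Ann(B)$. First I would set $Q:=A/\Ann(A)$ and identify $B/\Ann(B)$ with $Q$ via $\xi$. Note $\Ann(A)$ and $\Ann(B)$ are trivial braces (abelian groups on which the two operations agree and $*$ is trivial), so by the correspondence of Section~3, $A$ and $B$ correspond to cohomology classes $[(\alpha,\mu)]\in\Ha^2_b(Q,\Ann(A))$ and $[(\beta,\nu)]\in\Ha^2_b(Q,\Ann(B))$. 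Set $T_A:=\Gamma_2(A)\cap\Ann(A)$ and $T_B:=\Gamma_2(B)\cap\Ann(B)$; since $\Gamma_2$ of a quotient is the image of $\Gamma_2$, we get $\Gamma_2(A)/T_A\cong\Gamma_2(Q)\cong\Gamma_2(B)/T_B$, and commutativity of the isoclinism diagram~\eqref{dia:isoclinism} shows that $\theta$ restricts to an isomorphism $T_A\xrightarrow{\sim}T_B$.

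\emph{Enlarging the kernel.} Next I would form the pushout $Z_0:=\bigl(\Ann(A)\oplus\Ann(B)\bigr)/\{(t,-\theta(t)):t\in T_A\}$; all four structure maps being injective, $\Ann(A)$ and $\Ann(B)$ embed into $Z_0$ with intersection equal to the common image of $T_A\cong T_B$. Fix a divisible abelian group $Z\supseteq Z_0$ (for instance an injective hull), regarded as a trivial brace, identify $\Ann(A)$ and $\Ann(B)$ with their images in $Z$, and push $[(\alpha,\mu)]$ and $[(\beta,\nu)]$ forward along $\Ann(A)\hookrightarrow Z$ and $\Ann(B)\hookrightarrow Z$ to obtain classes $[(\bar\alpha,\bar\mu)]$ and $[(\bar\beta,\bar\nu)]$ in $\Ha^2_b(Q,Z)$.

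\emph{The key coincidence.} The heart of the argument is the claim $[(\bar\alpha,\bar\mu)]=[(\bar\beta,\bar\nu)]$ in $\Ha^2_b(Q,Z)$. The ``commutator data'' of the extension $A$ over $Q$ — the maps $Q\times Q\to\Ann(A)$ induced by $[\,\cdot\,,\,\cdot\,]^{\cdot}$, $[\,\cdot\,,\,\cdot\,]^{\circ}$ and $*$ — take values in $T_A$, and the corresponding data for $B$ take values in $T_B$; commutativity of~\eqref{dia:isoclinism} says precisely that these correspond under $\theta$, so, after the gluing above, they become literally the same maps into $Z$. Hence the difference $[(\bar\alpha,\bar\mu)]-[(\bar\beta,\bar\nu)]$ lies in the ``non-commutator'' part of $\Ha^2_b(Q,Z)$, which by a universal-coefficient-type analysis of brace $2$-cocycles with trivial coefficients is assembled from $\operatorname{Ext}^1_{\mathbb Z}$-groups of the abelianisations of $(Q,\cdot)$ and $(Q,\circ)$ with coefficients in $Z$; these vanish because $Z$ is divisible, so the difference is zero. \textbf{This is the main obstacle:} making precise the splitting of $\Ha^2_b(Q,Z)$ into a ``commutator part'' (which is matched by $\theta$) and an ``$\operatorname{Ext}$ part'' (which is killed by divisibility of $Z$), and verifying that the compatibility relation~\eqref{eq:equivCompati} binding the $\cdot$- and $\circ$-components causes no trouble.

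\emph{Assembling $W$.} Finally, put $W:=Z\times_{(\bar\alpha,\bar\mu)}Q$, an annihilator extension of $Q$ by $Z$ with $Z\le\Ann(W)$ and structure map $\pi\colon W\to Q$ of kernel $Z$. Since $(\bar\alpha,\bar\mu)$ is valued in $\Ann(A)\le Z$, the subset $\bar A:=\{(z,q):z\in\Ann(A),\,q\in Q\}$ is a sub-skew brace of $W$ and $(z,q)\mapsto(z,q)$ is a brace isomorphism $A\to\bar A$. By the key coincidence there is a map $h\colon Q\to Z$ with $h(1)=1$ such that $(\bar\beta,\bar\nu)$ and $(\bar\alpha,\bar\mu)$ differ by the $2$-coboundary associated with $h$, so $(z,q)\mapsto(z\,h(q),q)$ is a brace isomorphism $W\xrightarrow{\sim}Z\times_{(\bar\beta,\bar\nu)}Q$; pulling the sub-brace $\{(z,q):z\in\Ann(B)\}\cong B$ back through it produces a sub-skew brace $\bar B\le W$ with $\bar B\cong B$. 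Since $\pi$ restricts to a surjection onto $Q$ on each of $\bar A$ and $\bar B$, for both the $\cdot$- and the $\circ$-structure, and $\ker\pi=Z$ is an ideal of $W$ contained in $\Ann(W)$, it follows that $\bar A\,\Ann(W)=\bar A\circ\Ann(W)=W=\bar B\circ\Ann(W)=\bar B\,\Ann(W)$, which is the required conclusion.
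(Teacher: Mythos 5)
Your construction is a genuinely different route from the paper's (which builds $W$ by bare hands: it forms the fiber product $C=\{(a,b)\mid \xi(\Ann(A)a)=\Ann(B)b\}$, computes $\Gamma_2(C)=\{(v,\theta(v))\mid v\in\Gamma_2(A)\}$, and takes $W=(C/N_2\times C/\Gamma_2(C))/N$ for a suitable ideal $N$, with no cohomology at all). Your outer scaffolding is sound: realising $A$ and $B$ as annihilator extensions of $Q=A/\Ann(A)$, gluing $\Ann(A)$ and $\Ann(B)$ along $T_A\cong T_B$ (that $\theta$ restricts to such an isomorphism is exactly Lemma~\ref{lem:PropertyIsocli}(2)), embedding into a divisible $Z$, and the final assembly of $\bar A$, $\bar B$ inside $Z\times_{(\bar\alpha,\bar\mu)}Q$ all work as you describe, \emph{provided} the two pushed-forward classes coincide in $\Ha^2_b(Q,Z)$.

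That ``key coincidence'' is, however, a genuine gap, and you have correctly located it but not closed it. The argument you offer for it rests on a universal-coefficient-type splitting of $\Ha^2_b(Q,Z)$ into a ``commutator part'' detected by the data matched by $\theta$ and an $\operatorname{Ext}^1_{\mathbb Z}$ part killed by divisibility. No such decomposition is established in the paper or in your proposal, and it is far from formal here: a brace $2$-cocycle is a \emph{pair} $(\alpha,\mu)$ bound by the compatibility \eqref{eq:equivCompati}, so one cannot simply run the group-theoretic UCT on the two components separately, and the ``commutator data'' you invoke (values of $[\,\cdot\,,\,\cdot\,]^{\cdot}$ and $*$ on lifts) is not literally a map $Q\times Q\to T_A$ determined by the cohomology class without passing through a Hopf-formula/Schur-multiplier analysis that you do not carry out. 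As written, the central equality $[(\bar\alpha,\bar\mu)]=[(\bar\beta,\bar\nu)]$ is asserted, not proved. A more promising way to close the gap with the tools actually available in this paper: generalise Lemma~\ref{lem:BraceEpsilonKernel} from $\mathbb{C}^{\times}$ to an arbitrary divisible abelian coefficient group $Z$ (its proof only uses that the coefficients form an injective trivial brace), form the difference extension $L=(\Ann(A)\times\Ann(B))\times_{(\chi,\zeta)}Q$ as in the proof of Theorem~\ref{thm:gener:Schur:cov}, use the isoclinism to identify $(\Ann(A)\times\Ann(B))\cap\Gamma_2(L)$ with $\{((a,\theta(a)^{-1}),1)\}$, and observe that your gluing map $\Ann(A)\times\Ann(B)\to Z$ annihilates this subgroup, hence lies in $\Ker(\varepsilon_L)$, which says exactly that the two pushforwards are cohomologous. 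Until some such argument is supplied, the proof is incomplete at its decisive step; note also that the paper's elementary construction additionally yields the structural information of Remark~\ref{rmk:embed1}, which is reused in the induction proving Theorem~\ref{thm:Waall} and which your $W$ would not obviously provide.
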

\begin{proof}
Let $(\xi, \theta)$ be a skew brace isoclinism. Let 
$$C:=\{(a,b)\in A\times B \mid \xi(\Ann(A)a )= \Ann(B) b ,~a\in A,~b\in B\}.$$
It is not very difficult to prove that $C$ is a sub-skew  brace of $A \times B$.  Consider the maps $\pi_A:C\to A,~\pi_B:C\to B$ defined by $\pi_A(a,b) = a$ and $\pi_B(a,b) = b$ respectively. It is clear that $\pi_A,\pi_B$ are brace homomorphisms;  since $\xi$ is an isomorphism, these are surjective brace homomorphisms. Set  $N_2:= \Ker(\pi_A)$ and $N_1:= \Ker(\pi_B)$. It follows that
$$N_1=\{(a, 1)\in C \mid a\in \Ann A\},~N_2=\{(1, b)\in C \mid b\in \Ann B\}.$$
The structure of  $\Gamma_2(C)$ can be simplified as follows.
\begin{align*}
\Gamma_2(C) &=\langle [C,C]^{\cdot},C\ast C\rangle^{\cdot}\\
&=\langle [(a_1,b_1),(a_2,b_2)]^{\cdot}, (a_1,b_1)\ast (a_2,b_2) \mid (a_i,b_i)\in C\rangle^{\cdot}\\
&=\langle ([a_1,a_2]^{\cdot},[b_1,b_2]^{\cdot}), (a_1\ast a_2,b_1\ast b_2) \mid (a_i,b_i)\in C\rangle^{\cdot}\\
&=\{(a,\theta(a)) \mid a\in \Gamma_2(A)\}.
\end{align*} 
Set  $Y := C/{N_2} \times C/{\Gamma_2(C)}$ and $$N := \langle \big(  N_2(a,1),  \Gamma_2(C)(a, 1)\big) \mid (a,1) \in N_1\rangle^{\cdot}.$$
We claim that $N$ is an ideal of $Y$.  Since $N_1$ is an ideal of $C$, we have $c t c^{-1},~c \circ t \circ \bar{c} \in N_1$, on the other hand $\Gamma_2(C) c t c^{-1} = \Gamma_2(C)t=\Gamma_2(C) (c \circ t \circ \bar{c})$  for all $c \in C$ and $t \in N_1$, it follows that $N$ is a normal subgroup of $Y$ with respect to `$\cdot$' as well as `$\circ$'. A routine check gives invariance under the $\lambda$ map.

Let $W := Y/N$. We now prove that the skew left braces $A$ and $B$ embed in  $W$.  For any $(a, b_1), (a, b_2) \in C$, $(a, b_1) (a, b_2)^{-1} = (1, b_1 b_2^{-1}) \in N_2$. Thus the map $\rho_A : A \to W$ given by 
$$\rho_A(a) = N\big( N_2(a, b),  \Gamma_2(C)(1,1)\big) $$
is well defined.  Now let $(a_1,b), (a_2,b) \in C$. Then  $(a_1, b) (a_2, b)^{-1} = (a_1 a_2^{-1}, 1) \in N_1$.  Thus we get a well defined  map  $\rho_B : B \to W$ given by 
$$\rho_B(b) = N\big( N_2(a,b),   \Gamma_2(C)(a,b)\big).$$ 
An easy check now establishes that $\rho_A$ and $\rho_B$ are brace homomorphisms.

We only prove that $\rho_A$ is an embedding. Verification of the other case is left for the reader.  Let $\rho_A(a) = 1$ for some $a\in A$. Then $\big( N_2(a,b),   \Gamma_2(C)(1,1)\big) \in N$; so there exists an  element  $c \in \Ann(A)$ such that $  N_2(a,b) =   N_2(c,1)$ and $ \Gamma_2(C) (1,1)= \Gamma_2(C)(c,1)$. Hence $(c,1) \in \Gamma_2(C)$. Since $\theta$ is an isomorphism, we have   $c=1$. This yields $(a,b) \in N_2$, that is, $a=1$. Hence $\rho_A$ is an embedding.

It only remains to prove the last assertion. We only work it out for  $(A, \cdot)$, that is,   $\rho_A(A) \Ann(W)  = W$.
Let $u =  N \big( N_2(a_1, b_1),  \Gamma_2(C)(a_2, b_2)\big)   \in W$ be an arbitrary element. Then, obviously, we can write
$$u =  \big( N \big( N_2(a_1, b_1),  \Gamma_2(C)(1, 1)\big)\big(N \big( N_2(1, 1),  \Gamma_2(C)(a_2, b_2)\big) \big)  \big).$$
The first part in the product is obviously  $\rho_A(a_1)$. A straightforward computation, considering the fact that operations take place modulo $\Gamma_2(C)$, reveals that $v := N\big( N_2(1, 1),  \Gamma_2(C)(a_2, b_2)\big)  \in \Z(W, \cdot) \cap \Z(W, \circ)$. Since $C/\Gamma_2(C)$ is a trivial skew brace, we can easily see that $w\circ v = w  v$ for all $w \in W$. Hence $v \in \Ann(W)$, which proved our intended assertion. The proof is now complete by taking $\rho_A(A) = \bar{A}$ and $\rho_B(B) = \bar{B}$.
\end{proof}

\begin{rmk}\label{rmk:embed1}
Let $K:=LN/N$, where $L:=\{\big(N_2,\Gamma_2(C)(c_1,c_2)\big)~|~(c_1,c_2)\in C\}$. Then $\Gamma_2(K) = 1$, $[\rho_A(A),K]^\cdot=1=[\rho_A(A),K]^\circ$, $W=\rho_A(A)K$ and both $\rho_A(A)$ and $K$ are ideals of $W$.
\end{rmk}

The following result reproves   \cite[Proposition 3.10]{LV23}. 
\begin{cor}
Let $A$ and $B$ be isoclinic skew braces. Then  $(A, \cdot)$ is isoclinic to  $(B, \cdot)$  and  $(A, \circ)$ is isoclinic to $(B, \circ)$. \end{cor}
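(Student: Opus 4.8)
The plan is to derive the corollary directly from Theorem~\ref{thm:embed}, using the classical group-theoretic fact that a subgroup which, together with the centre, fills up the whole group is isoclinic to that group.

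First I would isolate that auxiliary fact: if $G$ is a group and $H \le G$ satisfies $G = H\,\Z(G)$, then $\Z(H) = H \cap \Z(G)$, $[G,G] = [H,H]$, and the inclusion $H \hookrightarrow G$ induces an isoclinism $\big(H/\Z(H) \to G/\Z(G),\ [H,H] \to [G,G]\big)$. Both equalities are immediate: for $h \in \Z(H)$ every $g = h'z$ with $z$ central commutes with $h$, so $h \in \Z(G)$; and $[h_1 z_1, h_2 z_2] = [h_1,h_2]$ whenever $z_1, z_2$ are central. The commutativity of the defining square is then trivial, the map on commutator subgroups being the identity. As isoclinism of groups is an equivalence relation, this also yields: if $H_1, H_2 \le G$ with $G = H_1\Z(G) = H_2\Z(G)$, then $H_1$ and $H_2$ are isoclinic.

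Next, given isoclinic skew braces $A$ and $B$, Theorem~\ref{thm:embed} furnishes a skew brace $W$ with sub-skew braces $\bar{A} \cong A$, $\bar{B} \cong B$ and
\[\bar{A}\,\Ann(W) = \bar{A} \circ \Ann(W) = W = \bar{B} \circ \Ann(W) = \bar{B}\,\Ann(W).\]
Since $\Ann(W) = \Ker(\lambda) \cap \Z(W,\cdot) \cap \Z(W,\circ)$, we have $\Ann(W) \le \Z(W,\cdot)$ and $\Ann(W) \le \Z(W,\circ)$. Reading the displayed identities in $(W,\cdot)$ gives $(W,\cdot) = (\bar{A},\cdot)\,\Z(W,\cdot) = (\bar{B},\cdot)\,\Z(W,\cdot)$, and reading them in $(W,\circ)$ gives $(W,\circ) = (\bar{A},\circ)\,\Z(W,\circ) = (\bar{B},\circ)\,\Z(W,\circ)$. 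Applying the auxiliary fact, $(\bar{A},\cdot)$ and $(\bar{B},\cdot)$ are each isoclinic to $(W,\cdot)$, hence to one another; transporting along the brace isomorphisms $A \cong \bar{A}$ and $B \cong \bar{B}$ shows $(A,\cdot)$ is isoclinic to $(B,\cdot)$. The identical argument with $\circ$ in place of $\cdot$ gives that $(A,\circ)$ is isoclinic to $(B,\circ)$.

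There is no real obstacle here: the entire content is packaged in Theorem~\ref{thm:embed}, and the only thing requiring verification is the standard auxiliary lemma above, whose proof is a one-line commutator computation together with the observation $\Z(H) = H \cap \Z(G)$.
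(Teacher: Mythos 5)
Your proposal is correct and follows essentially the same route as the paper: both invoke Theorem~\ref{thm:embed} to reduce to sub(-skew) braces $\bar{A},\bar{B}$ of a common $W$ with $W=\bar{A}\Ann(W)=\bar{B}\Ann(W)$, note that $\Ann(W)$ sits inside $\Z(W,\cdot)$ and $\Z(W,\circ)$, and then apply the standard group-theoretic fact that $G=H\Z(G)$ forces $H$ and $G$ to be isoclinic (the paper writes down the isomorphism $(W,\circ)/\Z(W,\circ)\to(A,\circ)/\Z(A,\circ)$ explicitly, which is just the inverse of the map induced by your inclusion). No gaps.
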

\begin{proof}
Without loss of generality, considering the setup of  Theorem \ref{thm:embed}, we can assume $(A, \circ), (B, \circ)$ are subgroups of $(W, \circ)$ such that $(A, \circ) \circ \Z(W, \circ) = (W, \circ) = (B, \circ)  \circ \Z(W, \circ)$. Notice that $[A, A]^{\circ} = [W, W]^{\circ}$.  Consider the map $\alpha: (W, \circ)/{\Z(W, \circ)} \to (A, \circ)/{\Z(A,\circ)}$ given by 
$$\alpha( \Z(W, \circ)\circ (u \circ v)) =  \Z(A, \circ)\circ u$$
for all $u \in (A, \circ)$ and $v \in \Z(W, \circ)$. Since $\Z(A, \circ) \subseteq \Z(W, \circ)$, it follows that $\alpha$ is a group isomorphism.
An easy check now establishes that  $(A, \circ)$ is isoclinic to $(W, \circ)$. Similarly, one can show that $(B, \circ)$ is isoclinic to $(W, \circ)$. Since  isoclinism is an equivalence relation, we have that $(A, \circ)$ is isoclinic to $(B, \circ)$. Verbatim for the other assertion  completes the proof.
\end{proof}

From the following theorem we obtain that isoclinism of skew braces can also be defined using ideals inside annihilators instead of the full annihilators.

\begin{thm}\label{thm:Tappe}
Two skew braces $A$ and $B$ are isoclinic if and only if there exist ideals $L \subseteq \Ann A, ~N\subseteq \Ann B$ of $A,~B$ respectively and isomorphisms $\xi:A/L\to B/N$ and $\theta: \Gamma_2(A) \to \Gamma_2(B)$ such that  the diagram
\begin{equation}\label{dia:Isoclinism2}
\begin{tikzcd}
\Gamma_2(A)\arrow[d,"\theta"] &(A/{L})^2\arrow[l,"\phi_{\cdot}^A"']\arrow[r,"\phi_\ast^A"]\arrow[d,"\xi \times \xi"] & \Gamma_2(A)\arrow[d,"\theta"]\\
\Gamma_2(B) &(B/{N})^2\arrow[l,"\phi_{\cdot}^B"]\arrow[r,"\phi_\ast^B"'] &\Gamma_2(B)
\end{tikzcd}
\end{equation}
commutes.
\end{thm}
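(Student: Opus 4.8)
The plan is to prove both directions, with the nontrivial content residing in the ``only if'' direction.

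For the ``if'' direction, suppose we are given ideals $L\subseteq \Ann A$ and $N\subseteq\Ann B$ together with isomorphisms $\xi:A/L\to B/N$ and $\theta:\Gamma_2(A)\to\Gamma_2(B)$ making the diagram \eqref{dia:Isoclinism2} commute. The first step is to observe that since $L\subseteq\Ann A$ and $N\subseteq\Ann B$, the quotient maps $A/L\onto A/\Ann A$ and $B/N\onto B/\Ann B$ are surjective brace homomorphisms, and $\xi$ carries $\Ann A/L$ onto $\Ann B/N$; indeed one checks that $\xi(\Ann A/L)=\Ann B/N$ because an element of $A/L$ is central-annihilating if and only if all the word values $\phi_\cdot^A$, $\phi_\ast^A$ (together with their ``opposite'' counterparts, which are recovered from the same data) vanish on it, and $\xi$ intertwines these word maps by the commuting diagram. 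Hence $\xi$ descends to a brace isomorphism $\bar\xi:A/\Ann A\to B/\Ann B$, while $\theta$ is already a map $\Gamma_2(A)\to\Gamma_2(B)$. Since the word maps $\phi_\cdot,\phi_\ast$ factor through $A/\Ann A$ (they were well defined there, by \cite{LV23}), the commutativity of \eqref{dia:Isoclinism2} forces the commutativity of \eqref{dia:isoclinism}. Thus $(\bar\xi,\theta)$ is a brace isoclinism and $A,B$ are isoclinic.

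For the ``only if'' direction, the natural choice is $L=\Ann A$ and $N=\Ann B$, which makes the statement immediate from the definition of isoclinism in Section 2. I would state this explicitly: if $A$ and $B$ are isoclinic with isoclinism $(\xi,\theta)$, take $L=\Ann A$, $N=\Ann B$, keep the same $\xi$ and $\theta$, and then \eqref{dia:Isoclinism2} is literally \eqref{dia:isoclinism}. So this direction is trivially true; the interesting assertion of the theorem is really the flexibility afforded by the ``if'' direction, namely that one need not use the \emph{full} annihilators but may use any ideals sitting inside them. I would emphasize in the writeup that this is the point: the theorem says isoclinism is detected by any such pair of sub-annihilator ideals, paralleling the group-theoretic phenomenon exploited in \cite{Tappe76} and used above in Theorem~\ref{thm:embed}.

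The main obstacle, and the step deserving the most care, is verifying that $\xi$ maps $\Ann A/L$ \emph{onto} $\Ann B/N$ — equivalently, that an element $\tilde a\in A/L$ lies in $\Ann A/L$ precisely when its image under the full-annihilator quotient lies in the annihilator, and that this characterization is transported by $\xi$ using only the information in \eqref{dia:Isoclinism2}. One has to check that the commuting square controls not just $\phi_\cdot^A$ and $\phi_\ast^A$ but also the ``reversed'' operations $[b,a]^\cdot$ and $b\ast a$; this is handled by noting, as in the paper's discussion of $X$ and the word monoid $M$, that $\bar\gamma_\cdot$ and $\bar\ast$ are obtained from $\gamma_\cdot,\ast$ by permuting arguments, so that $\xi\times\xi$ intertwines them as well. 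Once this membership characterization is established, the descent of $\xi$ to $\bar\xi$ and the commutativity of \eqref{dia:isoclinism} are routine diagram chases, and the fact that $\bar\xi$ is a brace isomorphism (not merely a bijection) follows because $\xi$ is one and $L,N$ are ideals.
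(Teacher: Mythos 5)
Your proposal is correct and follows essentially the same route as the paper: the only substantive step is showing that the commuting square \eqref{dia:Isoclinism2} forces $\xi(\Ann A/L)=\Ann B/N$ (using that the reversed operations $u\ast a$, $[a,u]^\cdot$ are just $\phi_\ast^A$, $\phi_\cdot^A$ with permuted arguments), so that $\xi$ descends to $\bar\xi:A/\Ann A\to B/\Ann B$ and $(\bar\xi,\theta)$ is an isoclinism; the converse is trivial with $L=\Ann A$, $N=\Ann B$. This matches the paper's argument, including the use of $\xi^{-1},\theta^{-1}$ for the reverse inclusion.
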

\begin{proof}
We only aim to prove the non-trivial implication. We are going to show that $\xi$ induces an isomorphism $\bar{\xi} : A/{\Ann(A)}\to B/{\Ann(B)}$ such that $(\bar{\xi}, \theta)$ is a brace isoclinism of $A$ and $B$. Let $u \in \Ann(A)$ and $a \in A$. Then 
$$[a, u]^{\cdot} =1 = a \ast u = u \ast a.$$
By the commutativity of  diagram \eqref{dia:Isoclinism2}, we have
$$[\tilde{a}, \tilde{u}]^{\cdot} = 1 = \tilde{a} \ast \tilde{u} =  \tilde{u} \ast \tilde{a},$$
where $\tilde{x}$ is the coset representative of $\xi(\bar{x})$ in $B$ for $x \in \{a, u\}$.
Since $a \in A$ is arbitrary and $\xi$ is an isomorphism, we get $\tilde{u} \in \Ann(B)$, which implies that $\xi(\Ann(A)/L) \subseteq \Ann(B)/N$.  The reverse inclusion holds by considering $\xi^{-1}$  and $\theta^{-1}$. Hence $\xi(\Ann(A)/L) = \Ann(B)/N$. The desired  isomorphism $\bar{\xi}$ is given by 
$$\bar{\xi}(\Ann(A) \cdot a) =   \Ann(B) \cdot \tilde{a};  \,\,\,\,\, \bar{\xi}(\Ann(A) \circ a) =  \Ann(B) \circ \tilde{a}$$
for all $a \in A$. It is easy to see that  $(\bar{\xi}, \theta)$ is a brace isoclinism of $A$ and $B$.
\end{proof}

\begin{lemma}\label{lem:PropertyIsocli}
Let $A$ and $B$ be skew braces with ideals $L \subseteq \Ann A$ and $N\subseteq \Ann B$. Assume there exist isomorphisms $\xi:A/L\to B/N$ and $\theta: \Gamma_2(A) \to \Gamma_2(B)$ such that diagram \eqref{dia:Isoclinism2} commutes. Then we have
\begin{enumerate}
\item $a\in \Gamma_2(A)$ implies $\xi(L  a)=  N  \theta(a),$
\item $\theta(L \cap \Gamma_2(A))=N\cap \Gamma_2(B)$.
\end{enumerate}
\end{lemma}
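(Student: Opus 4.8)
The plan is to prove the two assertions more or less directly from the commutativity of diagram \eqref{dia:Isoclinism2} together with the well-definedness of the brace commutator maps $\phi^A_\cdot,\phi^A_\ast$ and their $B$-analogues, exactly as in the proof of Theorem \ref{thm:Tappe}. For (1), the key observation is that every element of $\Gamma_2(A)=\langle[A,A]^\cdot,A\ast A\rangle^\cdot$ is a product of factors of the form $[a_1,a_2]^\cdot$, $(a_1\ast a_2)$ and their inverses; so it suffices to track $\theta$ on such generators. I would first check the claim on a single generator: for $a=[a_1,a_2]^\cdot$ we have $\xi(L\,a)=\xi(\phi^A_\cdot(\overline{a_1},\overline{a_2}))=\phi^B_\cdot(\xi\overline{a_1},\xi\overline{a_2})=N\,\theta([a_1,a_2]^\cdot)$ by the left square of \eqref{dia:Isoclinism2}; similarly $a=a_1\ast a_2$ is handled by the right square. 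Then I would extend multiplicatively: since $\xi$ is a brace homomorphism and $\theta$ a brace isomorphism, if the identity $\xi(L\,a)=N\,\theta(a)$ holds for $a=u,v\in\Gamma_2(A)$ then it holds for $uv$ and for $u^{-1}$, and hence for all of $\Gamma_2(A)$. (Strictly speaking one must also observe that $\theta$ restricted to the subgroup of $\Gamma_2(A)$ where the identity holds is a homomorphism — immediate — and that the generators' inverses are covered because $\xi(L\,a^{-1})=\xi(L\,a)^{-1}=(N\,\theta(a))^{-1}=N\,\theta(a^{-1})$.)

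For (2), I would use (1) twice. If $a\in L\cap\Gamma_2(A)$, then $\xi(L\,a)=N\cdot 1=N$, so by (1) $N\,\theta(a)=N$, i.e. $\theta(a)\in N$; and $\theta(a)\in\Gamma_2(B)$ since $\theta:\Gamma_2(A)\to\Gamma_2(B)$. Hence $\theta(L\cap\Gamma_2(A))\subseteq N\cap\Gamma_2(B)$. For the reverse inclusion, apply the same argument to the diagram with $\xi^{-1}$ and $\theta^{-1}$ in place of $\xi,\theta$ (which also commutes, being the "inverse" of \eqref{dia:Isoclinism2}): for $b\in N\cap\Gamma_2(B)$ one gets $\theta^{-1}(b)\in L\cap\Gamma_2(A)$, so $b=\theta(\theta^{-1}(b))\in\theta(L\cap\Gamma_2(A))$. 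Since $\theta$ is injective this gives the equality $\theta(L\cap\Gamma_2(A))=N\cap\Gamma_2(B)$.

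The only mildly delicate point — and the one I would be most careful about — is the passage from generators to arbitrary products in (1): one must be sure that the coset identity $\xi(L\,a)=N\,\theta(a)$ genuinely propagates through products, which relies on $\xi$ respecting $\cdot$ (so $\xi(L\,ab)=\xi(L\,a)\xi(L\,b)$) and on $\theta$ being a group homomorphism under $\cdot$ on $\Gamma_2(A)$ (so $\theta(ab)=\theta(a)\theta(b)$). Both are given. Everything else is bookkeeping, so I would keep the write-up short, stating the generator computation explicitly for $\phi_\cdot$ and $\phi_\ast$ and then invoking multiplicativity, then deducing (2) from (1) by the symmetric-relation argument. I do not expect any real obstacle here; this lemma is essentially a repackaging of the mechanism already used in the proof of Theorem \ref{thm:Tappe}.
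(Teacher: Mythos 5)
Your proposal is correct and follows essentially the same route as the paper: verify the coset identity $\xi(La)=N\theta(a)$ on the generators $[a_1,a_2]^\cdot$ and $a_1\ast a_2$ via the commutative diagram, extend multiplicatively, and deduce (2) by applying (1) to $a\in L\cap\Gamma_2(A)$ and to $\theta^{-1}(b)$ for $b\in N\cap\Gamma_2(B)$. The paper is terser (it leaves the propagation from generators to products implicit), while you spell it out; the only thing to tidy in a final write-up is the coset-versus-element bookkeeping in the chain $\xi(L\,a)=\cdots=N\,\theta(a)$, since $\phi^B_\cdot(\xi\overline{a_1},\xi\overline{a_2})$ is an element of $\Gamma_2(B)$ rather than a coset.
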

\begin{proof}
Let $a_1, a_2\in A$ and $\xi(L  a_1) = N  b_1,~\xi( L  a_2) = N   b_2$. Then the commutative diagram gives $\xi(L [a_1,a_2]^{\cdot} ) =  N [b_1,b_2]^{\cdot}  = N \theta([a_1,a_2]^{\cdot}) $ and $\xi( L  (a_1\ast a_2)) = N  (b_1 \ast b_2)  = N  \theta(a_1\ast a_2)$. This gives the  first assertion.

Let $a\in L \cap \Gamma_2(A)$, then from the first part we have $N  1= \xi(L  a) = N  \theta(a)$. Thus $\theta(a) \in N \cap \Gamma_2(B)$. Now assume $b \in N \cap \Gamma_2(B)$, then again from the first part we have $\xi(L \theta^{-1}(b)) = N  \theta\theta^{-1}(b) =  N  b = N 1$. Therefore $\theta^{-1}(b)\in L \cap \Gamma_2(A)$, that is, $b\in \theta(L \cap \Gamma_2(A))$, which completes the proof. \end{proof}

We now turn our attention to the case $n \ge 1$.  Theorem   \ref{thm:Tappe} can be generalized   for $n$-isoclinic skew braces as follows:

\begin{thm}\label{thm:Hekster}
Two symmetric skew braces $A, B$ are $n$-isoclinic if and only if there exist ideals $L \subseteq \Ann_n(A), ~N \subseteq \Ann_n(B)$ of $A,~B$ respectively and brace isomorphisms $\xi:A/L\to B/N$ and $\theta: \Gamma_{n+1}(A) \to \Gamma_{n+1}(B)$ such that the following  diagrams commute
 \begin{center}
\begin{tikzcd}\nonumber
(A/L)^{n+1} \arrow [d, "(\xi)^{n+1}"]   \arrow[r, "\phi_m^A"]  & \Gamma_{n+1}(A) \arrow[d, "\theta"]\\
(A/N)^{n+1}    \arrow[r, "\phi_m^B"]  & \Gamma_{n+1}(B)
\end{tikzcd}
\end{center}
for all $m \in M$ with $\deg(m) = n$.
\end{thm}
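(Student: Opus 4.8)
The plan is to mimic the proof of Theorem~\ref{thm:Tappe} (the case $n=1$) while replacing $\Ann$ by $\Ann_n$, $\Gamma_2$ by $\Gamma_{n+1}$, and the two word maps $\phi^A_\cdot,\phi^A_*$ by the full family $\{\phi^A_m\mid m\in M,\ \deg m=n\}$. The forward implication is immediate: if $A$ and $B$ are $n$-isoclinic with $n$-isoclinism $(\xi,\theta)$, then take $L=\Ann_n(A)$ and $N=\Ann_n(B)$, and the commuting diagrams are exactly the definition of $n$-isoclinism. So the entire content is the reverse implication.

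For the reverse implication, assume we are given ideals $L\subseteq\Ann_n(A)$, $N\subseteq\Ann_n(B)$, brace isomorphisms $\xi:A/L\to B/N$ and $\theta:\Gamma_{n+1}(A)\to\Gamma_{n+1}(B)$ making all the square diagrams commute. The first step is to show $\xi$ carries $\Ann_n(A)/L$ onto $\Ann_n(B)/N$. Fix $u\in\Ann_n(A)$ and pick, for each $a\in A$, a coset representative $\tilde a$ in $B$ of $\xi(La)$; similarly $\tilde u$ for $\xi(Lu)$. By Lemma~\ref{lemma-key}, membership of $u$ in $\Ann_n(A)$ is equivalent to $m(a_1\cdots a_n u)=1$ for all $m\in M$ with $\deg m=n$ and all $a_i\in A$; and since $\Ann_n(A)$ is an ideal, by the criterion in Theorem~\ref{phi-well-defined} (valid since symmetric skew braces lie in $\tilde{\mathcal I}_n$) the value $m(a_1\cdots a_n u)$ depends only on the cosets $La_i$, so $\phi_m^A(La_1,\dots,La_n,Lu)=1$. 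Commutativity of the diagrams then forces $\phi_m^B(\tilde b_1,\dots,\tilde b_n,\tilde u)=1$ for all choices, where $\tilde b_i$ ranges over all of $B/N$-representatives as the $La_i$ range over $A/L$ (here we use surjectivity of $\xi$). Hence $m(b_1\cdots b_n\tilde u)\in N\subseteq\Ann_n(B)$ for all $b_i$ and all $m$, which by Lemma~\ref{lemma-key} applied in $B/N$, combined with $N\subseteq\Ann_n(B)$, gives $\tilde u\in\Ann_n(B)$. Thus $\xi(\Ann_n(A)/L)\subseteq\Ann_n(B)/N$; the reverse inclusion follows by running the same argument with $\xi^{-1}$ and $\theta^{-1}$.

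Once $\xi(\Ann_n(A)/L)=\Ann_n(B)/N$ is established, $\xi$ descends to a well-defined brace isomorphism $\bar\xi:A/\Ann_n(A)\to B/\Ann_n(B)$ via $\bar\xi(\Ann_n(A)\,a)=\Ann_n(B)\,\tilde a$ (well-definedness because $L\subseteq\Ann_n(A)$ maps into $N\subseteq\Ann_n(B)$; bijectivity because $\xi$ is a bijection respecting the subgroups). It remains to check that $(\bar\xi,\theta)$ is an $n$-isoclinism, i.e.\ the squares
\[
\begin{tikzcd}
(A/\Ann_n(A))^{n+1}\arrow[d,"(\bar\xi)^{n+1}"]\arrow[r,"\phi_m^A"] & \Gamma_{n+1}(A)\arrow[d,"\theta"]\\
(B/\Ann_n(B))^{n+1}\arrow[r,"\phi_m^B"] & \Gamma_{n+1}(B)
\end{tikzcd}
\]
commute for every $m\in M$ with $\deg m=n$. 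This is inherited from the given squares over $A/L$, $B/N$: for $a_i\in A$ the value $m(a_1\cdots a_{n+1})$ is independent of which representatives of the $\Ann_n(A)$-cosets we choose (again by Theorem~\ref{phi-well-defined}, since the $\Ann_n$-cosets refine the $L$-cosets trivially — rather, $L\subseteq\Ann_n(A)$, so an $\Ann_n(A)$-coset is a union of $L$-cosets, and the map is already constant on $L$-cosets hence on $\Ann_n(A)$-cosets by the well-definedness of $\phi_m^A$ on $(A/\Ann_n(A))^{n+1}$), and similarly in $B$; chasing an element through the $L$-level diagram and projecting gives the $\Ann_n$-level diagram. Finally, $\theta$ is already a brace isomorphism $\Gamma_{n+1}(A)\to\Gamma_{n+1}(B)$ as required.

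The main obstacle I anticipate is the bookkeeping in the first step: passing from "the $L$-level squares commute" to "$\tilde u\in\Ann_n(B)$" requires carefully invoking Lemma~\ref{lemma-key} and Theorem~\ref{phi-well-defined} in both $A$ and $B$ and tracking that, because $L$ (resp.\ $N$) is contained in but possibly strictly smaller than $\Ann_n(A)$ (resp.\ $\Ann_n(B)$), the word values $m(\cdots)$ are only a priori constant on $L$-cosets, not on $\Ann_n$-cosets — one must use the ideal hypotheses on $L,N$ together with the characterization of $\Ann_n$ in Lemma~\ref{lemma-key} to close the loop. Everything else is formal diagram-chasing of the type already carried out in the proof of Theorem~\ref{thm:Tappe}, with the single word map replaced by the family indexed by degree-$n$ elements of $M$.
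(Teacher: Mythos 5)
Your proposal is correct and follows essentially the same route as the paper, whose entire proof of this theorem is the one-line remark that it ``goes along the same lines as the proof of Theorem~\ref{thm:Tappe}''; your write-up is in fact a faithful expansion of that $n=1$ argument with $\Ann$, $\Gamma_2$ and the two word maps replaced by $\Ann_n$, $\Gamma_{n+1}$ and the family $\{\phi^A_m : \deg m = n\}$, using Lemma~\ref{lemma-key} where the $n=1$ proof uses the definition of $\Ann$. One small simplification: since $\theta(1)=1$, commutativity of the given squares yields $m(b_1\cdots b_n\tilde u)=1$ exactly (not merely $\in N$), so you can invoke Lemma~\ref{lemma-key} directly in $B$ rather than detouring through $B/N$.
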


\begin{proof}
Proof goes along the same lines as the the proof of Theorem \ref{thm:Tappe}. 
\end{proof}

The verification of the following result is left as an a exercise for the reader.

\begin{lemma}\label{lem:ann-gamma-direct}
Let $A = B \times C$ be the direct product of skew braces $B$ and $C$. Then for $n\ge 0$, the following hold true:
\begin{enumerate}
\item $\Ann_n(A)=\Ann_n(B) \times \Ann_n(C)$.
\item $\Gamma_{n+1}(A) = \Gamma_{n+1}(B) \times \Gamma_{n+1}(C)$.
\end{enumerate}
\end{lemma}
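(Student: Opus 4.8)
The plan is to prove both parts by induction on $n$, exploiting the fact that on the direct product $A = B \times C$ every structure map is computed coordinatewise. Concretely, for $a = (b_1, c_1)$ and $x = (b_2, c_2)$ one has $a \cdot x = (b_1 b_2, c_1 c_2)$, $a \circ x = (b_1 \circ b_2, c_1 \circ c_2)$, $\lambda_a(x) = (\lambda_{b_1}(b_2), \lambda_{c_1}(c_2))$, and hence $a * x = (b_1 * b_2, c_1 * c_2)$ and $[a, x]^{\cdot} = ([b_1, b_2]^{\cdot}, [c_1, c_2]^{\cdot})$. The recurring device is that a statement about pairs reduces to a statement about each factor by specialising the other coordinate to $1$.

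For (1), the case $n = 0$ is immediate since $\Ann_0 = 1$ in every skew brace. Assume $\Ann_{n-1}(A) = \Ann_{n-1}(B) \times \Ann_{n-1}(C)$ and invoke the characterisation $\Ann_n(A) = \{a \in A \mid a*x,\ x*a,\ [a,x]^{\cdot} \in \Ann_{n-1}(A)\ \text{for all}\ x \in A\}$ recorded in Section 2. For $a = (b_1, c_1)$, the coordinatewise formulas above show that $a \in \Ann_n(A)$ holds iff $b_1 * b_2$, $b_2 * b_1$ and $[b_1, b_2]^{\cdot}$ lie in $\Ann_{n-1}(B)$ for all $b_2 \in B$ (take $x = (b_2, 1)$) and, symmetrically, $c_1 * c_2$, $c_2 * c_1$, $[c_1, c_2]^{\cdot} \in \Ann_{n-1}(C)$ for all $c_2 \in C$ (take $x = (1, c_2)$); that is, iff $b_1 \in \Ann_n(B)$ and $c_1 \in \Ann_n(C)$. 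Hence $\Ann_n(A) = \Ann_n(B) \times \Ann_n(C)$.

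For (2), it is convenient to prove $\Gamma_k(A) = \Gamma_k(B) \times \Gamma_k(C)$ for all $k \ge 1$ by induction on $k$, the base case $k = 1$ being $\Gamma_1(A) = A = \Gamma_1(B) \times \Gamma_1(C)$. Assuming this for $k - 1$, note that every generator of $\Gamma_k(A) = \langle A * \Gamma_{k-1}(A),\ \Gamma_{k-1}(A) * A,\ [A, \Gamma_{k-1}(A)]^{\cdot}\rangle^{\cdot}$ is one of $a * u$, $u * a$, $[a, u]^{\cdot}$ with $a \in A$ and $u \in \Gamma_{k-1}(A) = \Gamma_{k-1}(B) \times \Gamma_{k-1}(C)$; writing $a = (b_1, c_1)$ and $u = (u_B, u_C)$, such an element equals $(b_1 * u_B, c_1 * u_C)$, $(u_B * b_1, u_C * c_1)$ or $([b_1, u_B]^{\cdot}, [c_1, u_C]^{\cdot})$ respectively, all of which lie in $\Gamma_k(B) \times \Gamma_k(C)$. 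Since $\Gamma_k(B) \times \Gamma_k(C)$ is a subgroup of $(A, \cdot)$, this gives $\Gamma_k(A) \subseteq \Gamma_k(B) \times \Gamma_k(C)$. For the reverse inclusion, specialising $c_1 = 1$, $u_C = 1$ places every generator of $\Gamma_k(B)$ into the first coordinate of an element of $\Gamma_k(A)$, so $\Gamma_k(B) \times \{1\} \subseteq \Gamma_k(A)$, and symmetrically $\{1\} \times \Gamma_k(C) \subseteq \Gamma_k(A)$; hence $\Gamma_k(B) \times \Gamma_k(C) \subseteq \Gamma_k(A)$. Reading this with $k = n + 1$ yields (2).

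There is no genuine obstacle; the whole argument is bookkeeping on the coordinatewise operations, and the inductive skeletons of both parts are short. The only points needing a line of care are verifying the coordinatewise formula for $\lambda_a$ on $B \times C$ (so that $*$ and all the derived subgroups split as asserted) and observing in (2) that the three generating families $A * \Gamma_{k-1}(A)$, $\Gamma_{k-1}(A) * A$ and $[A, \Gamma_{k-1}(A)]^{\cdot}$ are each dispatched by the identical two-line computation, so nothing separate is required for them.
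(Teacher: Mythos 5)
Your proof is correct, and since the paper explicitly leaves this lemma as an exercise for the reader, there is no authorial proof to diverge from; your coordinatewise argument (componentwise $\lambda$, hence componentwise $*$ and commutators, plus the recorded characterisation of $\Ann_n$ and induction on the two series) is exactly the intended routine verification.
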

%\begin{proof}
%It is obvious for $n=0$. Now let $(x,y)\in \Ann(A)$, then for $(a,b)\in A$ we have
%\begin{align}
%[(x,y),(a,b)]^\cdot &=(x,y)\ast (a,b)=(a,b)\ast (x,y)=(1,1),\label{eq1}
%\end{align}
%thus
%\begin{align}
%([x,a]^\cdot,[y,b]^\cdot) &=(x\ast a,y\ast b)=(a\ast x,b\ast y)=(1,1).\label{eq2}
%\end{align}
%This gives $x\in\Ann(I)$ and $y\in \Ann(J)$. Also \eqref{eq2} implies \eqref{eq1}, hence $\Ann(A)=\Ann(I)\times \Ann(J)$. Note that
%$$\Ann_{n+1}(A)=\{x\in A~|~[x,a]^\cdot,~x\ast a,~a\ast x\in\Ann_{n}(A),~\forall~a\in A\}.$$ So the result follows from induction.

%For $n=1$, $\gamma_1(A)=\{1\}$, for any skew brace $A$. Now by induction, we have
%\begin{align*}
%\Gamma_{n+1}(A) &=\langle (a,b)\ast (x,y),~(x,y)\ast (a,b),~[(x,y),(a,b)]^\cdot~|~(a,b)\in A, ~(x,y)\in\Gamma_n(A)\rangle^\cdot\\
% &=\langle (a,b)\ast (x,y),~(x,y)\ast (a,b),~[(x,y),(a,b)]^\cdot~|~(a,b)\in A, ~(x,y)\in\Gamma_n(I)\times \Gamma_n(J)\rangle^\cdot\\
% &=\langle (a\ast x,b\ast y),~(x\ast a,y\ast b),~([x,a]^\cdot,[y,b]^\cdot)~|~(a,b)\in A, ~(x,y)\in\Gamma_n(I)\times \Gamma_n(J)\rangle^\cdot\\
%  &=\{ (x,y)~|~(x,y)\in\Gamma_{n+1}(I)\times \Gamma_{n+1}(J)\}\\
%  &=\Gamma_{n+1}(I)\times \Gamma_{n+1}(J).
%\end{align*}
%\end{proof}

The following observation is immediate from the definitions.

\begin{lemma}\label{lem:nilpotentSkew}
Any nilpotent skew brace $A$ with nilpotency class at most $ n$, lies in  $\mathcal{I}_{n}$.  
\end{lemma}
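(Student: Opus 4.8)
The goal is to show that a nilpotent skew brace $A$ of nilpotency class $\le n$ lies in $\mathcal{I}_n$; by Theorem \ref{phi-well-defined}, this amounts to checking that $\epsilon(u,v) \in \Ann_{s-r}(A)$ for all $\epsilon \in X$, all integers $1 \le r \le s \le n$, all $u \in \Ann_s(A)$ and all $v \in S_{r-1}(A)$.

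The plan is to exploit the fact that nilpotency class $\le n$ forces $\Ann_n(A) = A$, so the annihilator series is a \emph{complete} ascending series of ideals terminating at $A$ after at most $n$ steps. First I would observe that $S_{r-1}(A) \subseteq A_{(r-1)} \subseteq \Gamma_r(A)$ by the construction preceding Lemma \ref{lemma-key}, so it suffices to prove the containment for $v \in \Gamma_r(A)$. Next, the key structural input is the standard interaction between the lower central series $\Gamma_\bullet$ and the upper (annihilator) series $\Ann_\bullet$: when $\Ann_n(A) = A$, one has $\Gamma_{r}(A) \le \Ann_{n-r+1}(A)$ for all $r \ge 1$ (the brace analogue of the group-theoretic fact $\gamma_r(G) \le Z_{n-r+1}(G)$ when $Z_n(G) = G$), which follows by a downward induction using that $\Gamma_{r}(A)$ is generated by elements $\epsilon_1(a, w)$ with $w \in \Gamma_{r-1}(A)$ and the definition of $\Ann_k(A)$ via $\Ann_k(A)/\Ann_{k-1}(A) = \Ann(A/\Ann_{k-1}(A))$. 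I expect this inclusion is either already available in \cite{BJ23} (via $\Gamma_{n+1}(A) = 1$) or provable in a couple of lines.

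With that in hand, the main step is a dimension-counting / degree-shift argument: if $u \in \Ann_s(A)$ with $s \le n$ and $v \in \Gamma_r(A) \le \Ann_{n-r+1}(A)$, I want $\epsilon(u,v) \in \Ann_{s-r}(A)$. The cleanest route is to pass to the quotient $\bar{A} := A/\Ann_{s-r}(A)$ and show $\epsilon(\bar u, \bar v) = 1$ there. In $\bar A$, the image $\bar u$ lies in $\Ann_r(\bar A)$ (since $\Ann_s(A)/\Ann_{s-r}(A) \subseteq \Ann_r(\bar A)$ — again a standard fact about the annihilator series under quotients). But then, because $\bar v \in \Gamma_r(\bar A)$ and an element of $\Ann_r(\bar A)$ commutes appropriately with everything in $\Gamma_r$ — more precisely, one shows inductively on $r$ that $\epsilon(\Ann_r(\cdot), \Gamma_r(\cdot)) = 1$ — we conclude $\epsilon(\bar u, \bar v) = 1$. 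Actually, rather than reorganizing from scratch, the most economical proof simply invokes Lemma \ref{lemma4}: that lemma already gives $\epsilon(u,v) \in \Ann_{s-r}(A)$ for $u \in \Ann_s(A)$ and $v \in \bar\Gamma_r(A)$, and by Lemma \ref{lem:inclusion:1} we have $\bar\Gamma_r(A) \subseteq A_{r-1}$; combined with the fact (to be checked) that $S_{r-1}(A) \subseteq A_{(r-1)}$ and the containments among these left ideals, this delivers exactly the hypothesis of Theorem \ref{phi-well-defined}. The remaining subtlety is the two ``missing'' word types $\bar{\gamma}_\cdot$ and $\bar{*}$ not covered by Lemma \ref{lemma4}, but $\bar\gamma_\cdot$ reduces to $\gamma_\cdot$ by a conjugation identity and $\bar{*}$ is handled exactly as in Lemma \ref{lemma5} (whose computation does not use symmetry in an essential way once $v$ lies in the relevant central layer) — or, more simply, in the nilpotent case every element of $A_{(r-1)}$ already sits high enough in the annihilator series that these word values are forced to lie in $\Ann_{s-r}(A)$ for arithmetic reasons.

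The main obstacle, I expect, is not any single computation but rather pinning down precisely which inclusion among $\{S_{r-1}(A),\ A_{(r-1)},\ \bar\Gamma_r(A),\ \Gamma_r(A)\}$ one needs and making sure the degree bookkeeping ($s \ge r$, the drop to $\Ann_{s-r}$) lines up with the nilpotency bound $s \le n$; in particular one must be careful that $S_{r-1}(A)$ — defined as the \emph{set} of word values of degree $r-1$, not the generated ideal — is contained in a term of the $\bar\Gamma$ or $A_{(\cdot)}$ filtration to which Lemmas \ref{lemma4}, \ref{lemma5} and \ref{lem:inclusion:1} apply. Once that alignment is made explicit, the statement is an immediate corollary of Theorem \ref{phi-well-defined} together with the lemmas already proved for symmetric (and, in the relevant parts, arbitrary) skew braces, since nilpotency of class $\le n$ makes the hypotheses vacuously satisfiable: whenever $s \le n$, the index $s - r$ is nonnegative and small enough that the required annihilator membership is automatic.
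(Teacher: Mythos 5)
Your approach misses the reason the paper calls this result ``immediate from the definitions,'' and the route you substitute for it contains a genuine error. The short proof is this: nilpotency class at most $n$ means $\Ann_n(A)=A$, so $A/\Ann_n(A)$ is the trivial brace and the domain of each map $\phi_m^A$ is a single point; moreover $A_{(n)}\le \Gamma_{n+1}(A)=1$ by \cite[Theorem 2.8]{BJ23}, so the codomain is trivial as well (equivalently, Lemma \ref{lemma-key} with $t=n$ gives $m(a_1\cdots a_{n+1})=1$ for every choice of entries, since every $a_{n+1}$ lies in $\Ann_n(A)$). Well-definedness of $\phi_m^A$ is therefore vacuous, and $A\in\mathcal{I}_n$. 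No appeal to Theorem \ref{phi-well-defined} is needed.

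The gap in your argument is the final ``arithmetic reasons'' step, where the containments run the wrong way. From $v\in S_{r-1}(A)\subseteq\Gamma_r(A)\subseteq\Ann_{n-r+1}(A)$ you obtain $\epsilon(u,v)\in\Ann_{n-r}(A)$, and from $u\in\Ann_s(A)$ you obtain $\epsilon(u,v)\in\Ann_{s-1}(A)$; but the target $\Ann_{s-r}(A)$ satisfies $\Ann_{s-r}(A)\subseteq\Ann_{\min(s-1,\,n-r)}(A)$ because $s-r\le s-1$ and $s-r\le n-r$. Membership in the \emph{larger} ideal does not yield membership in the \emph{smaller} one, so the required conclusion is not automatic -- it is precisely the nontrivial commutation property that Lemmas \ref{lemma4} and \ref{lemma5} are designed to establish. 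Your fallback to those lemmas does not close the hole either: Lemma \ref{lemma4} does not cover $\bar{*}$, and the computation in Lemma \ref{lemma5} uses the symmetric identity $\lambda_{ab}=\lambda_{b\circ a}$ of Lemma \ref{lem:symm} in an essential way (it is the very first step, rewriting $\lambda_{a*d}$ as $\lambda_{d^{-1}\circ a\circ d\circ a^{-1}}$), so it cannot be quoted for an arbitrary nilpotent skew brace. Had you noticed that both the domain and the codomain of $\phi_m^A$ collapse, none of this machinery would have been needed.
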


The next result gives some sufficient conditions so that quotients and direct products of skew braces in $\mathcal{I}_{n}$ fall back into it.
\begin{lemma}\label{lem:InI_n}
Let $A,B \in \mathcal{I}_{n}$, $I$ be an ideal of $A$ and $K$ be a sub-skew brace of $B$ for a given positive integer $n$. Then $A \times B \in \mathcal{I}_{n}$. Furthermore 
\begin{enumerate}
\item  if $\Gamma_{n+1}(A) \cap I = 1$, then $A/I \in \mathcal{I}_{n}$.
\item  if $B = K \Ann_n(B)$, then  $\Ann_n(K) = K \cap \Ann_n(B)$, which in turn implies that $K \in \mathcal{I}_{n}$.
\end{enumerate}
\end{lemma}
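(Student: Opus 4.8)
The plan is to verify the membership conditions using the structural criterion from Theorem~\ref{phi-well-defined}, namely that a skew brace $X$ lies in $\mathcal{I}_n$ if and only if $\epsilon(u,v)\in\Ann_{s-r}(X)$ for all $\epsilon\in X$, $u\in\Ann_s(X)$, $v\in S_{r-1}(X)$ with $1\le r\le s\le n$. For the direct product $A\times B$, I would combine Lemma~\ref{lem:ann-gamma-direct} (which gives $\Ann_n(A\times B)=\Ann_n(A)\times\Ann_n(B)$ and $\Gamma_{n+1}(A\times B)=\Gamma_{n+1}(A)\times\Gamma_{n+1}(B)$) with the observation that all four generating word operations $\gamma_{\cdot},\bar\gamma_{\cdot},*,\bar*$ are computed componentwise in a direct product. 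Hence $S_{r-1}(A\times B)\subseteq S_{r-1}(A)\times S_{r-1}(B)$, and for $u=(u_A,u_B)\in\Ann_s(A)\times\Ann_s(B)$ and $v=(v_A,v_B)$ with $v_A\in S_{r-1}(A)$, $v_B\in S_{r-1}(B)$, we get $\epsilon(u,v)=(\epsilon(u_A,v_A),\epsilon(u_B,v_B))\in\Ann_{s-r}(A)\times\Ann_{s-r}(B)=\Ann_{s-r}(A\times B)$, using that $A,B\in\mathcal{I}_n$ and Theorem~\ref{phi-well-defined} applied to each factor. This shows $A\times B\in\mathcal{I}_n$.

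For assertion (1), set $\bar A:=A/I$ with the hypothesis $\Gamma_{n+1}(A)\cap I=1$. The key point is that the quotient map $\pi:A\to\bar A$ satisfies $\pi(\Ann_s(A))\subseteq\Ann_s(\bar A)$ and $\pi(S_{r-1}(A))=S_{r-1}(\bar A)$ (the latter because word maps are preserved under brace homomorphisms, and $\pi$ is surjective). Given $\bar u\in\Ann_s(\bar A)$ and $\bar v\in S_{r-1}(\bar A)$, lift $\bar v$ to some $v\in S_{r-1}(A)$ and lift $\bar u$ to $u\in A$; here one must be careful, since a lift of an element of $\Ann_s(\bar A)$ need not lie in $\Ann_s(A)$. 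The remedy is to work directly: I want to show $\epsilon(\bar u,\bar v)\in\Ann_{s-r}(\bar A)$, and by Lemma~\ref{lemma-key} it suffices to show $m(\bar a_1\cdots\bar a_{s-r}\,\epsilon(\bar u,\bar v))=1$ for all $m\in M$ with $\deg m=s-r$. Pulling this back through $\pi$, the expression $m(a_1\cdots a_{s-r}\,\epsilon(u,v))$ is a brace-commutator word of degree $s$ evaluated with the entry $\bar u$ in the last slot; since $\bar A\in\mathcal{I}_r$ for all $r\le n$ is not assumed, I would instead argue that $\epsilon(u,v)$ modulo $I$ depends only on $\bar u$ modulo $\Ann_s$—more precisely, using that $\Gamma_{n+1}(A)\cap I=1$ forces $I\cap\Gamma_{r}(A)=1$ for $r\ge n+1$, and combining with Corollary~\ref{cor-phi} and Lemma~\ref{lemma4b} to reduce everything to computations in $A$. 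The cleanest route is: the natural map $\Gamma_{n+1}(A)\to\Gamma_{n+1}(\bar A)$ is an isomorphism (as $\Gamma_{n+1}(A)\cap I=1$), and $\phi^A_m$ being well-defined on $A$ together with this isomorphism transports well-definedness of $\phi^{\bar A}_m$; one checks the induced $\xi:A/\Ann_n(A)\to\bar A/\Ann_n(\bar A)$ and $\theta$ give an $n$-isoclinism, hence $\bar A\in\mathcal{I}_n$ by Theorem~\ref{thm:Hekster}-type reasoning, or simply directly from the definition of $\mathcal{I}_n$.

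For assertion (2), assume $B=K\Ann_n(B)$. The claim $\Ann_n(K)=K\cap\Ann_n(B)$ is the heart of the matter. The inclusion $K\cap\Ann_n(B)\subseteq\Ann_n(K)$ is routine (if $u\in K$ satisfies $\epsilon(u,b)\in\Ann_{n-1}(B)$ for all $b\in B$, then in particular for $b\in K$, and one inducts down using $\Ann_{n-1}(B)\cap K$; here one needs that $K=K\Ann_{n}(B)$ pushes down to $K\cap\Ann_{i}(B)$ behaving well, which follows from $B=K\Ann_n(B)$ and the fact that $\Ann_i(B)$ is an ideal). For the reverse inclusion $\Ann_n(K)\subseteq K\cap\Ann_n(B)$: take $u\in\Ann_n(K)$ and an arbitrary $b\in B$; write $b=k\cdot t$ with $k\in K$, $t\in\Ann_n(B)$. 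Then $\epsilon(u,b)=\epsilon(u,k\cdot t)$, and applying Lemma~\ref{lemma4b} (or Corollary~\ref{cor-phi}) the element $\epsilon(u,kt)$ equals $\epsilon(u,k)$ modulo $\Ann_{n-1}(B)$—but here we need the hypothesis of Lemma~\ref{lemma4b}, namely $\epsilon(c,d)\in\Ann_{n-1}(B)$ for $c\in\Ann_n(B)$, $d\in\Gamma_1(B)=B$, which holds since $\Ann_n$ is an ideal so $\epsilon(\Ann_n(B),B)\subseteq\Ann_{n-1}(B)$. So $\epsilon(u,b)\equiv\epsilon(u,k)$ mod $\Ann_{n-1}(B)$, and by induction on $n$ applied to $K$ and $B$ (the case $n=1$ being immediate from the definition of $\Ann_1$) one concludes $\epsilon(u,k)\in\Ann_{n-1}(K)=K\cap\Ann_{n-1}(B)$, hence $\epsilon(u,b)\in\Ann_{n-1}(B)$, giving $u\in\Ann_n(B)$, so $u\in K\cap\Ann_n(B)$. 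Once $\Ann_n(K)=K\cap\Ann_n(B)$ is established, Proposition~\ref{prop7}(1) (or its proof, which shows $B$ and $K$ are $n$-isoclinic when $B=K\Ann_n(B)$) gives that $K$ is $n$-isoclinic to $B\in\mathcal{I}_n$, and since $n$-isoclinism is only defined within $\mathcal{I}_n$, one instead verifies $K\in\mathcal{I}_n$ directly: for $u\in\Ann_s(K)=K\cap\Ann_s(B)$ and $v\in S_{r-1}(K)\subseteq S_{r-1}(B)$, we have $\epsilon(u,v)\in\Ann_{s-r}(B)$ since $B\in\mathcal{I}_n$, and $\epsilon(u,v)\in K$ since $K$ is a sub-skew brace, so $\epsilon(u,v)\in K\cap\Ann_{s-r}(B)=\Ann_{s-r}(K)$; by Theorem~\ref{phi-well-defined}, $K\in\mathcal{I}_n$.

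\textbf{Main obstacle.} The genuinely delicate point is assertion (2)'s identity $\Ann_n(K)=K\cap\Ann_n(B)$: the naive inductive argument needs $\Ann_{n-1}(K)=K\cap\Ann_{n-1}(B)$, but the hypothesis $B=K\Ann_n(B)$ does not obviously descend to $B/\Ann_{n-1}(B)=(K\Ann_{n-1}(B)/\Ann_{n-1}(B))\cdot(\Ann_n(B)/\Ann_{n-1}(B))$ in a form that lets the induction close cleanly, so one must set up the induction on the pair $(K,B)$ carefully, possibly using that the image of $K$ in $B/\Ann_{n-1}(B)$ together with the image of $\Ann_n(B)$ still generates, and that $\Ann(B/\Ann_{n-1}(B))=\Ann_n(B)/\Ann_{n-1}(B)$. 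For assertion (1), the subtlety is that a lift of an annihilator element need not be an annihilator element, so one cannot simply "pull back" the criterion; the fix is to exploit $\Gamma_{n+1}(A)\cap I=1$ to identify $\Gamma_{n+1}(\bar A)$ with $\Gamma_{n+1}(A)$ and run the argument on the $\Gamma$ side rather than the $\Ann$ side.
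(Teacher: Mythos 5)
Your treatment of the direct product is fine and matches the paper (componentwise word maps plus Lemma~\ref{lem:ann-gamma-direct}). The problems are in parts (1) and (2).

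For part (2) the gap is the one you yourself flag: your argument reduces $\epsilon(u,b)\in\Ann_{n-1}(B)$ to $\Ann_{n-1}(K)=K\cap\Ann_{n-1}(B)$, and that inductive hypothesis is not available because $B=K\Ann_n(B)$ does not descend to level $n-1$. The induction therefore does not close, and no repair along those lines is offered. The paper avoids induction on the annihilator series entirely by working with degree-$n$ words: by Lemma~\ref{lemma-key}, $x\in\Ann_n(K)$ means $m(k_1\cdots k_nx)=1$ for all $m\in M$ of degree $n$ and $k_i\in K$; writing an arbitrary $b_i\in B$ as $b_i=k_iu_i$ with $u_i\in\Ann_n(B)$ and using the defining property of $B\in\mathcal{I}_n$ (the degree-$n$ word maps are insensitive to perturbing arguments by $\Ann_n(B)$) gives $m(b_1\cdots b_nx)=m(k_1\cdots k_nx)=1$, whence $x\in\Ann_n(B)$ by Lemma~\ref{lemma-key} again. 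No statement about $\Ann_{n-1}$ is ever needed. A second, smaller defect: your final deduction of $K\in\mathcal{I}_n$ via Theorem~\ref{phi-well-defined} uses $\Ann_s(K)=K\cap\Ann_s(B)$ for \emph{all} $s\le n$, which is not established (it would require $B=K\Ann_s(B)$ at every level). The correct route is to verify well-definedness of the degree-$n$ maps $\phi^K_m$ directly from the definition of $\mathcal{I}_n$, which only needs $\Ann_n(K)\subseteq\Ann_n(B)$ together with $B\in\mathcal{I}_n$.

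For part (1) you correctly identify the obstacle (a lift of an element of $\Ann_n(A/I)$ need not lie in $\Ann_n(A)$), but your proposed resolution --- transporting well-definedness through the isomorphism $\Gamma_{n+1}(A)\cong\Gamma_{n+1}(A/I)$ --- does not by itself control the \emph{domain} of $\phi^{A/I}_m$, which is $(A/I)/\Ann_n(A/I)$. The missing step, which is the entire content of the paper's argument, is the identity $\Ann_n(A/I)=\Ann_n(A)I/I$: if $Ia\in\Ann_n(A/I)$ then every degree-$n$ word value $m(a_1\cdots a_na)$ lies in $I$ and also in $A_{(n)}\subseteq\Gamma_{n+1}(A)$, hence equals $1$ since $\Gamma_{n+1}(A)\cap I=1$, so $a\in\Ann_n(A)$ by Lemma~\ref{lemma-key}. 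Once this is in hand, coset representatives modulo $\Ann_n(A/I)$ may be chosen to differ by elements of $\Ann_n(A)$, and well-definedness of $\phi^{A/I}_m$ follows immediately from that of $\phi^A_m$. You gesture at all the right ingredients but never assemble them into this statement, and without it the argument for (1) is incomplete.
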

\begin{proof}
The  assertion on the direct product easily holds using Lemma \ref{lem:ann-gamma-direct}. Assume that $\Gamma_{n+1}(A) \cap I = 1$. We claim that $\Ann_n(A/I) = \Ann_n(A)I/I$. Let $Ia \in \Ann_n(A/I)$. Then $m(a_1a_2 \cdots a_n a) \in I$ for all $a_i \in A$ and $m \in M$ with $\deg(m) = n$. Also $m(a_1a_2 \cdots a_n a) \in \Gamma_{n+1}(A)$. Hence $m(a_1a_2 \cdots a_n a) =1$, which implies that $a \in \Ann_n(A)$, and, the other inclusion being obvious,  the claim holds true. That $A/I \in \mathcal{I}_{n}$ holds is very easy to see now.

Now assume that $B = K \Ann_n(B)$.  It is clear that $K \cap \Ann_n(A) \subseteq \Ann_n(K)$. Let $x \in \Ann_n(K)$.  Then for any $k_i \in K$, $1 \le i \le n$ and all $m \in M$ with $\deg(m) = n$, we have $m(k_1k_2 \cdots k_n x)=1$.
Let $b_i$, $1 \le i \le n$, be arbitrary elements of $B$. Then there exist $k_i \in K$ and $u_i \in \Ann_n(B)$ such that $b_i = k_i u_i$. Since $B \in \mathcal{I}_n$, we have $m(b_1 b_2 \cdots b_n x) =  m(k_1k_2 \cdots k_n x) = 1$. Hence $x \in \Ann_n(B)$. The proof is now complete.
\end{proof}

We now prove the embedding theorem (Theorem D), which is analogous to the main theorem of \cite{Waall2004}.

\begin{thm}\label{thm:Waall}
Let $A, B$ be two $n$-isoclinic ($n \ge 1$) symmetric skew braces and $(\xi,\theta)$ be a brace $n$-isoclinism. Then there exists a symmetric skew brace $R := \tilde{A} \times N$ with the following properties:
\begin{enumerate}
\item $N$ is a nilpotent skew brace of nilpotency class $n$;
\item $\tilde{A}$ is brace isomorphic to $A$, and $R=\bar{A}\Ann_n(R)$;
\item $R$ admits a sub-skew brace $\bar{B}$  isomorphic to $B$ such that  $R=\bar{B}\Ann_n(R)$,
\end{enumerate}
where $\bar{A}:=\tilde{A}\times \{1\}$. In particular, both $A$ and $B$ are $n$-isoclinic to $R$.
\end{thm}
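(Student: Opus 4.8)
The plan is to build $R$ from the ``common extension'' of $A$ and $B$ furnished by Theorem \ref{thm:isoclin-surj}, together with a suitable nilpotent quotient of it. Recall first what that theorem provides: applied to the $n$-isoclinic symmetric braces $A$ and $B$ (with $n$-isoclinism $(\xi,\theta)$), it yields a symmetric skew brace $C$ with ideals $N_1,N_2$ such that $N_1\cong\Ann_n(A)$, $N_2\cong\Ann_n(B)$, $C/N_2\cong A$, $C/N_1\cong B$ and $N_1\cap\Gamma_{n+1}(C)=1=N_2\cap\Gamma_{n+1}(C)$; moreover, as established in that proof, $N_1,N_2\subseteq\Ann_n(C)$ and $\Gamma_{n+1}(C)$ is generated by the ``diagonal'' elements $\big(m(a_1\cdots a_{n+1}),\theta(m(a_1\cdots a_{n+1}))\big)$ when $C$ is viewed inside $A\times B$. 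I would fix brace isomorphisms identifying $A$ with $C/N_2$ and $B$ with $C/N_1$, and keep Theorem \ref{thm:Hekster} in reserve so that, when needed, one may work with these ideals inside the annihilators rather than with the full annihilators.

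Next, set $N:=C/\Gamma_{n+1}(C)$. Since $\Gamma_{n+1}(C)$ is an ideal of $C$ with $\Gamma_{n+1}(C/\Gamma_{n+1}(C))=1$, the brace $N$ is nilpotent of class at most $n$, and it is symmetric, being a quotient of the symmetric brace $C$ (if the class comes out strictly below $n$, enlarge $N$ by a direct factor that is a nilpotent symmetric brace of class exactly $n$; such a factor equals its own $n$-th annihilator, so nothing else below is affected). Put $\tilde A:=C/N_2$, $R:=\tilde A\times N$ and $\bar A:=\tilde A\times\{1\}$, so that $R$ is a symmetric skew brace with $\tilde A\cong A$. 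Property (2) is then immediate: by Lemma \ref{lem:ann-gamma-direct}, $\Ann_n(R)=\Ann_n(\tilde A)\times\Ann_n(N)=\Ann_n(\tilde A)\times N$, the last equality because $N$ has class $\le n$, whence $\bar A\,\Ann_n(R)=(\tilde A\times\{1\})\big(\Ann_n(\tilde A)\times N\big)=\tilde A\times N=R$; Proposition \ref{prop7}(1) then gives that $\bar A\cong A$ is $n$-isoclinic to $R$.

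For property (3) the task is to exhibit a sub-skew brace $\bar B\le R$ with $\bar B\cong B$ (not merely $n$-isoclinic to $B$) and $\bar B\,\Ann_n(R)=R$. I would seek $\bar B$ as a ``twisted copy'' of $B$ inside $\tilde A\times N$, assembled from the quotient map $C\to C/N_1\cong B$ and the map $C\to C/\Gamma_{n+1}(C)=N$, using the identification $A=C/N_2$, the gluing recorded by $\theta$ on $\Gamma_{n+1}(C)$, and a fixed set-transversal of the relevant annihilators to reconcile coset representatives; symmetry (Lemma \ref{lem:symm}) is what guarantees that the resulting set is closed under $\circ$ and under the $\lambda$-action, hence is an actual sub-brace. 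Granting such a $\bar B$, the identity $\bar B\,\Ann_n(R)=R$ would follow by checking that $\bar B$ surjects onto $R/\Ann_n(R)\cong\tilde A/\Ann_n(\tilde A)$, which reduces to the facts that $\Ann_n(C)$ maps onto $N_1N_2/N_2=\Ann_n(A)$ and that $C$ maps onto $N=\Ann_n(N)$; Proposition \ref{prop7}(1) again yields $\bar B\sim_n R$, and transitivity of $n$-isoclinism gives the final assertion that $A$ and $B$ are both $n$-isoclinic to $R$. The main obstacle is precisely this last construction: forcing the $\tilde A$-component (glued through $\theta$ on $\Gamma_{n+1}$) and the nilpotent $N$-component to fit into one set that is simultaneously closed under all three brace operations and still fills $R$ modulo $\Ann_n(R)$ — and it is here that symmetry and the structural results of Section 5 (Propositions \ref{prop6} and \ref{prop7}) are used in an essential way.
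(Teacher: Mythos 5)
Your construction of the fiber product $C$, the identification of $N_1,N_2$, and the verification of property (2) for $R=\tilde A\times N$ all match the paper's setup, but there is a genuine gap at exactly the point you flag as ``the main obstacle'': property (3), the existence of an \emph{isomorphic} copy of $B$ inside $R$ with $\bar B\,\Ann_n(R)=R$, is never established, and your candidate $R=\tilde A\times C/\Gamma_{n+1}(C)$ is in fact too small to carry out the plan you sketch. The natural map $c\mapsto\big(N_2c,\;\Gamma_{n+1}(C)c\big)$ into $\tilde A\times C/\Gamma_{n+1}(C)$ has kernel $N_2\cap\Gamma_{n+1}(C)=1$, so it embeds all of $C$, not $B=C/N_1$; and since $N_1\cap\Gamma_{n+1}(C)=1$, the image of $N_1$ under this map is not trivial, so no amount of ``twisting by $\theta$ and choosing transversals'' inside this particular product will collapse $N_1$ without also changing $R$. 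This is not a formal difficulty that symmetry resolves; it is the crux of the theorem.

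The paper's proof supplies the missing idea via induction on $n$ (the base case $n=1$ being Theorem \ref{thm:embed} and Remark \ref{rmk:embed1}). One first passes to $Y:=W/V$ where $W=C/N_2\times C/\Gamma_{n+1}(C)$ and $V=\{(N_2x,\Gamma_{n+1}(C)x)\mid x\in N_1\cap\Gamma_n(C)\}$; the point of quotienting by $V$ is that the map $\rho_2(c)=V(N_2c,\Gamma_{n+1}(C)c)$ then has kernel $N_1\cap\Gamma_n(C)$ rather than $N_1$, so $\rho_2(C)\cong C/(N_1\cap\Gamma_n(C))$ is $(n-1)$-isoclinic to $B$ by Proposition \ref{prop7}(2). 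The inductive hypothesis applied to the pair $\big(\rho_2(C),B\big)$ at level $n-1$ produces a brace $U=\tilde D_2\times K_{n-1}$ containing a genuine isomorphic copy of $B$, and the final $R$ is $\rho_1(C)\times K_n\times K_{n-1}$ --- it needs the extra nilpotent direct factor $K_{n-1}$ of class $n-1$, which your $R$ lacks, precisely to house that copy of $B$; the identity $\eta(\overline E_2)\Ann_n(R)=R$ is then checked by a chain of product decompositions using Lemmas \ref{lem:ann-gamma-direct} and \ref{lem:InI_n}. Without this inductive mechanism (or a substitute for it), your argument does not prove (3), and hence does not prove the theorem.
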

\begin{proof}
We proceed by induction.  By Theorem \ref{thm:embed} and  Remark \ref{rmk:embed1}, the result holds for $n=1$ in the class of symmetric skew braces.
The fiber product
$$C:=\{(a, b) \mid a \in A,~ b \in B,~\xi(\Ann_n(A)a) = \Ann_n(B)b\}$$
of $A$ and  $B$ over $A/\Ann_n(A)$ is clearly a symmetric sub-skew brace of the symmetric skew brace $A \times B$. Set $N_1:=\Ann_n(A)\times \{1\}$ and $N_2:=\{1\}\times \Ann_n(B)$, which are  ideals of $C$.  Consider the natural projections $C\twoheadrightarrow A_1$ and $C\twoheadrightarrow A_2$. Since $A$ and $B$ are $n$-isoclinic, it follows that  $A$ and $B$ are brace isomorphic to $C/{N_2}$ and $C/{N_1}$ respectively. It follows from the proof of Theorem \ref{thm:isoclin-surj} that  
\begin{equation}\label{intersection}
N_1 \cap \; \Gamma_{n+1}(C) = 1 =  N_2 \cap \; \Gamma_{n+1}(C).
\end{equation}

 Now consider the brace 
$$W := C/{N_2}\times C/{\Gamma_{n+1}(C)}$$
and the subset 
$$V:=\{(N_2x,\Gamma_{n+1}(C)x)~|~x\in N_1\cap \Gamma_n(C)\}$$ of $W$. It is easy to see that $V$ is a sub-skew brace of $W$.
Since $N_1\cap \Gamma_n(C)$ is an ideal, for $c\in C$ and $x\in N_1\cap \Gamma_n(C)$, we have $cxc^{-1},~c\circ x\circ \bar{c},~\lambda_c(x)\in N_1\cap \Gamma_n(C)$. Hence  by Remark \ref{rmk1} and  the definition of  $\Gamma_{n+1}(C)$, we get 
$$\Gamma_{n+1}(C) (c\circ x\circ \bar{c}) = \Gamma_{n+1}(C)x = \Gamma_{n+1}(C)cxc^{-1} = \Gamma_{n+1}(C)\lambda_c(x).$$
This yields that $V$ is an ideal of $W$. Now consider the skew brace $Y:=W/V$. 

Let us now define
\begin{align*}
\rho_1 &:C\to Y,\hspace{1cm} c \mapsto V \big(N_2 c,\Gamma_{n+1}(C)\big),\\
\rho_2 &:C\to Y,\hspace{1cm} c \mapsto V \big(N_2 c,\Gamma_{n+1}(C) c\big).
\end{align*}
It is clear that these maps are brace homomorphisms and, using \eqref{intersection},  we have $\ker (\rho_1) = N_2$ and $\ker (\rho_2) = N_1 \cap \Gamma_{n}(C)$. Thus $\rho_1(C)$ is brace isomorphic to $A$ and $\rho_2(C)$ is brace isomorphic to $C/{N_1\cap \Gamma_{n}(C)}$.

Since  $A, B$ are symmetric skew braces, it follows that   $C/{N_1}$ and $C/{N_1\cap \Gamma_{n}(C)}$ are symmetric, thus $C/{N_1},~C/{N_1\cap \Gamma_{n}(C)}\in \mathcal{I}_{n-1}$. By Proposition \ref{prop7}(2), we have that $C/{N_1}$ is $(n-1)$-isoclinic to $C/{N_1\cap \Gamma_{n}(C)}$,  which implies that  $B$ is $(n-1)$-isoclinic to $\rho_2(C)$. Now, by the inductive step,  there exists a symmetric  skew brace $U := \tilde{D}_2 \times K_{n-1}$, where $K_{n-1}$ is a nilpotent skew brace of nilpotency class $n-1$, $\overline{D}_2 := \tilde{D}_2 \times \{1\}$ is brace isomorphic to $\rho_2(C)$, and there is a sub skew brace $\overline{E}_2$ of $U$ which is brace isomorphic to $B$  such that 
$$\overline{E}_2 \Ann_{n-1}(U) = U = \overline{D}_2 \Ann_{n-1}(U).$$ 

Set $K_n := LV/V$, where $L := \{(N_2, \Gamma_{n+1}(C)c) \mid c \in C\}$, which is symmetric by the construction. Observe that  $Y = \rho_1(C)K_n$, where $\rho_1(C)$ and $K_n$ are ideals of $Y$ such that  $\rho_1(C) \cap K_n = 1$;  hence  it follows that 
$$Y = \rho_1(C) \times K_n.$$
  Note that $K_n$ is a nilpotent skew brace of nilpotency class $n$, thus by Lemma \ref{lem:ann-gamma-direct}, we have
\begin{equation}\label{eq:em2}
\Ann_n(Y)=\Ann_n(\rho_1(C))\Ann_n(K_n)=\Ann_n(\rho_1(C))K_n.
\end{equation}
Since $Y=\rho_1(C)K_n$, we get
\begin{equation*}\label{eq:em3}
Y=\rho_1(C)\Ann_n(Y).
\end{equation*}

We now finally set
$$R:=\rho_1(C)\times K_n\times K_{n-1},$$
which is a symmetric skew brace.
Let  $\overline{\rho_1(C)}$, $\overline{K}_{n}$ and  $\overline{K}_{n-1}$, respectivey,  be the isomorphic copies  of $\rho_1(C)$, $ K_n$, and  $K_{n-1}$ such that $R$ is the internal direct product
$$R = \overline{\rho_1(C)} ~ \overline{K}_n ~ \overline{K}_{n-1}.$$
Let $\iota : Y \to R$ be the natural embedding so that $\iota(Y) =  \overline{\rho_1(C)}  ~ \overline{K}_n$.

It is clear that  $\overline{\rho_1(C)}$ is brace isomorphic to $A$, and $\overline{K}_{n}$ and $\overline{K}_{n-1}$ are  nilpotent skew braces of nilpotency class  $n$ and $n-1$ respectively. 
%In this setting, we get
%\begin{equation}\label{eq:em7}
%R=\overline{\rho_1(C)} \times \overline{K}_{n} \times \overline{K}_{n-1}.
%\end{equation}
Now by Lemma \ref{lem:ann-gamma-direct}, we get
\begin{equation}
\Ann_n(R) = \Ann_n(\overline{\rho_1(C)})~\Ann_n(\overline{K}_{n}~\overline{K}_{n-1}) = \Ann_n(\overline{\rho_1(C)})~\overline{K}_{n}~\overline{K}_{n-1},
\end{equation}
which implies that 
\begin{equation}\label{eq:em6}
R=\overline{\rho_1(C)}\Ann_n(R).
\end{equation}
From \eqref{eq:em2}, we get
\begin{equation}\label{eq:em4}
\iota(\Ann_n(Y)) = \iota(\Ann_n(\rho_1(C))K_n) = \Ann_n(\overline{\rho_1(C)}~ \overline{K}_n)  = \Ann_n(\iota(Y)).
\end{equation}

Recall that $\overline{E}_2 \Ann_{n-1}(U)=U=\overline{D}_2~\widetilde{K}_{n-1}$, where $\widetilde{K}_{n-1}:=\{1\} \times K_{n-1}$. Also we have a brace isomorphism $\tau:\overline{D_2} \to \rho_2(C)$. This yields an injective brace homomorphism
$$\bar{\tau}:\overline{D}_2\xrightarrow \tau \rho_2(C)\xrightarrow{\iota|_{\rho_2(C)}}R.$$
Now, $U$ being the internal direct product of $\overline{D}_2$ and $\widetilde{K}_{n-1}$,  every element $u \in U$ can be uniquely expressed  in the form $u=(s,1)(1,t)$, where $(s,1)\in\overline{D}_2$ and $(1,t)\in\widetilde{K}_{n-1}$. Now consider the map
$$\eta:U\to R,\hspace{1cm}u\mapsto (\bar{\tau}(s,1), 1) (1,1,t).$$
One can  now naturally see that $\eta$ is an injective brace homomorphism.

 Thus we obtain
$$ \eta(U) = \eta\big(\overline{D}_2 \widetilde{K}_{n-1}\big) =  \iota(\rho_2(C))\overline{K}_{n-1}$$
and 
$$\eta(U)= \eta\big(\overline{E}_2\Ann_{n-1}(U)\big) = \eta(\overline{E}_2)\Ann_{n-1}(\eta(U)),$$
which gives 
$$\iota(\rho_2(C))\overline{K}_{n-1} = \eta(\overline{E}_2)\Ann_{n-1}(\eta(U)).$$
Now, using various equalities obtained above, we get
\begin{align*}
R &=\overline{\rho_1(C)}~\overline{K}_n~\overline{K}_{n-1} =\iota(Y)\overline{K}_{n-1}\\
&=\iota\big(\rho_2(C)\Ann_n(Y)\big)\overline{K}_{n-1}\\
&=\iota(\rho_2(C))\Ann_n(\iota(Y))\overline{K}_{n-1}\\
&=\iota(\rho_2(C))\overline{K}_{n-1}\Ann_n(\iota(Y)) \mbox{ (as  $[\Ann_n(\iota(Y)),\overline{K}_{n-1}]^\cdot = 1$)}\\
&=\eta(\overline{E}_2) \Ann_{n-1}(\eta(U) \Ann_n(\overline{\rho_1(C)})\overline{K}_n\\
&=\eta(\overline{E}_2) \Ann_{n-1}\big(\iota(\rho_2(C))\big)\overline{K}_{n-1 }\Ann_n(\overline{\rho_1(C)})\overline{K}_n\\
&\le \eta(\overline{E}_2) \Ann_{n} \big(\iota(\rho_2(C))\big)\overline{K}_{n-1} \Ann_n(\overline{\rho_1(C)})\overline{K}_n\\
&=\eta(\overline{E}_2) \big(\Ann_{n}(\iota(Y)) \cap \iota(\rho_2(C))\big) \overline{K}_{n-1}\Ann_n(\overline{\rho_1(C)})\overline{K}_n \mbox{ ~(by Lemma \ref{lem:InI_n}(2))}\\
&=\eta(\overline{E}_2)\big(\Ann_{n}\big(\overline{\rho_1(C)}~\overline{K}_n\big)\cap\iota(\rho_2(C))\big)\overline{K}_{n-1} \Ann_n(\overline{\rho_1(C)})\overline{K}_n\\
&=\eta(\overline{E}_2)\big(\Ann_{n}(\overline{\rho_1(C)})\overline{K}_n \cap \iota(\rho_2(C))\big) \big(\Ann_n(\overline{\rho_1(C)})\overline{K}_n\big) \overline{K}_{n-1}\\
&=\eta(\overline{E}_2) \Ann_n(\overline{\rho_1(C)})\overline{K}_n \overline{K}_{n-1}\\
&=\eta(\overline{E}_2)\Ann_n(R)\\
&\le R.
\end{align*}

This, using \eqref{eq:em6},  finally yields
$$\overline{\rho_1(C)}\Ann_n(R)=\eta(\overline{E}_2)\Ann_n(R) = R = \rho_1(C) \times (K_n \times K_{n-1}),$$ 
where  $\overline{\rho_1(C)} \cong \rho_1(C)\cong A$ and  $\eta(\overline{E}_2) \cong \overline{E}_2 \cong B$  are brace isomorphisms and 
$K_n \times K_{n-1}$ is a nilpotent skew brace having  nilpotency  class $n$. The skew brace $R$ is a direct product of $N := K_n\times K_{n-1}$ and $A$, thus by Lemma \ref{lem:nilpotentSkew} and \ref{lem:InI_n}, $R\in\mathcal{I}_n$. That $A$ and $B$ are brace $n$-isoclinic to $R$ follows from Proposition \ref{prop7}(1). The proof is now complete.
\end{proof}

As a consequence, we get the following interesting result.

\begin{cor}
Let $A$ and $B$ be $n$-isoclinic symmetric skew braces. Then  $(A, \cdot)$ is $n$-isoclinic to  $(B, \cdot)$  and  $(A, \circ)$ is $n$-isoclinic to $(B, \circ)$.
\end{cor}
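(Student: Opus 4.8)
The plan is to deduce the statement from Theorem~\ref{thm:Waall} together with the classical group-theoretic counterpart of Proposition~\ref{prop7}(1): if $H$ is a subgroup of a group $G$ with $G=H\,\Z_n(G)$, then $H$ and $G$ are $n$-isoclinic groups. Since $A$ and $B$ are $n$-isoclinic symmetric skew braces, Theorem~\ref{thm:Waall} supplies a symmetric skew brace $R$ with sub-skew braces $\bar A$ and $\bar B$ such that $\bar A\cong A$, $\bar B\cong B$, and $R=\bar A\,\Ann_n(R)=\bar B\,\Ann_n(R)$. A brace isomorphism restricts to isomorphisms of the underlying additive groups and of the underlying multiplicative groups, so it suffices to prove that $(\bar A,\cdot)$ is $n$-isoclinic to $(\bar B,\cdot)$ and that $(\bar A,\circ)$ is $n$-isoclinic to $(\bar B,\circ)$, as abstract groups.

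The first step is to observe that $\Ann_n(R)\subseteq\Z_n(R,\cdot)\cap\Z_n(R,\circ)$, where $\Z_n(R,\cdot)$ and $\Z_n(R,\circ)$ denote the $n$th centres of the additive and multiplicative groups of $R$. This is proved by induction on $n$: the case $n=1$ is immediate from $\Ann_1(R)=\Ker(\lambda)\cap\Z(R,\cdot)\cap\Z(R,\circ)$, and for the inductive step one uses that an element $a\in\Ann_n(R)$ satisfies $[a,r]^{\cdot},[a,r]^{\circ}\in\Ann_{n-1}(R)\subseteq\Z_{n-1}(R,\cdot)\cap\Z_{n-1}(R,\circ)$ for all $r\in R$, so that $a$ maps into the centre of $R/\Z_{n-1}(R,\cdot)$ and into the centre of $R/\Z_{n-1}(R,\circ)$. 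Together with $R=\bar A\,\Ann_n(R)=\bar B\,\Ann_n(R)$ this yields the group equalities $(R,\cdot)=(\bar A,\cdot)\,\Z_n(R,\cdot)=(\bar B,\cdot)\,\Z_n(R,\cdot)$, and likewise for the operation $\circ$.

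Next I would apply the group-theoretic fact quoted above to the group $(R,\cdot)$ with its two subgroups $(\bar A,\cdot)$ and $(\bar B,\cdot)$: each of these is $n$-isoclinic to $(R,\cdot)$. As $n$-isoclinism of groups is an equivalence relation, it follows that $(A,\cdot)\cong(\bar A,\cdot)$ is $n$-isoclinic to $(\bar B,\cdot)\cong(B,\cdot)$. The identical argument with $\circ$ in place of $\cdot$ completes the proof.

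The only step I expect to require genuine care is the group-theoretic fact that $G=H\,\Z_n(G)$ forces $H$ and $G$ to be $n$-isoclinic. This is the exact abstract-group analogue of Proposition~\ref{prop7}(1), and it is established in the same way: the assignment $\Z_n(H)h\mapsto\Z_n(G)h$ is a well-defined group isomorphism $H/\Z_n(H)\to G/\Z_n(G)$, one checks that $\gamma_{n+1}(H)=\gamma_{n+1}(G)$ (this is where the three-subgroups lemma enters, through the inclusions $[\Z_i(G),\gamma_j(G)]\subseteq\Z_{i-j}(G)$, with $\Z_k(G)$ read as trivial for $k\le 0$), and with the identity map on $\gamma_{n+1}$ the defining square commutes. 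Since in Proposition~\ref{prop7}(1) one must in addition control the $\lambda$-map and the operation $\ast$, the purely group-theoretic version is strictly simpler and its details are already contained in the proof of that proposition.
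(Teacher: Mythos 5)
Your proposal is correct and follows essentially the same route as the paper: invoke Theorem~\ref{thm:Waall} to embed $A$ and $B$ into a common $R$ with $R=\bar A\,\Ann_n(R)=\bar B\,\Ann_n(R)$, establish $\Ann_n(R)\subseteq \Z_n(R,\cdot)\cap \Z_n(R,\circ)$ by induction, and finish with the group-theoretic fact that $G=H\,\Z_n(G)$ forces $H$ and $G$ to be $n$-isoclinic (which the paper simply cites as Lemma~3.5 of \cite{Hekster}). The only cosmetic difference is that the paper derives the containment of the annihilator series in the two upper central series via the explicit formula for $[x,a]^{\circ}$ from the proof of Lemma~\ref{lemma2}, while you obtain it directly from the quotient definition of $\Ann_n$; both are valid.
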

\begin{proof}
Let $G$ be a symmetric skew brace and $a, x \in G$. Then from the proof of  Lemma \ref{lemma2}, we get
$$[x,a]^\circ=\lambda_x\Big(\bar{x}^{-1}\lambda_a([\bar{a}^{-1},\bar{x}]^\cdot)\bar{x}(\bar{x}^{-1}(a\ast \bar{x})\bar{x})\lambda_a(\bar{a}^{-1}(\bar{x}\ast a)\bar{a})\Big).$$
Thus from the definition of the ideal $\Ann_r(G)$, an easy induction gives $\Ann_r(G)\subseteq Z_r(G,\cdot)\cap Z_r(G,\circ)$ for any skew brace $G$, where $r\ge 0$. With this observation and Lemma \ref{lemma1},  Theorem  \ref{thm:Waall} confirms that we can assume $A,~B$ to be sub-skew braces of a symmetric skew brace $R$ with
$$(R,\cdot)=(A,\cdot) Z_n(R,\cdot)=(B,\cdot) Z_n(R,\cdot)$$
and 
$$(R,\circ) = (A,\circ)\circ Z_n(R,\circ)=(B,\circ)\circ Z_n(R,\circ).$$
Now the result follows from Lemma 3.5  of \cite{Hekster}.
\end{proof}

We conclude this section with the remark that  we gave a serious try to the preceding result for all skew braces in $\tilde{\mathcal{I}}_n$, but could not succeed. One perhaps needs to come up with  different ideas to get the success.

%%%%%%%%%%%%%%%%%%%%%%%%%%%%%%%%%%%%%%%%%%%%%%%%%%%%%%%%%%%%

\section{Isoclinism invariance}

Let $(A, \cdot, \circ)$ be a skew brace. Then for any element $a \in A$, the map $\lambda^{op}_a$, defined by $\lambda^{op}_a(b) := \iota_a(\lambda_a(b))$ for all $b \in A$, is an automorphism $(A, \cdot)$, where $\iota_a$ denotes the inner automorphism of $(A, \cdot)$ induced by $a$. Also the map $\lambda^{op} : (A, \circ) \to  \Aut(A, \cdot)$ such that $a \mapsto \lambda^{op}_a$ is a group homomorphism, that is, $\lambda^{op} $ is a group action of $(A, \circ)$ on $(A, \cdot)$.  Then we naturally get a group $\Lambda_A := (A, \cdot) \rtimes_{\lambda^{op}} (A, \circ)$, the semi-direct product of $(A, \circ)$ by $(A, \cdot)$ under the action $\lambda^{op}$ with the composition law given by $(a_1, b_1) (a_2, b_2) = \big(a_1\cdot \lambda^{op}_{b_1}(a_2),~b_1 \circ b_2\big)$ for $(a_1, b_1), (a_2, b_2) \in \Lambda_A$.

 Let $A$ and $B$ be two skew braces. Then, for a brace homomorphism $f : A \to B$, the map $\bar{f} : \Lambda_A \to \Lambda_B$, given by $\bar{f}(a_1, b_1) = (f(a_1), f(b_1))$ for all $(a_1, b_1)\in \Lambda_A$, turns out to be a group homomorphism. Note that $\mathcal{SB}$, the class of all skew braces, forms a category.  Let $\mathcal{G}$ be  the category of groups. Then we have

\begin{prop}\label{prop:func}
The map $\Lambda: \mathcal{SB} \to \mathcal{G}$, given by  $A \mapsto \Lambda_A$, is a functor.  Moreover, brace isomorphisms are mapped to group isomorphisms.
\end{prop}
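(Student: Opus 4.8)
The plan is to check that the stated assignments $A\mapsto\Lambda_A$ on objects and $f\mapsto\bar f$ on morphisms satisfy the functor axioms, and then to extract the statement about isomorphisms purely formally. The only assignment that needs genuine verification is that $\bar f$ is a morphism of $\mathcal G$, i.e.\ a group homomorphism; once that is in place, preservation of identities and of composition is immediate from the componentwise formula $\bar f(a_1,b_1)=(f(a_1),f(b_1))$.

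First I would record the elementary identity
$$\lambda^{op}_b(a)=(b\circ a)\cdot b^{-1}\qquad(a,b\in A),$$
which follows at once from $\lambda^{op}_b(a)=b\cdot\lambda_b(a)\cdot b^{-1}$ and $\lambda_b(a)=b^{-1}\cdot(b\circ a)$. Since a brace homomorphism $f\colon A\to B$ respects both `$\cdot$' and `$\circ$', this gives
$$f\big(\lambda^{op}_b(a)\big)=f(b\circ a)\,f(b)^{-1}=\big(f(b)\circ f(a)\big)f(b)^{-1}=\lambda^{op}_{f(b)}\big(f(a)\big).$$
Combining this with multiplicativity of $f$ on $(A,\cdot)$, one computes $\bar f\big((a_1,b_1)(a_2,b_2)\big)=\bar f\big(a_1\cdot\lambda^{op}_{b_1}(a_2),\,b_1\circ b_2\big)=\big(f(a_1)\cdot\lambda^{op}_{f(b_1)}(f(a_2)),\,f(b_1)\circ f(b_2)\big)=\bar f(a_1,b_1)\,\bar f(a_2,b_2)$, so $\bar f$ is a group homomorphism --- this is precisely the assertion recorded just before the proposition, so in the write-up it can be quoted rather than reproven.

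Next, functoriality is formal: from $\bar f(a_1,b_1)=(f(a_1),f(b_1))$ one reads off $\overline{\Id_A}=\Id_{\Lambda_A}$, and for brace homomorphisms $f\colon A\to B$, $g\colon B\to C$ one has $\overline{g\circ f}(a_1,b_1)=\big(g(f(a_1)),g(f(b_1))\big)=\bar g\big(\bar f(a_1,b_1)\big)$, hence $\overline{g\circ f}=\bar g\circ\bar f$. This proves $\Lambda$ is a functor. Finally, if $f\colon A\to B$ is a brace isomorphism, then $f^{-1}\colon B\to A$ is again a brace homomorphism, and applying the two functoriality identities to the pair $f,f^{-1}$ yields $\bar f\circ\overline{f^{-1}}=\overline{f\circ f^{-1}}=\Id_{\Lambda_B}$ and $\overline{f^{-1}}\circ\bar f=\Id_{\Lambda_A}$; thus $\bar f$ is a group isomorphism with inverse $\overline{f^{-1}}$. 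I expect no real obstacle here: the whole argument reduces to the single identity $f\circ\lambda^{op}_b=\lambda^{op}_{f(b)}\circ f$, and everything else is diagram-free bookkeeping.
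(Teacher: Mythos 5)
Your proof is correct and follows exactly the route the paper intends: the paper itself only records (without proof) that $\bar f(a_1,b_1)=(f(a_1),f(b_1))$ is a group homomorphism and then states the proposition, and your verification via the identity $f\circ\lambda^{op}_b=\lambda^{op}_{f(b)}\circ f$ together with the formal functoriality and inverse arguments is precisely the omitted routine check. No gaps.
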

%\begin{proof}
%As explained above, for any skew brace $(A, \cdot, \circ)$, we naturally get  the group $\Lambda_A$.
%Let $f:(A, \cdot, \circ) \to (B, \cdot, \circ)$ be a  brace homomorphism. Then it follows that
%\begin{equation}
%f(\lambda^{op}_{a_1}(b_1))=\lambda^{op}_{f(a_1)}(f(b_1))\label{eq:fLamb1}
%\end{equation}
%for all $a_1, b_1\in A$.
% Now consider the map $\bar{f} : \Lambda_A \to \Lambda_B$ given by 
% $$\bar{f}(a_1, b_1) = (f(a_1), f(b_1))$$
%for all  $(a_1, b_1)\in \Lambda_A$. By \eqref{eq:fLamb1}, we get
%\begin{align*}
%\bar{f}\big((a_1, b_1)(a_2,b_2)\big) &=\bar{f}\big(a_1\cdot \lambda^{op}_{b_1}(a_2),~b_1 \circ b_2\big)\\
%&=\big(f(a_1\cdot \lambda^{op}_{b_1}(a_2)),~f(b_1 \circ b_2)\big)\\
%&=\big(f(a_1) \cdot \lambda^{op}_{f(b_1)}(f(a_2)),~f(b_1)\circ f(b_2)\big)\\
%&=\bar{f}(a_1, b_1) \bar{f}(a_2, b_2).
%\end{align*}
%This proves that $\bar{f}$ is a group homomorphism, which completes the proof.
%\end{proof}

The following definition of representation of a skew brace is introduced in \cite{LV24}.
A  triple $(V,\beta,\rho)$ is said to be a {\em brace representation} of a skew brace $(A, \cdot, \circ)$ if\\
(i) $V$ is a vector space over a field $k$;\\
(ii) $\beta: (A, \cdot) \to GL(V)$ is a representation of $(A, \cdot)$;\\
(iii) $\rho: (A, \circ) \to GL(V)$ is a representation of $(A, \circ)$\\
such that 
\begin{equation}
\beta(\lambda^{op}_a(b))=\rho(a)\beta(b)\rho(a)^{-1}
\end{equation}
holds for all  $a, b \in A$.

For a given brace representation  $(V,\beta,\rho)$ of $(A, \cdot, \circ)$, we can naturally construct a representation $(V,\phi_{(\beta,\rho)})$ of the group $\Lambda_A$ by defining 
$$\phi_{(\beta,\rho)}:\Lambda_A\to GL(V),\hspace{.5cm}\phi_{(\beta,\rho)}(a,b)=\beta(a)\rho(b).$$ 
Conversely, if $\phi:\Lambda_A\to GL(V)$ is a representation of the group $\Lambda_A$, then we can define $\beta,\rho$ by setting 
\begin{align*}
\beta &: (A,\cdot)\to GL(V),\hspace{1cm}\beta(a)=\phi(a, 1)\\
\rho &: (A,\circ)\to GL(V),\hspace{1cm}\rho(b)=\phi(1, b)
\end{align*}
so that  $(V,\beta,\rho)$ is a brace representation of $(A, \cdot, \circ)$.

Two representations $(V_1,\beta_1,\rho_1)$ and $(V_2,\beta_2,\rho_2)$ of $(A, \cdot, \circ)$ are said to  be {\em equivalent} if the corresponding representations $(V_1,\phi_{(\beta_1,\rho_1)})$  and $(V_2, \phi_{(\beta_2,\rho_2)})$ of $\Lambda_A$ are equivalent.
We have proved 

\begin{prop}
The study of the brace representations of skew braces $(A, \cdot, \circ)$ is equivalent to the study of the representations of corresponding natural semi-direct products $\Lambda_A$.
\end{prop}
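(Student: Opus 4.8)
The plan is to make precise the correspondence already sketched just before the statement and to show it is a bijection respecting equivalence. Concretely, I would send a brace representation $(V,\beta,\rho)$ of $(A,\cdot,\circ)$ to the pair $(V,\phi_{(\beta,\rho)})$ with $\phi_{(\beta,\rho)}(a,b)=\beta(a)\rho(b)$, and send a representation $\phi:\Lambda_A\to GL(V)$ to the pair $(\beta,\rho)$ with $\beta(a)=\phi(a,1)$ and $\rho(b)=\phi(1,b)$; then I would check that (i) each assignment indeed lands in the right class of objects, (ii) the two assignments are mutually inverse, and (iii) they intertwine the two notions of equivalence.

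First I would verify that $\phi_{(\beta,\rho)}$ is a group homomorphism. Using the product $(a_1,b_1)(a_2,b_2)=\big(a_1\cdot\lambda^{op}_{b_1}(a_2),\,b_1\circ b_2\big)$ in $\Lambda_A$ together with the fact that $\beta$ and $\rho$ are representations of $(A,\cdot)$ and $(A,\circ)$, the identity $\phi_{(\beta,\rho)}\big((a_1,b_1)(a_2,b_2)\big)=\phi_{(\beta,\rho)}(a_1,b_1)\,\phi_{(\beta,\rho)}(a_2,b_2)$ collapses exactly to $\beta(\lambda^{op}_{b_1}(a_2))=\rho(b_1)\beta(a_2)\rho(b_1)^{-1}$, so it holds precisely because $(V,\beta,\rho)$ is a brace representation. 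Conversely, given $\phi:\Lambda_A\to GL(V)$, I would note that $\lambda^{op}_1=\operatorname{id}$ and $\lambda^{op}_a(1)=1$ make $a\mapsto(a,1)$ and $b\mapsto(1,b)$ group homomorphisms into $\Lambda_A$, hence $\beta$ and $\rho$ are representations of $(A,\cdot)$ and $(A,\circ)$; and rewriting $\phi(\lambda^{op}_a(b),1)$ as $\phi\big((1,a)(b,1)(1,\bar a)\big)$ and simplifying in $\Lambda_A$ (using $(1,a)(b,1)(1,\bar a)=(\lambda^{op}_a(b),1)$) yields the compatibility relation, so $(V,\beta,\rho)$ is a brace representation.

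Next I would check that the two constructions are mutually inverse: on one side $\phi_{(\beta,\rho)}(a,1)=\beta(a)$ and $\phi_{(\beta,\rho)}(1,b)=\rho(b)$, recovering $(\beta,\rho)$; on the other side every element factors as $(a,b)=(a,1)(1,b)$, so $\phi(a,b)=\phi(a,1)\phi(1,b)=\beta(a)\rho(b)=\phi_{(\beta,\rho)}(a,b)$, recovering $\phi$. This gives a bijection between brace representations of $A$ on a fixed $V$ and representations of $\Lambda_A$ on $V$. Finally, since equivalence of brace representations was \emph{defined} to mean equivalence of the associated $\Lambda_A$-representations, this bijection descends to a bijection between equivalence classes; combined with the functoriality of $\Lambda$ (Proposition \ref{prop:func}), which transports brace isomorphisms of $A$ to group isomorphisms of $\Lambda_A$, this yields the asserted equivalence of the two representation theories.

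There is no serious obstacle here: every step is a direct computation inside the semidirect product $\Lambda_A$. The only point demanding a little care is the bookkeeping in $\Lambda_A$ — consistently using $(a,b)=(a,1)(1,b)$, the triviality of $\lambda^{op}_1$ and of $\lambda^{op}_a(1)$, and the conjugation identity above — so that the compatibility axiom in the definition of a brace representation and the homomorphism property of $\phi_{(\beta,\rho)}$ are seen to be literally the same equation.
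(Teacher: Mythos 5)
Your proposal is correct and follows essentially the same route as the paper: the paper's "proof" is precisely the paragraph preceding the statement, which sets up the two mutually inverse assignments $(\beta,\rho)\mapsto\phi_{(\beta,\rho)}$ and $\phi\mapsto(\phi(-,1),\phi(1,-))$ and defines equivalence of brace representations via the associated $\Lambda_A$-representations. You have merely supplied the routine verifications (the homomorphism property collapsing to the compatibility axiom, the conjugation identity $(1,a)(b,1)(1,\bar a)=(\lambda^{op}_a(b),1)$, and the factorization $(a,b)=(a,1)(1,b)$) that the paper leaves implicit.
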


\subsection{$n$-isoclinism of natural semi-direct product.}
Let $I$ be an ideal of a skew brace $A := (A, \cdot, \circ)$. Then  $\Lambda_I := I \rtimes_{\lambda^{op}} I$ is a normal subgroup of $\Lambda_A$.  An  easy check reveals that $\Lambda_{\Ann(A)}\subseteq \Z(\Lambda_A)$. 
The following result follows from  \cite[Proposition 3.3]{GG09} (also reproved in \cite[Theorem 3.7]{RSU}). 

\begin{lemma}\label{lemma:comm}
For any skew brace $A$, $\gamma_2(\Lambda_A) = \Gamma_2(A) \rtimes_{\lambda^{op}} \gamma_2(A, \circ)$.
\end{lemma}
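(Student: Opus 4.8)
The plan is to verify the claimed description $\gamma_2(\Lambda_A) = \Gamma_2(A) \rtimes_{\lambda^{op}} \gamma_2(A, \circ)$ by a direct commutator computation in the semidirect product $\Lambda_A = (A,\cdot)\rtimes_{\lambda^{op}}(A,\circ)$, using the composition law $(a_1,b_1)(a_2,b_2) = (a_1 \cdot \lambda^{op}_{b_1}(a_2), b_1\circ b_2)$. First I would compute the commutator $[(a_1,b_1),(a_2,b_2)]$ of two arbitrary elements. Writing out the inverse $(a,b)^{-1} = (\lambda^{op}_{\bar b}(a^{-1}), \bar b)$ and multiplying through, one sees that the $(A,\circ)$-component of $[(a_1,b_1),(a_2,b_2)]$ is exactly $[b_1,b_2]^{\circ}$, so the projection of $\gamma_2(\Lambda_A)$ onto the second factor $(A,\circ)$ lands in $\gamma_2(A,\circ)$, and in fact surjects onto it by taking $a_1=a_2=1$. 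This already shows $\gamma_2(\Lambda_A) \subseteq \Gamma_2(A)\rtimes_{\lambda^{op}}\gamma_2(A,\circ)$ provided I can check that the first ($A,\cdot$)-component always lies in $\Gamma_2(A)$.

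The key point for the first component is to identify which elements of $(A,\cdot)$ appear. Specializing, the commutator $[(a,1),(1,b)]$ has first component $a^{-1}\lambda^{op}_b(a) = a^{-1}\cdot b\cdot\lambda_a\big(\text{something}\big)\cdots$; more cleanly, $\lambda^{op}_b(a)a^{-1} = \iota_b(\lambda_b(a))a^{-1}$, and using $\lambda_b(a) = b^{-1}\cdot(b\circ a)$ one rewrites this in terms of $b*a$ and a conjugate of a commutator $[b,a]^{\cdot}$, both of which lie in $\Gamma_2(A)$ by the definition of the lower central series. Similarly $[(a_1,1),(a_2,1)]$ has first component $[a_1,a_2]^{\cdot}\in\Gamma_2(A)$, and the general commutator is a product of conjugates of such elements together with the $\gamma_2(A,\circ)$-contribution, which by Lemma \ref{lemma2} (and Remark \ref{rmk1}) also sits inside $\Gamma_2(A)$ when read in the first coordinate. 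Assembling these, every generator of $\gamma_2(\Lambda_A)$ lies in $\Gamma_2(A)\rtimes_{\lambda^{op}}\gamma_2(A,\circ)$, and since the latter is a subgroup (one must check $\Gamma_2(A)$ is $\lambda^{op}_b$-invariant for $b\in\gamma_2(A,\circ)$, which follows because $\Gamma_2(A)$ is an ideal hence $\lambda$-invariant and normal in $(A,\cdot)$) the inclusion $\subseteq$ follows.

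For the reverse inclusion $\supseteq$, I would show that both $\Gamma_2(A)\times\{1\}$ and $\{1\}\times\gamma_2(A,\circ)$ lie in $\gamma_2(\Lambda_A)$. The second is immediate from the commutator formula for $[(1,b_1),(1,b_2)] = (1,[b_1,b_2]^{\circ})$. For the first, I need each generator of $\Gamma_2(A)$ — namely $a*c$, $c*a$, and $[a,c]^{\cdot}$ for $a\in A$, $c\in A$ — to be realized as the first component of an element of $\gamma_2(\Lambda_A)$ modulo the second factor; elements $[a_1,a_2]^{\cdot}$ come from $[(a_1,1),(a_2,1)]$, and the $*$-generators come from commutators $[(a,1),(1,b)]$ after correcting by an element of $\{1\}\times\gamma_2(A,\circ)\subseteq\gamma_2(\Lambda_A)$, using the identity relating $\lambda^{op}_b(a)a^{-1}$ to $b*a$ and $[b,a]^{\cdot}$ noted above. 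Since $\Gamma_2(A)$ is generated as a subgroup of $(A,\cdot)$ by these elements and $\gamma_2(\Lambda_A)$ is a subgroup, we get $\Gamma_2(A)\times\{1\}\subseteq\gamma_2(\Lambda_A)$, and combining with the $(A,\circ)$-part yields $\supseteq$.

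The main obstacle I anticipate is bookkeeping in the first-component computation: the conjugation twist by $\lambda^{op}$ means commutators do not split cleanly across the two factors, so I must carefully track conjugates (using that $\Gamma_2(A)$ is normal and $\lambda$-invariant, so conjugates and $\lambda$-images of its elements stay inside) and repeatedly invoke the defining relation $\lambda_a(b) = a^{-1}\cdot(a\circ b)$ together with the expansion of $a*b$ to move between the `$\cdot$' and `$\circ$' pictures. Once the single general commutator $[(a_1,b_1),(a_2,b_2)]$ is computed explicitly and its two components are expressed in terms of $\ast$-products and `$\cdot$'- and `$\circ$'-commutators, both inclusions drop out; invoking the cited \cite[Proposition 3.3]{GG09}/\cite[Theorem 3.7]{RSU} can shortcut the bulk of this calculation if one prefers to cite rather than reprove it.
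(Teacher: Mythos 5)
Your plan is essentially correct, but it is worth noting that the paper does not actually prove this lemma at all: it simply cites \cite[Proposition 3.3]{GG09} (reproved in \cite[Theorem 3.7]{RSU}), which computes the commutator subgroup of a semidirect product. So your self-contained commutator computation is a genuinely different (and more informative) route: it makes explicit why the first component of $\gamma_2(\Lambda_A)$ is exactly $\Gamma_2(A)$, namely that $[(a,1),(1,b)]$ has first component $a\,\lambda^{op}_b(a^{-1}) = [a,b]^{\cdot}\cdot\bigl((b*a)^{-1}\bigr)^{b}$, so the $*$-generators and the `$\cdot$'-commutators of $\Gamma_2(A)$ are precisely what the mixed commutators produce, while the citation route buries this identification inside a general semidirect-product formula. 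Two small bookkeeping slips you should fix when writing this out. First, to isolate $b*a$ from $[(a,1),(1,b)]$ you must correct by an element of $\gamma_2(A,\cdot)\times\{1\}$ (realized as $[(x,1),(y,1)]$) and by an $(A,\cdot)$-conjugation, not by an element of $\{1\}\times\gamma_2(A,\circ)$ as you wrote; the second factor of $[(a,1),(1,b)]$ is already trivial, so no $\circ$-correction is involved. Second, in the inclusion $\subseteq$ the appeal to Lemma \ref{lemma2} is not where the work is: the $\gamma_2(A,\circ)$-contribution lives in the second coordinate, and the point that actually needs checking is that conjugating a generator $(g,h)$ by an arbitrary $(x,y)$ yields first component $x\,\lambda^{op}_y(g)\,\lambda^{op}_{y\circ h\circ\bar y}(x^{-1})\,$ (up to rearrangement), which lies in $\Gamma_2(A)$ by the same identity $x\,\lambda^{op}_k(x^{-1}) = [x,k]^{\cdot}\cdot\bigl((k*x)^{-1}\bigr)^{k}$ together with $\Gamma_2(A)$ being an ideal. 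With those repairs the argument closes correctly.
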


We now generalize this for the further terms of the lower central series.
 Set $L_1(A) = A$ and define 
\begin{eqnarray*}
K_n(A) &:=& \gen{\lambda_u(b)b^{-1} \mid u \in \gamma_{n-1}(A, \circ), b \in A}^{\cdot},\\
A_n &:=& \gen{\lambda_a(v)v^{-1} \mid a \in A, v \in L_{n-1}(A)}^{\cdot},\\
L_n(A) &:=& \gen{K_n(A), A_n, [A,L_{n-1}(A)]^{\cdot}}^{\cdot}.
\end{eqnarray*}

It follows from \cite[Theorem 1.1]{GP20} that $L_n(A)$ is a left ideal of $A$ and it is a normal subgroup of $(A, \cdot)$.  Notice that $L_2(A) = \gen{A^2, [A, A]^{\cdot}}^{\cdot} = \Gamma_2(A)$ and $L_n(A) \subseteq \Gamma_n(A)$ for all $n \ge 1$.  We now prove

\begin{lemma}
$\gamma_n(A, \circ)$ is contained in  $L_n(A)$ for all $n \ge 1$. 
\end{lemma}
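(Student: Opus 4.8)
The plan is to mimic the proof of Lemma~\ref{lemma2}, where the analogous statement $\gamma_n(A,\circ)\subseteq\Gamma_n(A)$ was established, but now keeping track of the stronger left-ideal structure carried by $L_n(A)$. I would argue by induction on $n$. The base cases $n=1$ and $n=2$ are immediate: $L_1(A)=A$ and $L_2(A)=\langle A^2,[A,A]^{\cdot}\rangle^{\cdot}=\Gamma_2(A)$, and the computation already carried out at the start of the proof of Lemma~\ref{lemma2} shows that each generator $x\circ y\circ\bar x\circ\bar y$ of $\gamma_2(A,\circ)$ lies in $\Gamma_2(A)=L_2(A)$; in fact, inspecting that computation, the element $\bar{x}^{-1}\lambda_y(\bar y^{-1}\bar x\lambda_{\bar x}(\bar y))$ decomposes into a piece of the form $\lambda_y(b)b^{-1}$ (which lies in $K_2(A)$ since $\bar y\in\gamma_1(A,\circ)=A$) and commutator pieces, so it sits in $L_2(A)$.

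For the inductive step I would assume $\gamma_{n-1}(A,\circ)\subseteq L_{n-1}(A)$ and take a generator $u=[v,a]^{\circ}$ of $\gamma_n(A,\circ)$ with $v\in\gamma_{n-1}(A,\circ)$ and $a\in A$. Reusing the identity displayed in the proof of Lemma~\ref{lemma2},
\[
u=[v,a]^{\circ}=\lambda_v\big(\bar v^{-1}\bar a^{-1}\bar v\bar a\,(c^{-1}\lambda_a(c))\,\lambda_a(\bar a^{-1}\lambda_{\bar v}(\bar a))\big),
\]
where $c:=\bar a^{-1}\bar v\bar a$. Since $v\in\gamma_{n-1}(A,\circ)$ is an ideal-like element and in particular $\bar v\in\gamma_{n-1}(A,\circ)$, one has $c\in\gamma_{n-1}(A,\circ)\subseteq L_{n-1}(A)$ by the inductive hypothesis. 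Now I would identify each factor inside $\lambda_v(\cdots)$ with a generator of $L_n(A)$: the factor $\bar v^{-1}\bar a^{-1}\bar v\bar a=([\bar a^{-1},\bar v^{-1}]^{\cdot})^{-1}$ lies in $[A,L_{n-1}(A)]^{\cdot}$; the factor $c^{-1}\lambda_a(c)=(\lambda_a(c^{-1})c)^{-1}$ is of the form $\lambda_a(v')v'^{-1}$ with $v'=c^{-1}\in L_{n-1}(A)$, hence lies in $A_n$; and the factor $\bar a^{-1}\lambda_{\bar v}(\bar a)=(\lambda_{\bar v}(\bar a^{-1})\bar a)^{-1}$ is of the form $\lambda_u(b)b^{-1}$ with $u=\bar v\in\gamma_{n-1}(A,\circ)$, hence lies in $K_n(A)$. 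Thus the argument of the brace-word $\lambda_v(\cdots)$ is a product of generators of the normal subgroup $L_n(A)$, so lies in $L_n(A)$. Finally, since $L_n(A)$ is a left ideal of $A$ (by \cite[Theorem 1.1]{GP20}), it is $\lambda$-invariant, so $u=\lambda_v(\cdots)\in L_n(A)$. As $\gamma_n(A,\circ)$ is generated by such elements $[v,a]^{\circ}$, we conclude $\gamma_n(A,\circ)\subseteq L_n(A)$.

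The main obstacle — and the only point requiring genuine care — is the bookkeeping that matches each of the three nontrivial factors in the expansion of $[v,a]^{\circ}$ with exactly one of the three families of generators ($K_n(A)$, $A_n$, $[A,L_{n-1}(A)]^{\cdot}$) defining $L_n(A)$, and in particular checking that the relevant auxiliary elements ($c$, $\bar v$, $c^{-1}$) genuinely lie in $L_{n-1}(A)$ or in $\gamma_{n-1}(A,\circ)$ as required. Here it is important to use that $\gamma_{n-1}(A,\circ)$ is a normal subgroup of $(A,\circ)$ (so that $c=\bar a^{-1}\bar v\bar a\in\gamma_{n-1}(A,\circ)$), together with the inductive inclusion $\gamma_{n-1}(A,\circ)\subseteq L_{n-1}(A)$; once those memberships are in hand, the identification of the factors is formal. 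I would also remark, in parallel with Remark~\ref{rmk1}, that the computation actually shows $\gamma_n(A,\circ)\subseteq K_n(A)\cdot A_n\cdot[A,L_{n-1}(A)]^{\cdot}$ up to $\lambda_v$-conjugation, which is exactly what makes $L_n(A)$ the natural recipient.
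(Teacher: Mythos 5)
Your proof is correct and follows essentially the same route as the paper's: reduce to the generators $[v,a]^{\circ}$, reuse the expansion from the proof of Lemma~\ref{lemma2}, match the three factors with the generating families $[A,L_{n-1}(A)]^{\cdot}$, $A_n$ and $K_n(A)$, and finish using that the left ideal $L_n(A)$ is $\lambda$-invariant and normal in $(A,\cdot)$. One justification should be repaired: the conjugation $c=\bar a^{-1}\bar v\bar a$ is taken in $(A,\cdot)$, not in $(A,\circ)$, so normality of $\gamma_{n-1}(A,\circ)$ in $(A,\circ)$ does not give $c\in\gamma_{n-1}(A,\circ)$, and that membership need not hold. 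What you actually need (and all the argument uses) is $c\in L_{n-1}(A)$, which follows because $\bar v\in\gamma_{n-1}(A,\circ)\subseteq L_{n-1}(A)$ by the inductive hypothesis together with the fact that left ideals are subgroups of $(A,\circ)$, and because $L_{n-1}(A)$ is normal in $(A,\cdot)$. With that correction the identification of the three factors, and hence the proof, goes through exactly as in the paper.
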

\begin{proof}
Since $\Gamma_2(A) = L_2(A)$, it follows from Lemma \ref{lemma2} that $\gamma_2(A, \circ) \subseteq  L_2(A)$. By induction assume that  $\gamma_{n-1}(A, \circ) \subseteq L_{n-1}(A)$. Let $u = [v, a]^{\circ} \in \gamma_{n}(A, \circ)$, where $v \in \gamma_{n-1}(A, \circ)$ and $a \in A$. Now using the computation as done for  the $n=2$ case, we have
\begin{equation*}
u = [v, a]^{\circ} = \lambda_v\big(\bar{v}^{-1}\bar{a}^{-1}  \bar{v}  \bar{a} (c^{-1} \lambda_a(c)) \lambda_a (\bar{a}^{-1} \lambda_{\bar{v}}( \bar{a}))\big),
\end{equation*}
where $c:=\bar{a}^{-1}  \bar{v}  \bar{a}$.  Notice that  $[ \bar{v}^{-1}, \bar{a}^{-1}]^{\cdot}  = ( [\bar{a}^{-1},  \bar{v}^{-1}]^{\cdot})^{-1} \in [A,   L_{n-1}(A)]^{\cdot}$, $c^{-1} \lambda_a(c) =  (\lambda_a(c^{-1}) c)^{-1} \in A_n$ as $c \in L_{n-1}(A)$, and $\bar{a}^{-1} \lambda_{\bar{v}}( \bar{a})  = ( \lambda_{\bar{v}^{-1}} \bar{a})^{-1} \in K_n(A)$. Hence $\gamma_{n}(A, \circ) \subseteq L_{n}(A)$, and the proof is complete. 
\end{proof}

The following can be verified easily.

\begin{lemma}
For any skew  brace $A$,  the sets $K_n(A)$ and $\gamma_n(A, \cdot)$ are left ideals of $A$, and both are normal in $(A, \cdot)$.
\end{lemma}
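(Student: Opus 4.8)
The plan is to verify the asserted properties separately for $\gamma_n(A,\cdot)$ and for $K_n(A)$, reducing each to the homomorphism property of $\lambda:(A,\circ)\to\Aut(A,\cdot)$ together with one elementary identity. Recall that, by definition, a left ideal of $A$ is a subgroup of $(A,\cdot)$ stable under every $\lambda_a$, so in each case there are three things to see: it is a subgroup of $(A,\cdot)$, it is $\lambda$-stable, and it is normal in $(A,\cdot)$. For $\gamma_n(A,\cdot)$ all three are immediate: it is a subgroup of $(A,\cdot)$ by definition, and as a term of the lower central series of the group $(A,\cdot)$ it is fully invariant, hence characteristic; in particular it is normal in $(A,\cdot)$ and is carried onto itself by every automorphism of $(A,\cdot)$, in particular by each $\lambda_a$. (For $n=1$ this is trivial, since $\gamma_1(A,\cdot)=A$.)

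For $K_n(A)$ the only property of $\gamma_{n-1}(A,\circ)$ that will be used is that it is a normal subgroup of $(A,\circ)$. By construction $K_n(A)$ is a subgroup of $(A,\cdot)$, generated by the elements $u\ast b=\lambda_u(b)\,b^{-1}$ with $u\in\gamma_{n-1}(A,\circ)$ and $b\in A$. To prove $\lambda$-stability, fix $a\in A$ and compute
\[
\lambda_a(u\ast b)=\lambda_a(\lambda_u(b))\,\lambda_a(b)^{-1}=\lambda_{a\circ u}(b)\,\lambda_a(b)^{-1}.
\]
Using the identity $a\circ u=(a\circ u\circ\bar a)\circ a$ together with $\lambda_{x\circ y}=\lambda_x\lambda_y$, set $w:=a\circ u\circ\bar a$ and $d:=\lambda_a(b)$, so that $\lambda_{a\circ u}(b)=\lambda_w(\lambda_a(b))=\lambda_w(d)$ and hence $\lambda_a(u\ast b)=\lambda_w(d)\,d^{-1}=w\ast d$. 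Since $\gamma_{n-1}(A,\circ)$ is normal in $(A,\circ)$ we have $w\in\gamma_{n-1}(A,\circ)$, and $d\in A$, so $w\ast d\in K_n(A)$. Thus $\lambda_a$ carries the generators of $K_n(A)$ into $K_n(A)$, whence $\lambda_a(K_n(A))\subseteq K_n(A)$ and $K_n(A)$ is a left ideal.

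For normality of $K_n(A)$ in $(A,\cdot)$ I would invoke the straightforward identity $a\ast(b\cdot c)=(a\ast b)\,(a\ast c)^{b}$ valid in any skew brace (with $x^{b}=bxb^{-1}$). Taking $a=u\in\gamma_{n-1}(A,\circ)$ and rearranging gives $(u\ast c)^{b}=(u\ast b)^{-1}\,(u\ast(b\cdot c))$, whose right-hand side is a product of generators of $K_n(A)$ and inverses thereof, hence lies in $K_n(A)$. Since conjugation by $b$ is an automorphism of $(A,\cdot)$ sending the generating set of $K_n(A)$ into $K_n(A)$, it sends all of $K_n(A)$ into $K_n(A)$; applying this to $b$ and to $b^{-1}$ shows $K_n(A)$ is normal in $(A,\cdot)$. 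I do not expect any genuine obstacle here — the whole argument is a short computation — the only point needing a little care being to keep track of which normal subgroup is being used where: $\gamma_{n-1}(A,\circ)\normal(A,\circ)$ for the $\lambda$-stability step, and $K_n(A)\normal(A,\cdot)$ for the conjugation step.
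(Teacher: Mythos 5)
Your proof is correct. The paper gives no argument for this lemma (it merely remarks that "the following can be verified easily"), and your verification — characteristic subgroups of $(A,\cdot)$ for $\gamma_n(A,\cdot)$, the conjugation trick $a\circ u=(a\circ u\circ\bar a)\circ a$ for $\lambda$-stability of $K_n(A)$, and the identity $u\ast(b\cdot c)=(u\ast b)(u\ast c)^{b}$ for normality — is exactly the routine check the authors intend.
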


The next two results follow   from \cite[Theorem 1.1]{GP20}.
\begin{lemma}\label{lemma3}
For any skew  brace $A$, we have $[L_n(A), \Ann_n(A)]^{\cdot} = 1$.
\end{lemma}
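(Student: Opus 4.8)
The plan is to pass to the natural semidirect product $\Lambda_A = (A,\cdot)\rtimes_{\lambda^{op}}(A,\circ)$ and read the statement off the lower and upper central series of $\Lambda_A$. By \cite[Theorem 1.1]{GP20} (the generalization of Lemma \ref{lemma:comm}) one has $\gamma_n(\Lambda_A) = L_n(A)\rtimes_{\lambda^{op}}\gamma_n(A,\circ)$ for all $n\ge 1$; in particular $(v,1)\in\gamma_n(\Lambda_A)$ whenever $v\in L_n(A)$. Since $\lambda^{op}_1=\mathrm{id}$, the map $x\mapsto (x,1)$ embeds $(A,\cdot)$ into $\Lambda_A$ and $[(v,1),(u,1)]_{\Lambda_A}=([v,u]^{\cdot},1)$ for all $v,u\in A$. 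Granting the inclusion $\Delta_n:=\{(u,1)\mid u\in\Ann_n(A)\}\subseteq \Z_n(\Lambda_A)$, the standard group identity $[\gamma_i(G),\Z_j(G)]\subseteq \Z_{j-i}(G)$ for $i\le j$ yields $[\gamma_n(\Lambda_A),\Z_n(\Lambda_A)]=1$, hence $[(v,1),(u,1)]_{\Lambda_A}=1$ for $v\in L_n(A)$, $u\in\Ann_n(A)$, i.e.\ $[v,u]^{\cdot}=1$. As the elements $[v,u]^{\cdot}$ generate $[L_n(A),\Ann_n(A)]^{\cdot}$, this gives the lemma.

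It remains to prove $\Delta_n\subseteq \Z_n(\Lambda_A)$, which I would do by induction on $n$; the case $n=0$ is trivial, and the case $n=1$ is the already noted inclusion $\Lambda_{\Ann(A)}\subseteq \Z(\Lambda_A)$. For the inductive step fix $u\in\Ann_n(A)$ and an arbitrary $(a,b)\in\Lambda_A$. A direct computation in $\Lambda_A$, using $\lambda^{op}_b\lambda^{op}_{\bar b}=\lambda^{op}_{b\circ\bar b}=\mathrm{id}$, gives
$$[(u,1),(a,b)]_{\Lambda_A}=\bigl(u\cdot a\,\lambda^{op}_b(u)^{-1}a^{-1},\ 1\bigr).$$
Since $\Ann_n(A)$ is an ideal it is $\lambda$-invariant, so $u':=\lambda_b(u)\in\Ann_n(A)$ and $\lambda^{op}_b(u)=bu'b^{-1}=u'\,[u'^{-1},b]^{\cdot}$ with $[u'^{-1},b]^{\cdot}\in\Ann_{n-1}(A)$ by the description of $\Ann_n(A)$; likewise $u'^{-1}=\lambda_b(u^{-1})=(b*u^{-1})u^{-1}$ with $b*u^{-1}\in\Ann_{n-1}(A)$, and $au^{-1}a^{-1}=u^{-1}[u,a]^{\cdot}$ with $[u,a]^{\cdot}\in\Ann_{n-1}(A)$. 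Collecting these, $u\cdot a\lambda^{op}_b(u)^{-1}a^{-1}$ becomes a product of conjugates (by $u$ and by $a$) of elements of $\Ann_{n-1}(A)$, hence lies in $\Ann_{n-1}(A)$ by normality. Therefore $[(u,1),(a,b)]_{\Lambda_A}\in\Delta_{n-1}\subseteq \Z_{n-1}(\Lambda_A)$ by the inductive hypothesis, and as $(a,b)$ was arbitrary, $(u,1)\in \Z_n(\Lambda_A)$.

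The main obstacle is the bookkeeping in the displayed computation: one must unwind $\lambda^{op}_b(u)=\iota_b(\lambda_b(u))$ carefully and repeatedly invoke the characterization $\Ann_n(A)=\{x\mid x*c,\ c*x,\ [x,c]^{\cdot}\in\Ann_{n-1}(A)\ \text{for all }c\}$ together with the $\lambda$-invariance and normality of $\Ann_n(A)$, so that every intermediate factor genuinely lands in $\Ann_{n-1}(A)$ and the induction closes; the remaining steps are formal. As an alternative (fully internal) route, one can prove, by induction on $r$ and a case analysis over the three generating types of $L_r(A)$ — namely $a*v$ and $[a,v]^{\cdot}$ with $v\in L_{r-1}(A)$, and $u_0*b$ with $u_0\in\gamma_{r-1}(A,\circ)\subseteq L_{r-1}(A)$ — that $\epsilon(u,v)\in\Ann_{s-r}(A)$ for all $\epsilon\in X$, $u\in\Ann_s(A)$ and $v\in L_r(A)$ with $s\ge r$ (the $L_n$-analogue of Lemmas \ref{lemma4} and \ref{lemma5}), and then run a Hall--Witt argument in $(A,\cdot)$ exactly as in the proof that symmetric skew braces lie in $\tilde{\mathcal{I}}_n$; this works but duplicates a substantial amount of the Section~4 computations.
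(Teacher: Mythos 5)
Your proof is correct and follows essentially the same route as the paper: the paper derives this lemma from \cite[Theorem 1.1]{GP20} via the identification $\gamma_n(\Lambda_A)=L_n(A)\rtimes_{\lambda^{op}}\gamma_n(A,\circ)$ together with the (asserted but not detailed) inclusion $\Lambda_{\Ann_n(A)}\subseteq \Z_n(\Lambda_A)$ and the standard fact $[\gamma_n(G),\Z_n(G)]=1$. You merely supply the inductive verification of the $\Z_n$-inclusion (for the copy $\Ann_n(A)\times\{1\}$, which is all that is needed) that the paper leaves implicit.
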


\begin{lemma}\label{lemma:comm:n}
For any skew brace $A$ and $n \ge 2$, $\gamma_n(\Lambda_A) =  L_n(A) \rtimes_{\lambda^{op}} \gamma_n(A, \circ)$.
\end{lemma}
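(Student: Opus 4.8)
The plan is to prove $\gamma_n(\Lambda_A) = L_n(A) \rtimes_{\lambda^{op}} \gamma_n(A, \circ)$ by induction on $n$, with the base case $n=2$ already supplied by Lemma \ref{lemma:comm}. So assume the identity holds for $n-1$; I would then compute $\gamma_n(\Lambda_A) = [\Lambda_A, \gamma_{n-1}(\Lambda_A)]$ directly using the inductive description $\gamma_{n-1}(\Lambda_A) = L_{n-1}(A) \rtimes_{\lambda^{op}} \gamma_{n-1}(A, \circ)$. Since $\gamma_n(\Lambda_A)$ is generated by commutators $[(a, b), (v, w)]$ with $(a,b) \in \Lambda_A$ and $(v, w) \in \gamma_{n-1}(\Lambda_A)$, i.e. $v \in L_{n-1}(A)$ and $w \in \gamma_{n-1}(A, \circ)$, the heart of the argument is to expand such a commutator in the semidirect product $\Lambda_A = (A, \cdot) \rtimes_{\lambda^{op}} (A, \circ)$ and read off its two components.

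First I would record the semidirect product commutator formula: for $(a_1, b_1), (a_2, b_2) \in \Lambda_A$, a routine computation with the composition law $(a_1, b_1)(a_2, b_2) = (a_1 \lambda^{op}_{b_1}(a_2), b_1 \circ b_2)$ gives the second coordinate of $[(a_1,b_1),(a_2,b_2)]$ equal to $[b_1, b_2]^{\circ}$ and the first coordinate a product of $\lambda^{op}$-translates of $a_1, a_2$ and their inverses, together with a "correction" term built from $[b_1, b_2]^{\circ}$ acting via $\lambda^{op}$. Applying this to $(a,b)$ and $(v,w)$ with $v \in L_{n-1}(A)$, $w \in \gamma_{n-1}(A, \circ)$, the second coordinate lands in $[A, \gamma_{n-1}(A,\circ)]^{\circ} \subseteq \gamma_n(A, \circ)$, and conversely every generator of $\gamma_n(A, \circ)$ arises this way (take $a = 1$). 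For the first coordinate I would check that each constituent factor lies in $L_n(A)$: the terms of the form $\lambda^{op}_b(v) v^{-1}$ type expressions lie in $K_n(A)$ or in $[A, L_{n-1}(A)]^{\cdot}$ or in $A_n$ after using that $L_{n-1}(A)$ is normal in $(A, \cdot)$ and a left ideal (from \cite[Theorem 1.1]{GP20}), while the correction term coming from $[b, w]^{\circ} \in \gamma_n(A, \circ)$ contributes something in $L_n(A)$ because, by the preceding lemma, $\gamma_n(A, \circ) \subseteq L_n(A)$ and $L_n(A)$ is normal in $(A, \cdot)$. This establishes $\gamma_n(\Lambda_A) \subseteq L_n(A) \rtimes_{\lambda^{op}} \gamma_n(A, \circ)$.

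For the reverse inclusion I would show that $L_n(A) \times \{1\}$ and $\{1\} \times \gamma_n(A, \circ)$ both sit inside $\gamma_n(\Lambda_A)$, so that their product (which is all of $L_n(A) \rtimes_{\lambda^{op}} \gamma_n(A, \circ)$, this being a subgroup since $L_n(A)$ is $\lambda^{op}$-invariant) does too. The element $\{1\} \times \gamma_n(A, \circ)$ part is immediate from the computation above by setting $a=1$. For $L_n(A) \times \{1\}$ I would treat its three families of generators: a generator $\lambda_u(b) b^{-1}$ of $K_n(A)$ with $u \in \gamma_{n-1}(A, \circ)$ is the first coordinate of $[(b, 1), (1, u)]$ up to a factor in $\gamma_n(A, \circ) \times \{1\}$-direction already known to be in $\gamma_n(\Lambda_A)$; a generator $\lambda_a(v) v^{-1}$ of $A_n$ with $v \in L_{n-1}(A)$ arises from $[(a, 1), (v, 1)]$ using that $(v,1) \in \gamma_{n-1}(\Lambda_A)$; and a generator $[a, v]^{\cdot}$ of $[A, L_{n-1}(A)]^{\cdot}$ is the first coordinate of $[(a, 1), (v, 1)]$ as well, whose second coordinate is trivial. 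Combining with normality one deduces $L_n(A) \times \{1\} \subseteq \gamma_n(\Lambda_A)$, and the proof is complete.

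The main obstacle I anticipate is bookkeeping in the first-coordinate expansion: the semidirect product commutator mixes $\lambda^{op}$-conjugation (which is $\lambda$ composed with inner automorphisms of $(A, \cdot)$) with the $\cdot$-multiplication, so one must carefully track that every stray conjugation-by-an-element-of-$A$ keeps the factors inside $L_n(A)$, which works precisely because $L_n(A)$ is normal in $(A, \cdot)$ and a left ideal. A secondary subtlety is ensuring that the "correction" term contributed by the $\circ$-commutator $[b, w]^{\circ}$ does not fall outside $L_n(A)$ — this is exactly where one invokes $\gamma_n(A, \circ) \subseteq L_n(A)$ from the lemma just above. Everything else is a direct, if slightly tedious, unwinding of definitions, entirely parallel to the $n = 2$ case handled in Lemma \ref{lemma:comm}.
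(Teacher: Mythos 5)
Your overall strategy is sound and is genuinely different from what the paper does: the paper gives no computation at all, deducing both Lemma \ref{lemma3} and Lemma \ref{lemma:comm:n} directly from the general formula for the lower central series of a semidirect product in \cite[Theorem 1.1]{GP20}, specialized to $\Lambda_A = (A,\cdot)\rtimes_{\lambda^{op}}(A,\circ)$. Your inductive, self-contained verification essentially reproves that special case; what it buys is transparency about exactly which commutators in $\Lambda_A$ produce the three generating families $K_n(A)$, $A_n$ and $[A,L_{n-1}(A)]^{\cdot}$, at the cost of the bookkeeping you describe. The ingredients you lean on (normality of $L_{n-1}(A)$ and $L_n(A)$ in $(A,\cdot)$, the left-ideal property, and $\gamma_{n-1}(A,\circ)\subseteq L_{n-1}(A)$ from the preceding lemma) are all available and are exactly what is needed to absorb the discrepancy between $\lambda$ and $\lambda^{op}=\iota_a\circ\lambda_a$ in the generators, so the forward inclusion goes through as you sketch it.

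There is, however, one concrete slip in the reverse inclusion. You claim that a generator $\lambda_a(v)v^{-1}$ of $A_n$ "arises from $[(a,1),(v,1)]$" and that $[a,v]^{\cdot}$ is the first coordinate of the same commutator. Both second components of $(a,1)$ and $(v,1)$ are trivial, so $\lambda^{op}$ acts as the identity throughout and $[(a,1),(v,1)]=([a,v]^{\cdot},1)$ exactly; this commutator can only produce the $[A,L_{n-1}(A)]^{\cdot}$ generators and never sees the $\lambda$-map. To realize the $A_n$ generators you must place $a$ in the multiplicative slot and compute $[(1,a),(v,1)]$, whose first coordinate is $\lambda^{op}_a(v)v^{-1}=a\lambda_a(v)a^{-1}v^{-1}$, which differs from $\lambda_a(v)v^{-1}$ by a conjugate and an element of $[A,L_{n-1}(A)]^{\cdot}$ — both already known to lie in $\gamma_n(\Lambda_A)\cap(A\times\{1\})$. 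With that repair (and the analogous care you already take for the $K_n(A)$ generators via $[(b,1),(1,u)]$), the argument is complete.
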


Notice that the set of commutator identities satisfied for an element of $A$ to fall in $\Ann_n(A)$ contains the set of commutator identities satisfied for an element of $\Lambda_A$ to fall in $\Z_n(\Lambda_A)$. Thus it is not difficult to prove that 
$\Lambda_{\Ann_n(A)} \subseteq \Z_n(\Lambda_A)$. We now prove that, under certain conditions,  $n$-isoclinism is invariant under the functor $A \mapsto \Lambda_A$. This is Theorem E of the introduction.

\begin{thm}
Let $A$ and $B$ be two  $n$-isoclinic  skew braces in $\mathcal{I}_n$ such that $A_{(n)} = \Gamma_{n+1}(A)$ and $B_{(n)} = \Gamma_{n+1}(B)$. Then the groups $\Lambda_A$ and $\Lambda_B$ are $n$-isoclinic. 
\end{thm}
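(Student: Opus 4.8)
The plan is to transport the $n$-isoclinism $(\xi,\theta)$ between $A$ and $B$ to an $n$-isoclinism between $\Lambda_A$ and $\Lambda_B$, using the structural descriptions $\Lambda_{\Ann_n(A)}\subseteq\Z_n(\Lambda_A)$ and $\gamma_{n+1}(\Lambda_A)=L_{n+1}(A)\rtimes_{\lambda^{op}}\gamma_{n+1}(A,\circ)$ from Lemma \ref{lemma:comm:n}. First I would observe that since $A\in\mathcal{I}_n$ and $A_{(n)}=\Gamma_{n+1}(A)$, the brace-commutator-word maps $\phi^A_m$ have image exactly $\Gamma_{n+1}(A)$, and $\Gamma_{n+1}(A)$ is generated by word values $m(a_1\cdots a_{n+1})$ with $\deg(m)=n$; likewise $\gamma_{n+1}(A,\circ)\subseteq\Gamma_{n+1}(A)$ by Lemma \ref{lemma2} (and Remark \ref{rmk1}), so $L_{n+1}(A)\subseteq\Gamma_{n+1}(A)$. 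The key point is that \emph{every} value of the $n$-fold group commutator $\gamma_\cdot^n$ in $\Lambda_A$ on elements of $\Lambda_A/\Z_n(\Lambda_A)$ can be expressed through brace-commutator-word values in $A$: expanding an $n$-fold commutator $[(a_1,b_1),\dots]$ in the semidirect product $(A,\cdot)\rtimes_{\lambda^{op}}(A,\circ)$ produces, by the exact computations underlying Lemmas \ref{lemma2}, \ref{lemma:comm} and \ref{lemma:comm:n}, a $(A,\cdot)$-component lying in $L_{n+1}(A)$ that is a product of brace-commutator-word values $m(a_{i_1}\cdots a_{i_{n+1}})$ (the $\gamma_\cdot$, $*$, $\bar*$ letters arising from the two group laws and the $\lambda$-action) and a $(A,\circ)$-component in $\gamma_{n+1}(A,\circ)$.

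The construction then goes as follows. Define $\alpha:\Lambda_A/\Z_n(\Lambda_A)\to\Lambda_B/\Z_n(\Lambda_B)$ by reading $\xi$ on both coordinates: given $(a,b)\in\Lambda_A$, pick coset representatives $a',b'\in B$ of $\xi(\Ann_n(A)a)$ and $\xi(\Ann_n(A)b)$ and send the class of $(a,b)$ to the class of $(a',b')$. Since $\Lambda_{\Ann_n(A)}\subseteq\Z_n(\Lambda_A)$ this is well defined on the subgroup generated by $\Lambda_{\Ann_n(A)}$; to see it is well defined and bijective on the full quotient $\Lambda_A/\Z_n(\Lambda_A)$ one argues exactly as in the proof of Proposition \ref{prop6}, using that membership in $\Z_n(\Lambda_A)$ is detected by vanishing of $n$-fold commutator words, which by the previous paragraph is controlled by vanishing of the $\phi^A_m$, and $\xi$ is a brace isomorphism intertwining these. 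For the bottom map I would define $\beta:\gamma_{n+1}(\Lambda_A)\to\gamma_{n+1}(\Lambda_B)$ on the generating commutator values by sending the $(A,\cdot)$-component via $\theta$ (which maps $\Gamma_{n+1}(A)\to\Gamma_{n+1}(B)$ and, by the commutativity of the isoclinism diagram, sends each word value $m(a_1\cdots a_{n+1})$ to $m(b_1\cdots b_{n+1})$) and the $(A,\circ)$-component via the restriction of $\theta$ to $\gamma_{n+1}(A,\circ)$ — this restriction lands in $\gamma_{n+1}(B,\circ)$ precisely because $\gamma_{n+1}(A,\circ)$ is generated by $\circ$-commutators, which are particular brace-commutator-word values (Lemma \ref{lemma2}), and $\theta$ respects them. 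One checks $\beta$ is a well-defined group isomorphism $L_{n+1}(A)\rtimes\gamma_{n+1}(A,\circ)\to L_{n+1}(B)\rtimes\gamma_{n+1}(B,\circ)$ by verifying compatibility with the $\lambda^{op}$-twisted multiplication, again using that $\theta$ is a \emph{brace} isomorphism onto $\Gamma_{n+1}(B)$ and Lemma \ref{lem:symm}-type identities.

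Finally one verifies the commuting square: for $(a_1,b_1),\dots,(a_{n+1},b_{n+1})\in\Lambda_A$ one must show $\beta\big(\gamma_\cdot^n((a_1,b_1),\dots,(a_{n+1},b_{n+1}))\big)=\gamma_\cdot^n(\alpha(\overline{(a_1,b_1)}),\dots)$. Expanding both sides into their $(A,\cdot)$- and $(A,\circ)$-components and matching term by term, this reduces to two families of identities: on the multiplicative component, to $\theta(m(a_1\cdots a_{n+1}))=m(b_1\cdots b_{n+1})$ for every $m$ occurring in the expansion, which is exactly the commutativity of the brace $n$-isoclinism diagram; and on the $\circ$-component, to the analogous statement for $\circ$-commutator words, which follows since those are themselves brace-commutator words. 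The main obstacle — and the place where the hypotheses $A\in\mathcal{I}_n$ and $A_{(n)}=\Gamma_{n+1}(A)$ are genuinely needed — is the bookkeeping in the first paragraph: showing that the full $(A,\cdot)$-component produced by an $n$-fold commutator in the semidirect product can be written purely in terms of the degree-$n$ brace-commutator-word values $m(\cdots)$ that $\phi^A_m$ and $\theta$ see, with no stray factors outside $\Gamma_{n+1}(A)=A_{(n)}$ (this is what makes $\theta$ applicable and well-behaved), and symmetrically that it is built only from such values modulo $\Z_n$ so that $\alpha$ is forced to be well defined. Once that expansion is pinned down, $\alpha$, $\beta$ and the square all drop out by the same pattern of arguments used in Proposition \ref{prop6} and the proof of Theorem \ref{thm:isoclin-surj}.
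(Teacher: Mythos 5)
Your proposal is correct and follows essentially the same route as the paper: transport $(\xi,\theta)$ componentwise to the semidirect products, use $\Lambda_{\Ann_n(A)}\subseteq\Z_n(\Lambda_A)$ and $\gamma_{n+1}(\Lambda_A)=L_{n+1}(A)\rtimes_{\lambda^{op}}\gamma_{n+1}(A,\circ)$, and check the commuting square by expanding group commutators into brace-word values. The only organizational difference is that the paper first defines the top isomorphism on $\Lambda_A/\Lambda_{\Ann_n(A)}$ (via the functor $\Lambda$ applied to $\xi$) and then invokes Hekster's Lemma 3.11 to pass to the full quotients by $\Z_n$, whereas you define it directly on $\Lambda_A/\Z_n(\Lambda_A)$ and argue the descent inline; also note that $\circ$-commutators are not literally word values of elements of $M$, but only expressible in terms of them via Lemma \ref{lemma2} and Remark \ref{rmk1}, which is all your argument actually needs.
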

\begin{proof}
We begin observing that the map
\begin{equation}
\psi^n_A: \Lambda_A \to \Lambda_{A/{\Ann_n(A)}},\hspace{.5cm} (a,b) \mapsto \big( \Ann_n(A) a,  \Ann_n(A) \circ b\big),
\end{equation}
is a surjective group homomorphism with $\Ker(\psi^n_A) =\Lambda_{\Ann_n(A)}$. Thus $\psi^n_A$ induces an isomorphism $\bar{\psi}^n_A:\Lambda_A/\Lambda_{\Ann_n(A)} \to \Lambda_{A/{\Ann_n(A)}}$.

Let $(\xi,\theta)$ be a brace $n$-isoclinism  between $A$ and $B$. As $\xi$ and $\theta$ are brace isomorphisms, by Proposition \ref{prop:func}, we get induced isomorphisms $\Lambda(\xi): \Lambda_{A/{\Ann_n(A)}}\to \Lambda_{B/{\Ann_n(B)}}$ and $\Lambda(\theta):\Lambda_{\Gamma_{n+1}(A)}\to \Lambda_{\Gamma_{n+1}(B)}$, respectively, given by 
\begin{align*}
\Lambda(\xi)\big( \Ann_n(A) a_1,  \Ann_n(A) a_2\big) &=\big(\xi(\Ann_n(A) a_1),~\xi(\Ann_n(A) a_2)\big)\\
\Lambda(\theta)(u_1, u_2) &=(\theta(u_1),\theta(u_2))
\end{align*}
for all $a_1, a_2 \in A$ and $u_1, u_2 \in \Gamma_{n+1}(A)$.
We now set
$$\alpha:= (\bar{\psi}_B^{n})^{-1} \Lambda(\xi)\bar{\psi}_A^n: \Lambda_A/\Lambda_{\Ann_n(A)}\to \Lambda_{B}/{\Lambda_{\Ann_n(B)}},$$
which is a brace isomorphism.
By  Lemma \ref{lemma:comm:n} we have $\gamma_{n+1}(\Lambda_{A}) =   L_{n+1}(A) \rtimes_{\lambda^{op}} \gamma_{n+1}(A, \circ) \subseteq \Lambda_{\Gamma_{n+1}(A)}$ and $\gamma_{n+1}(\Lambda_B) = L_{n+1}(B) \rtimes_{\lambda^{op}} \gamma_{n+1}(B, \circ)\subseteq  \Lambda_{\Gamma_{n+1}(B)}$.  Since $(\xi, \theta)$ is a brace isoclinism, it turns out that $\beta := \Lambda(\theta)|_{\gamma_{n+1}(\Lambda_A)}$ is a group isomorphism from $\gamma_{n+1}(\Lambda_{A})$ to $\gamma_{n+1}(\Lambda_B)$. A straightforward computation now shows that the following diagram commutes:

\begin{equation*}
\begin{tikzcd}
(\Lambda_A/{\Lambda_{\Ann_n(A)}})^{n+1} \arrow[d,"\alpha \times \alpha"]  \arrow[r, " "]  & \gamma_{n+1}(\Lambda_{A}) \arrow[d,"\beta"] \\
(\Lambda_B/{\Lambda_{\Ann_n(B)}})^{n+1}   \arrow[r, " "]  &\gamma_{n+1}(\Lambda_B).
\end{tikzcd}
\end{equation*}
As noticed above, $\Lambda_{\Ann_n(A)}$ and $\Lambda_{\Ann_n(B)}$ are subgroups of $\Z_n(\Lambda_A)$ and $\Z_n(\Lambda_B)$ respectively. It now follows by \cite[Lemma 3.11]{Hekster} that $(\alpha,\beta)$ is an  isoclinism, which completes the proof.
\end{proof}

Note that the whole class of skew braces $A$ falls in $\mathcal{I}_1$ and $A_{(1)} = \Gamma_2(A)$. So we readily get 
\begin{cor}
Let $A$ and $B$ be two  isoclinic  skew braces. Then the groups $\Lambda_A$ and $\Lambda_B$ are isoclinic. 
\end{cor}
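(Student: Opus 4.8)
The plan is to obtain this as the $n=1$ specialisation of Theorem E (the preceding theorem), so all that is needed is to verify that its hypotheses hold automatically in this case and that, for $n=1$, its conclusion is exactly the asserted isoclinism.

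First I would recall that $\mathcal{I}_1 = \mathcal{SB}$ (as noted in the introduction, this is due to \cite{LV23}; it is also immediate from Theorem \ref{phi-well-defined}, since for $n=1$ the only admissible pair is $r=s=1$, and then $\epsilon(u,v)\in\Ann_0(A)=1$ for all $\epsilon\in X$, $u\in\Ann(A)$, $v\in A$ holds by the very definition of $\Ann(A)$). Hence $A,B\in\mathcal{I}_1$. Next I would check that $A_{(1)}=\Gamma_2(A)$ for every skew brace $A$: the inclusion $A_{(1)}\le\Gamma_2(A)$ is part of the general construction of the skeleton, while $\Gamma_2(A)=\gen{A\ast A,\,[A,A]^{\cdot}}^{\cdot}$ is generated by values of the degree-one words $\ast$ and $\gamma_{\cdot}$, all of which lie in $A_{(1)}$; so $\Gamma_2(A)\le A_{(1)}$, whence equality. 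Thus $A_{(1)}=\Gamma_2(A)$ and $B_{(1)}=\Gamma_2(B)$, so both additional hypotheses of Theorem E are satisfied with $n=1$.

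Finally, for $n=1$ the words $m\in M$ with $\deg(m)=1$ are exactly the four elements of $X$, and commutativity of the defining square of $1$-isoclinism for the words $\gamma_{\cdot}$ and $\ast$ forces it for $\bar\gamma_{\cdot}$ and $\bar{\ast}$, because $\bar\gamma_{\cdot}(a,b)=\gamma_{\cdot}(b,a)$, $\bar{\ast}(a,b)=\ast(b,a)$, and $\xi\times\xi$ is symmetric in its two coordinates; together with $A_{(1)}=\Gamma_2(A)$ this shows that $1$-isoclinism of skew braces in the sense of Section 4 coincides with isoclinism in the sense of Section 2. Likewise, the $n=1$ case of the group-theoretic notion recalled in the excerpt is precisely classical isoclinism of groups. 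Applying Theorem E with $n=1$ to $A$ and $B$ therefore yields that $\Lambda_A$ and $\Lambda_B$ are isoclinic. Since essentially no computation is involved, there is no genuine obstacle; the only point that deserves a line of care is exactly this last bookkeeping — confirming that the $n=1$ instances of the various ($n$-)isoclinism definitions, for skew braces and for groups, reduce to the classical ones, so that Theorem E applies and its output is literally ``isoclinic''.
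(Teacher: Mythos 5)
Your proposal is correct and takes essentially the same route as the paper: the authors also obtain this corollary by specializing Theorem E to $n=1$, observing precisely that every skew brace lies in $\mathcal{I}_1$ and that $A_{(1)}=\Gamma_2(A)$. Your write-up merely spells out these two verifications (and the bookkeeping that the $n=1$ notions of isoclinism reduce to the classical ones) in more detail than the paper does.
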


The concept of soluble and supersoluble skew left braces was introduced and investigated in \cite{BEFPT24, BEJP24}. It is proved that for a supersoluble skew brace $A$, if $A/\Ann_n(A)$ is finite, then $\Gamma_{n+1}(A)$ is finite (\cite[Theorem 3.47]{BEFPT24}). The group theory result holds for arbitrary $n$ and the full class of groups, and it  is due to R. Baer \cite{Baer52}. As a consequence of the  discussion in this section, we get the following result.

\begin{prop}\label{gamma2:finite}
Let, for a given integer $n \ge 1$, $A$ be a skew brace such that $A /\Ann_n(A)$ is finite. Then $L_{n+1}(A)$  is finite.  In particular, for $n = 1$, we get:  if $A /\Ann(A)$ is finite, then $\Gamma_2(A)$ is finite.
\end{prop}
 
\begin{proof}
If $A /\Ann_n(A)$ is finite, then $\Lambda_A/\Lambda_{\Ann_n(A)}$ is finite. Since $\Lambda_{\Ann_n(A)} \subseteq \Z_n(\Lambda_A)$, it follows that $\Lambda_A/\Z_n(\Lambda_A)$ is finite. Hence $\gamma_{n+1}(\Lambda_A)$ is finite. The proof now follows from Lemma \ref{lemma:comm:n}. The second assertion holds by the fact that $\Gamma_2(A) = L_{2}(A)$.
\end{proof}

The case $n=1$ of the preceding result also follows from  \cite[Theorem 5.4]{JKAV21}.

\subsection{Other invariance results.}
We finally prove that various nilpotencies of  skew braces are invariant under $n$-isoclinism.    We actually prove little general results, which will imply invariance under $n$-isoclinism.  

 A skew brace $A$ is \textit{left nilpotent}, respectively \textit{right nilpotent}, if there exists $n \in \N$ such that $A^n = 1$, respectively $A^{(n)} = 1$.   For $A$ a skew brace, define $A^{[1]} = A_1 = A$ and for $n \geq 2$, iteratively define
    \begin{eqnarray*}
        A^{[n]} &=& \Big\langle \bigcup_{i=1}^{n-1} A^{[i]} * A^{[n-i]}
\Big\rangle^\cdot, \\
        A_n &=& A_{n-1} * A_{n-1}.
    \end{eqnarray*}
 Note that   these are descending series' of left ideals of $A$.  If there exists $n \geq 1$ such that $A^{[n]} = 1$, then $A$ is said to be 
\textit{strongly nilpotent}.    If there exists $n \geq 1$ such that $A_n = \{1\}$, then $A$ is said to be  \textit{$*$-soluble}.

Strong nilpotency can be characterized through left and right nilpotency, as was first proved in \cite{smoktunowicz18} for braces
and in \cite{CSV19} for skew braces in general.

\begin{thm} \label{thm:strong_left_right}
   For a skew brace  $A$, the following are equivalent:
    \begin{enumerate}
        \item $A$ is strongly nilpotent.
        \item $A$ is left and right nilpotent.
    \end{enumerate}
\end{thm}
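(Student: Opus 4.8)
The plan is to prove the two implications separately, using the characterization of strong nilpotency in terms of the mixed products $A^{[n]}$ together with the single-sided iterated products $A^n$ and $A^{(n)}$. The direction $(1) \Rightarrow (2)$ is straightforward: since $A^n \subseteq A^{[n]}$ and $A^{(n)} \subseteq A^{[n]}$ (both follow immediately by induction from the definitions, because every term appearing in the defining generating sets of $A^n$ and of $A^{(n)}$ is one of the summands $A^{[i]} * A^{[n-i]}$ with $i=1$ or $i = n-1$), strong nilpotency $A^{[n]} = 1$ forces $A^n = 1$ and $A^{(n)} = 1$, so $A$ is both left and right nilpotent.

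For the converse $(2) \Rightarrow (1)$, assume $A^m = 1$ and $A^{(\ell)} = 1$ for some $m, \ell \ge 1$. The first step is to record the key multiplicative identities governing how $*$ interacts with products on each side: for left ideals $I, J, K$ one has containments of the form $I * (J * K) \subseteq \langle (I*J)*K,\ J*(I*K)\rangle^{\cdot}$ and $(I * J) * K \subseteq \langle I * (J * K),\ (I*K)*J, \dots\rangle^{\cdot}$ coming from the brace identity and the fact that $I*J$ is generated by elements $\lambda_a(b)b^{-1}$; these are exactly the tools used in \cite{CSV19}. Using them, the plan is to show by a double induction that every mixed term $A^{[n]}$ is contained in a product (in $(A,\cdot)$) of terms of the form $A^{(i)} \ast A^{j}$ type expressions, and more precisely to prove the crucial estimate: there is a function $N(p,q)$ such that $A^{[N(p,q)]} \subseteq A^{p} \cdot (\text{something in } A^{(q)})$, or cleanly, that $A^{[n]}$ eventually lands inside $A^{(\ell)} = 1$ once $n$ is large enough relative to $m$ and $\ell$. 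Concretely, I would argue that a long enough $*$-word in the mixed series, by repeatedly pushing parentheses to one side using the identities above, can be rewritten as a product of iterated one-sided products, and a word of weight $> $ (a bound depending on $m$ and $\ell$) must contain either a left-nested subword of length $m$ (hence lies in $A^m = 1$) or a right-nested subword of length $\ell$ (hence lies in $A^{(\ell)} = 1$); a Ramsey-type / pigeonhole bookkeeping on the bracketing then finishes it.

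The main obstacle is precisely this combinatorial rewriting: controlling how an arbitrary bracketing of a long $*$-word decomposes, modulo the commutator corrections, into one-sided nested pieces. The subtlety is that applying the brace identity to move a parenthesis does not just relocate a bracket — it produces extra factors (conjugates, and terms with $\lambda$ applied), so one must check that all these correction terms themselves lie in mixed products $A^{[n']}$ with $n'$ no smaller, so the induction is genuinely decreasing in the right parameter. I would handle this by working not with individual elements but with the left ideals $A^{[n]}$ directly, establishing set-level containments $A^{[n]} \subseteq \langle A^{[a]} \ast A^{[b]} : a+b \ge n,\ \{a,b\} \ne \{1, n-1\} \text{ forbidden appropriately}\rangle^{\cdot}$ and iterating; since each $A^{[k]}$ is a normal subgroup of $(A,\cdot)$ (hence conjugation corrections stay inside), the bookkeeping closes. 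This is the standard strategy behind Theorem~\ref{thm:strong_left_right} as it appears in \cite{smoktunowicz18, CSV19}, and I would follow it, citing those papers for the combinatorial core and only spelling out the reduction to left- and right-nilpotency explicitly.

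Finally, I would remark that since $A^m = 1$ and $A^{(\ell)} = 1$ with, say, $k := \max(m,\ell)$, one expects an explicit bound of the shape $A^{[f(k)]} = 1$ with $f$ at most roughly $\binom{2k}{k}$ or similar, but a clean statement only needs existence, so I would not optimize the constant. The resulting $n$ with $A^{[n]} = 1$ then witnesses strong nilpotency, completing the proof of the equivalence.
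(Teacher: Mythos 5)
First, a point of comparison: the paper does not prove Theorem~\ref{thm:strong_left_right} at all --- it is imported from the literature, with the proof attributed to \cite{smoktunowicz18} for braces and to \cite{CSV19} for skew braces in general. So there is no in-paper argument to match your attempt against; a correct solution here is either an honest citation or a genuine reproof. Your implication $(1)\Rightarrow(2)$ is correct and complete: $A^{n}\subseteq A^{[n]}$ and $A^{(n)}\subseteq A^{[n]}$ do follow by immediate induction from the definitions.

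The converse is where all the content lies, and there your sketch has a genuine gap. The containments you adopt as your ``key multiplicative identities'', e.g.\ $I*(J*K)\subseteq\langle (I*J)*K,\ J*(I*K)\rangle^{\cdot}$, are not what the brace axioms actually provide, and you give no derivation of them. What the axioms give is $a*(b\cdot c)=(a*b)\cdot (a*c)^{b}$ and, crucially, $(a\circ b)*c=\big(a*(b*c)\big)\cdot(b*c)\cdot(a*c)$; the latter relates a left-nested $*$-word to the $\circ$-product of the first two entries, not to a re-bracketed $*$-word, so the difficulty is not one of ``moving parentheses'' at all --- it is that $\lambda_{a*b}$ has no useful expression in terms of $\lambda_a$ and $\lambda_b$. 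The ``Ramsey-type / pigeonhole bookkeeping on the bracketing'' you invoke to finish is precisely the hard combinatorial core of the theorem; in \cite{smoktunowicz18} and \cite{CSV19} it is replaced by a careful double induction (on the right-nilpotency index, exploiting that each $A^{(n)}$ is an ideal so one may pass to $A/A^{(s-1)}$ and then use left nilpotency to kill the residual terms), with explicit bounds. Since you ultimately defer to those papers for exactly this step, your write-up functions as a citation --- which is all the paper itself does --- but it should not be presented as a proof: as written, the one nontrivial implication rests on unverified identities and an unexecuted combinatorial claim.
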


The following result will show the invariance of the desired classes of skew braces under $n$-isoclinism in $\mathcal{I}_n$, $ n \ge 1$.
\begin{prop} \label{prop:left_right_nilpot}
    Let $A$ and $B$ be two  skew braces such that $A/\Ann_n(A)$ is brace isomorphic to   $B/\Ann_n(B)$ for some $n \ge 1$. Then the following hold:
    \begin{enumerate}
        \item $A$ is right nilpotent if and only if $B$ is right nilpotent.
        \item $A$ is left nilpotent if and only if $B$ is left nilpotent.
        \item $A$ is nilpotent if and only if $B$ is nilpotent.
        \item $A$ is strongly nilpotent if and only if $B$ is strongly nilpotent.
        \item $A$ is  $*$-soluble if and only if $B$ is $*$-soluble.
    \end{enumerate}
\end{prop}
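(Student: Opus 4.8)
The plan is to reduce each of the five equivalences to a statement about the quotient $\bar A := A/\Ann_n(A) \cong B/\Ann_n(B) =: \bar B$, exploiting the fact that all the relevant series ($A^n$, $A^{(n)}$, $A^{[n]}$, $A_n$) are built from the operation $*$ and group commutators, and that $\Ann_n(A)$ behaves like the $n$-th centre. The key reduction is: for any of these descending series $S_\bullet(A)$ of (left) ideals, one has $S_m(A) = 1$ if and only if $S_m(\bar A) = 1$, for $m$ sufficiently large relative to $n$. The ``only if'' direction is trivial because each series is compatible with quotients, so $S_m(A)=1$ forces $S_m(\bar A)=1$. The substance is the converse: if $S_m(\bar A)=1$, i.e. $S_m(A) \subseteq \Ann_n(A)$, we must push this down to $S_{m+k}(A)=1$ for some $k$ depending only on $n$ and the type of series.

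First I would isolate the mechanism that makes this work. Lemma~\ref{lemma4} (and its symmetric-case extensions) and Lemma~\ref{lemma-key} tell us precisely how the operation $*$ and the commutator $\gamma_\cdot$ interact with the annihilator series: if $v$ is a ``brace-commutator value of degree $r$'' and $u \in \Ann_s(A)$, then $\epsilon(u,v) \in \Ann_{s-r}(A)$. Consequently, if $S_m(A) \subseteq \Ann_n(A)$, then forming one more layer of the series (one more application of $*$, against all of $A$) lands us inside $\Ann_{n-1}(A)$; iterating $n$ times lands us in $\Ann_0(A) = 1$. Concretely, for right nilpotency: $A^{(m)} \subseteq \Ann_n(A) = \Ann_n(A)$ implies $A^{(m+1)} = A^{(m)} * A \subseteq \Ann_n(A) * A$; but elements of $A^{(m)}$ are degree-$\ge 1$ brace commutator values, so by the annihilator-interaction lemmas $\Ann_n(A) * A \subseteq \Ann_{n-1}(A)$ — wait, more carefully one uses that $A^{(m)} \le \Gamma_{m+1}(A)$ and Lemma~\ref{lemma4}-type estimates — giving $A^{(m+n)} = 1$. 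The same bookkeeping handles $A^n$ (left nilpotency), $A^{[n]}$ (strong nilpotency), and $A_n$ ($*$-solubility), since each is assembled from iterated $*$-brackets; for solubility one notes $A_{k+1} = A_k * A_k$ so both factors are commutator values, improving the annihilator index even faster. Part (3), nilpotency, is the cleanest: $A$ is nilpotent iff $\Gamma_{m+1}(A)=1$ for some $m$ (by \cite[Theorem 2.8]{BJ23}), and $\Gamma_{m+1}(A) \subseteq \Ann_n(A)$ together with Lemma~\ref{lemma4} forces $\Gamma_{m+n+1}(A)=1$; alternatively invoke that $\Ann_n(A) = A$ whenever $A$ is nilpotent, combined with the quotient isomorphism and an index-chasing argument on the annihilator series. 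Part (4) then follows formally from (1), (2) and Theorem~\ref{thm:strong_left_right}, so it need not be proved directly.

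The order I would carry this out: (i) record the elementary ``only if'' implications, uniformly, from compatibility of the four series with brace quotients; (ii) prove a single lemma of the form ``if a descending series $S_\bullet$ of left ideals, each term generated by brace-commutator values of positive degree, satisfies $S_m(A) \subseteq \Ann_j(A)$, then $S_{m+j}(A) = 1$'', using Lemma~\ref{lemma-key} and the $\epsilon(u,v)\in\Ann_{s-r}(A)$ estimates — this is the technical heart and is essentially a repackaging of computations already done for Lemmas~\ref{lemma4} and \ref{lemma5}; (iii) apply this lemma with $S_\bullet \in \{A^\bullet,\ A^{(\bullet)},\ A^{[\bullet]},\ A_\bullet\}$ and $j = n$ to get (1), (2), (5); (iv) derive (3) either in parallel (via $\Gamma_\bullet$) or as a consequence, and (iv) get (4) from (1)+(2)+Theorem~\ref{thm:strong_left_right}.

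The main obstacle I anticipate is step (ii): making the generic lemma genuinely \emph{generic}. The series $A^n$, $A^{(n)}$, $A^{[n]}$, $A_n$ are defined by slightly different recursions — some take $*$ on the left, some on the right, some bracket two previous terms together — and the annihilator-interaction lemmas proved earlier (Lemmas~\ref{lemma4}, \ref{lemma4b}, \ref{lemma5}) are stated for specific configurations of $\epsilon$ and for $v \in \Gamma_r(A)$ or $v \in S_{r-1}(A)$. One must check that each of the four series has all of its generators lying in some $\Gamma_r(A)$ with $r$ growing, so that the degree-drop estimate applies; for $A^n$ and $A^{(n)}$ this is classical (they sit inside $\Gamma_n(A)$), and for $A^{[n]}$, $A_n$ it follows since strong nilpotency/solubility series are dominated by the lower central series. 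A secondary subtlety: the interaction lemmas are cleanest for symmetric skew braces, but nothing here assumes symmetry, so I would either (a) restrict to the $\epsilon \in \{\gamma_\cdot, *\}$ estimates of Lemma~\ref{lemma4}, which hold for all skew braces, or (b) observe that left/right nilpotency only involves $*$ and does not need the $\bar *$ or $\bar\gamma_\cdot$ identities. I expect (a)/(b) to suffice, so that no symmetry hypothesis creeps in and the proposition holds for arbitrary skew braces as stated.
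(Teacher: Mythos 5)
Your proposal is correct and follows essentially the same route as the paper: pass to the quotient (using that each series is preserved under surjective brace homomorphisms), deduce $S_k(B)\subseteq \Ann_n(B)$, and then drop the annihilator index by one with each further layer of the series until reaching $\Ann_0(B)=1$, with (4) obtained from (1), (2) and Theorem \ref{thm:strong_left_right}. The only difference is that you route the key containments $\Ann_j(B)*B\subseteq\Ann_{j-1}(B)$ and $B*\Ann_j(B)\subseteq\Ann_{j-1}(B)$ through Lemmas \ref{lemma4}--\ref{lemma5} and worry about symmetry, whereas these are immediate from the defining characterization $\Ann_j(B)=\{b\mid b*c,\,c*b,\,[b,c]^{\cdot}\in\Ann_{j-1}(B)\ \forall c\}$ for arbitrary skew braces, so the heavier machinery (and any symmetry concern) is unnecessary.
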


\begin{proof}
    We only prove assertion (1); the remaining ones can be  proven analogously. Indeed, (4) follows from (1), (2) and Theorem \ref{thm:strong_left_right}.

    Suppose that $A$ is right nilpotent. Then, it follows that $A/\Ann_n(A)$ is right nilpotent (for an arbitrary surjective brace homomorphism, it follows from \cite[Lemma 2.5]{CSV19} that the image of a right nilpotent skew brace is right nilpotent). Therefore, by the given hypothesis,  $B/\Ann_n(B)$ is right nilpotent as well. Thus there exists $k \in \N$ such that $(B/\Ann_n(B))^{(k)} = \{1\}$, or equivalently,
    \[
    B^{(k)} \subset \Ann_n(B).
    \]
    By the  definition of $\Ann_n(B)$, we have
    \[
    B^{(k+1)} = B^{(k)} * B \subset \Ann_n(B) * B \subset \Ann_{n-1}(B).
    \]
    Repeating this reasoning $n$ times, it follows that $B^{(k+n)}
\subset \Ann_0(B) = 1$. Hence $B$ is right nilpotent.
\end{proof}

We conclude with two remarks and one question.

\begin{remark}
Most of the results analogous to the results of the paper \cite{Hekster} can be obtained for symmetric skew braces. The other subclass of skew braces for which the theory will go well is the class of $\lambda$-homomorphism skew braces.  It will be interesting to obtain such results for a larger subclass of $\mathcal{I}_n$.
\end{remark}

\begin{remark}
Concepts of monomial and strongly monomial skew braces can be introduced using Proposition  \ref{prop:func} by defining a  character through the representation of a skew brace defined  above in this section. Then the invariance of these concepts under $n$-isoclinism can be studied, for $n \ge 1$.
\end{remark}

We know that $\mathcal{I}_1  = \bar{\mathcal{I}}_1= \tilde{\mathcal{I}}_1 = \mathcal{SB}$ and symmetric skew braces are contained in $\tilde{\mathcal{I}}_n$ for all $n \ge 1$. The following question seems very interesting.

\vspace{.05in}

\noindent  {\bf Question.} 
How big are the classes $\mathcal{I}_n$, $\bar{\mathcal{I}}_n$  and  $\tilde{\mathcal{I}}_n$ for a given $n \ge 2$?

\vspace{.1in}
%%%%%%%%%%%%%%%%%%%%%%%%%%%%%%%%%%%%%%%%%%%%%%%%%%%%%%%%%%%%%%%%%

\noindent {\bf Acknowledgement:} The third-named author acknowledges the support of DST-SERB Grant MTR/2021/000285. He would also like to thank Leandro Vendramin and Mathematics Department of VUB, Brussels for hosting him for two weeks in March 2024, where the present work was initiated.  The second named author acknowledges partial support by the project OZR3762 of Vrije Universiteit Brussel and FWO Senior Research Project G004124N. The authors thank Leandro Vendramin for his kind help with GAP computations and constructive remarks on the presentation of the article.

\end{document}